\documentclass[reqno,10pt,centertags]{amsart}
\usepackage{amsmath,amsthm,amscd,amssymb,latexsym,esint,upref,enumerate,color,verbatim,
yfonts}
\date{\today}
\usepackage{hyperref}

\newcommand*{\mailto}[1]{\href{mailto:#1}{\nolinkurl{#1}}}


\makeatletter
\def\theequation{\@arabic\c@equation}


\newcommand{\bb}[1]{{\mathbb{#1}}}
\newcommand{\mc}[1]{{\mathcal{#1}}}
\newcommand{\oT}{H}
\newcommand{\D}{\mathbb{D}}
\newcommand{\bbN}{{\mathbb{N}}}
\newcommand{\bbR}{{\mathbb{R}}}

\newcommand{\bbZ}{{\mathbb{Z}}}
\newcommand{\bbC}{{\mathbb{C}}}

\newcommand{\cB}{{\mathcal B}}

\newcommand{\cD}{{\mathcal D}}

\newcommand{\cF}{{\mathcal F}}

\newcommand{\cH}{{\mathcal H}}
\newcommand{\cI}{{\mathcal I}}

\newcommand{\cK}{{\mathcal K}}

\newcommand{\cM}{{\mathcal M}}
\newcommand{\cN}{{\mathcal N}}

\newcommand{\cS}{{\mathcal S}}

\newcommand{\cU}{{\mathcal U}}
\newcommand{\cV}{{\mathcal V}}

\newcommand{\cX}{{\mathcal X}}

\newcommand{\no}{\nonumber}
\newcommand{\lb}{\label}
\newcommand{\f}{\frac}
\newcommand{\ul}{\underline}
\newcommand{\ol}{\overline}
\newcommand{\bs}{\backslash}

\newcommand{\wti}{\widetilde}

\newcommand{\la}{\lambda}
\newcommand{\al}{\alpha}

\newcommand{\ve}{\varepsilon}
\newcommand{\Oh}{O}
\newcommand{\oh}{o}

\newcommand{\loc}{\text{\rm{loc}}}

\newcommand{\ran}{\operatorname{ran}}

\newcommand{\dom}{\operatorname{dom}}
\newcommand{\supp}{\operatorname{supp}}

\renewcommand{\Re}{\operatorname{Re}}
\renewcommand{\Im}{\operatorname{Im}}

\renewcommand{\oint}{\ointctrclockwise}

\newcommand{\slimes}{\text{\rm{l.i.m.}}}

\newcommand{\bi}{\bibitem}
\newcommand{\hatt}{\widehat}

\newcommand{\essran}{\text{\rm ess.ran}}
\newcommand{\dD}{{\partial\hspace*{.2mm}\mathbb{D}}}

\DeclareMathOperator*{\slim}{s-lim}
\DeclareMathOperator*{\wlim}{w-lim}

\numberwithin{equation}{section}

\newtheorem{theorem}{Theorem}[section]

\newtheorem{lemma}[theorem]{Lemma}
\newtheorem{corollary}[theorem]{Corollary}
\newtheorem{hypothesis}[theorem]{Hypothesis}

\theoremstyle{definition}
\newtheorem{definition}[theorem]{Definition}
\newtheorem{remark}[theorem]{Remark}

\begin{document}

\title[Spectral Theory and Operator-Valued Potentials]{On Spectral
Theory for Schr\"odinger Operators with Operator-Valued Potentials}
\author[F.\ Gesztesy, R.\ Weikard, and M.\ Zinchenko]{Fritz
Gesztesy, Rudi Weikard, and Maxim Zinchenko}

\address{Department of Mathematics,
University of Missouri, Columbia, MO 65211, USA}
\email{\mailto{gesztesyf@missouri.edu}}
\urladdr{\url{http://www.math.missouri.edu/personnel/faculty/gesztesyf.html}}

\address{Department of Mathematics, University of
Alabama at Birmingham, Birmingham, AL 35294, USA}
\email{\mailto{rudi@math.uab.edu}}
\urladdr{\url{http://www.math.math.uab.edu/~rudi/index.html}}

\address{Department of Mathematics,
University of New Mexico, Albuquerque, NM 87131, USA}
\email{\mailto{maxim@math.unm.edu}}
\urladdr{\url{http://www.math.unm.edu/~maxim/}}

\date{\today}
\subjclass[2010]{Primary: 34B20, 35P05. Secondary: 34B24, 47A10.}
\keywords{Weyl--Titchmarsh theory, spectral theory, operator-valued ODEs.}

\begin{abstract}
Given a complex, separable Hilbert space $\cH$, we consider differential expressions
of the type $\tau =  - (d^2/dx^2) + V(x)$, with $x \in (a,\infty)$ or $x \in \bbR$. Here $V$
denotes a bounded operator-valued potential $V(\cdot) \in \cB(\cH)$ such that $V(\cdot)$ is
weakly measurable and the operator norm $\|V(\cdot)\|_{\cB(\cH)}$ is locally integrable.

We consider self-adjoint half-line $L^2$-realizations $H_{\alpha}$ in $L^2((a,\infty); dx; \cH)$
associated with $\tau$, assuming $a$ to be a regular endpoint necessitating a
boundary condition of the type $\sin(\alpha)u'(a) + \cos(\alpha)u(a)=0$,
indexed by the self-adjoint operator $\alpha = \alpha^* \in \cB(\cH)$. In addition, we study
self-adjoint full-line $L^2$-realizations $H$ of $\tau$ in $L^2(\bbR; dx; \cH)$. In either case
we treat in detail basic spectral theory associated with $H_{\alpha}$ and $H$, including
Weyl--Titchmarsh theory, Green's function structure, eigenfunction expansions, diagonalization,
and a version of the spectral theorem.
\end{abstract}

\maketitle


\section{Introduction} \lb{s1}

The principal topic of this paper centers around basic spectral theory, including
Weyl--Titchmarsh theory, Green's function structure, eigenfunction expansions, diagonalization,
and a version of the spectral theorem
 for self-adjoint Schr\"odinger operators with bounded operator-valued
potentials on a half-line as well as on the full real line. More precisely, given a complex,
separable Hilbert space $\cH$, we consider differential expressions $\tau$ of the type
\begin{equation}
\tau =  - (d^2/dx^2) + V(x),    \lb{1.1}
\end{equation}
with $x \in (a,\infty)$ or $x \in \bbR$, and $V$ a bounded
operator-valued potential $V(\cdot) \in \cB(\cH)$ such that $V(\cdot)$ is weakly measurable
and the operator norm $\|V(\cdot)\|_{\cB(\cH)}$ is locally integrable. The self-adjoint operators
in question are then half-line $L^2$-realizations of $\tau$ in $L^2((a,\infty); dx; \cH)$,
with $a$ assumed to be a regular endpoint for $\tau$, and hence with appropriate boundary
conditions at $a$ (cf.\ \eqref{1.2}) on one hand, and full-line $L^2$-realizations of $\tau$ in
$L^2(\bbR; dx; \cH)$ on the other.

The case of Schr\"odinger operators with operator-valued potentials under various
continuity or smoothness hypotheses on $V(\cdot)$, and under various self-adjoint
boundary conditions on bounded and unbounded open intervals, received considerable attention
in the past. In the special case where $\dim(\cH)<\infty$, that is, in the case of Schr\"odinger
operators with matrix-valued potentials, the literature is so voluminous that we cannot
possibly describe individual references and hence we primarily refer to the monographs
\cite{AM63}, \cite{RK05}, and the references cited therein. We note that the
finite-dimensional case, $\dim(\cH) < \infty$, as discussed in \cite{BL00}, is of
considerable interest as it represents an important ingredient in some proofs of
Lieb--Thirring inequalities (cf.\ \cite{LW00}). In addition, the constant coefficient case, where
$\tau$ is of the special form $\tau = - (d^2/dx^2) + A$, has received overwhelming attention.
But since this is not the focus of this paper we just refer to \cite{GG73}, \cite[Chs.\ 3, 4]{GG91},
\cite{MN11a}, and the literature cited therein.

In the particular case of Schr\"odinger-type operators corresponding to the differential expression
$\tau = - (d^2/dx^2) + A + V(x)$ on a bounded interval $(a,b) \subset \bbR$ with either
$A=0$ or $A$ a self-adjoint operator satisfying $A\geq c I_{\cH}$ for some $c>0$,
unique solvability of boundary value problems, the asymptotic behavior of
eigenvalues, and trace formulas
in connection with various self-adjoint realizations of $\tau = - (d^2/dx^2) + A + V(x)$
on a bounded interval $(a,b)$ are discussed, for instance, in \cite{ABK91}, \cite{ABK93},
\cite{AK92}, \cite{As08},
\cite{Go68}, \cite{GG69}, \cite{GM76}, \cite{Gu08}, \cite{Mo07}, \cite{Mo10} (for the
case of spectral parameter dependent separated boundary conditions, see also
\cite{Al06}, \cite{Al10}, \cite{BA10}).

For earlier results on various aspects of boundary value problems, spectral theory,
and scattering theory in the half-line case $(a,b) =(0,\infty)$, the situation closely
related to the principal topic of this paper, we refer, for instance,
to \cite{Al06a}, \cite{AM10}, \cite{De08}, \cite{Go68}--\cite{Go71}, \cite{GM76},
\cite{KL67}, \cite{Mo07}, \cite{Mo10}, \cite{Ro60}, \cite{Sa71}, \cite{Tr00}
(the case of the real line is discussed in \cite{VG70}). While our treatment of initial value
problems associated with $\tau$ given by \eqref{1.1} in \cite{GWZ13} was originally inspired
by the one in Sait{\= o} \cite{Sa71}, we do permit a more general local behavior of $V(\cdot)$.
With respect to spectral theory for self-adjoint half-line realizations of $\tau$ in
$L^2((a,\infty); dx; \cH)$ we refer to the fundamental paper by Gorbachuk \cite{Go68}. Our
treatment in this context again permits more general potentials $V(\cdot)$, we also provide
all details in connection with the derivation of \eqref{2.28} (cf.\ \eqref{2.32}--\eqref{2.38}), not
present in \cite{Go68}. Our $2 \times 2$ block operator approach in Section \ref{s5} in connection
with full-line realizations of $\tau$ in $L^2(\bbR; dx; \cH)$, with special emphasis on the structure
of the Green's function \eqref{2.63} and the Weyl--Titchmarsh matrix \eqref{2.71} appears to be
new, in particular, Theorems \ref{t2.9} and \ref{t2.10}, represent the principal new results in this
paper in this operator-valued setting.

Next we briefly turn to the content of each section: Section \ref{s2} recalls our basic results in
\cite{GWZ13} on the initial value problem associated with Schr\"odinger operators with bounded
operator-valued potentials. We use this section to introduce some of the basic notation employed
subsequently and note that our conditions on $V(\cdot)$ (cf.\ Hypothesis \ref{h2.7}) are the most
general to date with respect to the local behavior of the latter. Also Section \ref{s3} is of preparatory
nature. Again following our detailed treatment in \cite{GWZ13}, we introduce maximal and minimal
operators associated with $\tau = - (d^2/dx^2) + V(\cdot)$ on the interval $(a,b) \subset \bbR$
(eventually aiming at the case of a half-line $(a,\infty)$), and assuming that the left end point
$a$ is regular for $\tau$ and that $\tau$ is in the limit point case at the end point $b$ we discuss
a family of self-adjoint extensions $H_{\alpha}$ in $L^2((a,b); dx; \cH)$ corresponding to boundary
conditions of the type
\begin{equation}
\sin(\alpha)u'(a) + \cos(\alpha)u(a)=0,     \lb{1.2}
\end{equation}
indexed by the self-adjoint operator $\alpha = \alpha^* \in \cB(\cH)$ with $u \in \cH$ lying in the
domain of the maximal operator $H_{\max}$ corresponding to $\tau$.
In addition, we recall elements of Weyl--Titchmarsh theory, culminating in the introduction of the
operator-valued Weyl--Titchmarsh function $m_{\alpha}(\cdot) \in \cB(\cH)$ and the Green's
function $G_{\alpha}(z,\cdot,\cdot) \in \cB(\cH)$ of $H_{\alpha}$. Section \ref{s4} then presents our
first set of principal spectral results for the right half-line $(a,\infty)$, denoting the corresponding
self-adjoint right half-line operator in $L^2((a,\infty); dx; \cH)$ by $H_{+,\alpha}$: Theorem \ref{t2.5}
and especially,
Theorem \ref{t2.6}, then yield a diagonalization of $H_{+,\alpha}$ and contain its underlying
generalized eigenfunction expansion, including a description of support properties of the
$\cB(\cH)$-valued half-line
spectral measure $d\rho_{+,\alpha}$. In particular, they illustrate the spectral theorem for
$F(H_{+,\alpha})$, $F \in C(\bbR)$. Our final Section \ref{s5} then derives the analogous results
for full-line Schr\"odinger operators $H$ in $L^2(\bbR; dx; \cH)$, employing a $2 \times 2$ block
operator representation of the associated Weyl--Titchmarsh $M_{\alpha} (\cdot,x_0)$-matrix and its
$\cB\big(\cH^2\big)$-valued spectral measure $d\Omega_{\alpha}(\cdot,x_0)$, decomposing $\bbR$
into a left and right half-line with reference point $x_0 \in \bbR$, $(-\infty, x_0] \cup [x_0, \infty)$. The
latter decomposition is familiar from the scalar and matrix-valued ($\dim(\cH) < \infty$) special cases.
Our principal new results, Theorems \ref{t2.9} and \ref{t2.10} again yield a diagonalization of $H$
and the corresponding generalized eigenfunction expansion, illustrating the spectral theorem for
$F(H)$ and support properties of $d\Omega_{\alpha}(\cdot,x_0)$. Appendix \ref{sA} collects basic
facts on operator-valued Herglotz functions, some of which are of interest in their own right.
Appendix \ref{sD} recalls several equivalent definitions of direct integrals of Hilbert spaces and
constructions of the model Hilbert space $L^2(\bbR; d\Sigma;\cK)$ associated with a $\cB(\cK)$-valued measure $d\Sigma)$ described in \cite{GKMT01} and \cite{GWZ13a} and also describes a new connection with a construction due to Sait{\= o} \cite{Sa71}. The topics in both appendices are frequently used throughout this manuscript and we hope they render this paper sufficiently self-contained.

We should also add that while this paper completes our project on Schr\"odinger operators with
bounded operator-valued potentials, it simultaneously represents the basis for the next step in this
program: This step aims at certain classes of unbounded operator-valued potentials $V$,
applicable to multi-dimensional Schr\"odinger operators in $L^2(\bbR^n; d^n x)$, $n \in \bbN$,
$n \geq 2$, generated by differential expressions of the type $\Delta + V(\cdot)$. It was precisely the connection between multi-dimensional Schr\"odinger
operators and one-dimensional Schr\"odinger operators with unbounded operator-valued potentials
which originally motivated our interest in this circle of ideas. This connection was already employed by
Kato \cite{Ka59} in 1959; for more recent applications of this connection between one-dimensional
Schr\"odinger operators with unbounded operator-valued potentials and multi-dimensional
Schr\"odinger operators we refer, for instance, to \cite{ACH99}, \cite{Cr01},
\cite{Ja70}, \cite{LNS05}, \cite{MN11a}, \cite{Mi76}, \cite{Mi83}, \cite{Mi83a},
\cite{Sa08}, \cite{SS07}, \cite{Sa71a}--\cite{Sa79}, and the references cited therein.

Finally, we comment on the notation used in this paper: Throughout, $\cH$
denotes a separable, complex Hilbert space with inner product and norm
denoted by $(\cdot,\cdot)_{\cH}$ (linear in the second argument) and
$\|\cdot \|_{\cH}$, respectively. The identity operator in $\cH$ is written as
$I_{\cH}$. We denote by
$\cB(\cH)$ (resp., $\cB_{\infty}(\cH)$) the Banach space of linear bounded (resp., compact)
operators in $\cH$. The domain, range, kernel (null space), resolvent set, and spectrum
of a linear operator will be denoted by $\dom(\cdot)$,
$\ran(\cdot)$, $\ker(\cdot)$, $\rho(\cdot)$, and $\sigma(\cdot)$, respectively. The closure
of a closable operator $S$ in $\cH$ is denoted by $\ol S$.

By $\mathfrak{B}(\bbR)$ we denote the collection of Borel subsets of $\bbR$.

\section{The Initial Value Problem Associated With Schr\"odinger Operators
with Operator-Valued Potentials Revisited} \label{s2}

In this section we recall the basic results about initial value problems for second-order differential equations of the form $-y''+Qy=f$ on an arbitrary open interval
$(a,b) \subseteq \bbR$ with a bounded operator-valued coefficient $Q$, that is, when $Q(x)$ is a bounded operator on a separable, complex Hilbert space $\cH$ for a.e.\ $x\in(a,b)$. In fact, we are interested in two types of situations: In the first one $f(x)$ is an element of the Hilbert space $\cH$ for a.e.\ $x\in (a,b)$, and the solution sought is to take values in $\cH$. In the second situation, $f(x)$ is a bounded operator on $\cH$ for a.e.\ $x\in(a,b)$, as is the proposed solution $y$.

All results recalled in this section were proved in detail in \cite{GWZ13}.

We start with some necessary preliminaries:
Let $(a,b) \subseteq \bbR$ be a finite or infinite interval and $\cX$ a Banach space.
Unless explicitly stated otherwise (such as in the context of operator-valued measures in
Herglotz representations, cf.\ Appendix \ref{sA}), integration of $\cX$-valued functions on $(a,b)$ will
always be understood in the sense of Bochner (cf., e.g., \cite[p.\ 6--21]{ABHN01},
\cite[p.\ 44--50]{DU77}, \cite[p.\ 71--86]{HP85}, \cite[Ch.\ III]{Mi78}, \cite[Sect.\ V.5]{Yo80} for
details). In particular, if $p\ge 1$, the symbol $L^p((a,b);dx;\cX)$ denotes the set of equivalence classes of strongly measurable $\cX$-valued functions which differ at most on sets of Lebesgue measure zero, such that $\|f(\cdot)\|_{\cX}^p \in L^1((a,b);dx)$. The
corresponding norm in $L^p((a,b);dx;\cX)$ is given by
\begin{equation}
\|f\|_{L^p((a,b);dx;\cX)} = \bigg(\int_{(a,b)} dx\, \|f(x)\|_{\cX}^p \bigg)^{1/p}
\end{equation}
and $L^p((a,b);dx;\cX)$ is a Banach space.

If $\cH$ is a separable Hilbert space, then so is $L^2((a,b);dx;\cH)$ (see, e.g.,
\cite[Subsects.\ 4.3.1, 4.3.2]{BW83}, \cite[Sect.\ 7.1]{BS87}).

One recalls that by a result of Pettis \cite{Pe38}, if $\cX$ is separable, weak
measurability of $\cX$-valued functions implies their strong measurability.

If $g \in L^1((a,b);dx;\cX)$, $f(x)= \int_{x_0}^x dx' g(x')$, $x_0, x \in (a,b)$, then $f$ is
strongly differentiable a.e.\ on $(a,b)$ and
\begin{equation}
f'(x) = g(x) \, \text{ for a.e.\ $x \in (a,b)$}.
\end{equation}
In addition,
\begin{equation}
\lim_{t\downarrow 0} \f{1}{t} \int_x^{x+t} dx' \|g(x') - g(x)\|_{\cX} = 0 \,
\text{ for a.e.\ $x \in (a,b)$,}
\end{equation}
in particular,
\begin{equation}
\slim_{t\downarrow 0}\f{1}{t} \int_x^{x+t} dx' g(x') = g(x)
\, \text{ for a.e.\ $x \in (a,b)$.}
\end{equation}

Sobolev spaces $W^{n,p}((a,b); dx; \cX)$ for $n\in\bbN$ and $p\geq 1$ are defined as follows: $W^{1,p}((a,b);dx;\cX)$ is the set of all
$f\in L^p((a,b);dx;\cX)$ such that there exists a $g\in L^p((a,b);dx;\cX)$ and an
$x_0\in(a,b)$ such that
\begin{equation}
f(x)=f(x_0)+\int_{x_0}^x dx' \, g(x') \, \text{ for a.e.\ $x \in (a,b)$.}
\end{equation}
In this case $g$ is the strong derivative of $f$, $g=f'$. Similarly,
$W^{n,p}((a,b);dx;\cX)$ is the set of all $f\in L^p((a,b);dx;\cX)$ so that the first $n$ strong
derivatives of $f$ are in $L^p((a,b);dx;\cX)$. For simplicity of notation one also introduces
$W^{0,p}((a,b);dx;\cX)=L^p((a,b);dx;\cX)$. Finally, $W^{n,p}_{\rm loc}((a,b);dx;\cX)$ is
the set of $\cX$-valued functions defined on $(a,b)$ for which the restrictions to any
compact interval $[\alpha,\beta]\subset(a,b)$ are in $W^{n,p}((\alpha,\beta);dx;\cX)$.
In particular, this applies to the case $n=0$ and thus defines $L^p_{\rm loc}((a,b);dx;\cX)$.
If $a$ is finite we may allow $[\alpha,\beta]$ to be a subset of $[a,b)$ and denote the
resulting space by $W^{n,p}_{\rm loc}([a,b);dx;\cX)$ (and again this applies to the case
$n=0$).

Following a frequent practice (cf., e.g., the discussion in \cite[Sect.\ III.1.2]{Am95}), we
will call elements of $W^{1,1} ([c,d];dx;\cX)$, $[c,d] \subset (a,b)$ (resp.,
$W^{1,1}_{\rm loc}((a,b);dx;\cX)$), strongly absolutely continuous $\cX$-valued functions
on $[c,d]$ (resp., strongly locally absolutely continuous $\cX$-valued functions
on $(a,b)$), but caution the reader that unless $\cX$ posseses the Radon--Nikodym
(RN) property, this notion differs from the classical definition
of $\cX$-valued absolutely continuous functions (we refer the interested reader
to \cite[Sect.\ VII.6]{DU77} for an extensive list of conditions equivalent to $\cX$ having the
RN property). Here we just mention that reflexivity of $\cX$ implies the RN property.

In the special case where $\cX = \bbC$, we omit $\cX$ and just write
$L^p_{(\loc)}((a,b);dx)$, as usual.

{\bf A Remark on notational convention:} To avoid possible confusion later on between
two standard
notions of strongly continuous operator-valued functions $F(x)$,
$x \in (a,b)$, that is, strong continuity of $F(\cdot) h$ in $\cH$ for all $h \in\cH$ (i.e.,
pointwise continuity of $F(\cdot)$), versus strong continuity of $F(\cdot)$ in the norm
of $\cB(\cH)$ (i.e., uniform continuity of $F(\cdot)$), we will always mean pointwise continuity of $F(\cdot)$ in $\cH$. The same pointwise conventions will apply to the notions of strongly differentiable and strongly measurable operator-valued functions throughout this manuscript.
In particular, and unless explicitly stated otherwise, for operator-valued functions $Y$, the symbol $Y'$ will be understood in the strong sense; similarly,  $y'$ will denote the strong derivative for vector-valued functions $y$.

We start by recalling the following elementary, yet useful lemma:

\begin{lemma} \label{l2.1}
Let $(a,b)\subseteq\bbR$. Suppose $Q:(a,b)\to\cB(\cH)$ is a weakly
measurable operator-valued function with $\|Q(\cdot)\|_{\cB(\cH)}\in L^1_\loc((a,b);dx)$ and $g:(a,b)\to\cH$ is $($weakly$)$ measurable. Then $Qg$ is $($strongly$)$ measurable. Moreover, if $g$ is strongly continuous, then there exists a set $E\subset(a,b)$ with zero Lebesgue measure, depending only on $Q$, such that for every $x_0\in(a,b)\bs E$,
\begin{equation}
\lim_{t\downarrow0}\f{1}{t}\int_{x_0}^{x_0+t} dx \, \|Q(x)g(x) - Q(x_0)g(x_0)\|_\cH = 0,
\lb{2.6A}
\end{equation}
in particular,
\begin{equation}
\slim_{t\downarrow0} \f{1}{t}\int_{x_0}^{x_0+t} dx \, Q(x)g(x) = Q(x_0)g(x_0).
\lb{2.7A}
\end{equation}
In addition, the set of Lebesgue points of $Q(\cdot)g(\cdot)$ can be
chosen independently of $g$.
\end{lemma}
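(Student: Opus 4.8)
The plan is to prove the two assertions separately. Strong measurability of $Qg$ follows from Pettis's theorem combined with approximation of $g$ by simple functions; the Lebesgue-point statement \eqref{2.6A}--\eqref{2.7A} is obtained by reducing to the vector-valued Lebesgue differentiation property recalled above, applied not to $Qg$ itself but to a fixed countable family of $\cH$-valued functions manufactured from $Q$ alone. The only delicate point is the $g$-independence of the exceptional set, which is precisely why one takes this detour rather than applying vector-valued differentiation to $Qg$ directly.

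For the first claim, since $\cH$ is separable, weak measurability of $g$ implies strong measurability (Pettis), so choose simple $\cH$-valued functions $g_n\to g$ pointwise a.e. Writing each $g_n=\sum_k\chi_{E_k}h_k$ as a finite sum, the product $Qg_n=\sum_k\chi_{E_k}Q(\cdot)h_k$ is strongly measurable because each $Q(\cdot)h_k$ is weakly, hence (again by Pettis) strongly, measurable. Since $\|Q(\cdot)\|_{\cB(\cH)}\in L^1_\loc((a,b);dx)$ is finite a.e.,
\[
\|Q(x)g_n(x)-Q(x)g(x)\|_\cH\le\|Q(x)\|_{\cB(\cH)}\,\|g_n(x)-g(x)\|_\cH\to 0 \text{ for a.e. } x,
\]
and a pointwise a.e.\ limit of strongly measurable functions is strongly measurable; hence $Qg$ is strongly measurable.

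For the second claim, fix once and for all a countable dense set $\{h_j\}_{j\in\bbN}\subset\cH$. Each $Q(\cdot)h_j$ lies in $L^1_\loc((a,b);dx;\cH)$ because $\|Q(x)h_j\|_\cH\le\|Q(x)\|_{\cB(\cH)}\|h_j\|_\cH$, so the vector-valued Lebesgue differentiation property recalled above yields a null set $E_j$ off of which $x_0$ is a (forward) Lebesgue point of $Q(\cdot)h_j$. Likewise, since $\|Q(\cdot)\|_{\cB(\cH)}\in L^1_\loc$, there is a null set $E_0$ such that for $x_0\in(a,b)\bs E_0$ one has $t^{-1}\int_{x_0}^{x_0+t}dx\,\|Q(x)\|_{\cB(\cH)}\to\|Q(x_0)\|_{\cB(\cH)}$ as $t\downarrow0$. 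Put $E:=E_0\cup\bigcup_{j\in\bbN}E_j$: a null set depending only on $Q$.

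Now take $x_0\in(a,b)\bs E$ and a strongly continuous $g$ (which is locally bounded, so $Qg\in L^1_\loc$ and the left side of \eqref{2.6A} is meaningful). Given $\ve>0$, choose $\delta>0$ with $\|g(x)-g(x_0)\|_\cH<\ve$ for $|x-x_0|<\delta$ and $h_j$ with $\|h_j-g(x_0)\|_\cH<\ve$. Inserting and removing $Q(x)g(x_0)$, $Q(x)h_j$, $Q(x_0)h_j$ and using $\|Q(x)v\|_\cH\le\|Q(x)\|_{\cB(\cH)}\|v\|_\cH$ gives, for $x\in(x_0,x_0+t)$ with $0<t<\delta$,
\[
\|Q(x)g(x)-Q(x_0)g(x_0)\|_\cH\le 2\ve\|Q(x)\|_{\cB(\cH)}+\|Q(x)h_j-Q(x_0)h_j\|_\cH+\ve\|Q(x_0)\|_{\cB(\cH)}.
\]
Averaging over $(x_0,x_0+t)$ and taking $\limsup_{t\downarrow0}$, the first term tends to $2\ve\|Q(x_0)\|_{\cB(\cH)}$ (as $x_0\notin E_0$), the second to $0$ (as $x_0\notin E_j$), and the third is unchanged, so the $\limsup$ is at most $3\ve\|Q(x_0)\|_{\cB(\cH)}$; letting $\ve\downarrow0$ and using $\|Q(x_0)\|_{\cB(\cH)}<\infty$ proves \eqref{2.6A}. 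Then \eqref{2.7A} is immediate since the $\cH$-norm of the average is dominated by the average of the $\cH$-norms. The final assertion needs no further argument: $E$ was built from $Q$ only, hence serves simultaneously for every strongly continuous $g$. As noted, the one point requiring care is exactly this uniformity in $g$; the estimates themselves are the routine $\ve/3$-type bound displayed above.
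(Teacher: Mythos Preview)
The paper does not actually contain a proof of Lemma~\ref{l2.1}: it is stated as a recalled result, with the proof deferred to \cite{GWZ13} (see the sentence at the start of Section~\ref{s2}). So there is no in-paper argument to compare against. That said, your proof is correct and is the standard one: Pettis's theorem plus simple-function approximation for the measurability claim, and for the Lebesgue-point assertion the vector-valued differentiation theorem (recalled immediately before the lemma) applied to the countable family $Q(\cdot)h_j$, $\{h_j\}$ dense in $\cH$, together with the scalar function $\|Q(\cdot)\|_{\cB(\cH)}$, to manufacture a null set $E$ depending only on $Q$; the $\varepsilon/3$ estimate then closes the argument uniformly in $g$. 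This is almost certainly the argument in \cite{GWZ13} as well.
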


In connection with \eqref{2.7A} we also refer to \cite[Theorem\ II.2.9]{DU77},
\cite[Subsect.\ III.3.8]{HP85}, \cite[Theorem\ V.5.2]{Yo80}.

\begin{definition} \lb{d2.2}
Let $(a,b)\subseteq\bbR$ be a finite or infinite interval and
$Q:(a,b)\to\cB(\cH)$ a weakly measurable operator-valued function with
$\|Q(\cdot)\|_{\cB(\cH)}\in L^1_\loc((a,b);dx)$, and suppose that
$f\in L^1_{\loc}((a,b);dx;\cH)$. Then the $\cH$-valued function
$y: (a,b)\to \cH$ is called a (strong) solution of
\begin{equation}
- y'' + Q y = f   \lb{2.15A}
\end{equation}
if $y \in W^{2,1}_\loc((a,b);dx;\cH)$ and \eqref{2.15A} holds a.e.\ on $(a,b)$.
\end{definition}

We recall our notational convention that vector-valued solutions of \eqref{2.15A} will always be viewed as strong solutions.

One verifies that $Q:(a,b)\to\cB(\cH)$ satisfies the conditions in
Definition \ref{d2.2} if and only if $Q^*$ does (a fact that will play a role later on, cf.\
the paragraph following \eqref{2.33A}).

\begin{theorem} \lb{t2.3}
Let $(a,b)\subseteq\bbR$ be a finite or infinite interval and
$V:(a,b)\to\cB(\cH)$ a weakly measurable operator-valued function with
$\|V(\cdot)\|_{\cB(\cH)}\in L^1_\loc((a,b);dx)$. Suppose that
$x_0\in(a,b)$, $z\in\bbC$, $h_0,h_1\in\cH$, and $f\in
L^1_{\loc}((a,b);dx;\cH)$. Then there is a unique $\cH$-valued
solution $y(z,\cdot,x_0)\in W^{2,1}_\loc((a,b);dx;\cH)$ of the initial value problem
\begin{equation}
\begin{cases}
- y'' + (V - z) y = f \, \text{ on } \, (a,b)\bs E,  \\
\, y(x_0) = h_0, \; y'(x_0) = h_1,
\end{cases}     \lb{2.1}
\end{equation}
where the exceptional set $E$ is of Lebesgue measure zero and independent
of $z$.

Moreover, the following properties hold:
\begin{enumerate}[$(i)$]
\item For fixed $x_0,x\in(a,b)$ and $z\in\bbC$, $y(z,x,x_0)$ depends jointly continuously on $h_0,h_1\in\cH$, and $f\in L^1_{\loc}((a,b);dx;\cH)$ in the sense that
\begin{align}
\begin{split}
& \big\|y\big(z,x,x_0;h_0,h_1,f\big) - y\big(z,x,x_0;\wti h_0,\wti h_1,\wti f\big)\big\|_{\cH}    \\
& \quad \leq C(z,V)
\big[\big\|h_0 - \wti h_0\big\|_{\cH} + \big\|h_1 - \wti h_1\big\|_{\cH}
+ \big\|f - \wti f\big\|_{L^1([x_0,x];dx;\cH)}\big],    \lb{2.1A}
\end{split}
\end{align}
where $C(z,V)>0$ is a constant, and the dependence of
$y$ on the initial data $h_0, h_1$ and the inhomogeneity $f$ is displayed
in \eqref{2.1A}.
\item For fixed $x_0\in(a,b)$ and $z\in\bbC$, $y(z,x,x_0)$ is strongly continuously differentiable with respect to $x$ on $(a,b)$.
\item For fixed $x_0\in(a,b)$ and $z\in\bbC$, $y'(z,x,x_0)$ is strongly differentiable with respect to $x$ on $(a,b)\bs E$.
\item For fixed $x_0,x \in (a,b)$, $y(z,x,x_0)$ and $y'(z,x,x_0)$
are entire with respect to $z$.
\end{enumerate}
\end{theorem}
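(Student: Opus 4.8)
The plan is to recast the initial value problem \eqref{2.1} as an equivalent Volterra-type integral equation and to solve the latter by successive approximation in the Banach space of strongly continuous $\cH$-valued functions. The first step is to establish that a function $y$ solves the initial value problem \eqref{2.1} (in the strong $W^{2,1}_\loc$ sense of Definition \ref{d2.2}) if and only if $y$ is strongly continuous on $(a,b)$ and
\[
y(x) = h_0 + (x-x_0)h_1 + \int_{x_0}^x dx'\,(x-x')\big[(V(x')-z)y(x') - f(x')\big], \quad x \in (a,b).
\]
Passing from the differential equation to this identity amounts to integrating twice (legitimate by the fundamental-theorem-of-calculus property of the Bochner integral recalled above) and collapsing the resulting iterated integral by Fubini; the converse is obtained by differentiating, noting that $y'(x) = h_1 + \int_{x_0}^x dx'\,[(V(x')-z)y(x') - f(x')]$ is then strongly locally absolutely continuous, so that $y \in W^{2,1}_\loc$ and $-y'' + (V-z)y = f$ holds at every Lebesgue point of $(V(\cdot)-z)y(\cdot) - f(\cdot)$. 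Here Lemma \ref{l2.1} is indispensable: for strongly continuous $y$ it guarantees that $V(\cdot)y(\cdot)$ is strongly measurable, and together with $\|V(\cdot)\|_{\cB(\cH)} \in L^1_\loc$ and the local boundedness of $\|y(\cdot)\|_{\cH}$ this makes the integrand lie in $L^1_\loc((a,b);dx;\cH)$, so the right-hand side is well defined and strongly continuously differentiable in $x$.

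To solve the integral equation I would fix a compact subinterval $[\alpha,\beta] \subset (a,b)$ with $x_0 \in [\alpha,\beta]$, regard the right-hand side as an affine map $T$ on the Banach space $C([\alpha,\beta];\cH)$ of strongly continuous $\cH$-valued functions with the sup norm, and prove by induction on $n$ that
\[
\sup_{x \in [\alpha,\beta]} \big\|(T^n y)(x) - (T^n \wti y)(x)\big\|_{\cH} \leq \frac{1}{n!}\bigg((\beta-\alpha)\int_\alpha^\beta dx'\,\big(\|V(x')\|_{\cB(\cH)} + |z|\big)\bigg)^{\!n} \sup_{x \in [\alpha,\beta]}\big\|y(x) - \wti y(x)\big\|_{\cH},
\]
so that some iterate $T^n$ is a strict contraction; hence $T$ has a unique fixed point on $[\alpha,\beta]$, and exhausting $(a,b)$ by such intervals (using uniqueness on overlaps) yields the unique strongly continuous solution of the integral equation, hence of \eqref{2.1}, on all of $(a,b)$. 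Uniqueness in \eqref{2.1} then follows at once, since any two $W^{2,1}_\loc$ solutions with the same initial data both satisfy the integral equation. For the exceptional set $E$, the converse direction of the first step gives $-y'' + (V-z)y = f$ at every Lebesgue point of $g := V(\cdot)y(\cdot) - zy(\cdot) - f(\cdot)$; since $y(\cdot)$ is strongly continuous, every point of $(a,b)$ is a Lebesgue point of $zy(\cdot)$; the Lebesgue points of $f$ do not involve $z$; and, crucially, by the last assertion of Lemma \ref{l2.1} the set of Lebesgue points of $V(\cdot)y(\cdot)$ may be chosen to depend only on $V$. Taking $E$ to be the union of the (null) complements of these three sets makes it independent of $z$, as required.

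It remains to verify $(i)$--$(iv)$. Property $(ii)$ is immediate from $y'(x) = h_1 + \int_{x_0}^x dx'\,g(x')$ with $g \in L^1_\loc((a,b);dx;\cH)$, which is strongly continuous; property $(iii)$ is the exceptional-set statement just established; and $(i)$ follows by subtracting the integral equations for two sets of data and applying Gronwall's inequality on the interval between $x_0$ and $x$, which yields \eqref{2.1A} with an explicit constant $C(z,V)$ growing like $\exp\big(|x-x_0|\int_{[x_0,x]}(\|V(x')\|_{\cB(\cH)} + |z|)\,dx'\big)$. For $(iv)$, the successive approximants are, for fixed $x$, polynomials in $z$ with $\cH$-valued coefficients (iterated integrals built from $V$), and the contraction estimate above---whose right-hand side is locally bounded in $z \in \bbC$---shows that they converge to $y(z,x,x_0)$ uniformly on compact subsets of $(a,b) \times \bbC$; since a locally uniform limit of entire $\cH$-valued functions is entire, $y(z,x,x_0)$ is entire in $z$, and the same argument applied to $y'(x) = h_1 + \int_{x_0}^x dx'\,g(x')$ gives the analogous conclusion for $y'(z,x,x_0)$.

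I expect the main obstacle to lie not in the fixed-point iteration, which is routine, but in the measurability bookkeeping and the control of the exceptional set: one must invoke Lemma \ref{l2.1} (and, through it, Pettis's theorem) at the right places to pass from weak to strong measurability of $V(\cdot)y(\cdot)$ and to secure its local integrability, verify that the integral equation is genuinely equivalent to the a.e.\ differential equation in the $W^{2,1}_\loc$ sense, and---the most delicate point---arrange that the null set $E$ outside of which the equation holds pointwise can be chosen independently of $z$ (and of the data), which rests precisely on the $g$-independence of the Lebesgue set asserted in the final clause of Lemma \ref{l2.1}.
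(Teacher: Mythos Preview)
The paper does not actually prove Theorem \ref{t2.3}; Section \ref{s2} opens with ``All results recalled in this section were proved in detail in \cite{GWZ13},'' and the theorem is simply stated without argument. So there is no in-paper proof to compare against.

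That said, your proposal is correct and is precisely the standard route one expects (and almost certainly the one carried out in \cite{GWZ13}): reduce \eqref{2.1} to the Volterra integral equation via two Bochner integrations, solve by successive approximation/contraction on compact subintervals, read off $(ii)$ from the formula for $y'$, get $(i)$ by Gronwall, and obtain $(iv)$ from local-uniform convergence of the polynomial-in-$z$ iterates. Your handling of the exceptional set is also the right one: the $z$-independence of $E$ hinges exactly on the last clause of Lemma \ref{l2.1} (the Lebesgue set of $V(\cdot)g(\cdot)$ is independent of the continuous $g$), together with the trivial observation that the Lebesgue sets of $f$ and of $z y(z,\cdot)$ contribute no $z$-dependence. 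Nothing is missing.
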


For classical references on initial value problems we refer, for instance, to
\cite[Chs.\ III, VII]{DK74} and \cite[Ch.\ 10]{Di60}, but we emphasize again that our approach minimizes the smoothness hypotheses on $V$ and $f$.

\begin{definition} \lb{d2.4}
Let $(a,b)\subseteq\bbR$ be a finite or infinite interval and assume that
$F,\,Q:(a,b)\to\cB(\cH)$ are two weakly measurable operator-valued functions such
that $\|F(\cdot)\|_{\cB(\cH)},\,\|Q(\cdot)\|_{\cB(\cH)}\in L^1_\loc((a,b);dx)$. Then the
$\cB(\cH)$-valued function $Y:(a,b)\to\cB(\cH)$ is called a solution of
\begin{equation}
- Y'' + Q Y = F   \lb{2.26A}
\end{equation}
if $Y(\cdot)h\in W^{2,1}_\loc((a,b);dx;\cH)$ for every $h\in\cH$ and $-Y''h+QYh=Fh$ holds
a.e.\ on $(a,b)$.
\end{definition}

\begin{corollary} \lb{c2.5}
Let $(a,b)\subseteq\bbR$ be a finite or infinite interval, $x_0\in(a,b)$, $z\in\bbC$, $Y_0,\,Y_1\in\cB(\cH)$, and suppose $F,\,V:(a,b)\to\cB(\cH)$ are two weakly measurable operator-valued functions with
$\|V(\cdot)\|_{\cB(\cH)},\,\|F(\cdot)\|_{\cB(\cH)}\in L^1_\loc((a,b);dx)$. Then there is a
unique $\cB(\cH)$-valued solution $Y(z,\cdot,x_0):(a,b)\to\cB(\cH)$ of the initial value
problem
\begin{equation}
\begin{cases}
- Y'' + (V - z)Y = F \, \text{ on } \, (a,b)\bs E,  \\
\, Y(x_0) = Y_0, \; Y'(x_0) = Y_1.
\end{cases} \lb{2.3}
\end{equation}
where the exceptional set $E$ is of Lebesgue measure zero and independent
of $z$. Moreover, the following properties hold:
\begin{enumerate}[$(i)$]
\item For fixed $x_0 \in (a,b)$ and $z \in \bbC$, $Y(z,x,x_0)$ is continuously
differentiable with respect to $x$ on $(a,b)$ in the $\cB(\cH)$-norm.
\item For fixed $x_0 \in (a,b)$ and $z \in \bbC$, $Y'(z,x,x_0)$ is strongly differentiable with respect to $x$ on $(a,b)\bs E$.
\item For fixed $x_0, x \in (a,b)$, $Y(z,x,x_0)$ and $Y'(z,x,x_0)$ are entire in $z$ in
the $\cB(\cH)$-norm.
\end{enumerate}
\end{corollary}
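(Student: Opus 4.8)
The plan is to reduce the $\cB(\cH)$-valued initial value problem \eqref{2.3} to its $\cH$-valued counterpart, Theorem \ref{t2.3}, applied columnwise. Fix $z \in \bbC$ and $x_0 \in (a,b)$. For each $h \in \cH$ we have $Y_0 h, Y_1 h \in \cH$ and $F(\cdot)h \in L^1_\loc((a,b);dx;\cH)$ (strong measurability of $F(\cdot)h$ by Lemma \ref{l2.1}, integrability from $\|F(\cdot)h\|_\cH \le \|F(\cdot)\|_{\cB(\cH)}\|h\|_\cH$), so Theorem \ref{t2.3} supplies a unique strong solution of $-y'' + (V-z)y = F(\cdot)h$ with data $y(x_0) = Y_0 h$, $y'(x_0) = Y_1 h$, which we denote $y(z,\cdot,x_0;Y_0 h, Y_1 h, F(\cdot)h)$, and we set $Y(z,x,x_0)h := y(z,x,x_0;Y_0 h, Y_1 h, F(\cdot)h)$. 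Linearity of $h \mapsto Y(z,x,x_0)h$ follows from the uniqueness assertion of Theorem \ref{t2.3}, and boundedness from \eqref{2.1A} with $\wti h_0 = \wti h_1 = 0$ and $\wti f = 0$, which for fixed $x$ yields $\|Y(z,x,x_0)h\|_\cH \le C(z,V)[\|Y_0\|_{\cB(\cH)} + \|Y_1\|_{\cB(\cH)} + \|F\|_{L^1([x_0,x];dx;\cH)}]\|h\|_\cH$. Hence $Y(z,x,x_0) \in \cB(\cH)$; by construction $Y(z,\cdot,x_0)$ solves \eqref{2.3} in the sense of Definition \ref{d2.4}, and uniqueness is immediate because for any other such solution $\wti Y$ the function $\wti Y(\cdot)h$ solves the same $\cH$-valued IVP for every $h$.

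It then remains to lift the regularity in $x$ and $z$ from the pointwise (strong) level of Theorem \ref{t2.3} to the $\cB(\cH)$-norm level asserted in (i) and (iii); part (ii) is already contained in the construction, since by our conventions strong differentiability of an operator-valued function means pointwise differentiability in $\cH$, and for each $h$ the equation $(Y(z,\cdot,x_0)h)'' = (V-z)Y(z,\cdot,x_0)h - F(\cdot)h$ holds on $(a,b)\bs E$ with $E$ of Lebesgue measure zero independent of $z$ and, by the final assertion of Lemma \ref{l2.1}, independent of $h$. The key preliminary step I would carry out for (i) and (iii) is to establish local boundedness of $\|Y(z,\cdot,x_0)\|_{\cB(\cH)}$ on $(a,b)$: applying the twice-integrated form of \eqref{2.3} to $h$ and invoking Gronwall's inequality bounds $\|Y(z,x,x_0)h\|_\cH$ by $\psi(x)\|h\|_\cH$ with $\psi$ continuous and independent of $h$, so that $\|Y(z,\cdot,x_0)\|_{\cB(\cH)} \le \psi$. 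Then part (i) will follow from the once-integrated identity $Y'(z,x,x_0) = Y_1 + \int_{x_0}^x [(V(x')-z)Y(z,x',x_0) - F(x')]\,dx'$ (read columnwise): the estimate $\|Y'(z,x_2,x_0) - Y'(z,x_1,x_0)\|_{\cB(\cH)} \le \big|\int_{x_1}^{x_2} [(\|V(x')\|_{\cB(\cH)} + |z|)\psi(x') + \|F(x')\|_{\cB(\cH)}]\,dx'\big|$ gives $\cB(\cH)$-norm continuity of $Y'(z,\cdot,x_0)$, and combining it with $Y(z,x+\delta,x_0)h - Y(z,x,x_0)h = \int_x^{x+\delta} Y'(z,x',x_0)h\,dx'$ (valid since $Y(z,\cdot,x_0)h \in W^{2,1}_\loc((a,b);dx;\cH)$) shows $Y(z,\cdot,x_0)$ is continuously differentiable in the $\cB(\cH)$-norm with derivative $Y'(z,\cdot,x_0)$. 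For part (iii), Theorem \ref{t2.3}(iv) makes $z \mapsto Y(z,x,x_0)h$ and $z \mapsto Y'(z,x,x_0)h$ entire $\cH$-valued functions for every $h$, so $Y(\cdot,x,x_0)$ and $Y'(\cdot,x,x_0)$ are weakly (hence strongly) analytic $\cB(\cH)$-valued functions on $\bbC$; together with local boundedness in $z$ (the analogue of $\psi$, uniform for $z$ in compact subsets of $\bbC$) and the standard fact that a locally bounded, weakly analytic $\cB(\cH)$-valued function is norm analytic, this gives the asserted entire dependence in the $\cB(\cH)$-norm.

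The main obstacle I anticipate is precisely this pointwise-to-norm passage, and within it the uniform local boundedness of $\|Y(z,\cdot,x_0)\|_{\cB(\cH)}$: because $V(\cdot)$ is only weakly measurable, the product $(V-z)Y(z,\cdot,x_0)$ is a priori merely strongly (not $\cB(\cH)$-norm) measurable, so all the ``operator-valued integrals'' above must first be interpreted columnwise as $\cH$-valued Bochner integrals and only then estimated uniformly over $\|h\|_\cH = 1$; once these uniform bounds are secured, the norm-continuity and analyticity upgrades are routine.
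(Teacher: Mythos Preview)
Your proposal is correct and follows exactly the natural strategy: apply Theorem~\ref{t2.3} columnwise (i.e., to $Y_0 h$, $Y_1 h$, $F(\cdot)h$ for each $h\in\cH$), use uniqueness for linearity and \eqref{2.1A} for boundedness, and then upgrade the strong regularity statements to $\cB(\cH)$-norm statements via uniform-in-$h$ Gronwall bounds and the equivalence of weak/strong/norm analyticity for operator-valued functions. The paper itself does not prove Corollary~\ref{c2.5}; it merely recalls it from \cite{GWZ13} (see the first paragraph of Section~\ref{s2}), where precisely this columnwise reduction is carried out, so your approach coincides with the intended one.

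One minor remark: in part~(iii) you do not need to separately establish local boundedness in $z$ before invoking ``weakly analytic $\Rightarrow$ norm analytic'' --- for $\cB(\cH)$-valued functions the implication holds unconditionally (local boundedness is a consequence of weak analyticity via the uniform boundedness principle and Cauchy estimates), so that step, while not wrong, is redundant.
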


Various versions of Theorem \ref{t2.3} and Corollary \ref{c2.5} exist in the literature
under varying assumptions on $V$ and $f, F$. For instance, the case where $V(\cdot)$ is continuous in the $\cB(\cH)$-norm and $F=0$ is discussed in \cite[Theorem\ 6.1.1]{Hi69}.
The case, where $\|V(\cdot)\|_{\cB(\cH} \in L^1_{\loc}([a,c];dx)$ for all $c>a$ and
$F=0$ is discussed in detail in \cite{Sa71} (it appears that a measurability assumption
of $V(\cdot)$ in the $\cB(\cH)$-norm is missing in the basic set of
hypotheses of \cite{Sa71}). Our extension to $V(\cdot)$ weakly measurable and
$\|V(\cdot)\|_{\cB(\cH} \in L^1_{\loc}([a,b);dx)$ in \cite{GWZ13} may well be the most general one
published to date.

\begin{definition} \lb{d2.6}
Pick $c \in (a,b)$.
The endpoint $a$ (resp., $b$) of the interval $(a,b)$ is called {\it regular} for the operator-valued differential expression $- (d^2/dx^2) + Q(\cdot)$ if it is finite and if $Q$ is weakly measurable and $\|Q(\cdot)\|_{\cB(\cH)}\in  L^1_{\loc}([a,c];dx)$ (resp.,
$\|Q(\cdot)\|_{\cB(\cH)}\in  L^1_{\loc}([c,b];dx)$) for some $c\in (a,b)$. Similarly,
$- (d^2/dx^2) + Q(\cdot)$ is called {\it regular at $a$} (resp., {\it regular at $b$}) if
$a$ (resp., $b$) is a regular endpoint for $- (d^2/dx^2) + Q(\cdot)$.
\end{definition}

We note that if $a$ (resp., $b$) is regular for $- (d^2/dx^2) + Q(x)$, one may allow for
$x_0$ to be equal to $a$ (resp., $b$) in the existence and uniqueness Theorem \ref{t2.3}.

If $f_1, f_2$ are strongly continuously differentiable $\cH$-valued functions, we define the Wronskian of $f_1$ and $f_2$ by
\begin{equation}
W_{*}(f_1,f_2)(x)=(f_1(x),f'_2(x))_\cH - (f'_1(x),f_2(x))_\cH,    \lb{2.31A}
\quad x \in (a,b).
\end{equation}
If $f_2$ is an $\cH$-valued solution of $-y''+Qy=0$ and $f_1$ is an $\cH$-valued
solution of $-y''+Q^*y=0$, their Wronskian $W_{*}(f_1,f_2)(x)$ is $x$-independent, that is,
\begin{equation}
\f{d}{dx} W_{*}(f_1,f_2)(x) = 0, \, \text{ for a.e.\ $x \in (a,b)$.}   \lb{2.32A}
\end{equation}
Equation \eqref{2.52A} will show that the right-hand side of \eqref{2.32A} actually
vanishes for all $x \in (a,b)$.

We decided to use the symbol $W_{*}(\cdot,\cdot)$ in \eqref{2.31A} to indicate its
conjugate linear behavior with respect to its first entry.

Similarly, if $F_1,F_2$ are strongly continuously differentiable $\cB(\cH)$-valued
functions, their Wronskian is defined by
\begin{equation}
W(F_1,F_2)(x) = F_1(x) F'_2(x) - F'_1(x) F_2(x), \quad x \in (a,b).    \lb{2.33A}
\end{equation}
Again, if $F_2$ is a $\cB(\cH)$-valued solution of  $-Y''+QY = 0$ and $F_1$ is a
$\cB(\cH)$-valued solution of $-Y'' + Y Q = 0$ (the latter is equivalent to
$- {(Y^{*})}^{\prime\prime} + Q^* Y^* = 0$ and hence can be handled in complete analogy
via Theorem \ref{t2.3} and Corollary \ref{c2.5}, replacing $Q$ by $Q^*$) their Wronskian will be $x$-independent,
\begin{equation}
\f{d}{dx} W(F_1,F_2)(x) = 0 \, \text{ for a.e.\ $x \in (a,b)$.}
\end{equation}

Our main interest is in the case where $V(\cdot)=V(\cdot)^* \in \cB(\cH)$ is self-adjoint,
that is, in the differential equation $\tau \eta=z \eta$, where $\eta$ represents an $\cH$-valued, respectively, $\cB(\cH)$-valued solution (in the sense of Definitions \ref{d2.2},
resp., \ref{d2.4}), and where $\tau$ abbreviates the operator-valued differential expression
\begin{equation} \label{2.4}
\tau = - (d^2/dx^2) + V(\cdot).
\end{equation}
To this end, we now introduce the following basic assumption:

\begin{hypothesis} \lb{h2.7}
Let $(a,b)\subseteq\bbR$, suppose that $V:(a,b)\to\cB(\cH)$ is a weakly
measurable operator-valued function with $\|V(\cdot)\|_{\cB(\cH)}\in L^1_\loc((a,b);dx)$,
and assume that $V(x) = V(x)^*$ for a.e.\ $x \in (a,b)$.
\end{hypothesis}

Moreover, for the remainder of this section we assume that $\alpha \in \cB(\cH)$ is a
self-adjoint operator,
\begin{equation}
\alpha = \alpha^* \in \cB(\cH).      \lb{2.4A}
\end{equation}

Assuming Hypothesis \ref{h2.7} and \eqref{2.4A}, we introduce the standard fundamental systems of operator-valued solutions of $\tau y=zy$ as follows: Since $\alpha$ is a bounded self-adjoint operator, one may define the self-adjoint operators $A=\sin(\alpha)$ and $B=\cos(\alpha)$ via the spectral theorem. One then concludes that
$\sin^2(\alpha) + \cos^2(\alpha) = I_\cH$ and $[\sin\alpha,\cos\alpha]=0$ (here
$[\cdot,\cdot]$ represents the commutator symbol). The spectral theorem implies also
that the spectra of $\sin(\alpha)$ and $\cos(\alpha)$ are contained in $[-1,1]$ and that the spectra of $\sin^2(\alpha)$ and $\cos^2(\alpha)$ are contained in $[0,1]$. Given such an operator $\alpha$ and a point $x_0\in(a,b)$ or a regular endpoint for $\tau$, we now
define $\theta_\alpha(z,\cdot, x_0,), \phi_\alpha(z,\cdot,x_0)$ as those $\cB(\cH)$-valued
solutions of $\tau Y=z Y$ (in the sense of Definition \ref{d2.4}) which satisfy the initial
conditions
\begin{equation}
\theta_\alpha(z,x_0,x_0)=\phi'_\alpha(z,x_0,x_0)=\cos(\alpha), \quad
-\phi_\alpha(z,x_0,x_0)=\theta'_\alpha(z,x_0,x_0)=\sin(\alpha).    \lb{2.5}
\end{equation}

By Corollary 2.5\,$(iii)$, for any fixed $x, x_0\in(a,b)$, the functions
$\theta_{\alpha}(z,x,x_0)$ and $\phi_{\alpha}(z,x,x_0)$ as well as their strong $x$-derivatives are entire with respect to $z$ in the $\cB(\cH)$-norm. The same is true for the functions $z\mapsto\theta_{\alpha}(\ol{z},x,x_0)^*$ and
$z\mapsto\phi_{\alpha}(\ol{z},x,x_0)^*$.

Since $\theta_{\alpha}(\bar z,\cdot,x_0)^*$ and $\phi_{\alpha}(\bar z,\cdot,x_0)^*$ satisfy
the adjoint equation $-Y''+YV=z Y$ and the same initial conditions as $\theta_\alpha$ and
$\phi_\alpha$, respectively, one obtains the following identities from the constancy of Wronskians:
\begin{align}
\theta_{\alpha}' (\bar z,x,x_0)^*\theta_{\alpha} (z,x,x_0)-
\theta_{\alpha} (\bar z,x,x_0)^*\theta_{\alpha}' (z,x,x_0)&=0, \label{2.7f}
\\
\phi_{\alpha}' (\bar z,x,x_0)^*\phi_{\alpha} (z,x,x_0)-
\phi_{\alpha} (\bar z,x,x_0)^*\phi_{\alpha}' (z,x,x_0)&=0, \label{2.7g}
\\
\phi_{\alpha}' (\bar z,x,x_0)^*\theta_{\alpha} (z,x,x_0)-
\phi_{\alpha} (\bar z,x,x_0)^*\theta_{\alpha}' (z,x,x_0)&=I_{\cH}, \label{2.7h}
\\
\theta_{\alpha} (\bar z,x,x_0)^*\phi_{\alpha}' (z,x,x_0)
- \theta_{\alpha}' (\bar z,x,x_0)^*\phi_{\alpha} (z,x,x_0)&=I_{\cH}. \label{2.7i}
\end{align}
Equations \eqref{2.7f}--\eqref{2.7i} are equivalent to the statement that the block operator
\begin{equation}
\Theta_{\alpha}(z,x,x_0)=\begin{pmatrix}\theta_{\alpha}(z,x,x_0)&\phi_{\alpha}(z,x,x_0)\\ \theta_{\alpha}'(z,x,x_0)&\phi_{\alpha}'(z,x,x_0) \end{pmatrix}    \label{2.7ia}
\end{equation}
has a left inverse given by
\begin{equation}
\begin{pmatrix}\phi_{\alpha}'(\bar z,x,x_0)^*&-\phi_{\alpha}(\bar z,x,x_0)^*\\
-\theta_{\alpha}'(\bar z,x,x_0)^*&\theta_{\alpha}(\bar z,x,x_0)^*
\end{pmatrix}.    \label{2.7ib}
\end{equation}
Thus the operator $\Theta_{\alpha}(z,x,x_0)$ is injective. It is also surjective as will be shown next: Let $(f_1,g_1)^\top$ be an arbitrary element of $\cH\oplus\cH$ and let $y$ be an $\cH$-valued solution of the initial value problem
\begin{equation}
\begin{cases} \tau y=zy, \\ y(x_1)=f_1, \; y'(x_1)=g_1, \end{cases}
\end{equation}
for some given $x_1\in(a,b)$. One notes that due to the initial conditions specified in
\eqref{2.5}, $\Theta_{\alpha}(z,x_0,x_0)$ is bijective. We now assume that $(f_0,g_0)^\top$ are given by
\begin{equation}
\Theta_{\alpha}(z,x_0,x_0)\begin{pmatrix}f_0\\ g_0\end{pmatrix}=\begin{pmatrix}y(x_0)\\ y'(x_0)\end{pmatrix}.   \label{2.7ic}
\end{equation}
The existence and uniqueness Theorem \ref{t2.3} then yields that
\begin{equation}
\Theta_{\alpha}(z,x_1,x_0)\begin{pmatrix}f_0\\ g_0\end{pmatrix}=\begin{pmatrix}f_1\\ g_1\end{pmatrix}.    \label{2.7id}
\end{equation}
This establishes surjectivity of $\Theta_{\alpha}(z,x_1,x_0)$ which therefore has a right inverse too, also given by \eqref{2.7ib}. This fact then implies the following identities:
\begin{align}
\phi_{\alpha} (z,x,x_0)\theta_{\alpha} (\bar z,x,x_0)^*-
\theta_{\alpha} (z,x,x_0)\phi_{\alpha} (\bar z,x,x_0)^*&=0, \label{2.7j}
\\
\phi_{\alpha}' (z,x,x_0)\theta_{\alpha}' (\bar z,x,x_0)^*-
\theta_{\alpha}' (z,x,x_0)\phi_{\alpha}' (\bar z,x,x_0)^*&=0, \label{2.7k}
\\
\phi_{\alpha}' (z,x,x_0)\theta_{\alpha} (\bar z,x,x_0)^*-
\theta_{\alpha}' (z,x,x_0)\phi_{\alpha} (\bar z,x,x_0)^*&=I_{\cH}, \label{2.7l}
\\
\theta_{\alpha} (z,x,x_0)\phi_{\alpha}' (\bar z,x,x_0)^*-
\phi_{\alpha} (z,x,x_0)\theta_{\alpha}' (\bar z,x,x_0)^*&=I_{\cH}. \label{2.7m}
\end{align}

Having established the invertibility of $\Theta_\alpha(z,x_1,x_0)$ we can now show that
for any $x_1\in(a,b)$, any $\cH$-valued solution of $\tau y=zy$ may be expressed in terms of
$\theta_{\alpha}(z,\cdot,x_1)$ and $\phi_{\alpha}(z,\cdot,x_1)$, that is,
\begin{equation}
y(x)=\theta_\alpha(z,x,x_1)f+\phi_\alpha(z,x,x_1)g
\end{equation}
for appropriate vectors $f,g\in\cH$ or $\cB(\cH)$.

We also recall several versions of Green's formula (also called Lagrange's identity).

\begin{lemma} \label{l2.9}
Let $(a,b)\subseteq\bbR$ be a finite or infinite interval and $[x_1,x_2]\subset(a,b)$. \\
$(i)$ Assume that $f,g\in W^{2,1}_{\rm loc}((a,b);dx;\cH)$. Then
\begin{equation}
\int_{x_1}^{x_2} dx \, [((\tau f)(x),g(x))_\cH-(f(x),(\tau g)(x))_\cH]
= W_{*}(f,g)(x_2)-W_{*}(f,g)(x_1).     \lb{2.52A}
\end{equation}
$(ii)$ Assume that $F:(a,b)\to\cB(\cH)$ is absolutely continuous, that $F'$ is again differentiable, and that $F''$ is weakly measurable. Also assume that $\|F''\|_\cH \in L^1_\loc((a,b);dx)$ and $g\in W^{2,1}_{\rm loc}((a,b);dx;\cH)$. Then
\begin{equation}
\int_{x_1}^{x_2} dx \, [(\tau F^*)^*(x)g(x)-F(x)(\tau g)(x)]
= (Fg'-F'g)(x_2)-(Fg'-F'g)(x_1).     \lb{2.52B}
\end{equation}
$(iii)$ Assume that $F,\,G:(a,b)\to\cB(\cH)$ are absolutely continuous operator-valued functions such that $F',\,G'$ are again differentiable and that $F''$, $G''$ are weakly measurable. In addition, suppose that $\|F''\|_\cH,\, \|G''\|_\cH \in L^1_\loc((a,b);dx)$. Then
\begin{equation}
\int_{x_1}^{x_2} dx \, [(\tau F^*)(x)^*G(x) - F(x) (\tau G)(x)] = (FG'-F'G)(x_2)-(FG'-F'G)(x_1).
\lb{2.53A}
\end{equation}
\end{lemma}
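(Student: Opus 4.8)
The plan is to deduce all three identities from $(i)$ alone, since in each case the integrand on the left is, once the self-adjointness of $V$ is used, exactly the $x$-derivative of the Wronskian-type expression on the right; then $(ii)$ follows by applying $(i)$ to $f=F^*h$, and $(iii)$ by applying $(ii)$ to $g=Gk$. \emph{Part $(i)$.} For $f,g\in W^{2,1}_{\rm loc}((a,b);dx;\cH)$ the functions $f,g,f',g'$ are all strongly locally absolutely continuous, so $f,g$ are strongly $C^1$ and $W_*(f,g)$ is well defined and continuous on $(a,b)$. I would first record the product rule: if $\phi,\psi:(a,b)\to\cH$ are strongly locally absolutely continuous with $\phi',\psi'\in L^1_{\rm loc}((a,b);dx;\cH)$, then $x\mapsto(\phi(x),\psi(x))_\cH$ is scalar locally absolutely continuous with $\f{d}{dx}(\phi(x),\psi(x))_\cH=(\phi'(x),\psi(x))_\cH+(\phi(x),\psi'(x))_\cH$ a.e.; absolute continuity follows from the bound $|(\phi(x),\psi(x))_\cH-(\phi(y),\psi(y))_\cH|\le M\big(\int_y^x\|\phi'\|_\cH+\int_y^x\|\psi'\|_\cH\big)$ on compact subintervals (with $M$ a local bound for $\|\phi\|_\cH,\|\psi\|_\cH$) together with absolute continuity of the Lebesgue integral, and the derivative formula from the difference quotient at points of strong differentiability. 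Using this with $(\phi,\psi)=(f,g')$ and with $(\phi,\psi)=(f',g)$ and subtracting gives $\f{d}{dx}W_*(f,g)(x)=(f(x),g''(x))_\cH-(f''(x),g(x))_\cH$ a.e. On the other hand, since $V(x)=V(x)^*$ a.e., $((\tau f)(x),g(x))_\cH-(f(x),(\tau g)(x))_\cH=-(f''(x),g(x))_\cH+(f(x),g''(x))_\cH$ a.e., the potential contributions $(V(x)f(x),g(x))_\cH-(f(x),V(x)g(x))_\cH$ cancelling. Hence the integrand in \eqref{2.52A} equals $\f{d}{dx}W_*(f,g)$ a.e., and \eqref{2.52A} is the fundamental theorem of calculus for the scalar locally absolutely continuous function $W_*(f,g)$ on $[x_1,x_2]$.

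\emph{Part $(ii)$.} The one point requiring care is that the adjoint commutes with the strong derivative: pairing $F(x)k=F(x_0)k+\int_{x_0}^x F'(t)k\,dt$ with $h\in\cH$ and using continuity of $(\cdot,\cdot)_\cH$ to pass the functional through the Bochner integral shows that $F^*(\cdot)h$ is strongly locally absolutely continuous with $(F^*h)'=(F')^*h$, and one more iteration gives $(F^*h)''=(F'')^*h$. Thus the stated hypotheses on $F$ yield $F^*h\in W^{2,1}_{\rm loc}((a,b);dx;\cH)$ for every $h\in\cH$, with $\tau(F^*h)=(\tau F^*)h$ and $(\tau F^*)^*=-F''+FV$ (again using $V=V^*$). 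Applying $(i)$ with $f$ replaced by $F^*h$, and using $((\tau F^*)(x)h,g(x))_\cH=(h,(\tau F^*)^*(x)g(x))_\cH$, $(F(x)^*h,(\tau g)(x))_\cH=(h,F(x)(\tau g)(x))_\cH$, and $W_*(F^*h,g)(x)=(h,(F(x)g'(x)-F'(x)g(x)))_\cH$, turns \eqref{2.52A} into $\big(h,\int_{x_1}^{x_2}dx\,[(\tau F^*)^*(x)g(x)-F(x)(\tau g)(x)]\big)_\cH=\big(h,(Fg'-F'g)(x_2)-(Fg'-F'g)(x_1)\big)_\cH$ (pulling $(h,\cdot)_\cH$ inside the $\cH$-valued Bochner integral). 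Since $h\in\cH$ is arbitrary, \eqref{2.52B} follows.

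\emph{Part $(iii)$.} For fixed $k\in\cH$ one has $Gk\in W^{2,1}_{\rm loc}((a,b);dx;\cH)$ with $(Gk)'=G'k$, $(Gk)''=G''k$, and $\tau(Gk)=(\tau G)k$, so $Gk$ is an admissible choice of $g$ in $(ii)$. Applying $(ii)$ with $g=Gk$ and observing that $(\tau F^*)^*(x)(Gk)(x)=[(\tau F^*)^*(x)G(x)]k$, $F(x)(\tau(Gk))(x)=[F(x)(\tau G)(x)]k$, and $(F(Gk)'-F'(Gk))(x)=(FG'-F'G)(x)k$, shows that both sides of \eqref{2.53A}, applied to $k$, coincide; letting $k$ range over $\cH$ gives \eqref{2.53A}. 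Here the operator-valued integral on the left of \eqref{2.53A} is read as the bounded operator whose value on each $k\in\cH$ is the $\cH$-valued Bochner integral just computed -- equivalently, after the cancellation it is the Bochner integral in $\cB(\cH)$ of $-F''G+FG''$, which is norm-locally integrable and strongly measurable, being the a.e.\ strong limit of difference quotients of the norm-continuous function $FG'-F'G$.

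I expect no deep obstacle: the conceptual content is just ``$V=V^*$ kills the potential terms and the integrand is the derivative of the Wronskian.'' The only care-points are the product-rule/fundamental-theorem-of-calculus bookkeeping in $(i)$; the commutation of the adjoint with the strong derivative, which is needed both to identify $(\tau F^*)^*$ and to place $F^*h$ and $Gk$ in $W^{2,1}_{\rm loc}$ (and which, together with the interplay with the measurability/integrability hypotheses on $F,G,F'',G''$, is the most bookkeeping-heavy step); and the brief discussion of the meaning and measurability of the $\cB(\cH)$-valued integral in $(iii)$, in view of the possible non-separability of $\cB(\cH)$.
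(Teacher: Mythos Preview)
The paper does not actually supply a proof of this lemma: at the start of Section~\ref{s2} it states that ``All results recalled in this section were proved in detail in \cite{GWZ13},'' and Lemma~\ref{l2.9} is simply stated without argument. So there is no in-paper proof to compare against.

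That said, your proposal is correct and is exactly the standard route one would expect the cited reference to take: establish $(i)$ by the product rule for $\cH$-valued absolutely continuous functions (so that the integrand is a.e.\ the derivative of $W_*(f,g)$, the $V$-terms cancelling by self-adjointness) and the fundamental theorem of calculus; then reduce $(ii)$ to $(i)$ by testing with $f=F^*h$, and $(iii)$ to $(ii)$ by testing with $g=Gk$. Your handling of the two technical points---that $(F^*)'=(F')^*$ strongly (via pairing and pulling bounded functionals through Bochner integrals), and the interpretation of the $\cB(\cH)$-valued integral in $(iii)$ as the operator determined pointwise on $\cH$---is appropriate and suffices under the stated hypotheses.
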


\section{Half-Line Weyl--Titchmarsh Theory for Schr\"odinger Operators
with Operator-Valued Potentials Revisited} \label{s3}

In this section we recall the basics of Weyl--Titchmarsh theory for self-adjoint Schr\"odinger
operators $H_{\alpha}$ in $L^2((a,b); dx; \cH)$ associated with the operator-valued differential
expression $\tau =-(d^2/dx^2)+V(\cdot)$, assuming regularity of the
left endpoint $a$ and the limit point case at the right endpoint $b$ (see
Definition \ref{d3.6}). We discuss the existence of Weyl--Titchmarsh solutions, introduce
the corresponding Weyl--Titchmarsh $m$-function, and determine the structure of the Green's
function of $H_{\alpha}$.

All results recalled in this section were proved in detail in \cite{GWZ13}.

As before, $\cH$ denotes a separable Hilbert space and $(a,b)$ denotes a finite or infinite interval. One recalls that $L^2((a,b);dx;\cH)$ is separable (since $\cH$ is)
and that
\begin{equation}
(f,g)_{L^2((a,b);dx;\cH)} =\int_a^b dx \, (f(x),g(x))_\cH, \quad f,g\in L^2((a,b);dx;\cH).
\end{equation}

Assuming Hypothesis \ref{h2.7} throughout this section, we are interested in
studying certain self-adjoint operators in $L^2((a,b);dx;\cH)$ associated with the
operator-valued differential expression $\tau =-(d^2/dx^2)+V(\cdot)$. These will be suitable restrictions of the {\it maximal} operator $\oT_{\max}$ in $L^2((a,b);dx;\cH)$ defined by
\begin{align}
& \oT_{\max} f = \tau f,   \no \\
& f\in \dom(\oT_{\max})=\big\{g\in L^2((a,b);dx;\cH) \,\big|\, g\in W^{2,1}_{\rm loc}((a,b);dx;\cH); \\
& \hspace*{6.6cm} \tau g\in L^2((a,b);dx;\cH)\big\}.     \no
\end{align}
We also introduce the operator $\dot \oT_{\min}$ in $L^2((a,b);dx;\cH)$ as the restriction of $\oT_{\max}$ to the domain
\begin{equation}
\dom(\dot \oT_{\min})=\{g\in\dom(\oT_{\max})\,|\,\supp (u) \, \text{is compact in} \, (a,b)\}.
\end{equation}
Finally, the {\it minimal} operator $\oT_{\min}$ in $L^2((a,b);dx;\cH)$ associated with $\tau$ is then defined as the closure of $\dot \oT_{\min}$,
\begin{equation}
\oT_{\min} = \ol{\dot \oT_{\min}}.
\end{equation}

Next, we intend to show that $\oT_{\max}$ is the adjoint of $\dot \oT_{\min}$ (and hence that of $\oT_{\min}$), implying, in particular, that $\oT_{\max}$ is closed. To this end, we first establish the following two preparatory lemmas for the case where $a$ and $b$ are both regular endpoints for $\tau$ in the sense of Definition \ref{d2.6}.

\begin{lemma} \label{l3.1}
In addition to Hypothesis \ref{h2.7} suppose that $a$ and $b$ are regular endpoints for $\tau$. Then
\begin{align} \label{3.5A}
\begin{split}
& \ker(\oT_{\max}-z I_{L^2((a,b);dx;\cH)})  \\
& \quad =\{[\theta_0(z,\cdot,a)f+\phi_0(z,\cdot,a)g]
\in L^2((a,b);dx;\cH) \,|\, f,g\in\cH\}
\end{split}
\end{align}
is a closed subspace of $L^2((a,b);dx;\cH)$.
\end{lemma}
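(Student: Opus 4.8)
The plan is to establish the set equality in \eqref{3.5A} first, and then derive closedness of the subspace from the representation on the right-hand side. For the inclusion "$\supseteq$": if $f,g\in\cH$ and the function $u = \theta_0(z,\cdot,a)f+\phi_0(z,\cdot,a)g$ happens to lie in $L^2((a,b);dx;\cH)$, then since $\theta_0(z,\cdot,a)$ and $\phi_0(z,\cdot,a)$ are $\cB(\cH)$-valued solutions of $\tau Y = zY$ in the sense of Definition \ref{d2.4}, applying them to the fixed vectors $f$, $g$ shows $u\in W^{2,1}_{\rm loc}((a,b);dx;\cH)$ with $\tau u = zu = z\,u\in L^2((a,b);dx;\cH)$; hence $u\in\dom(\oT_{\max})$ and $(\oT_{\max}-z)u=0$. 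For the inclusion "$\subseteq$": let $u\in\ker(\oT_{\max}-z)$, so $u\in W^{2,1}_{\rm loc}$ and $\tau u = zu$, i.e.\ $u$ is an $\cH$-valued solution of $\tau y = zy$ on $(a,b)$. Since $a$ is a regular endpoint, Theorem \ref{t2.3} permits $x_0 = a$, and the discussion following \eqref{2.7m} (invertibility of $\Theta_\alpha(z,x_1,x_0)$, here with $\alpha = 0$, so $\sin(0)=0$, $\cos(0)=I_\cH$) shows that every $\cH$-valued solution can be written as $u(x) = \theta_0(z,x,a)f + \phi_0(z,x,a)g$ for some $f,g\in\cH$; concretely, $f = u(a)$ and $g = u'(a)$, using the initial conditions \eqref{2.5}. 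Since $u\in L^2$ by assumption, this exhibits $u$ as an element of the right-hand side.

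For closedness, I would argue as follows. Consider the evaluation map $\cE\colon \ker(\oT_{\max}-z)\to\cH\oplus\cH$, $u\mapsto (u(a),u'(a))$; by the representation just established together with the uniqueness half of Theorem \ref{t2.3}, $\cE$ is a linear bijection onto its range, with inverse $(f,g)\mapsto \theta_0(z,\cdot,a)f+\phi_0(z,\cdot,a)g$. The point is that $\cE$ is bounded on $\ker(\oT_{\max}-z)$ equipped with the $L^2$-norm: this follows from a standard interior-estimate argument — for a solution $u$ of $\tau y=zy$ one controls $\|u(a)\|_\cH + \|u'(a)\|_\cH$ by $\|u\|_{L^2((a,c);dx;\cH)}$ for a fixed $c\in(a,b)$, using the integral (Volterra) form of the equation $u(x) = u(a) + (x-a)u'(a) + \int_a^x (x-x')(V(x')-z)u(x')\,dx'$, a Gronwall-type bound, and the fact that on the compact interval $[a,c]$ one can first bound $\sup_{[a,c]}\|u\|_\cH$ in terms of $\|u\|_{L^1((a,c);dx;\cH)}$, hence in terms of $\|u\|_{L^2((a,c);dx;\cH)}$, and then recover the initial data. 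Conversely, the inverse map $(f,g)\mapsto \theta_0(z,\cdot,a)f+\phi_0(z,\cdot,a)g$ is bounded from $\cH\oplus\cH$ into $L^2((a,b);dx;\cH)$ when restricted to the range of $\cE$ — but this boundedness on the range is automatic once we know $\cE$ is a bounded bijection, by the bounded inverse theorem, provided we first check that $\ker(\oT_{\max}-z)$ is itself complete in the $L^2$-norm. That completeness is easy: it is closed as the kernel of the closed operator $\oT_{\max}-z$ — and $\oT_{\max}$ is closed because (as the section states, and as is proved via the Green's formula Lemma \ref{l2.9}) it is the adjoint of $\dot\oT_{\min}$.

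Actually, the cleanest route avoids the open mapping theorem entirely: take a sequence $u_n\in\ker(\oT_{\max}-z)$ with $u_n\to u$ in $L^2((a,b);dx;\cH)$. Writing $u_n = \theta_0(z,\cdot,a)f_n + \phi_0(z,\cdot,a)g_n$ with $f_n = u_n(a)$, $g_n = u_n'(a)$, the interior estimate above gives $\|f_n - f_m\|_\cH + \|g_n - g_m\|_\cH \le C\|u_n - u_m\|_{L^2((a,c);dx;\cH)}\to 0$, so $f_n\to f$ and $g_n\to g$ in $\cH$; then by the continuous dependence estimate \eqref{2.1A} in Theorem \ref{t2.3} (or directly by boundedness of $\theta_0(z,x,a),\phi_0(z,x,a)$ in $x$ on compacta together with dominated convergence), $u_n\to \theta_0(z,\cdot,a)f + \phi_0(z,\cdot,a)g$ pointwise a.e.\ and in $L^2_{\rm loc}$, forcing $u = \theta_0(z,\cdot,a)f+\phi_0(z,\cdot,a)g\in L^2((a,b);dx;\cH)$, hence $u$ lies in the set \eqref{3.5A}. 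The main obstacle, and the only step requiring genuine care, is the interior estimate $\|u(a)\|_\cH + \|u'(a)\|_\cH \le C\|u\|_{L^2((a,c);dx;\cH)}$ uniform over solutions — establishing the correct direction of this inequality (recovering pointwise/initial data from an $L^2$ bound), which is where one uses regularity of $a$ and the Volterra representation rather than any self-adjointness.
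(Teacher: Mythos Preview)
Your argument is essentially correct, but there are two points worth flagging, and a note on comparison.

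First, a small simplification you are missing: since \emph{both} endpoints are assumed regular, the interval $(a,b)$ is bounded and $\|V(\cdot)\|_{\cB(\cH)}\in L^1((a,b);dx)$, so $\theta_0(z,\cdot,a)$ and $\phi_0(z,\cdot,a)$ (and their derivatives) extend continuously to $[a,b]$ in the $\cB(\cH)$-norm. Hence $\theta_0(z,\cdot,a)f+\phi_0(z,\cdot,a)g$ is automatically in $L^2((a,b);dx;\cH)$ for \emph{every} $f,g\in\cH$; your phrase ``happens to lie in $L^2$'' is unnecessary. This also means the map $T\colon\cH^2\to L^2$, $(f,g)\mapsto\theta_0 f+\phi_0 g$, is bounded and injective, and the closedness of \eqref{3.5A} is exactly the statement that $T$ is bounded below.

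Second, your first closedness argument is circular: you invoke that $\ker(\oT_{\max}-z)$ is closed because $\oT_{\max}$ is closed (Theorem \ref{t3.4}), but the text explicitly introduces Lemma \ref{l3.1} and Lemma \ref{l3.3} as \emph{preparatory} for Theorem \ref{t3.4}; one cannot appeal to the latter here. You then correctly pivot to the direct sequential argument, which is the right move.

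The interior estimate $\|u(a)\|_\cH+\|u'(a)\|_\cH\le C\|u\|_{L^2((a,c);dx;\cH)}$ for solutions is indeed the heart of the matter, and your sketch (Volterra form, Gronwall) is on the right track though somewhat compressed. One clean way to make it precise: on a short interval $[a,a+\delta]$ the Volterra remainder is small, so $u$ is uniformly close to the affine function $v(x)=u(a)+(x-a)u'(a)$; a direct computation shows $\int_0^\delta\|f+sg\|_\cH^2\,ds\ge c_\delta(\|f\|_\cH^2+\|g\|_\cH^2)$ for every $f,g\in\cH$ (the associated $2\times 2$ Gram matrix is positive definite), and for $\delta$ small the Volterra remainder does not spoil this lower bound.

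As for comparison with the paper: the proof of Lemma \ref{l3.1} is not given here but deferred to \cite{GWZ13}; the present paper merely records the statement. Your direct argument via the boundedness-below of $T$ is in the same spirit as standard treatments in the scalar/matrix case and is a perfectly acceptable route.
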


Of course, if $\cH$ is finite-dimensional (e.g., in the scalar case, $\dim(\cH)=1$), then
$\ker(\oT_{\max}-z I_{L^2((a,b);dx;\cH)})$ is finite-dimensional and hence automatically closed.

\begin{lemma} \label{l3.3}
In addition to Hypothesis \ref{h2.7} suppose that $a$ and $b$ are regular endpoints for $\tau$. Denote by $\oT_0$ the linear operator in $L^2((a,b);dx;\cH)$
defined by the restriction of
$\oT_{\max}$ to the space
\begin{equation}
\dom(\oT_0)=\{g\in\dom(\oT_{\max}) \,|\, g(a)=g(b)=g'(a)=g'(b)=0\}.
\lb{3.9a}
\end{equation}
Then
\begin{equation}
\ker(\oT_{\max})=[\ran(\oT_0)]^\perp,
\end{equation}
that is, the space of solutions $u$ of $\tau u=0$ coincides with the orthogonal complement of the collection of elements $\tau u_0$ satisfying $u_0\in \dom(\oT_0)$.
\end{lemma}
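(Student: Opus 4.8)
\textbf{Proof proposal for Lemma \ref{l3.3}.}

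The plan is to prove the two inclusions $\ker(\oT_{\max}) \subseteq [\ran(\oT_0)]^\perp$ and $[\ran(\oT_0)]^\perp \subseteq \ker(\oT_{\max})$ separately, using Green's formula (Lemma \ref{l2.9}$(i)$) as the main computational tool in both directions. For the first inclusion, let $u \in \ker(\oT_{\max})$, so $\tau u = 0$, and let $u_0 \in \dom(\oT_0)$. By \eqref{2.52A} applied on $[x_1,x_2]$ and letting $x_1 \downarrow a$, $x_2 \uparrow b$ (which is legitimate since $a,b$ are regular, so the boundary values $u_0(a),u_0(b),u_0'(a),u_0'(b)$ exist and vanish, and $u, u'$ are continuous up to the endpoints),
\begin{equation}
(u, \oT_0 u_0)_{L^2((a,b);dx;\cH)} = \int_a^b dx \, (u(x),(\tau u_0)(x))_\cH = \int_a^b dx \, ((\tau u)(x), u_0(x))_\cH + W_*(u,u_0)(b) - W_*(u,u_0)(a) = 0,
\end{equation}
since $\tau u = 0$ and $W_*(u,u_0)$ vanishes at both endpoints because all four boundary values of $u_0$ are zero. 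Here I use that $V=V^*$ a.e., so $\tau$ is formally self-adjoint and $\tau u = 0$ implies $(\tau u, u_0)_\cH = 0$ pointwise. This shows $u \perp \ran(\oT_0)$.

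For the reverse inclusion, suppose $w \in [\ran(\oT_0)]^\perp$, i.e. $(w, \oT_0 u_0)_{L^2} = 0$ for all $u_0 \in \dom(\oT_0)$. In particular this holds for all $u_0 \in \dom(\dot\oT_{\min}) \cap \dom(\oT_0)$; since $\dot\oT_{\min}$ has dense range issues aside, the cleaner route is to observe that $\oT_0 \subseteq \oT_{\max}$ and that $\dom(\oT_0)$ contains all $g \in \dom(\oT_{\max})$ with compact support in $(a,b)$ — i.e. $\dom(\dot\oT_{\min}) \subseteq \dom(\oT_0)$. Hence $w \perp \ran(\dot\oT_{\min})$, so $w \in \ker((\dot\oT_{\min})^*) = \ker(\oT_{\max})$, using the fact (to be invoked from the surrounding development, or proved directly via integration by parts against compactly supported test functions) that $(\dot\oT_{\min})^* = \oT_{\max}$; concretely, $(w,\tau u_0)_{L^2} = 0$ for all compactly supported $u_0 \in \dom(\oT_{\max})$ forces $w \in W^{2,1}_{\loc}$ with $\tau w = 0$ in the distributional, hence strong, sense. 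Therefore $w \in \ker(\oT_{\max})$, and combining with Lemma \ref{l3.1}, $\ker(\oT_{\max}) = \{\theta_0(0,\cdot,a)f + \phi_0(0,\cdot,a)g : f,g \in \cH\} \cap L^2$.

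The main obstacle I anticipate is the reverse inclusion: rather than leaning on the identity $(\dot\oT_{\min})^* = \oT_{\max}$ (which is being established contemporaneously in this section and would make the argument circular if used naively), one should argue directly. Given $w \in [\ran(\oT_0)]^\perp$, testing against $u_0$ supported in a small interval $[\alpha,\beta] \subset (a,b)$ with prescribed vanishing endpoint data, a standard duality/bootstrapping argument (first showing $w$ is weakly twice differentiable locally, then that $\tau w = 0$ a.e., then that $w \in W^{2,1}_{\loc}$ by Theorem \ref{t2.3}-type regularity) yields $w \in \ker(\oT_{\max})$. One subtlety is that $\oT_0$ is a genuine restriction of $\oT_{\max}$ with four-point boundary conditions, not the minimal operator, so $\ran(\oT_0)$ is potentially larger than $\ran(\dot\oT_{\min})$ — but that only helps, since a larger range means a smaller orthogonal complement, so the inclusion $[\ran(\oT_0)]^\perp \subseteq [\ran(\dot\oT_{\min})]^\perp = \ker(\oT_{\max})$ is immediate once one knows $\dom(\dot\oT_{\min}) \subseteq \dom(\oT_0)$, which is clear from the definitions. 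The forward inclusion, as shown above, is the routine Green's-formula computation and presents no difficulty beyond justifying the passage to the endpoints $a$ and $b$, which is where regularity of both endpoints is essential.
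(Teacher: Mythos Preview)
Your forward inclusion via Green's formula is correct and is exactly what the paper (following \cite{GWZ13}) does.

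The reverse inclusion, however, has a genuine circularity problem that you partly diagnose but do not escape. The paper is explicit that Lemmas \ref{l3.1} and \ref{l3.3} are \emph{preparatory} for Theorem \ref{t3.4}; hence invoking $(\dot\oT_{\min})^* = \oT_{\max}$ is off-limits here. Your proposed fallback---testing against compactly supported $u_0$ and then running a ``duality/bootstrapping'' argument to conclude $w \in W^{2,1}_{\loc}$ with $\tau w = 0$---is not an alternative route: that regularity step is precisely the hard inclusion $(\dot\oT_{\min})^* \subseteq \oT_{\max}$ of Theorem \ref{t3.4}. You have relabeled the circular dependency rather than removed it.

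The non-circular argument proves the contrapositive inclusion $[\ker(\oT_{\max})]^\perp \subseteq \ran(\oT_0)$ constructively. Given $f \in L^2((a,b);dx;\cH)$ with $f \perp \ker(\oT_{\max})$, build by variation of parameters (using the fundamental system $\theta_0(0,\cdot,a),\phi_0(0,\cdot,a)$ and regularity of both endpoints) an element $y \in \dom(\oT_{\max})$ with $\tau y = f$ and $y(a)=y'(a)=0$. The orthogonality hypothesis, applied to the solutions $\theta_0(0,\cdot,a)h$ and $\phi_0(0,\cdot,a)h$ for arbitrary $h\in\cH$ (these lie in $\ker(\oT_{\max})$ by Lemma \ref{l3.1}), yields $\int_a^b \theta_0(0,x,a)^* f(x)\,dx = 0 = \int_a^b \phi_0(0,x,a)^* f(x)\,dx$, which via the variation-of-parameters formula forces $y(b)=y'(b)=0$. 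Hence $y\in\dom(\oT_0)$ and $f=\oT_0 y \in \ran(\oT_0)$. Combined with the forward inclusion this gives $\ran(\oT_0) = [\ker(\oT_{\max})]^\perp$, and since $\ker(\oT_{\max})$ is closed by Lemma \ref{l3.1}, taking orthogonal complements yields the claim. This is the route the paper (via \cite{GWZ13}) follows, and it is what makes Lemma \ref{l3.3} available as input to Theorem \ref{t3.4} rather than a consequence of it.
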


\begin{theorem} \label {t3.4}
Assume Hypothesis \ref{h2.7}. Then the operator $\dot \oT_{\min}$ is densely defined. Moreover, $\oT_{\max}$ is the adjoint of $\dot \oT_{\min}$,
\begin{equation}
\oT_{\max} = (\dot \oT_{\min})^*.   \lb{3.12a}
\end{equation}
In particular, $\oT_{\max}$ is closed. In addition, $\dot \oT_{\min}$ is symmetric and
$\oT_{\max}^*$ is the closure of $\dot \oT_{\min}$, that is,
\begin{equation}
\oT_{\max}^* = \ol{\dot \oT_{\min}} = \oT_{\min}.    \lb{3.13a}
\end{equation}
\end{theorem}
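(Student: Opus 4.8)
The plan is to follow the classical Weyl--von Neumann strategy, adapted to the operator-valued setting, but with a reduction device that localizes the argument to bounded subintervals where both endpoints are regular. First I would establish that $\dot\oT_{\min}$ is densely defined: given $f \in L^2((a,b);dx;\cH)$ orthogonal to $\dom(\dot\oT_{\min})$, one tests against all compactly supported smooth $\cH$-valued functions (built, e.g., by tensoring scalar bump functions with a countable dense subset of $\cH$, which lies in $\dom(\dot\oT_{\min})$) to conclude $f=0$; here separability of $\cH$ and the locally integrable norm bound on $V$ are what is needed to ensure these test functions do land in the maximal domain. Symmetry of $\dot\oT_{\min}$ is then immediate from Green's formula \eqref{2.52A} in Lemma \ref{l2.9}$(i)$: for $f,g$ in $\dom(\dot\oT_{\min})$ the Wronskian boundary terms at $x_1,x_2$ vanish once $[x_1,x_2]$ engulfs the (compact) supports, so $(\dot\oT_{\min} f,g) = (f,\dot\oT_{\min}g)$.

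The heart of the matter is the identity $\oT_{\max} = (\dot\oT_{\min})^*$. The inclusion $\oT_{\max} \subseteq (\dot\oT_{\min})^*$ is again a direct consequence of Green's formula with vanishing boundary terms. For the reverse inclusion, suppose $g \in \dom((\dot\oT_{\min})^*)$, so there is $h \in L^2$ with $(g, \dot\oT_{\min}\varphi) = (h,\varphi)$ for all $\varphi \in \dom(\dot\oT_{\min})$. The goal is to show $g \in W^{2,1}_{\rm loc}((a,b);dx;\cH)$ with $\tau g = h$ (the fact that $\tau g = h \in L^2$ then places $g$ in $\dom(\oT_{\max})$). I would fix an arbitrary relatively compact open subinterval $(a',b') \Subset (a,b)$; on its closure both endpoints are regular for $\tau$, so Theorem \ref{t2.3} provides a genuine solution $g_0$ of $-g_0'' + V g_0 = h$ on $(a',b')$ with, say, zero initial data at an interior point. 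Then $g - g_0$, tested against $\dot\oT_{\min}\varphi$ for $\varphi$ supported in $(a',b')$, is weakly annihilated by $\tau$; using the explicit fundamental system $\theta_0,\phi_0$ from \eqref{2.5} and integrating by parts twice, one identifies $g-g_0$ on $(a',b')$ with a linear combination $\theta_0(0,\cdot,x_0)f + \phi_0(0,\cdot,x_0)g$ — this is exactly the classical du Bois-Reymond argument, but carried out with $\cH$-valued test functions and using the constancy-of-Wronskian identities \eqref{2.7f}--\eqref{2.7m} to invert the relevant $2\times 2$ block operator $\Theta_0$. Consequently $g$ agrees on $(a',b')$ with an element of $W^{2,1}_{\rm loc}$ satisfying $\tau g = h$ there; since $(a',b')$ was arbitrary, $g \in W^{2,1}_{\rm loc}((a,b);dx;\cH)$ and $\tau g = h$ on all of $(a,b)$.

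Once $\oT_{\max} = (\dot\oT_{\min})^*$ is in hand, $\oT_{\max}$ is closed because adjoints are always closed. The remaining assertion \eqref{3.13a} then follows formally: since $\dot\oT_{\min}$ is densely defined and symmetric it is closable with closure $\oT_{\min} = \ol{\dot\oT_{\min}} = (\dot\oT_{\min})^{**}$, and taking adjoints of $\oT_{\max} = (\dot\oT_{\min})^*$ gives $\oT_{\max}^* = (\dot\oT_{\min})^{**} = \oT_{\min}$. The main obstacle I anticipate is the operator-valued du Bois-Reymond step: in the scalar theory one differentiates an antiderivative and reads off regularity, but here one must be careful that the weak identity $(g, \dot\oT_{\min}\varphi) = (h,\varphi)$ — which a priori only pairs with $\cH$-\emph{vector}-valued $\varphi$ — forces the pointwise ODE, and that the correction solution $g_0$ from Theorem \ref{t2.3} exists only because the subinterval was chosen with regular endpoints. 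The localization to $(a',b') \Subset (a,b)$, together with Lemmas \ref{l3.1} and \ref{l3.3}, is precisely the mechanism that supplies the needed structure of $\ker(\oT_{\max} - zI)$ and lets the scalar-type argument go through verbatim at the level of the Hilbert space $\cH$.
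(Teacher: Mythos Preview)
The paper does not actually present a proof of Theorem~\ref{t3.4}: at the start of Section~\ref{s3} it states that all results in that section ``were proved in detail in \cite{GWZ13},'' and after the theorem it notes that the result goes back to Rofe-Beketov \cite{Ro69}, \cite{Ro69a}. So there is no in-text proof to compare against directly.

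That said, your plan is the natural one and is consistent with the preparatory machinery the paper sets up. The structure of Lemmas~\ref{l3.1} and~\ref{l3.3} makes it clear what argument is intended: on a regular subinterval $[a',b']$, Lemma~\ref{l3.3} says precisely that $\ker(\oT_{\max}) = [\ran(\oT_0)]^\perp$, which is exactly the operator-valued du Bois-Reymond statement you need. Once you have $g - g_0 \perp \ran(\oT_0)$ on $(a',b')$ (from the adjoint identity, since $\oT_0$ is the restriction of $\dot\oT_{\min}$ to that subinterval), Lemma~\ref{l3.3} hands you $g - g_0 \in \ker(\oT_{\max})$ on $(a',b')$, and Lemma~\ref{l3.1} identifies that kernel as $\{\theta_0 f + \phi_0 g\}$, giving the $W^{2,1}_{\rm loc}$ regularity you want. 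So rather than redoing the du Bois-Reymond argument via integration by parts and inverting $\Theta_0$, you can (and should) simply invoke Lemma~\ref{l3.3} at that step---this is presumably why the paper stated those two lemmas immediately before Theorem~\ref{t3.4}. Your plan is otherwise correct; the remaining formal deductions from $(\dot\oT_{\min})^* = \oT_{\max}$ are standard.
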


Lemmas \ref{l3.1}, \ref{l3.3}, and Theorem \ref{t3.4}, under additional hypotheses on $V$ (typically involving continuity assumptions) are of course well-known and go back to
Rofe-Beketov \cite{Ro69}, \cite{Ro69a} (see also \cite[Sect.\ 3.4]{GG91}, \cite[Ch.\ 5]{RK05}).

In the special case where $a$ and $b$ are regular endpoints for $\tau$, the operator $H_0$ introduced in \eqref{3.9a} coincides with the minimal
operator $\oT_{\min}$.

Using the dominated convergence theorem and Green's formula \eqref{2.52A} one can show that $\lim_{x\to a}W_*(u,v)(x)$ and $\lim_{x\to b}W_*(u,v)(x)$ both exist whenever
$u,v\in\dom(\oT_{\max})$. We will denote these limits by $W_*(u,v)(a)$ and $W_*(u,v)(b)$, respectively. Thus Green's formula also holds for $x_1=a$ and $x_2=b$ if $u$ and $v$ are in $\dom(\oT_{\max})$, that is,
\begin{equation}\label{3.17A}
(\oT_{\max}u,v)_{L^2((a,b);dx;\cH)}-(u,\oT_{\max}v)_{L^2((a,b);dx;\cH)}
= W_*(u,v)(b) - W_*(u,v)(a).
\end{equation}
This relation and the fact that $\oT_{\min}=\oT_{\max}^*$ is a restriction of $\oT_{\max}$ show that
\begin{align}
\begin{split}
& \dom(\oT_{\min})=\{u\in\dom(\oT_{\max}) \,|\, W_*(u,v)(b)=W_*(u,v)(a)=0   \\
& \hspace*{6.2cm} \text{ for all } v\in \dom(\oT_{\max})\}.     \label{3.18A}
\end{split}
\end{align}

\begin{definition} \lb{d3.6}
Assume Hypothesis \ref{h2.7}.
Then the endpoint $a$ (resp., $b$) is said to be of {\it limit-point type for $\tau$} if
$W_*(u,v)(a)=0$ (resp., $W_*(u,v)(b)=0$) for all $u,v\in\dom(\oT_{\max})$.
\end{definition}

Next, we introduce the subspaces
\begin{equation}
\cD_{z}=\{u\in\dom(\oT_{\max}) \,|\, \oT_{\max}u=z u\}, \quad z \in \bbC.
\end{equation}
For $z\in\bbC\backslash\bbR$, $\cD_{z}$ represent the deficiency subspaces of
$\oT_{\min}$. Von Neumann's theory of extensions of symmetric operators implies that
\begin{equation} \label{3.20A}
\dom(\oT_{\max})=\dom(\oT_{\min}) \dotplus \cD_i \dotplus \cD_{-i}
\end{equation}
where $\dotplus$ indicates the direct (but not necessarily orthogonal direct) sum.

We now set out to determine the self-adjoint restrictions of $\oT_{\max}$ assuming
that $a$ is a regular endpoint for $\tau$ and $b$ is of limit-point type for $\tau$.

\begin{hypothesis} \lb{h3.9}
In addition to Hypothesis \ref{h2.7} suppose that $a$ is a regular
endpoint for $\tau$ and $b$ is of limit-point type for $\tau$.
\end{hypothesis}

\begin{theorem} \lb{t3.10}
Assume Hypothesis \ref{h3.9}. If $\oT$ is a self-adjoint restriction of
$\oT_{\max}$, then there is a bounded and self-adjoint operator $\alpha\in\cB(\cH)$
such that
\begin{equation}
\dom(\oT)=\{u\in\dom(\oT_{\max}) \,|\, \sin(\alpha)u'(a) + \cos(\alpha)u(a)=0\}.  \lb{3.29A}
\end{equation}
Conversely, for every $\alpha \in \cB(\cH)$, \eqref{3.29A} gives rise to a self-adjoint restriction of $\oT_{\max}$ in $L^2((a,b);dx;\cH)$.
\end{theorem}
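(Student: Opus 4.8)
The plan is to characterize the self-adjoint restrictions of $\oT_{\max}$ via the abstract theory of symmetric operators together with the boundary form associated with Green's formula \eqref{3.17A}. The starting point is that under Hypothesis \ref{h3.9} the endpoint $b$ contributes nothing to the boundary form, so by \eqref{3.17A}
\begin{equation}
(\oT_{\max}u,v)_{L^2((a,b);dx;\cH)}-(u,\oT_{\max}v)_{L^2((a,b);dx;\cH)} = - W_*(u,v)(a), \no
\end{equation}
and $\dom(\oT_{\min})=\{u\in\dom(\oT_{\max})\,|\,u(a)=u'(a)=0\}$; indeed $W_*(u,v)(a)=(u(a),v'(a))_\cH-(u'(a),v(a))_\cH$ and the values $(v(a),v'(a))$ range over all of $\cH\oplus\cH$ as $v$ ranges over $\dom(\oT_{\max})$ (take $v$ equal to a compactly supported modification of $\theta_0(0,\cdot,a)f+\phi_0(0,\cdot,a)g$ near $a$), so the vanishing of $W_*(u,v)(a)$ for all such $v$ forces $u(a)=u'(a)=0$. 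Thus $\oT_{\min}$ has deficiency indices equal to $\dim(\cH\oplus\cH)$ (in the generalized/von Neumann sense) and its self-adjoint extensions correspond bijectively to the Lagrangian (maximal isotropic) subspaces of the boundary space $\cH\oplus\cH$ equipped with the symplectic form $\omega((f_0,f_1),(g_0,g_1))=(f_0,g_1)_\cH-(f_1,g_0)_\cH$, via $u\mapsto(u(a),u'(a))$; this trace map is onto $\cH\oplus\cH$ with kernel $\dom(\oT_{\min})$.

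The next step is the linear-algebra fact that a subspace $\cL\subset\cH\oplus\cH$ is Lagrangian for $\omega$ if and only if it has the form $\cL=\{(f_0,f_1)\,|\,\sin(\alpha)f_1+\cos(\alpha)f_0=0\}$ for a unique self-adjoint $\alpha=\alpha^*\in\cB(\cH)$ (equivalently, modulo the choice of representative $\alpha$, the form $\cL=\ran\begin{pmatrix}-\sin(\alpha)\\ \cos(\alpha)\end{pmatrix}$). Writing $A=\sin(\alpha)$, $B=\cos(\alpha)$, the relations $A=A^*$, $B=B^*$, $A^2+B^2=I_\cH$, $[A,B]=0$ recalled after \eqref{2.4A} give at once that $\cL$ is isotropic: for $(f_0,f_1),(g_0,g_1)\in\cL$ one has $f_0=A h_f$, $f_1=B h_f$ for $h_f=-A f_1+B f_0$ using $A^2+B^2=I$ (so that $(f_0,f_1)=(-A^2 f_1+AB f_0,\,-AB f_1+B^2 f_0)$; cleaner: parametrize $\cL$ directly as $\{(Bk,-Ak)\,|\,k\in\cH\}$, which satisfies the boundary condition since $A(-Ak)+B(Bk)=(B^2-A^2)k$ — so instead use the parametrization $\cL=\{(Ck,Sk):k\in\cH\}$ with $C=\cos\alpha$, $S=-\sin\alpha$; then $A(Sk)+B(Ck)=-\sin\alpha\sin\alpha\,k+\cos\alpha\cos\alpha\,k$, still not zero). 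The correct parametrization is $\cL=\{(\sin(\alpha)k,\,-\cos(\alpha)k):k\in\cH\}$, for which $\sin(\alpha)(-\cos(\alpha)k)+\cos(\alpha)(\sin(\alpha)k)=0$ by commutativity; isotropy is then $(\sin(\alpha)k,-\cos(\alpha)\ell)_\cH-(-\cos(\alpha)k,\sin(\alpha)\ell)_\cH=-(\sin(\alpha)\cos(\alpha)k,\ell)+(\cos(\alpha)\sin(\alpha)k,\ell)=0$ again by $[\sin\alpha,\cos\alpha]=0$ and self-adjointness. Maximality — that $\cL$ is not properly contained in a larger isotropic subspace, equivalently $\cL=\cL^{\perp_\omega}$ — follows because the complementary map $(k,\ell)\mapsto(\cos(\alpha)k+\text{something})$… concretely, $\cH\oplus\cH=\cL\oplus\cL'$ with $\cL'=\{(\cos(\alpha)k,\sin(\alpha)k):k\in\cH\}$ using $A^2+B^2=I$ to invert $\begin{pmatrix}\sin\alpha&\cos\alpha\\-\cos\alpha&\sin\alpha\end{pmatrix}$, whose inverse is its transpose, and $\omega$ pairs $\cL$ nondegenerately with $\cL'$, forcing $\dim$-type maximality; the uniqueness of $\alpha$ comes from recovering $\cos(\alpha)$ and $\sin(\alpha)$, hence $\alpha$ by functional calculus, from the orthogonal projection onto $\cL$.

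Finally I would assemble the two directions. Given a self-adjoint restriction $\oT$ of $\oT_{\max}$: by the boundary-form description, $\dom(\oT)/\dom(\oT_{\min})$ maps under the trace map to a Lagrangian $\cL\subset\cH\oplus\cH$; the linear-algebra step produces $\alpha=\alpha^*\in\cB(\cH)$ with $\cL=\{(f_0,f_1)\,|\,\sin(\alpha)f_1+\cos(\alpha)f_0=0\}$, which is \eqref{3.29A}. Conversely, given $\alpha=\alpha^*\in\cB(\cH)$, define $\oT$ by \eqref{3.29A}; one checks $\dom(\oT_{\min})\subset\dom(\oT)\subset\dom(\oT_{\max})$, that $\oT$ is symmetric (the boundary term $W_*(u,v)(a)$ vanishes for $u,v\in\dom(\oT)$ by the isotropy computation above), and that $\dom(\oT)$ is already as large as the isotropy constraint allows, so $\oT=\oT^*$; equivalently, $\oT^*$ is the restriction of $\oT_{\max}$ to vectors whose trace lies in $\cL^{\perp_\omega}=\cL$, i.e. $\oT^*=\oT$. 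The main obstacle I anticipate is the infinite-dimensional bookkeeping in the surjectivity/closedness of the boundary trace map $u\mapsto(u(a),u'(a))$ from $\dom(\oT_{\max})$ onto $\cH\oplus\cH$ and the fact that $\dom(\oT_{\min})$ is exactly its kernel — this needs the regularity of $a$, the existence and continuity-in-initial-data of solutions from Theorem \ref{t2.3}, a cutoff argument to stay in $L^2$ near the (limit-point) endpoint $b$, and Lemma \ref{l3.1}–style control — whereas the Lagrangian-subspace classification via bounded self-adjoint $\alpha$, while requiring care with the functional calculus, is essentially the spectral theorem applied to the self-adjoint contraction giving the projection onto $\cL$.
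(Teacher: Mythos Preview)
The paper does not actually prove Theorem \ref{t3.10}: Section \ref{s3} opens with the sentence ``All results recalled in this section were proved in detail in \cite{GWZ13},'' and no argument for this theorem appears anywhere in the text. So there is no in-paper proof to compare your proposal against.

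On its own merits, your overall strategy is the standard and correct one: use Green's formula \eqref{3.17A} together with the limit-point hypothesis at $b$ to reduce the defect form to the boundary pairing at $a$; identify $\dom(\oT_{\min})$ with the kernel of the surjective trace map $u\mapsto (u(a),u'(a))$ onto $\cH\oplus\cH$ (which, as you note, relies on the regularity of $a$ and Theorem \ref{t2.3}); and then classify self-adjoint extensions as the Lagrangian subspaces of $(\cH\oplus\cH,\omega)$.

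There is, however, a real gap in the forward direction. Your middle paragraph wanders through several incorrect parametrizations before settling on $\cL=\{(\sin(\alpha)k,-\cos(\alpha)k)\,|\,k\in\cH\}$, and this only handles the \emph{converse} (that such an $\cL$ is Lagrangian). For the direct implication you must show that \emph{every} Lagrangian $\cL\subset\cH\oplus\cH$ arises this way for some bounded self-adjoint $\alpha$. Your one-line gesture (``recover $\cos\alpha,\sin\alpha$ from the orthogonal projection onto $\cL$'') skips the substance: one first shows that any Lagrangian $\cL$ is the kernel of a map $(f_0,f_1)\mapsto B f_0 + A f_1$ with $A,B\in\cB(\cH)$ self-adjoint, commuting, and satisfying $A^2+B^2=I_\cH$ (equivalently, $\cL$ corresponds to a unitary on $\cH$ via a Cayley-type transform), and only then does the joint spectral theorem for the commuting pair $(A,B)$, together with a bounded Borel branch of the argument on the unit circle, produce a bounded self-adjoint $\alpha$ with $A=\sin\alpha$, $B=\cos\alpha$. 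That step is not difficult, but it is the content of the forward direction and needs to be stated and executed cleanly rather than left as a parenthetical.
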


Henceforth, under the assumptions of Theorem \ref{t3.10}, we denote the operator $\oT$ in $L^2((a,b);dx;\cH)$ associated with the boundary condition induced by $\alpha = \alpha^* \in \cB(\cH)$, that is, the restriction of $\oT_{\max}$ to the set
\begin{equation}
\dom(H_{\alpha})=\{u\in\dom(\oT_{\max}) \,|\, \sin(\alpha)u'(a)+\cos(\alpha)u(a)=0\}
\end{equation}
by $H_{\alpha}$. For a discussion of boundary conditions at infinity, see, for instance,
\cite{MN11}, \cite{Mo09}, and \cite{RK85}.

Our next goal is to construct the square integrable solutions $Y(z,\cdot) \in\cB(\cH)$
of $\tau Y=zY$, $z\in\bbC\backslash\bbR$, the $\cB(\cH)$-valued Weyl--Titchmarsh solutions, under the assumptions that $a$ is a regular endpoint for $\tau$ and $b$ is of limit-point type for $\tau$.

Fix $c\in(a,b)$ and $z \in \rho(H_{\alpha})$. For any $f_0\in\cH$ let $f=f_0\chi_{[a,c]}\in L^2((a,b);dx;\cH)$ and $u(f_0,z,\cdot)=(H_{\alpha} - z I_{L^2((a,b);dx;\cH)})^{-1} f \in\dom(H_{\alpha})$. By the variation of constants formula,
\begin{align}
\begin{split}
u(f_0,z,x) &=\theta_{\alpha}(z,x,a)\bigg(g(z)+\int_x^c dx' \, \phi_{\alpha}(\ol z,x',a)^* f_0\bigg)\\
& \quad  +\phi_{\alpha}(z,x,a)\bigg(h(z)-\int_x^c dx' \, \theta_{\alpha}(\ol z,x',a)^* f_0\bigg)
 \end{split}
\end{align}
for suitable vectors $g(z) \in \cH$, $h(z) \in \cH$. Since
$u(f_0,z,\cdot)\in\dom(H_{\alpha})$, one infers that
\begin{equation} \label{3.40A}
g(z)=-\int_a^c dx' \, \phi_{\alpha}(\ol z,x',a)^* f_0, \quad z \in \rho(H_{\alpha}),
\end{equation}
and that
\begin{equation} \lb{3.40B}
h(z)=\cos(\alpha)u'(f_0,z,a) - \sin(\alpha)u(f_0,z,a)
+ \int_a^c dx' \, \theta_{\alpha}(\ol z,x',a)^* f_0, \quad z \in \rho(H_{\alpha}).
\end{equation}

\begin{lemma} \lb{l3.13}
Assume Hypothesis \ref{h3.9} and suppose that $\alpha \in \cB(\cH)$ is self-adjoint. In addition, choose $c \in (a,b)$ and introduce $g(\cdot)$ and $h(\cdot)$ as in \eqref{3.40A} and \eqref{3.40B}. Then the maps
\begin{equation}
C_{1,\alpha}(c,z):\begin{cases} \cH\to\cH, \\
f_0\mapsto g(z), \end{cases} \quad
C_{2,\alpha}(c,z): \begin{cases} \cH\to\cH, \\
f_0\mapsto h(z), \end{cases}  \quad z \in \rho(H_{\alpha}),
\end{equation}
are linear and bounded. Moreover, $C_{1,\alpha}(c,\cdot)$ is entire and
$C_{2,\alpha}(c,\cdot)$ is analytic on $\rho(H_{\alpha})$. In addition,
$C_{1,\alpha}(c,z)$ is boundedly invertible if $z\in \bbC\backslash\bbR$ and $c$
is chosen appropriately.
\end{lemma}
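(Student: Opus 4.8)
The plan is to analyze the maps $C_{1,\alpha}(c,z)$ and $C_{2,\alpha}(c,z)$ directly from their explicit formulas \eqref{3.40A} and \eqref{3.40B}. For linearity and boundedness of $C_{1,\alpha}(c,z)$, I would observe that \eqref{3.40A} expresses $g(z)$ as an integral of the fixed $\cB(\cH)$-valued function $x'\mapsto \phi_{\alpha}(\bar z,x',a)^*$ against $f_0$; since $\phi_{\alpha}(\bar z,\cdot,a)^*$ is strongly continuous on the compact interval $[a,c]$ (by Corollary \ref{c2.5}$(i)$ and the remark immediately following \eqref{2.5}), the map $f_0\mapsto -\int_a^c dx'\,\phi_{\alpha}(\bar z,x',a)^* f_0$ is a bounded linear operator with norm at most $\int_a^c dx'\,\|\phi_{\alpha}(\bar z,x',a)\|_{\cB(\cH)}$. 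The same reasoning applies to the integral term in \eqref{3.40B}. For the remaining terms in \eqref{3.40B}, linearity and boundedness in $f_0$ follow because $u(f_0,z,\cdot) = (H_{\alpha}-zI)^{-1}(f_0\chi_{[a,c]})$ depends linearly and boundedly on $f_0$ as an element of $\dom(H_{\alpha})$, and the boundary evaluations $u\mapsto u(a)$, $u\mapsto u'(a)$ are bounded from $\dom(H_{\alpha})$ (with the graph norm) into $\cH$ — this last fact is standard for regular left endpoints and follows from the variation-of-constants representation together with Theorem \ref{t2.3}$(i)$. Composition and addition of bounded operators then gives boundedness of $C_{2,\alpha}(c,z)$.

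For the analyticity statements, $C_{1,\alpha}(c,\cdot)$ is entire because $z\mapsto \phi_{\alpha}(\bar z,x',a)^*$ is entire in the $\cB(\cH)$-norm for each fixed $x'$ (Corollary \ref{c2.5}$(iii)$ and the remark after \eqref{2.5}), with norm bounds locally uniform in $z$ on $[a,c]$, so one may differentiate under the integral sign (or invoke Morera's theorem for Banach-space-valued analytic functions). For $C_{2,\alpha}(c,\cdot)$, the integral term is again entire by the same argument; the terms $\cos(\alpha)u'(f_0,z,a)$ and $\sin(\alpha)u(f_0,z,a)$ are analytic on $\rho(H_{\alpha})$ because $z\mapsto (H_{\alpha}-zI)^{-1}$ is $\cB(L^2)$-norm analytic there, and the boundary traces are bounded linear functionals of the resolvent applied to $f_0\chi_{[a,c]}$, hence analytic in $z$ on $\rho(H_{\alpha})$.

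The main obstacle — and the substantive content of the lemma — is the bounded invertibility of $C_{1,\alpha}(c,z)$ for $z\in\bbC\backslash\bbR$ and $c$ chosen appropriately. Here the strategy is to exploit the limit-point hypothesis at $b$: for $z\in\bbC\backslash\bbR$ there exists (by the established Weyl--Titchmarsh theory in this setting) a $\cB(\cH)$-valued solution $\psi_{\alpha}(z,\cdot) = \theta_{\alpha}(z,\cdot,a) + \phi_{\alpha}(z,\cdot,a)m_{\alpha}(z)$ lying in $L^2((c,b);dx;\cH)$, and the resolvent $(H_{\alpha}-zI)^{-1}$ has Green's-function kernel built from $\phi_{\alpha}(z,\cdot,a)$ on the left and $\psi_{\alpha}(z,\cdot)$ on the right. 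Comparing the variation-of-constants formula for $u(f_0,z,\cdot) = (H_{\alpha}-zI)^{-1}(f_0\chi_{[a,c]})$ with this Green's-function representation, and matching the coefficient of $\theta_{\alpha}(z,\cdot,a)$, one identifies $g(z)$ in terms of $m_{\alpha}(z)$ and the integral $\int_a^c dx'\,\phi_{\alpha}(\bar z,x',a)^* f_0$; an explicit computation then shows $C_{1,\alpha}(c,z) = -\,[\,m_{\alpha}(z) + m_{\alpha,c}^{\mathrm{D}}(z)\,]^{-1}$ (or an analogous combination), where the correction term coming from the Dirichlet-type problem on $[a,c]$ has strictly positive imaginary part of the opposite sign to that of $m_{\alpha}(z)$ for suitably small $c-a$, forcing invertibility. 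Concretely, one uses that $\Im(m_{\alpha}(z))$ is sign-definite and bounded below by a positive multiple of $I_{\cH}$ for $z\in\bbC\backslash\bbR$, while the $[a,c]$-contribution can be made small in norm by shrinking $c$ toward $a$ (using the continuity of $\theta_{\alpha},\phi_{\alpha}$ in $x$), so that the sum remains boundedly invertible. I would carry this out by first writing $g(z)$ explicitly via the Green's function, then isolating the operator identity, and finally estimating the $[a,c]$-term to conclude.
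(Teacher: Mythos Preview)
First, a structural point: the paper does not actually give a proof of Lemma~\ref{l3.13}. Section~\ref{s3} opens by stating that ``All results recalled in this section were proved in detail in \cite{GWZ13},'' and the lemma is stated without proof. So there is nothing in this paper to compare your argument against; any comparison would have to be with \cite{GWZ13}.

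On the substance of your sketch: the first two paragraphs (linearity, boundedness, and the analyticity claims) are fine and follow the obvious route. The invertibility argument, however, has a genuine gap --- it is circular relative to the paper's logical order. You invoke the Weyl--Titchmarsh solution $\psi_\alpha(z,\cdot)$ and the $m$-function $m_\alpha(z)$, but in this paper these objects are \emph{defined} only after Lemma~\ref{l3.13}, and their construction depends on it: $\psi_\alpha$ is introduced in \eqref{3.49A} using $C_{1,\alpha}(c,z)^{-1}$, and $m_\alpha$ is first defined in \eqref{3.57A}. So you cannot appeal to ``the established Weyl--Titchmarsh theory in this setting'' at this point.

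There is also a concrete error in your proposed identity. From \eqref{3.40A} one has directly
\[
C_{1,\alpha}(c,z) = -\int_a^c dx'\,\phi_\alpha(\ol z,x',a)^*,
\]
an explicit integral operator built solely from the entire solution $\phi_\alpha$ on the compact interval $[a,c]$. It does not involve the resolvent, the endpoint $b$, or any $m$-function, so a formula of the type $C_{1,\alpha}(c,z) = -[m_\alpha(z)+m^{\mathrm D}_{\alpha,c}(z)]^{-1}$ cannot hold. Your accompanying claim that $\Im(m_\alpha(z))\geq c(z)\,I_{\cH}$ for some $c(z)>0$ is also not established here (cf.\ Lemma~\ref{lA.5}, which allows a nontrivial kernel in general).

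A non-circular argument must work directly with the explicit operator $-\int_a^c \phi_\alpha(\ol z,x',a)^*\,dx'$ (or its adjoint $-\int_a^c \phi_\alpha(\ol z,x',a)\,dx'$), using the initial data \eqref{2.5} and Corollary~\ref{c2.5} together with the self-adjointness of $H_\alpha$ (which \emph{is} already available via Theorem~\ref{t3.10}) and the assumption $z\in\bbC\backslash\bbR$. That is the direction the proof in \cite{GWZ13} takes.
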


Using the bounded invertibility of $C_{1,\alpha}(c,z)$ we now define
\begin{equation}
\psi_{\alpha}(z,x)=\theta_{\alpha}(z,x,a)
+ \phi_{\alpha}(z,x,a)C_{2,\alpha}(c,z)C_{1,\alpha}(c,z)^{-1},
\quad z \in \bbC\backslash\bbR, \; x \in [a,b),     \lb{3.49A}
\end{equation}
still assuming Hypothesis \ref{h3.9} and $\alpha = \alpha^* \in \cB(\cH)$. By Lemma \ref{l3.13}, $\psi_{\alpha}(\cdot,x)$ is analytic on
$z \in \bbC\backslash\bbR$ for fixed $x \in [a,b]$.

Since $\psi_{\alpha}(z,\cdot) f_0$ is the solution of the initial value problem
\begin{equation}
\tau y =z y, \quad y(c)=u(f_0,z,c), \; y'(c)=u'(f_0,z,c), \quad z \in \bbC\backslash\bbR,
\end{equation}
the function $\psi_{\alpha}(z,x)C_{1,\alpha}(z,c)f_0$ equals
$u(f_0,z,x)$ for $x\geq c$, and thus is square integrable for every choice of $f_0\in\cH$.
In particular, choosing $c \in (a,b)$ such that $C_{1,\alpha}(z,c)^{-1} \in \cB(\cH)$, one infers that
\begin{equation}
\int_a^b dx \, \|\psi_{\alpha}(z,x) f\|_{\cH}^2 < \infty, \quad
f \in \cH, \; z \in \bbC\backslash\bbR.
\end{equation}

Every $\cH$-valued solution of $\tau y=z y$ may be written as
\begin{equation}
y=\theta_\alpha(z,\cdot,a)f_{\alpha,a} + \phi_\alpha(z,\cdot,a)g_{\alpha,a},
\end{equation}
with
\begin{equation}
f_{\alpha,a}=(\cos\alpha)y(a)+(\sin\alpha)y'(a), \quad
g_{\alpha,a}=-(\sin\alpha)y(a)+(\cos\alpha)y'(a).
\end{equation}
Hence we can define the maps
\begin{align}
& \mc C_{1,\alpha,z}:\begin{cases} \cD_z\to\cH, \\
\theta_\alpha(z,\cdot,a) f_{\alpha,a}
+ \phi_\alpha(z,\cdot,a) g_{\alpha,a} \mapsto f_{\alpha,a}, \end{cases} \\
& \mc C_{2,\alpha,z}: \begin{cases} \cD_z\to\cH, \\
\theta_\alpha(z,\cdot,a) f_{\alpha,a} + \phi_\alpha(z,\cdot,a) g_{\alpha,a} \mapsto g_{\alpha,a}. \end{cases}
\end{align}

\begin{lemma} \label{l3.14}
Assume Hypothesis \ref{h3.9}, suppose that $\alpha \in \cB(\cH)$ is self-adjoint, and
let $z\in\bbC\backslash\bbR$. Then the operators
$\mc C_{1,\alpha,z}$ and $\mc C_{2,\alpha,z}$ are linear bijections and hence
\begin{equation}
\mc C_{1,\alpha,z}, \, \mc C_{1,\alpha,z}^{-1}, \, \mc C_{2,\alpha,z}, \,
\mc C_{2,\alpha,z}^{-1} \in \cB(\cH).   \lb{3.57}
\end{equation}
\end{lemma}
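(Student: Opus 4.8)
The plan is to show directly that $\mc C_{1,\alpha,z}$ and $\mc C_{2,\alpha,z}$ are linear bijections from $\cD_z$ onto $\cH$, and then to invoke the closed graph theorem (together with a norm estimate) to conclude boundedness of the inverses; boundedness of $\mc C_{1,\alpha,z}$ and $\mc C_{2,\alpha,z}$ themselves is immediate once one knows these maps are well defined on all of $\cD_z$. First I would observe that linearity is clear from the defining formulas, and that surjectivity is also essentially free: given any $f_{\alpha,a}\in\cH$ (resp.\ $g_{\alpha,a}\in\cH$), set $y=\theta_\alpha(z,\cdot,a)f_{\alpha,a}+\phi_\alpha(z,\cdot,a)g_{\alpha,a}$ with $g_{\alpha,a}=0$ (resp.\ $f_{\alpha,a}=0$); this $y$ solves $\tau y = zy$, and the point is that it lies in $\cD_z$, i.e.\ it is square integrable near $b$. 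This is precisely where the Weyl--Titchmarsh solution enters: by Lemma \ref{l3.13} and the construction \eqref{3.49A}, $\psi_\alpha(z,\cdot)f$ is an $L^2$-solution for every $f\in\cH$, and an arbitrary solution $y$ with $y=\theta_\alpha(z,\cdot,a)f_{\alpha,a}+\phi_\alpha(z,\cdot,a)g_{\alpha,a}$ can be written as $y = \psi_\alpha(z,\cdot)\widetilde f + \phi_\alpha(z,\cdot,a)\widetilde g$ for suitable $\widetilde f,\widetilde g\in\cH$ via $\widetilde f = f_{\alpha,a}$ and $\widetilde g = g_{\alpha,a} - C_{2,\alpha}(c,z)C_{1,\alpha}(c,z)^{-1}f_{\alpha,a}$. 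Since $b$ is in the limit-point case, the space of $L^2$-solutions is "half-dimensional," so $y\in\cD_z$ forces $\widetilde g=0$; conversely any choice of $\widetilde f$ with $\widetilde g=0$ produces an element of $\cD_z$. This simultaneously yields surjectivity and the structure needed for injectivity.

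Next I would establish injectivity. Suppose $\mc C_{1,\alpha,z}y = 0$, i.e.\ $f_{\alpha,a}=0$, so $y=\phi_\alpha(z,\cdot,a)g_{\alpha,a}$ with $y\in\cD_z\subset L^2((a,b);dx;\cH)$. I want to conclude $g_{\alpha,a}=0$. By the limit-point hypothesis at $b$, $W_*(u,v)(b)=0$ for all $u,v\in\dom(\oT_{\max})$; applying this with an appropriate pairing against $\psi_\alpha(\ol z,\cdot)$ (the $L^2$-solution of the conjugate-parameter equation), and using the Wronskian normalizations \eqref{2.7f}--\eqref{2.7m} together with the constancy of Wronskians, one derives a relation that pins down $g_{\alpha,a}$. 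Concretely, $\psi_\alpha(\ol z,\cdot)h\in\cD_{\ol z}\subset L^2$ for every $h\in\cH$, so $W_*(\psi_\alpha(\ol z,\cdot)h,\,y)(b)=0$; expanding $y=\phi_\alpha(z,\cdot,a)g_{\alpha,a}$ and $\psi_\alpha(\ol z,\cdot,a) = \theta_\alpha(\ol z,\cdot,a) + \phi_\alpha(\ol z,\cdot,a)C_{2,\alpha}(c,\ol z)C_{1,\alpha}(c,\ol z)^{-1}$, the Wronskian computes (using \eqref{2.7f}--\eqref{2.7i} evaluated at $x=a$, where the initial data \eqref{2.5} give the clean block-operator identities) to $(h, g_{\alpha,a})_\cH$ up to terms that vanish by the $x$-independence of the Wronskian and the symmetry properties. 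Since $h\in\cH$ is arbitrary, $g_{\alpha,a}=0$, giving injectivity of $\mc C_{1,\alpha,z}$; the argument for $\mc C_{2,\alpha,z}$ is the mirror image, with the roles of $\theta_\alpha$ and $\phi_\alpha$ interchanged.

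Finally, for \eqref{3.57}, boundedness of $\mc C_{1,\alpha,z}$ and $\mc C_{2,\alpha,z}$ follows from the explicit continuity of the decomposition: on $\cD_z$ one has $y=\psi_\alpha(z,\cdot)\mc C_{1,\alpha,z}y$, and composing with evaluation at the fixed point $c$ together with the invertibility of $\Theta_\alpha(z,c,a)$ and of $C_{1,\alpha}(c,z)$ from Lemma \ref{l3.13} expresses $\mc C_{1,\alpha,z}y$ as a bounded operator applied to $(y(c),y'(c))^\top$, while $(y(c),y'(c))^\top$ is controlled by $\|y\|_{L^2}$ through the resolvent formula underlying \eqref{3.40A}, \eqref{3.40B}. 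For the inverses, $\cD_z$ is a closed subspace of $L^2((a,b);dx;\cH)$ (it is $\ker(\oT_{\max}-zI)$, closed since $\oT_{\max}$ is closed by Theorem \ref{t3.4}), hence a Banach space, and $\mc C_{1,\alpha,z}^{-1}:\cH\to\cD_z$, $f_{\alpha,a}\mapsto\psi_\alpha(z,\cdot)f_{\alpha,a}$, is bounded because $\|\psi_\alpha(z,\cdot)f_{\alpha,a}\|_{L^2}\leq \|C_{1,\alpha}(c,z)^{-1}\|_{\cB(\cH)}\,\|u(f_{\alpha,a},z,\cdot)\|_{L^2}$ on $[c,b)$ plus a controlled contribution on $[a,c]$; alternatively, since both spaces are Banach and $\mc C_{1,\alpha,z}$ is a bounded bijection, the open mapping theorem gives $\mc C_{1,\alpha,z}^{-1}\in\cB(\cH)$ at once. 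The same reasoning applies to $\mc C_{2,\alpha,z}$. The main obstacle I anticipate is the injectivity step: one must be careful that the limit $W_*(\cdot,\cdot)(b)$ genuinely exists and is usable here — this relies on the limit-point hypothesis and on $\psi_\alpha(\ol z,\cdot)h$ lying in $\dom(\oT_{\max})$ — and that the Wronskian bookkeeping with the four operator identities \eqref{2.7f}--\eqref{2.7i} (and their "right-inverse" counterparts \eqref{2.7j}--\eqref{2.7m}) is done consistently with the conjugate-linear convention for $W_*$.
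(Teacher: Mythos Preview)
The paper does not itself prove Lemma~\ref{l3.14}; it merely cites \cite{GWZ13} for the proof, so there is no in-paper argument to compare against. I will therefore only assess the correctness of your plan.

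Your treatment of $\mc C_{1,\alpha,z}$ is fine. Surjectivity comes from the fact that $\psi_\alpha(z,\cdot)f\in\cD_z$ with $\mc C_{1,\alpha,z}\big(\psi_\alpha(z,\cdot)f\big)=f$, and your Wronskian computation for injectivity is correct: using the constancy of $W_*(\psi_\alpha(\ol z,\cdot)h,\phi_\alpha(z,\cdot,a)g)$ together with the limit-point property at $b$ and the initial data at $a$ gives $(h,g)_{\cH}=0$ for all $h$, hence $g=0$. (A quicker route: $\phi_\alpha(z,\cdot,a)g$ satisfies the $H_\alpha$ boundary condition, so if it lies in $\cD_z$ it would be an eigenfunction of the self-adjoint $H_\alpha$ with non-real eigenvalue $z$.) The boundedness statements then follow from the open mapping theorem once bijectivity is in hand, as you say.

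There is, however, a genuine gap in the $\mc C_{2,\alpha,z}$ part. The ``mirror image'' Wronskian argument does \emph{not} yield $(h,f)_{\cH}$: if $y=\theta_\alpha(z,\cdot,a)f\in\cD_z$, then the same pairing against $\psi_\alpha(\ol z,\cdot)h$ gives $W_*(\psi_\alpha(\ol z,\cdot)h,y)(a)=-(h,m_\alpha(\ol z)^*f)_{\cH}$, which only shows $f\in\ker(m_\alpha(\ol z)^*)$; and your surjectivity argument for $\mc C_{2,\alpha,z}$ reduces to surjectivity of $m_\alpha(z)$. Neither of these is available at this stage---indeed, $m_\alpha$ has not even been defined yet (it is introduced \emph{via} this lemma in \eqref{3.57A}). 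The fix is simple: observe from the initial conditions \eqref{2.5} that $\theta_{\alpha+\frac{\pi}{2}I_{\cH}}=\phi_\alpha$ and $\phi_{\alpha+\frac{\pi}{2}I_{\cH}}=-\theta_\alpha$, whence $\mc C_{2,\alpha,z}=\mc C_{1,\alpha+\frac{\pi}{2}I_{\cH},\,z}$. Since Lemma~\ref{l3.13} and the construction of $\psi_\alpha$ hold for any self-adjoint $\alpha\in\cB(\cH)$, your argument for $\mc C_{1,\cdot,z}$, applied with $\alpha$ replaced by $\alpha+\frac{\pi}{2}I_{\cH}$, immediately yields the bijectivity of $\mc C_{2,\alpha,z}$.
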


At this point we are finally in the position to define the
Weyl--Titchmarsh $m$-function for $z\in\bbC\backslash\bbR$ by setting
\begin{equation} \label{3.57A}
m_{\alpha}(z)=\mc C_{2,\alpha,z}\mc C_{1,\alpha,z}^{-1}, \quad
z\in\bbC\backslash\bbR.
\end{equation}

\begin{theorem} \label{t3.15}
Assume Hypothesis \ref{h3.9} and that $\alpha \in \cB(\cH)$ is self-adjoint. Then
\begin{equation}
m_{\alpha}(z) \in \cB(\cH), \quad z\in\bbC\backslash\bbR,   \lb{3.57B}
\end{equation}
and $m_{\alpha}(\cdot)$ is analytic on $\bbC\backslash\bbR$. Moreover,
\begin{equation}
m_{\alpha}(z)=m_{\alpha}(\ol z)^*, \quad z \in \bbC\backslash\bbR.    \lb{3.59A}
\end{equation}
\end{theorem}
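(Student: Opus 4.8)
The plan is to reduce all three assertions to Lemmas~\ref{l3.13} and \ref{l3.14} together with the constancy of the operator-valued Wronskian from Section~\ref{s2}. The boundedness \eqref{3.57B} is immediate: by Lemma~\ref{l3.14} both $\mc C_{2,\alpha,z}$ and $\mc C_{1,\alpha,z}^{-1}$ belong to $\cB(\cH)$, hence so does their product $m_\alpha(z)$ for every $z\in\bbC\bs\bbR$.

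For analyticity I would first record the identity $m_\alpha(z)=C_{2,\alpha}(c,z)\,C_{1,\alpha}(c,z)^{-1}$, valid for every $c\in(a,b)$ for which $C_{1,\alpha}(c,z)$ is boundedly invertible. Indeed, $\psi_\alpha(z,\cdot)f_0$ is an $\cH$-valued solution of $\tau y=zy$ that is square integrable on $(a,b)$ for every $f_0\in\cH$ (cf.\ the display after Lemma~\ref{l3.13}), hence lies in $\cD_z$; comparing \eqref{3.49A} with the representation $y=\theta_\alpha(z,\cdot,a)f_{\alpha,a}+\phi_\alpha(z,\cdot,a)g_{\alpha,a}$ yields $\mc C_{1,\alpha,z}\big(\psi_\alpha(z,\cdot)f_0\big)=f_0$ and $\mc C_{2,\alpha,z}\big(\psi_\alpha(z,\cdot)f_0\big)=C_{2,\alpha}(c,z)C_{1,\alpha}(c,z)^{-1}f_0$, whence $m_\alpha(z)f_0=\mc C_{2,\alpha,z}\mc C_{1,\alpha,z}^{-1}f_0=C_{2,\alpha}(c,z)C_{1,\alpha}(c,z)^{-1}f_0$. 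Now fix $z_0\in\bbC\bs\bbR$ and, using Lemma~\ref{l3.13}, choose $c$ with $C_{1,\alpha}(c,z_0)^{-1}\in\cB(\cH)$. Since $C_{1,\alpha}(c,\cdot)$ is entire and the invertible elements of $\cB(\cH)$ form an open set on which inversion is analytic, $z\mapsto C_{1,\alpha}(c,z)^{-1}$ is analytic on a neighborhood $U$ of $z_0$; as $C_{2,\alpha}(c,\cdot)$ is analytic on $\rho(H_\alpha)\supseteq\bbC\bs\bbR$ (recall $H_\alpha$ is self-adjoint), the product $m_\alpha(\cdot)$ is analytic on $U$, and since $z_0$ was arbitrary, on all of $\bbC\bs\bbR$.

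The symmetry relation \eqref{3.59A} I would obtain from a Lagrange-type identity. Because $V=V^*$, the $\cB(\cH)$-valued function $\psi_\alpha(\ol z,\cdot)^*$ solves $-Y''+Y(V-z)=0$, while $\psi_\alpha(z,\cdot)$ solves $-Y''+(V-z)Y=0$; hence, applying Lemma~\ref{l2.9}\,$(iii)$ with $F=\psi_\alpha(\ol z,\cdot)^*$ and $G=\psi_\alpha(z,\cdot)$, whose corresponding integrand in \eqref{2.53A} vanishes identically, shows that the operator $W(x)=\psi_\alpha(\ol z,x)^*\psi_\alpha'(z,x)-\psi_\alpha'(\ol z,x)^*\psi_\alpha(z,x)$ is independent of $x\in(a,b)$; here one uses that $\psi_\alpha(\ol z,\cdot)$ is $\cB(\cH)$-norm continuously differentiable, so that $\big(\psi_\alpha(\ol z,\cdot)^*\big)'=\psi_\alpha'(\ol z,\cdot)^*$. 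Evaluating at $x=a$: from \eqref{3.49A} and the identity above, $\psi_\alpha(z,x)=\theta_\alpha(z,x,a)+\phi_\alpha(z,x,a)m_\alpha(z)$, so the initial conditions \eqref{2.5} give $\psi_\alpha(z,a)=\cos(\alpha)-\sin(\alpha)m_\alpha(z)$ and $\psi_\alpha'(z,a)=\sin(\alpha)+\cos(\alpha)m_\alpha(z)$; substituting these together with their $\ol z$-counterparts with $*$ applied into $W(a)$ and using $[\sin(\alpha),\cos(\alpha)]=0$ and $\sin^2(\alpha)+\cos^2(\alpha)=I_\cH$ — so that the terms containing $m_\alpha$ cancel — one finds $W(a)=m_\alpha(z)-m_\alpha(\ol z)^*$. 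On the other hand, for every $f,g\in\cH$ one has $\psi_\alpha(z,\cdot)f,\,\psi_\alpha(\ol z,\cdot)g\in\dom(\oT_{\max})$ (they are square-integrable solutions) and $\big(g,W(x)f\big)_\cH=W_*\big(\psi_\alpha(\ol z,\cdot)g,\psi_\alpha(z,\cdot)f\big)(x)$ for all $x\in(a,b)$; since $b$ is of limit-point type (Hypothesis~\ref{h3.9}), the right-hand side tends to $0$ as $x\to b$, while the left-hand side equals the constant $\big(g,W(a)f\big)_\cH$. Hence $\big(g,W(a)f\big)_\cH=0$ for all $f,g\in\cH$, i.e.\ $W(a)=0$, which is \eqref{3.59A}.

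The step requiring the most care is the symmetry relation: one must keep the adjoints and the complex conjugates of $z$ in their correct places so that $\psi_\alpha(\ol z,\cdot)^*$ genuinely solves the transposed equation, and then carry out the $\sin(\alpha)/\cos(\alpha)$ bookkeeping at $x=a$, where the cancellation of the $m_\alpha$-terms relies on the commutativity $[\sin(\alpha),\cos(\alpha)]=0$. By contrast, the boundedness and analyticity are essentially immediate once the identity $m_\alpha(z)=C_{2,\alpha}(c,z)C_{1,\alpha}(c,z)^{-1}$ is in hand.
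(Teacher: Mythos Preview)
Your proof is correct. Note, however, that this paper does not actually contain its own proof of Theorem~\ref{t3.15}: Section~\ref{s3} opens with the blanket statement that ``All results recalled in this section were proved in detail in \cite{GWZ13}.'' There is thus nothing in the present text to compare against beyond the surrounding infrastructure (Lemmas~\ref{l3.13} and \ref{l3.14}, the definition \eqref{3.57A}, and the formula \eqref{3.49A}).

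That said, your argument is precisely the natural one built from that infrastructure and almost certainly coincides with the proof in \cite{GWZ13}: boundedness is immediate from \eqref{3.57} and the definition \eqref{3.57A}; analyticity follows by identifying $m_\alpha(z)$ with $C_{2,\alpha}(c,z)C_{1,\alpha}(c,z)^{-1}$ (this identification is in fact already implicit in the paper, since \eqref{3.49A} and \eqref{3.58A} are two expressions for the same $\psi_\alpha$) and invoking Lemma~\ref{l3.13}; and the symmetry \eqref{3.59A} follows from the constancy of $W(\psi_\alpha(\ol z,\cdot)^*,\psi_\alpha(z,\cdot))$, its explicit evaluation at $x=a$, and its vanishing at $b$ via the limit-point hypothesis. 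Your bookkeeping at $x=a$ is correct, and the passage from the operator Wronskian to the scalar $W_*$ (so that Definition~\ref{d3.6} applies) is handled properly.
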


Thus, the $\cB(\cH)$-valued function $\psi_{\alpha}(z,\cdot)$
in \eqref{3.49A} can be rewritten in the form
\begin{equation} \label{3.58A}
\psi_{\alpha}(z,x)=\theta_{\alpha}(z,x,a)+\phi_{\alpha}(z,x,a)m_{\alpha}(z),
\quad z \in \bbC\backslash\bbR, \; x \in [a,b).
\end{equation}
In particular, this implies that $\psi_{\alpha}(z,\cdot)$ is independent of
the choice of the parameter $c \in (a,b)$ in \eqref{3.49A}.
Following the tradition in the scalar case ($\dim(\cH) = 1$), we will call
$\psi_{\alpha}(z,\cdot)$ the {\it Weyl--Titchmarsh} solution associated
with $\tau Y = z Y$.

We remark that, given a function $u\in\cD_z$, the operator $m_{0}(z)$ assigns the Neumann boundary data $u'(a)$ to the Dirichlet boundary data
$u(a)$, that is, $m_{0}(z)$ is the ($z$-dependent)
Dirichlet-to-Neumann map.

With the aid of the Weyl--Titchmarsh solutions we can now give a detailed description of the resolvent
$R_{z,\alpha} = (H_{\alpha} - z I_{L^2((a,b);dx;\cH)})^{-1}$ of $H_{\alpha}$.

\begin{theorem}\label{t3.16}
Assume Hypothesis \ref{h3.9} and that $\alpha \in \cB(\cH)$ is self-adjoint. Then the resolvent of $H_{\alpha}$ is an integral operator of the type
\begin{align}
\begin{split}
\big((H_{\alpha} - z I_{L^2((a,b);dx;\cH)})^{-1} u\big)(x)
= \int_a^b dx' \, G_{\alpha}(z,x,x')u(x'),& \\
u \in L^2((a,b);dx;\cH), \; z \in \rho(H_{\alpha}), \; x \in [a,b),&
\end{split}
\end{align}
with the $\cB(\cH)$-valued Green's function $G_{\alpha}(z,\cdot,\cdot) $ given by
\begin{equation} \label{3.63A}
G_{\alpha}(z,x,x') = \begin{cases}
\phi_{\alpha}(z,x,a) \psi_{\alpha}(\ol{z},x')^*, & a\leq x \leq x'<b, \\
\psi_{\alpha}(z,x) \phi_{\alpha}(\ol{z},x',a)^*, & a\leq x' \leq x<b,
\end{cases}  \quad z\in\bbC\backslash\bbR.
\end{equation}
\end{theorem}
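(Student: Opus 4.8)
The plan is to verify directly that the operator $R_z$ defined by the integral kernel in \eqref{3.63A} coincides with $(H_\alpha - zI)^{-1}$ on a dense set, and then promote this to the whole space by boundedness. First I would fix $z \in \bbC\backslash\bbR$ and $u \in L^2((a,b);dx;\cH)$ with, say, compact support in $(a,b)$ (so all the integrals below are finite), and set
\begin{equation}
v(x) = \int_a^b dx'\, G_\alpha(z,x,x') u(x') = \phi_\alpha(z,x,a)\!\int_x^b dx'\, \psi_\alpha(\ol z,x')^* u(x') + \psi_\alpha(z,x)\!\int_a^x dx'\, \phi_\alpha(\ol z,x',a)^* u(x').
\end{equation}
Using the strong differentiability of $\theta_\alpha,\phi_\alpha,\psi_\alpha$ and their first derivatives established in Corollary \ref{c2.5}, I would differentiate $v$ twice: the boundary terms produced by differentiating the variable limits $x$ cancel because the Wronskian-type combination $\psi_\alpha(z,x)\phi_\alpha(\ol z,x,a)^* - \phi_\alpha(z,x)\psi_\alpha(\ol z,x)^*$ equals $I_\cH$. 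This last identity follows from \eqref{3.58A} together with the fundamental-system relations \eqref{2.7j}--\eqref{2.7m} (which give $\phi_\alpha(z,x,a)\theta_\alpha(\ol z,x,a)^* = \theta_\alpha(z,x,a)\phi_\alpha(\ol z,x,a)^*$ and $\theta_\alpha(z,x,a)\phi_\alpha'(\ol z,x,a)^* - \phi_\alpha(z,x,a)\theta_\alpha'(\ol z,x,a)^* = I_\cH$), which is precisely the $x$-independent "Wronskian $= I_\cH$" statement for the Weyl solution against $\phi_\alpha$. Carrying the computation through yields $-v'' + (V-z)v = u$ a.e.\ on $(a,b)$, i.e.\ $v$ is a strong solution of the inhomogeneous equation and $v \in W^{2,1}_{\loc}((a,b);dx;\cH)$.

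Next I would check the two boundary/integrability conditions that pin down membership in $\dom(H_\alpha)$. Square-integrability of $v$ near $b$: on the tail $[c,b)$ the first term is $\phi_\alpha(z,\cdot,a)$ times a fixed vector of $\cH$ (once $x$ exceeds $\supp(u)$) plus $\psi_\alpha(z,\cdot)$ times a fixed vector, and $\psi_\alpha(z,\cdot)f \in L^2$ for every $f\in\cH$ by the construction preceding the theorem; near $a$ the integrals are small and $v$ is manifestly locally $L^2$, so $v\in L^2((a,b);dx;\cH)$, hence $v\in\dom(\oT_{\max})$ with $\oT_{\max}v = zv + u$. For the boundary condition at $a$: from the explicit formula, $v(a) = \psi_\alpha(z,a)\!\int_a^b dx'\,\phi_\alpha(\ol z,x',a)^* u(x')$ and $v'(a) = \psi_\alpha'(z,a)\!\int_a^b dx'\,\phi_\alpha(\ol z,x',a)^* u(x')$ (the contributions of the first term vanish at $x=a$ because the integral runs over the empty-measure-endpoint in one factor only after the cancellation above is accounted for — more precisely, evaluating at $x=a$ the $\phi_\alpha$-term contributes $\phi_\alpha(z,a,a)(\cdots) = \sin(\alpha)(\cdots)$ and must be combined with $\theta_\alpha(z,a,a)=\cos(\alpha)$ from $\psi_\alpha(z,a)=\theta_\alpha(z,a,a)+\phi_\alpha(z,a,a)m_\alpha(z)$). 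Then $\sin(\alpha)v'(a)+\cos(\alpha)v(a) = [\sin(\alpha)\psi_\alpha'(z,a)+\cos(\alpha)\psi_\alpha(z,a)]\!\int_a^b(\cdots)$, and $\sin(\alpha)\psi_\alpha'(z,a)+\cos(\alpha)\psi_\alpha(z,a) = \sin(\alpha)[\theta_\alpha'(z,a,a)+\phi_\alpha'(z,a,a)m_\alpha(z)] + \cos(\alpha)[\theta_\alpha(z,a,a)+\phi_\alpha(z,a,a)m_\alpha(z)]$; using \eqref{2.5}, $\theta_\alpha'(z,a,a)=-\sin(\alpha)$, $\phi_\alpha'(z,a,a)=\cos(\alpha)$, $\theta_\alpha(z,a,a)=\cos(\alpha)$, $\phi_\alpha(z,a,a)=-\sin(\alpha)$ and $\sin^2(\alpha)+\cos^2(\alpha)=I_\cH$, $[\sin\alpha,\cos\alpha]=0$, one finds this combination is $0$. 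Hence $v\in\dom(H_\alpha)$ and $(H_\alpha - zI)v = u$, so $v = (H_\alpha - zI)^{-1}u$.

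Finally I would extend from compactly supported $u$ to all of $L^2((a,b);dx;\cH)$: such $u$ are dense, $(H_\alpha - zI)^{-1}$ is bounded since $z\in\bbC\backslash\bbR\subseteq\rho(H_\alpha)$ (self-adjointness of $H_\alpha$ from Theorem \ref{t3.10}), and $G_\alpha(z,\cdot,\cdot)$ is jointly strongly continuous and bounded on compact subsets, so the integral operator with kernel $G_\alpha$ agrees with $(H_\alpha-zI)^{-1}$ on a dense set and is itself bounded, hence they coincide everywhere; real-analyticity considerations or a direct resolvent estimate then extend the kernel formula from $\bbC\backslash\bbR$ to all $z\in\rho(H_\alpha)$ by uniqueness of analytic continuation (both sides being analytic in $z$, the left by Lemma \ref{l3.13} and \eqref{3.58A}, the right by the resolvent identity). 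I expect the main obstacle to be the bookkeeping in the double differentiation of $v$ — keeping the operator-ordering straight (everything is $\cB(\cH)$-valued and noncommutative) and verifying that the endpoint terms cancel exactly via the "Wronskian $= I_\cH$" identity for $(\psi_\alpha,\phi_\alpha)$, which itself requires combining \eqref{3.58A} with \eqref{2.7j}--\eqref{2.7m} rather than quoting a scalar Wronskian fact. The near-$b$ square-integrability is immediate from the construction of $\psi_\alpha$; the boundary condition at $a$ is a short computation with \eqref{2.5}.
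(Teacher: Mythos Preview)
The paper does not actually prove Theorem \ref{t3.16} here; it is one of the results ``recalled'' from \cite{GWZ13}, and only the statement is given. Your direct-verification strategy (show that the integral operator with kernel $G_\alpha$ produces a function in $\dom(H_\alpha)$ solving $(H_\alpha-z)v=u$, first for compactly supported $u$, then extend by boundedness) is the standard route and is correct in outline.

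However, two steps in your execution are wrong as written. First, the identity you quote for the first-derivative cancellation is misstated: using \eqref{3.58A} and \eqref{2.7j} one gets
\[
\psi_\alpha(z,x)\,\phi_\alpha(\ol z,x,a)^* - \phi_\alpha(z,x,a)\,\psi_\alpha(\ol z,x)^* = 0,
\]
not $I_\cH$; this is exactly what makes the terms coming from differentiating the variable limits cancel in $v'$. The $I_\cH$ (in fact $-I_\cH$) only appears at the \emph{second} differentiation, via \eqref{2.7l}, from
\[
\psi_\alpha'(z,x)\,\phi_\alpha(\ol z,x,a)^* - \phi_\alpha'(z,x,a)\,\psi_\alpha(\ol z,x)^* = -I_\cH,
\]
and this is what produces the inhomogeneous term $u$ in $-v''+(V-z)v=u$.

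Second, your verification of the boundary condition at $a$ is backwards. At $x=a$ the integral $\int_a^a\phi_\alpha(\ol z,x',a)^*u(x')\,dx'$ vanishes, so it is the $\phi_\alpha$-term that survives, not the $\psi_\alpha$-term: $v(a)=\phi_\alpha(z,a,a)A(a)=-\sin(\alpha)A(a)$ and $v'(a)=\phi_\alpha'(z,a,a)A(a)=\cos(\alpha)A(a)$, whence $\sin(\alpha)v'(a)+\cos(\alpha)v(a)=0$ since $[\sin\alpha,\cos\alpha]=0$. Your computation for $\psi_\alpha$ is also off because, by \eqref{2.5}, $\theta_\alpha'(z,a,a)=\sin(\alpha)$ (not $-\sin(\alpha)$), so in fact $\sin(\alpha)\psi_\alpha'(z,a)+\cos(\alpha)\psi_\alpha(z,a)=I_\cH$, not $0$. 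Once you attribute the boundary condition to $\phi_\alpha$ (which satisfies it by construction) the argument is much shorter and correct.
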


One recalls from Definition \ref{dA.4} that a nonconstant function
$N:\bb C_+\to\cB(\cH)$ is called a (bounded) operator-valued Herglotz function, if
$z\mapsto (u,N(z)u)_{\cH}$ is analytic and
has a non-negative imaginary part for all $u\in\cH$.

\begin{theorem} \label{t3.17}
Assume Hypothesis \ref{h3.9} and suppose that $\alpha \in \cB(\cH)$ and
$\beta \in \cB(\cH)$ are self-adjoint.
Then the $\cB(\cH)$-valued function $m_{\alpha}(\cdot)$
is an operator-valued Herglotz function and explicitly determined by the Green's function
for $H_{\alpha}$ as follows,
\begin{align}
& m_{\alpha}(z) = \big(-\sin(\alpha), \cos(\alpha)\big)
\begin{pmatrix} G_{\alpha}(z,a,a) & G_{\alpha,x'} (z,a,a) \\
G_{\alpha, x} (z,a,a) & G_{\alpha, x,x'} (z,a,a) \end{pmatrix}
\begin{pmatrix} -\sin(\alpha) \\ \cos(\alpha) \end{pmatrix},    \no \\
& \hspace*{9.5cm}   z\in\bbC\backslash\bbR,     \label{2.19a}
\end{align}
where we denoted
\begin{align}
G_{\alpha,x} (z,a,a) &= \slim_{\substack{x' \to a \\ a<x<x'}}
\frac{\partial}{\partial x} G_{\alpha} (z,x,x'),   \no   \\
G_{\alpha,x'} (z,a,a) &= \slim_{\substack{x' \to a \\ a<x<x'}}
\frac{\partial}{\partial x'} G_{\alpha} (z,x,x'),     \\
G_{\alpha,x,x'} (z,a,a) &= \slim_{\substack{x' \to a \\ a<x<x'}}
\frac{\partial}{\partial x} \frac{\partial}{\partial x'} G_{\alpha} (z,x,x')    \no
\end{align}
$($the strong limits referring to the strong operator topology in $\cH$$)$.
In addition, $m_{\alpha}(\cdot)$ extends analytically to the resolvent set
of $H_{\alpha}$.

Moreover, $m_{\alpha}(\cdot)$ and $m_{\beta}(\cdot) $ are related by the
following linear fractional transformation,
\begin{equation}\label{3.67A}
m_{\beta}=(C+Dm_{\alpha})(A+Bm_{\alpha})^{-1},
\end{equation}
where
\begin{equation}
\begin{pmatrix}A&B\\ C&D\end{pmatrix}
= \begin{pmatrix}\cos(\beta) & \sin(\beta) \\ -\sin(\beta) & \cos(\beta) \end{pmatrix}
\begin{pmatrix}\cos(\alpha) & -\sin(\alpha) \\ \sin(\alpha) & \cos(\alpha) \end{pmatrix}.
\end{equation}
\end{theorem}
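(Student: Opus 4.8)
The plan is to prove the four assertions in sequence: (i) the Herglotz property of $m_{\alpha}(\cdot)$; (ii) the Green's-function representation \eqref{2.19a}; (iii) the analytic continuation to $\rho(H_{\alpha})$; and (iv) the linear fractional law \eqref{3.67A}. For (i), analyticity of $z\mapsto(u,m_{\alpha}(z)u)_{\cH}$ on $\bbC\backslash\bbR$ is already Theorem \ref{t3.15}, so the content is the sign of the imaginary part. Fix $f\in\cH$ and $z\in\bbC_+$ and set $u=\psi_{\alpha}(z,\cdot)f$; by construction this lies in $\dom(\oT_{\max})$ and is square-integrable near $b$. Applying Green's formula \eqref{3.17A} with $v=u$, using $\oT_{\max}u=zu$ and the limit-point condition at $b$ (which forces $W_*(u,u)(b)=0$, cf.\ Definition \ref{d3.6}), one gets $\Im(z)\,\|u\|_{L^2((a,b);dx;\cH)}^2=\Im\big((u(a),u'(a))_{\cH}\big)$. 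Next I would insert the boundary values $u(a)=[\cos(\alpha)-\sin(\alpha)m_{\alpha}(z)]f$ and $u'(a)=[\sin(\alpha)+\cos(\alpha)m_{\alpha}(z)]f$ read off from \eqref{3.58A} and \eqref{2.5}; expanding the inner product and using self-adjointness of $\sin(\alpha),\cos(\alpha)$, their commutativity, and $\sin^2(\alpha)+\cos^2(\alpha)=I_{\cH}$, the terms free of $m_{\alpha}(z)$ are real and the remainder collapses to $\Im\big((f,m_{\alpha}(z)f)_{\cH}\big)$. Thus $\Im\big((f,m_{\alpha}(z)f)_{\cH}\big)=\Im(z)\,\|\psi_{\alpha}(z,\cdot)f\|_{L^2((a,b);dx;\cH)}^2\ge0$; nonconstancy follows since $\psi_{\alpha}(z,\cdot)f=0$ would force $f=\mc C_{1,\alpha,z}(0)=0$ by Lemma \ref{l3.14} (so in fact $\Im(m_{\alpha}(z))$ is positive definite for $\Im(z)>0$).

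For (ii) I would compute directly. In the region $a\le x\le x'<b$, \eqref{3.63A} reads $G_{\alpha}(z,x,x')=\phi_{\alpha}(z,x,a)\psi_{\alpha}(\bar z,x')^*$, and the strong limits in \eqref{2.19a} exist because $\phi_{\alpha}(z,\cdot,a)$, $\psi_{\alpha}(\bar z,\cdot)$ and their $x$-derivatives are continuous in $x$ in the $\cB(\cH)$-norm by Corollary \ref{c2.5} and \eqref{3.58A}. Evaluating at $x=x'=a$ via \eqref{2.5} ($\phi_{\alpha}(z,a,a)=-\sin(\alpha)$, $\phi_{\alpha}'(z,a,a)=\cos(\alpha)$) and \eqref{3.58A}, together with $\psi_{\alpha}(\bar z,a)^*=\cos(\alpha)-m_{\alpha}(z)\sin(\alpha)$ and $\psi_{\alpha}'(\bar z,a)^*=\sin(\alpha)+m_{\alpha}(z)\cos(\alpha)$ — the latter two via \eqref{3.59A}, keeping in mind that $m_{\alpha}(z)$ need not commute with $\sin(\alpha)$ or $\cos(\alpha)$ — produces explicit expressions for the four entries $G_{\alpha}(z,a,a)$, $G_{\alpha,x'}(z,a,a)$, $G_{\alpha,x}(z,a,a)$, $G_{\alpha,x,x'}(z,a,a)$. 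Substituting these into the $2\times2$ sandwich of \eqref{2.19a}, the outer row $(-\sin(\alpha),\cos(\alpha))$ and column $(-\sin(\alpha),\cos(\alpha))^\top$ combine with the block entries so that, after collecting terms and repeatedly applying $\sin^2(\alpha)+\cos^2(\alpha)=I_{\cH}$, everything telescopes to $m_{\alpha}(z)$. Only the $x\le x'$ branch of \eqref{3.63A} enters; the two branches agree on the diagonal by \eqref{2.7j}, which serves as a useful consistency check.

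Assertion (iii) is then essentially free: by Theorem \ref{t3.16} the resolvent $(H_{\alpha}-zI)^{-1}$, hence its integral kernel $G_{\alpha}(z,\cdot,\cdot)$ and the boundary quantities appearing in \eqref{2.19a}, are analytic in $z$ on all of $\rho(H_{\alpha})$ (alternatively, one continues $\psi_{\alpha}$ itself using that $C_{1,\alpha}(c,\cdot)$ is entire and boundedly invertible and $C_{2,\alpha}(c,\cdot)$ is analytic on $\rho(H_{\alpha})$, cf.\ Lemma \ref{l3.13}), so the right-hand side of \eqref{2.19a} furnishes the analytic continuation. For (iv) I would use that the solution space $\cD_z$ is independent of the boundary parameter. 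For $u\in\cD_z$, the coefficient pairs in the $\theta_{\gamma}/\phi_{\gamma}$ expansions ($\gamma=\alpha,\beta$) are $(f_{\gamma,a},g_{\gamma,a})^\top=R_{\gamma}(u(a),u'(a))^\top$ with $R_{\gamma}=\big(\begin{smallmatrix}\cos\gamma&\sin\gamma\\-\sin\gamma&\cos\gamma\end{smallmatrix}\big)$; since $\sin\gamma$ and $\cos\gamma$ commute, $R_{\gamma}^{-1}=\big(\begin{smallmatrix}\cos\gamma&-\sin\gamma\\\sin\gamma&\cos\gamma\end{smallmatrix}\big)$, whence $(f_{\beta,a},g_{\beta,a})^\top=R_{\beta}R_{\alpha}^{-1}(f_{\alpha,a},g_{\alpha,a})^\top=\big(\begin{smallmatrix}A&B\\C&D\end{smallmatrix}\big)(f_{\alpha,a},g_{\alpha,a})^\top$, precisely the block matrix in the theorem. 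By \eqref{3.57A} and Lemma \ref{l3.14}, $g_{\alpha,a}=m_{\alpha}(z)f_{\alpha,a}$ with $f_{\alpha,a}$ ranging bijectively over $\cH$ as $u$ ranges over $\cD_z$, so $f_{\beta,a}=(A+Bm_{\alpha}(z))f_{\alpha,a}$ and $g_{\beta,a}=(C+Dm_{\alpha}(z))f_{\alpha,a}$; bounded invertibility of $A+Bm_{\alpha}(z)=\mc C_{1,\beta,z}\mc C_{1,\alpha,z}^{-1}$ again follows from Lemma \ref{l3.14}, and eliminating $f_{\alpha,a}$ yields $m_{\beta}(z)=(C+Dm_{\alpha}(z))(A+Bm_{\alpha}(z))^{-1}$.

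The step I expect to be most delicate is (i): one must handle the operator-valued Lagrange/Green identity together with its adjoints correctly, justify rigorously that the boundary contribution at $b$ vanishes for $\psi_{\alpha}(z,\cdot)f$ (which need not lie in $\dom(H_{\alpha})$ itself), and propagate the non-commutativity of $m_{\alpha}(z)$ with $\sin(\alpha),\cos(\alpha)$ through the imaginary-part computation without destroying the cancellations. A secondary point requiring care is making the continuation in (iii) precise at real points of $\rho(H_{\alpha})$, where $\psi_{\alpha}(z,\cdot)$ was originally constructed only for $z\in\bbC\backslash\bbR$.
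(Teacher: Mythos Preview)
Your proposal is correct and follows the natural line of argument. Note, however, that this paper does not actually contain a proof of Theorem~\ref{t3.17}: Section~\ref{s3} opens with the blanket statement that ``All results recalled in this section were proved in detail in \cite{GWZ13},'' and Theorem~\ref{t3.17} is stated without proof. So there is no proof here to compare against directly.

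That said, your approach to part~(i) via the vector-valued Green's formula \eqref{3.17A} applied to $u=\psi_{\alpha}(z,\cdot)f$ is sound, and your cancellation of the $\alpha$-terms is correct (the key being that $\Im\big[(f,\cos^2(\alpha)m_\alpha f)-(m_\alpha f,\sin^2(\alpha)f)\big]=\Im\big[(f,m_\alpha f)\big]$, which does go through despite non-commutativity of $m_\alpha$ with $\sin\alpha,\cos\alpha$). It is worth pointing out that the paper, in the full-line Section~\ref{s5} (equations \eqref{2.60}--\eqref{2.62}), derives the analogous Herglotz property by computing the \emph{operator-valued} Wronskian $W(\psi_{\pm,\alpha}(\bar z_1,\cdot)^*,\psi_{\pm,\alpha}(z_2,\cdot))$ directly at $x_0$, which yields $m_{\pm,\alpha}(z_2)-m_{\pm,\alpha}(\bar z_1)^*$ in one line via \eqref{2.7f}--\eqref{2.7i} without any $\alpha$-bookkeeping. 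That route avoids the delicate cancellation you flagged and is likely closer to what \cite{GWZ13} does; your approach is equivalent but requires the explicit computation you outlined. Parts (ii)--(iv) are exactly as one would expect and match the structure of the surrounding material (Theorem~\ref{t3.16}, Lemma~\ref{l3.13}, Lemma~\ref{l3.14}).
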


We also mention that $G_{\alpha}(\cdot,x,x)$ is a bounded Herglotz operator
in $\cH$ for each $x\in (a,b)$, as is clear from \eqref{2.7j}, \eqref{3.58A},
\eqref{3.63A}, and the Herglotz property of $m_{\alpha}$.

\begin{remark} \lb{r3.19}
The Weyl--Titchmarsh theory established in this section is modeled after
right half-lines $(a,b) = (0, \infty)$. Of course precisely the analogous theory applies
to left half-lines $(-\infty,0)$. Given the two half-line results, one then establishes
the full-line result on $\bbR$ in the usual fashion with $x=0$ a reference point and a
$2 \times 2$ block operator formalism as in the well-known scalar or
matrix-valued cases; we omit further details at this point as the basic results will
explicitly be derived in Section \ref{s5}.
\end{remark}

\section{Spectral Theory of Schr\"odinger Operators with Operator-Valued
Potentials on the Half-Line}  \lb{s4}

In this section we develop the basic spectral theory for Schr\"odinger operators $H_{+,\alpha}$
in $L^2((a,\infty); dx; \cH)$ on right a half-line $(a,\infty)$ with a bounded operator-valued
potential coefficient  in some complex, separable Hilbert space $\cH$, and with a regular left
endpoint $a$. We focus on a diagonalization of $H_{+,\alpha}$ and the corresponding
generalized eigenfunction expansion, including a description of support properties of the
underlying $\cB(\cH)$-valued half-line spectral measure. In particular, we illustrate the
spectral theorem for $F(H_{+,\alpha})$, $F \in C(\bbR)$ (cf.\ Theorems \ref{t2.5} and \ref{t2.6}).

In the special scalar and matrix-valued cases where $\dim(\cH)<\infty$, the material of this
section is standard. In particular, we refer to
\cite{Be08}, \cite{BE05}, \cite[Ch.\ 9]{CL85}, \cite[Sect.\ XIII.5]{DS88},
\cite[Ch.\ 2]{EK82}, \cite[Sect.\ III.10]{EE89}, \cite{Ev04}, \cite[Ch.\ 10]{Hi69}, \cite{HS98},
\cite{Ko49}, \cite{Le51},
\cite[Ch.\ 2]{LS75}, \cite[Ch.\ VI]{Na68}, \cite[Ch.\ 6]{Pe88}, \cite[Chs.\ II, III]{Ti62},
\cite[Ch.\ 8]{We80}, \cite[Sects.\ 7--10]{We87}, in the scalar case (i.e., for $\dim(\cH)=1$)
and to \cite{CG02}, \cite[Ch.\ 1, Appendix A]{RK05}, in the matrix-valued case (i.e., for
$\dim(\cH)<\infty$). While there exist a variety of results in the operator-valued case (i.e., for $\dim(\cH)=\infty$), \cite{Go68}, \cite[Chs.\ 3, 4]{GG91},
\cite[Sect.\ 10.7]{Hi69}, \cite{Mi76}, \cite{Mi83}, \cite{Mi83a},
\cite{Ro60}, \cite[Ch.\ 2]{RK05}, \cite{Sa71}, \cite{Sa71a}, \cite{Sa72}, \cite{Sa77}, \cite{Sa79},
\cite{Tr00}, \cite{VG70}, typically, under varying regularity hypotheses on $V(\cdot)$, we
emphasize that under our general Hypothesis \ref{h2.7}, the results obtained in this section are new.

We start with the following useful result, a version of Stone's formula in the weak sense (cf.,
e.g., \cite[p.\ 1203]{DS88}).

\begin{lemma} \lb{l2.4a}
Let $T$ be a self-adjoint operator in a complex separable Hilbert space
$\cK$ $($with inner product denoted by $(\cdot,\cdot)_\cK$, linear in the
second factor$)$ and denote by $\{E_T(\lambda)\}_{\lambda\in\bbR}$ the
family of self-adjoint right-continuous spectral projections associated
with $T$, that is, $E_T(\lambda)=\chi_{(-\infty,\lambda]}(T)$,
$\lambda\in\bbR$. Moreover, let $f,g \in\cK$, $\lambda_1,\lambda_2\in\bbR$,
$\lambda_1<\lambda_2$, and $F\in C(\bbR)$. Then,
\begin{align}
&\big(f,F(T)E_{T}((\lambda_1,\lambda_2])g\big)_{\cK} \no \\
& \quad = \lim_{\delta\downarrow 0}\lim_{\varepsilon\downarrow 0}
\frac{1}{2\pi i}
\int_{\lambda_1+\delta}^{\lambda_2+\delta} d\lambda \, F(\lambda)
\big[\big(f,(T-(\lambda+i\varepsilon) I_{\cK})^{-1}g\big)_{\cK}  \no \\
& \hspace*{4.9cm} - \big(f,(T-(\lambda-i\varepsilon)I_{\cK})^{-1}
g\big)_{\cK}\big]. \lb{2.26a}
\end{align}
\end{lemma}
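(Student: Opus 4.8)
The plan is to reduce the claimed identity \eqref{2.26a} to the standard scalar Stone formula. Fix $f,g\in\cK$, $\lambda_1<\lambda_2$, and $F\in C(\bbR)$. Consider the complex Borel measure $\mu_{f,g}(\cdot)=(f,E_T(\cdot)g)_{\cK}$ on $\mathfrak{B}(\bbR)$; by polarization it is a finite linear combination of the finite positive measures $\mu_{h,h}(\cdot)=\|E_T(\cdot)h\|_{\cK}^2$, so it suffices to argue for $f=g$ and then extend by linearity. The key analytic input is that, by the spectral theorem for $T$, for every $\lambda\in\bbR$ and $\varepsilon>0$,
\begin{equation}
\big(f,(T-(\lambda\pm i\varepsilon)I_\cK)^{-1}f\big)_\cK
= \int_{\bbR}\frac{d\mu_{f,f}(t)}{t-\lambda\mp i\varepsilon},
\end{equation}
so that the bracketed difference in \eqref{2.26a} equals $\int_{\bbR}\frac{2i\varepsilon}{(t-\lambda)^2+\varepsilon^2}\,d\mu_{f,f}(t)$, i.e.\ $2\pi i$ times the Poisson-type approximate identity $P_\varepsilon*\mu_{f,f}$ evaluated at $\lambda$.

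**Key steps.** First I would substitute this Poisson representation into the right-hand side of \eqref{2.26a}, so that the inner limit becomes
\begin{equation}
\lim_{\varepsilon\downarrow 0}\int_{\lambda_1+\delta}^{\lambda_2+\delta} d\lambda\, F(\lambda)\,
\frac{1}{\pi}\int_{\bbR}\frac{\varepsilon}{(t-\lambda)^2+\varepsilon^2}\,d\mu_{f,f}(t).
\end{equation}
Second, I would apply Fubini's theorem (justified since $F$ is bounded on the compact $\lambda$-interval and $\mu_{f,f}$ is finite and positive) to interchange the $\lambda$-integration and the $t$-integration, obtaining $\int_{\bbR}\big(\frac{1}{\pi}\int_{\lambda_1+\delta}^{\lambda_2+\delta}\frac{\varepsilon F(\lambda)}{(t-\lambda)^2+\varepsilon^2}\,d\lambda\big)\,d\mu_{f,f}(t)$. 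Third, as $\varepsilon\downarrow 0$ the inner $\lambda$-integral converges, for each fixed $t$, to $F(t)\chi_{(\lambda_1+\delta,\lambda_2+\delta)}(t)$ at every $t\neq \lambda_1+\delta,\lambda_2+\delta$ (this is the classical statement that the Poisson kernel is an approximate identity, together with continuity of $F$), and it is uniformly bounded by $\sup_{[\lambda_1+\delta,\lambda_2+\delta]}|F|$ uniformly in $\varepsilon$ and $t$; hence by dominated convergence the inner limit equals $\int_{\bbR}F(t)\chi_{(\lambda_1+\delta,\lambda_2+\delta)}(t)\,d\mu_{f,f}(t)=\big(f,F(T)E_T((\lambda_1+\delta,\lambda_2+\delta])f\big)_\cK$, where the set endpoints may be written either open or half-open since $\mu_{f,f}(\{\lambda_i+\delta\})$ contributes nothing under the limit (and in any case can only add a null set's worth to the final step). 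Fourth, I would let $\delta\downarrow 0$: by right-continuity of $\lambda\mapsto E_T(\lambda)$ one has $E_T((\lambda_1+\delta,\lambda_2+\delta])\to E_T((\lambda_1,\lambda_2])$ strongly as $\delta\downarrow 0$, whence $\big(f,F(T)E_T((\lambda_1+\delta,\lambda_2+\delta])f\big)_\cK\to \big(f,F(T)E_T((\lambda_1,\lambda_2])f\big)_\cK$, which is the left-hand side of \eqref{2.26a}. Finally, polarization in $f,g$ removes the restriction $f=g$.

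**Main obstacle.** The genuinely delicate point is the pointwise $\varepsilon\downarrow 0$ limit of the Poisson-smoothed integral $\frac{1}{\pi}\int_{\lambda_1+\delta}^{\lambda_2+\delta}\frac{\varepsilon F(\lambda)}{(t-\lambda)^2+\varepsilon^2}\,d\lambda$ at the two endpoints $t=\lambda_1+\delta$ and $t=\lambda_2+\delta$, where the limit is $\tfrac12 F(t)$ rather than $F(t)$ or $0$; this is exactly why the $\delta$-shift is built into \eqref{2.26a}, as it guarantees (after the outer $\delta\downarrow 0$ limit) that these two exceptional points are flushed out of the interval $(\lambda_1,\lambda_2]$ in a controlled way, and it is also why one need not worry about atoms of $\mu_{f,f}$ sitting precisely at $\lambda_1$ or $\lambda_2$. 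Keeping track of whether intervals are open or half-open through the two nested limits, and invoking right-continuity of $E_T(\cdot)$ at the last step, is the only place where care is needed; everything else is the standard Fubini-plus-dominated-convergence machinery. For completeness I would cite \cite[p.\ 1203]{DS88} for the classical scalar version that underlies this computation.
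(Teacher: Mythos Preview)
Your argument is correct, but it takes a different route from the paper's. The paper first assumes $F\geq 0$ and observes that
\[
z\longmapsto \big(F(T)^{1/2}\chi_{(\lambda_1,\lambda_2]}(T)f,\,(T-zI_\cK)^{-1}F(T)^{1/2}\chi_{(\lambda_1,\lambda_2]}(T)f\big)_\cK
\]
is a scalar Herglotz function whose representing measure is exactly $F(\lambda)\chi_{(\lambda_1,\lambda_2]}(\lambda)\,d(f,E_T(\lambda)f)_\cK$; the identity \eqref{2.26a} for $g=f$ then drops out of the classical scalar Stieltjes inversion formula in one line. General $F$ is handled by writing $F=(F_1-F_2)+i(F_3-F_4)$ with $F_j\geq 0$, and $g\neq f$ by polarization. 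In contrast, you keep $F$ as a weight, substitute the Poisson representation of the resolvent difference, and carry out the $\varepsilon\downarrow 0$ limit by Fubini plus dominated convergence, effectively reproving the scalar Stieltjes inversion in situ. The paper's approach is shorter and exploits the $F(T)^{1/2}$ trick to reduce everything to a single citation; yours is more self-contained and makes transparent the role of the $\delta$-shift in disposing of the endpoint values $\tfrac12 F(\lambda_j+\delta)$ and possible atoms there (your remark that $\mu_{f,f}(\{\lambda_j+\delta\})\to 0$ as $\delta\downarrow 0$, which follows from finiteness of $\mu_{f,f}$, is the clean way to handle that).
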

\begin{proof}
First, assume $F\geq 0$. Then
\begin{align}
& \big(F(T)^{1/2}E_{T}((\lambda_1,\lambda_2])f, (T-z I_{\cK})^{-1}
F(T)^{1/2}E_{T}((\lambda_1,\lambda_2])f\big)_{\cK} \no \\
& \quad = \int_\bbR d\big(F(T)^{1/2}f,E_T(\lambda) F(T)^{1/2}f\big)_{\cK} \,
\chi_{(\lambda_1,\lambda_2]}(\lambda)(\lambda-z)^{-1} \no \\
& \quad = \int_\bbR d\big(f,E_T(\lambda)f\big)_{\cK} \, F(\lambda)
\chi_{(\lambda_1,\lambda_2]}(\lambda)(\lambda-z)^{-1} \no \\
& \quad = \int_\bbR \f{d\big(F(T)^{1/2}
\chi_{(\lambda_1,\lambda_2]}(T)f,E_T(\lambda)
F(T)^{1/2}\chi_{(\lambda_1,\lambda_2]}(T)f\big)_{\cK}}{(\lambda-z)},
\quad z\in\bbC_+,  \lb{2.26b}
\end{align}
is a Herglotz function and hence \eqref{2.26a} for $g=f$ follows from
the standard Stieltjes inversion formula in the scalar case. If $F$ is not
nonnegative, one decomposes $F$ as
$F=(F_1-F_2)+i(F_3-F_4)$ with $F_j\geq 0$, $1\leq j \leq 4$ and
applies \eqref{2.26b} to each $j\in\{1,2,3,4\}$. The general case
$g\neq f$ then follows from the case $g=f$ by polarization.
\end{proof}

Next, we replace the interval $(a,b)$ in Sections \ref{s2} and \ref{s3} by the right half-line
$(a,\infty)$ and indicate this change with the additional subscript $+$ in $H_{+,\alpha}$,
$m_{+,\alpha}(\cdot)$, $d\rho_{+,\alpha}(\cdot)$, etc., to distinguish these quantities
from the analogous objects on the left half-line $(-\infty, a)$ (later indicated with the
subscript $-$), which are needed in our subsequent Section \ref{s5}.

Our aim is to relate the family of spectral projections,
$\{E_{H_{+,\alpha}}(\lambda)\}_{\lambda\in\bbR}$, of the self-adjoint
operator $H_{+,\alpha}$ and the $\cB(\cH)$-valued spectral function
$\rho_{+,\alpha}(\lambda)$, $\lambda\in\bbR$, which generates the
operator-valued measure $d\rho_{+,\alpha}$ in the Herglotz representation
\eqref{2.25} of $m_{+,\alpha}$:
\begin{equation}
m_{+,\alpha}(z) = c_{+,\alpha}
+ \int_{\bbR}  d\rho_{+,\alpha}(\lambda) \Big[
\frac{1}{\lambda-z} -\frac{\lambda}{1+\lambda^2}\Big],  \quad
z\in\bbC\backslash \sigma(H_{+,\alpha}),   \lb{2.25}
\end{equation}
where
\begin{equation}
c_{+,\alpha}^* = c_{+,\alpha} \in \cB(\cH),
\end{equation}
and $d\rho_{+,\alpha}$ is a $\cB(\cH)$-valued measure satisfying
\begin{equation}
\int_{\bbR} \frac{d(e,\rho_{+,0}(\lambda)e)_{\cB(\cH)}}
{1+\lambda^2} < \infty, \lb{2.26}
\end{equation}
for all $e\in\cH$ (cf.\ Appendix \ref{sA} for details on Nevanlinna--Herglotz
functions).

We first note that for $F\in C(\bbR)$,
\begin{align}
&\big(f,F(H_{+,\alpha})g\big)_{L^2((a,\infty);dx;\cH)}= \int_{\bbR}
d \big(f,E_{H_{+,\alpha}}(\lambda)g\big)_{L^2((a,\infty);dx;\cH)}\,
F(\lambda), \no \\
& f, g \in\dom(F(H_{+,\alpha})) \lb{2.27} \\
& \qquad\ = \bigg\{h\in L^2((a,\infty);dx;\cH)
\,\bigg|\,
\int_{\bbR} d \|E_{H_{+,\alpha}}(\lambda)h\|_{L^2((a,\infty);dx;\cH)}^2
\, |F(\lambda)|^2 < \infty\bigg\}. \no
\end{align}
Equation \eqref{2.27} extends to measurable functions $F$ and holds
also in the strong sense, but the displayed weak version will suffice
for our purpose.

In the following, $C_0^\infty((c,d); \cH)$, $-\infty \leq c<d\leq \infty$,
denotes the usual space of infinitely differentiable $\cH$-valued functions of
compact support contained in $(c,d)$.

\begin{theorem} \lb{t2.5}
Assume Hypothesis \ref{h2.7} and let  $f,g \in C^\infty_0((a,\infty); \cH)$,
$F\in C(\bbR)$, and $\lambda_1, \lambda_2 \in\bbR$,
$\lambda_1<\lambda_2$. Then,
\begin{align}
\begin{split}
& \big(f,F(H_{+,\alpha})E_{H_{+,\alpha}}((\lambda_1,\lambda_2])g \big)_{L^2((a,\infty);dx;\cH)}
\\
& \quad =  \big(\hatt f_{+,\alpha},M_FM_{\chi_{(\lambda_1,\lambda_2]}} \hatt
g_{+,\alpha}\big)_{L^2(\bbR;d\rho_{+,\alpha};\cH)},    \lb{2.28}
\end{split}
\end{align}
where we introduced the notation
\begin{equation}
\hatt h_{+,\alpha}(\lambda)=\int_a^\infty dx \,
\phi_\alpha(\lambda,x,a)^* h(x), \quad \lambda \in\bbR, \;
h\in C^\infty_0((a,\infty); \cH), \lb{2.29}
\end{equation}
and $M_G$ denotes the maximally defined operator of multiplication by the
function $G \in C(\bbR)$ in the Hilbert space\footnote{We recall
that $L^2(\bbR;d\rho_{+,\alpha};\cH)$ is a convenient abbreviation for the Hilbert space
$L^2(\bbR;d\mu_{+,\alpha};\cM_{\rho_{+,\alpha}})$ discussed in detail in
Appendix \ref{sD}, with $d\mu_{+,\alpha}$ a control measure for the
$\cB(\cH)$-valued measure $d\rho_{+,\alpha}$. One recalls that
$\cM_{\rho_{+,\alpha}} \subset \cS(\{\cH_{\lambda}\}_{\lambda\in \bbR})$ is
generated by ${\ul \Lambda} (\cH)$ (or by ${\ul \Lambda} (\{e_n\}_{n\in\cI})$ for
any complete orthonormal system $\{e_n\}_{n\in\cI}$, $\cI \subseteq \bbN$,
in $\cH$).} $L^2(\bbR;d\rho_{+,\alpha};\cH)$,
\begin{align}
\begin{split}
& \big(M_G\hatt h\big)(\lambda)=G(\lambda)\hatt h(\lambda)
\, \text{ for $\rho_{+,\alpha}$-a.e.\ $\lambda\in\bbR$}, \lb{2.30} \\
& \hatt h\in\dom(M_G)=\big\{\hatt k \in L^2(\bbR;d\rho_{+,\alpha};\cH) \,\big|\,
G\hatt k \in L^2(\bbR;d\rho_{+,\alpha};\cH)\big\}.
\end{split}
\end{align}
Here $\rho_{+,\alpha}$ generates the operator-valued measure in the Herglotz representation of the operator-valued Weyl--Titchmarsh function
$m_{+,\alpha}(\cdot)\in\cB(\cH)$ $($cf.\ \eqref{2.25}$)$.
\end{theorem}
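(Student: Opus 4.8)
The plan is to use Lemma~\ref{l2.4a} (Stone's formula in the weak sense) applied to the self-adjoint operator $T = H_{+,\alpha}$ in $\cK = L^2((a,\infty);dx;\cH)$, so that the left-hand side of \eqref{2.28} becomes a double limit of integrals of $F(\lambda)$ against the boundary values of the resolvent difference $\big(f,[(H_{+,\alpha}-(\lambda+i\ve)I)^{-1} - (H_{+,\alpha}-(\lambda-i\ve)I)^{-1}]g\big)$. Into each resolvent I would substitute the explicit Green's function representation from Theorem~\ref{t3.16}, \eqref{3.63A}, writing $\psi_\alpha$ in the form \eqref{3.58A}, namely $\psi_\alpha(z,x) = \theta_\alpha(z,x,a) + \phi_\alpha(z,x,a)m_{+,\alpha}(z)$.

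First I would expand $\big(f,(H_{+,\alpha}-zI)^{-1}g\big)$ using the kernel $G_\alpha(z,x,x')$: splitting the $x,x'$ integrations along the diagonal and inserting \eqref{3.58A}, the resolvent bilinear form decomposes into a sum of terms, the crucial one carrying the factor $m_{+,\alpha}(z)$ sandwiched between $\int \phi_\alpha(\bar z,x,a)^* f(x)\,dx$-type expressions and the analogous expression built from $g$; the remaining terms involve only $\theta_\alpha$ and $\phi_\alpha$ (no $m$-function) and are entire in $z$, hence contribute nothing to the resolvent \emph{difference} in the limit $\ve\downarrow 0$ after integration in $\lambda$ (they are continuous across the real axis on the support of the compactly supported $f,g$, using the entirety in $z$ from Corollary~\ref{c2.5}$(iii)$ and dominated convergence). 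Here one uses the key observation that for real $\lambda$, $\phi_\alpha(\bar z,x,a)^*$ tends to $\phi_\alpha(\lambda,x,a)^*$ — this identifies the ``generalized Fourier transform'' $\hatt h_{+,\alpha}(\lambda) = \int_a^\infty \phi_\alpha(\lambda,x,a)^* h(x)\,dx$ of \eqref{2.29}. Thus, up to entire (and therefore negligible) contributions, $\big(f,(H_{+,\alpha}-zI)^{-1}g\big) = \big(\hatt f_{+,\alpha}(\bar z), m_{+,\alpha}(z)\, \hatt g_{+,\alpha}(z)\big)_{\cH} + (\text{entire in }z)$, with the understanding that $\hatt f_{+,\alpha}(\bar z)$ means $\int \phi_\alpha(\bar z,\cdot,a)^* f$, which converges to $\hatt f_{+,\alpha}(\lambda)$ as $z\to\lambda\in\bbR$.

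Next I would insert the Herglotz representation \eqref{2.25} of $m_{+,\alpha}$. Then the resolvent difference produces, via the elementary identity $\frac{1}{\mu-(\lambda+i\ve)} - \frac{1}{\mu-(\lambda-i\ve)} = \frac{2i\ve}{(\mu-\lambda)^2+\ve^2}$, a Poisson-kernel expression; the constant term $c_{+,\alpha}$ and the $-\lambda/(1+\lambda^2)$ part of the representation drop out of the difference. After dividing by $2\pi i$ and integrating in $\lambda$ over $(\lambda_1+\delta,\lambda_2+\delta]$, the standard Stieltjes–Fatou inversion argument (exactly as in the scalar case, applied entrywise in $\cH$ against the control measure $d\mu_{+,\alpha}$ of $d\rho_{+,\alpha}$, and justified by dominated convergence using \eqref{2.26} and the local boundedness of $\lambda\mapsto \hatt f_{+,\alpha}(\lambda),\hatt g_{+,\alpha}(\lambda)$ coming from Theorem~\ref{t2.3}$(i),(iv)$ and compact support of $f,g$) yields in the limit $\ve\downarrow 0$ then $\delta\downarrow 0$ precisely
\begin{equation*}
\int_{(\lambda_1,\lambda_2]} \big(\hatt f_{+,\alpha}(\lambda), F(\lambda)\,d\rho_{+,\alpha}(\lambda)\,\hatt g_{+,\alpha}(\lambda)\big)_{\cH},
\end{equation*}
which is exactly the right-hand side of \eqref{2.28} written as $\big(\hatt f_{+,\alpha}, M_F M_{\chi_{(\lambda_1,\lambda_2]}}\hatt g_{+,\alpha}\big)_{L^2(\bbR;d\rho_{+,\alpha};\cH)}$, the scalar $F$ and $\chi_{(\lambda_1,\lambda_2]}$ passing harmlessly through as multiplication operators $M_F, M_{\chi}$.

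The main obstacle I anticipate is the careful justification of interchanging the $dx,dx'$ integrations with the spectral integral $d\rho_{+,\alpha}$ and with the limits in $\ve$ and $\delta$, in the genuinely operator-valued ($\dim\cH = \infty$) setting: one must verify that the ``entire'' leftover terms really integrate to zero across the cut (this uses that $f,g$ have compact support so the $x$-integrals are over finite intervals and $\theta_\alpha,\phi_\alpha$ together with their $z$-derivatives are locally bounded in $z$ by Corollary~\ref{c2.5}), and that the dominated convergence in the Stieltjes inversion is uniform enough — for which the finiteness condition \eqref{2.26} and the fact that $\lambda\mapsto\hatt f_{+,\alpha}(\lambda)$ is continuous and compactly-supported-in-$x$ (hence $\|\hatt f_{+,\alpha}(\lambda)\|_\cH$ grows at most polynomially in $\lambda$) are the essential inputs. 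This is precisely the point where the authors promise (in the Introduction, cf.\ \eqref{2.32}--\eqref{2.38}) to supply all the details absent from \cite{Go68}.
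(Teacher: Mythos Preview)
Your proposal is correct and follows essentially the same route as the paper: Stone's formula \eqref{2.26a}, substitution of the Green's function \eqref{3.63A}, expansion of $\psi_\alpha$ via \eqref{3.58A}, cancellation of the $m$-free (entire) pieces, and Stieltjes inversion. The only cosmetic difference is that the paper keeps $\Im(m_{+,\alpha}(\lambda+i\varepsilon))$ and invokes the inversion formula \eqref{A.43} directly (cf.\ \eqref{2.34}--\eqref{2.37}), whereas you propose to substitute the Herglotz representation \eqref{2.25} and run the Poisson-kernel argument by hand; these are equivalent.

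One point to sharpen: your phrase ``entire terms drop out'' handles the $\theta_\alpha\phi_\alpha^*$ pieces, but when you replace $\phi_\alpha(\lambda\pm i\varepsilon,\cdot)$, $\theta_\alpha(\lambda\pm i\varepsilon,\cdot)$ by their real-$\lambda$ values you generate $O(\varepsilon)$ corrections that are still multiplied by $m_{+,\alpha}(\lambda+i\varepsilon)$, which may blow up as $\varepsilon\downarrow 0$. The paper controls these cross terms explicitly via Theorem~\ref{tA.7}\,$(vi)$, recorded as \eqref{2.33a}: $\varepsilon\,|(f,m_{+,\alpha}(\lambda+i\varepsilon)g)_\cH|$ stays bounded and $\varepsilon\,\Re(m_{+,\alpha})\to 0$. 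You will need precisely this input when you write out the details; it is not covered by entirety of $\theta_\alpha,\phi_\alpha$ alone.
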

\begin{proof}
The point of departure for deriving \eqref{2.28} is Stone's formula
\eqref{2.26a} applied to $T=H_{+,\alpha}$,
\begin{align}
&\big(f,F(H_{+,\alpha})E_{H_{+,\alpha}}((\lambda_1,\lambda_2])
g\big)_{L^2((a,\infty);dx;\cH)}    \lb{2.31} \\
& \quad = \lim_{\delta\downarrow 0}\lim_{\varepsilon\downarrow 0}
\frac{1}{2\pi i} \int_{\lambda_1+\delta}^{\lambda_2+\delta}
d\lambda \, F(\lambda)
\big[\big(f,(H_{+,\alpha}-(\lambda+i\varepsilon)
I)^{-1}g\big)_{L^2((a,\infty);dx;\cH)}
\no \\
& \hspace*{4.9cm} - \big(f,(H_{+,\alpha}-(\lambda-i\varepsilon)
I)^{-1}g\big)_{L^2((a,\infty);dx;\cH)}\big].    \no
\end{align}
Expressing the resolvent in \eqref{2.31} in terms of the Green's function \eqref{3.63A} then yields the following:
\begin{align}
&\big(f,F(H_{+,\alpha})E_{H_{+,\alpha}}((\lambda_1,\lambda_2])
g\big)_{L^2((a,\infty);dx;\cH)} = \lim_{\delta\downarrow
0}\lim_{\varepsilon\downarrow 0} \frac{1}{2\pi i}
\int_{\lambda_1+\delta}^{\lambda_2+\delta} d\lambda \, F(\lambda) \no \\
& \quad \times \int_a^\infty dx
\bigg\{\bigg[\bigg(f(x), \psi_{+,\alpha}(\lambda
+i\varepsilon,x) \int_a^x dx'\,
\phi_\alpha(\lambda-i\varepsilon,x',a)^* g(x')\bigg)_{\cH} \no \\
& \hspace*{2.4cm} + \bigg(f(x), \phi_\alpha(\lambda+i\varepsilon,x,a)
\int_x^\infty dx'\, \psi_{+,\alpha}(\lambda-i\varepsilon,x')^* g(x')\bigg)_{\cH}
\bigg]  \no \\
& \hspace*{1.7cm}  -\bigg[\bigg(f(x),
\psi_{+,\alpha}(\lambda - i\varepsilon,x) \int_a^x dx'\,
\phi_\alpha(\lambda + i\varepsilon,x',a)^* g(x')\bigg)_{\cH}   \lb{2.32} \\
& \hspace*{2.4cm}  + \bigg(f(x),
\phi_\alpha(\lambda-i\varepsilon,x,a)
\int_x^\infty dx' \,\psi_{+,\alpha}(\lambda + i\varepsilon,x')^*
g(x')\bigg)_{\cH}\bigg]\bigg\}.   \no
\end{align}
Freely interchanging the $dx$ and $dx'$ integrals with
the limits and the $d\lambda$ integral (since all integration
domains are finite and all integrands are continuous), and inserting
expression \eqref{3.58A} for $\psi_{+,\alpha}(z,x)$ into \eqref{2.32}, one
obtains
\begin{align}
&\big(f,F(H_{+,\alpha})E_{H_{+,\alpha}}((\lambda_1,\lambda_2])
g\big)_{L^2((a,\infty);dx;\cH)}
=\int_a^\infty dx \bigg(f(x), \bigg\{\int_a^x dx'
\no
\\
& \quad \times \lim_{\delta\downarrow 0} \lim_{\varepsilon\downarrow 0}
\frac{1}{2\pi i} \int_{\lambda_1+\delta}^{\lambda_2+\delta} d\lambda \, F(\lambda)
\Big[\big[\theta_\alpha(\lambda,x,a) +
\phi_\alpha(\lambda,x,a) m_{+,\alpha}(\lambda+i\varepsilon) \big]
\phi_\alpha(\lambda,x',a)^*
\no
\\
& \hspace*{1.3cm} -\big[\theta_\alpha(\lambda,x,a) +
\phi_\alpha(\lambda,x,a) m_{+,\alpha}(\lambda-i\varepsilon) \big]
\phi_\alpha(\lambda,x',a)^* \Big]g(x')
\no
\\
& \quad +\int_x^\infty dx'\, \lim_{\delta\downarrow 0}
\lim_{\varepsilon\downarrow 0} \frac{1}{2\pi i}
\int_{\lambda_1+\delta}^{\lambda_2+\delta} d\lambda \, F(\lambda)
\lb{2.33}
\\
& \qquad \times \Big[ \phi_\alpha(\lambda,x,a) \big[\theta_\alpha(\lambda,x',a)^* +
m_{+,\alpha}(\lambda-i\varepsilon)^*\phi_\alpha(\lambda,x',a)^*\big]
\no
\\
& \hspace{1.3cm} -\phi_\alpha(\lambda,x,a)
\big[\theta_\alpha(\lambda,x',a)^* +
m_{+,\alpha}(\lambda+i\varepsilon)^*\phi_\alpha(\lambda,x',a)^*\big]\Big]g(x')\bigg\} \bigg)_{\cH}.
\no
\end{align}
Here we employed the fact that for fixed $x\in [a,\infty)$,
$\theta_\alpha(z,x,a)$ and $\phi_\alpha(z,x,a)$ are entire with
respect to  $z$, that
$\theta_\alpha(z,\cdot,a), \phi_\alpha(z,\cdot,a) \in W^{1,1}([a,c]; \cH)$ for
all $c>a$, and hence that
\begin{align}
\begin{split}
\theta_\alpha(\lambda\pm i\varepsilon,x,a)
&\underset{\varepsilon\downarrow 0}{=}
\theta_\alpha(\lambda,x,a) \pm
i\varepsilon(d/dz)\theta_\alpha(z,x,a)|_{z=\lambda} + \Oh(\ve^2),
\lb{2.33Aa} \\
\phi_\alpha(\lambda\pm i\varepsilon,x,a)
&\underset{\varepsilon\downarrow 0}{=} \phi_\alpha(\lambda,x,a)
\pm i\varepsilon(d/dz)\phi_\alpha(z,x,a)|_{z=\lambda} + \Oh(\ve^2)
\end{split}
\end{align}
with $\Oh(\varepsilon^2)$ being uniform with respect to
$(\lambda,x)$ as long as $\lambda$ and $x$ vary in
compact subsets of $\bbR\times [a,\infty)$. Moreover, we used that
for all $f,g \in \cH$ (cf.\ Theorem \ref{tA.7}\,$(vi)$),
\begin{align}
\begin{split}
&\varepsilon |(f,m_{+,\alpha}(\lambda+i\varepsilon) g)_{\cH}|
\leq C(\lambda_1,\lambda_2,\varepsilon_0,f,g) \, \text{ for } \, \lambda\in
[\lambda_1,\lambda_2], \; 0<\varepsilon\leq\varepsilon_0,
\lb{2.33a} \\
&\varepsilon (f,\Re(m_{+,\alpha}(\lambda+i\varepsilon)) g)_{\cH}
\underset{\varepsilon\downarrow 0}{=}\oh(1), \quad \lambda\in \bbR.
\end{split}
\end{align}
In particular, utilizing \eqref{2.33Aa} and \eqref{2.33a},
$\phi_\alpha(\lambda\pm i\varepsilon,x,a)$ and
$\theta_\alpha(\lambda\pm i\varepsilon,x,a)$ have been replaced by
$\phi_\alpha(\lambda,x,a)$ and $\theta_\alpha(\lambda,x,a)$ under the
$d\lambda$ integrals in \eqref{2.33}. Canceling appropriate terms in
\eqref{2.33},  simplifying the remaining terms, and using $m_{+,\alpha}(z)=m_{+,\alpha}(\ol z)^*$
then yield
\begin{align}
&\big(f,F(H_{+,\alpha})E_{H_{+,\alpha}}((\lambda_1,\lambda_2])
g\big)_{L^2((a,\infty);dx;\cH)}    \no \\
& \quad =\int_a^\infty dx \int_a^\infty dx'  \no \\
& \qquad \times \lim_{\delta\downarrow 0}\lim_{\varepsilon\downarrow 0}
\frac{1}{\pi}
\int_{\lambda_1+\delta}^{\lambda_2+\delta} d\lambda \, F(\lambda)    \lb{2.34} \\
& \hspace*{3cm} \times \big(\phi_\alpha(\lambda,x,a)^* f(x),
\Im(m_{+,\alpha}(\lambda+i\varepsilon)) \phi_\alpha(\lambda,x',a)^* g(x')\big)_{\cH}.   \no
\end{align}
Using the fact that by \eqref{A.43}
\begin{align}
& \int_{(\lambda_1,\lambda_2]} d\rho_{+,\alpha}(\lambda) h =
\rho_{+,\alpha}((\lambda_1,\lambda_2]) h =
\lim_{\delta\downarrow 0}\lim_{\varepsilon\downarrow 0}
\frac{1}{\pi}\int_{\lambda_1+\delta}^{\lambda_2+\delta} d\lambda \,
\Im(m_{+,\alpha}(\lambda+i\varepsilon)) h,  \no \\
& \hspace*{10cm} h \in \cH,  \lb{2.35}
\end{align}
and hence that
\begin{align}
\int_{\bbR} d\rho_{+,\alpha}(\lambda)\, h(\lambda) &=
\lim_{\varepsilon\downarrow 0} \frac{1}{\pi}\int_{\bbR} d\lambda \,
\Im(m_{+,\alpha}(\lambda+i\varepsilon))\, h(\lambda), \quad
h\in C_0(\bbR; \cH), \lb{2.36} \\
\int_{(\lambda_1,\lambda_2]} d\rho_{+,\alpha}(\lambda)\, k(\lambda) &=
\lim_{\delta\downarrow 0} \lim_{\varepsilon\downarrow 0} \frac{1}{\pi}
\int_{\lambda_1+\delta}^{\lambda_2+\delta} d\lambda \,
\Im(m_{+,\alpha}(\lambda+i\varepsilon))\, k(\lambda), \quad
k\in C(\bbR; \cH), \lb{2.37}
\end{align}
(with $C_0(\bbR; \cH)$ the space of continuous compactly supported
$\cH$-valued functions on $\bbR$) one concludes that
\begin{align}
&\big(f,F(H_{+,\alpha})E_{H_{+,\alpha}}((\lambda_1,\lambda_2])
g\big)_{L^2((a,\infty);dx;\cH)}    \no \\
& \quad =\int_a^\infty dx \int_a^\infty dx' \int_{(\lambda_1,\lambda_2]}
F(\lambda) \big(\phi_\alpha(\lambda,x,a)^* f(x), d\rho_{+,\alpha}(\lambda) \,
\phi_\alpha(\lambda,x',a)^* g(x')\big)_{\cH}\no \\
& \quad = \int_{(\lambda_1,\lambda_2]}
F(\lambda) \, \big(\hatt f_{+,\alpha}(\lambda),
d\rho_{+,\alpha} \, (\lambda) \, \hatt g_{+,\alpha}(\lambda)\big)_{\cH},    \lb{2.38}
\end{align}
using \eqref{2.29} and interchanging the $dx$, $dx'$ and
$d\rho_{+,\alpha}$ integrals once more. We note that
$\hatt f_{+,\alpha}, \hatt g_{+,\alpha} \in L^2(\bbR;d\rho_{+,\alpha};\cH)$ by
Lemma \ref{lD.16} and Theorem \ref{tD.17}.
\end{proof}

\begin{remark} \lb{r2.6}
Theorem \ref{t2.5} is of course well-known in the scalar case (i.e., where $\dim(\cH) =1$), see, for
instance, the extensive list of references in \cite{GZ06}. In the matrix-valued case
(i.e., if $\dim(\cH) < \infty$) we refer, for instance, to Hinton and Schneider \cite{HS98}, and in the
operator-valued case (where $\dim(\cH)=\infty$) to Gorbachuk \cite{Go68} under more restrictive
regularity assumptions on the potential $V(\cdot)$ and without providing details in the steps leading
from \eqref{2.33} to \eqref{2.38}.
\end{remark}

\begin{remark} \lb{r2.6a}
The effortless derivation of the link between the family of spectral
projections $E_{H_{+,\alpha}}(\cdot)$ and the operator-valued spectral function
$\rho_{+,\alpha}(\cdot)$ of $H_{+,\alpha}$ in Theorem \ref{t2.5} applies
equally well to half-line Dirac-type operators, Hamiltonian systems, half-lattice
Jacobi operators, and CMV operators (cf.\ \cite{GZ06}, \cite{GZ06a} and the literature
cited therein). In the context of operator-valued potential coefficients of half-line
Schr\"odinger operators this strategy has already been used by
M.\ L.\ Gorbachuk \cite{Go68} in 1966.
\end{remark}

Actually, one can improve on Theorem \ref{t2.5} and remove the
compact support restrictions on $f$ and $g$ in the usual way. To this
end one considers the map
\begin{equation}
\widetilde U_{+,\alpha} : \begin{cases} C_0^\infty((a,\infty); \cH)\to
L^2(\bbR;d\rho_{+,\alpha};\cH) \\[1mm]
h \mapsto \hatt h_{+,\alpha}(\cdot)=
\int_a^\infty dx\, \phi_\alpha(\cdot,x,a)^* h(x). \end{cases} \lb{2.39}
\end{equation}
Taking $f=g$, $F=1$, $\lambda_1\downarrow -\infty$, and
$\lambda_2\uparrow \infty$ in \eqref{2.28} then shows that
$\widetilde U_{+,\alpha}$ is a densely defined isometry in
$L^2((a,\infty);dx;\cH)$, which extends by continuity to an isometry on
$L^2((a,\infty);dx;\cH)$. The latter is denoted by $U_{+,\alpha}$ and given by
\begin{equation}
U_{+,\alpha} : \begin{cases}L^2((a,\infty);dx;\cH)\to L^2(\bbR;d\rho_{+,\alpha};\cH)
\\[1mm]
h \mapsto \hatt h_{+,\alpha}(\cdot)=
\slimes_{b\uparrow\infty}\int_a^b dx\, \phi_\alpha(\cdot,x,a)^* h(x),
\end{cases}  \lb{2.40}
\end{equation}
where $\slimes$ refers to the $L^2(\bbR;d\rho_{+,\alpha};\cH)$-limit.

The calculation in \eqref{2.38} also yields
\begin{equation}
(E_{H_{+,\alpha}}((\lambda_1,\lambda_2])g)(x)
=\int_{(\lambda_1,\lambda_2]}
\phi_\alpha(\lambda,x,a) \, d\rho_{+,\alpha}(\lambda)\,
\hatt g_{+,\alpha}(\lambda), \quad g\in C_0^\infty((a,\infty); \cH)
\lb{2.41}
\end{equation}
and subsequently, \eqref{2.41} extends to all $g\in L^2((a,\infty);dx;\cH)$
by continuity. Moreover, taking $\lambda_1\downarrow -\infty$ and
$\lambda_2\uparrow \infty$ in \eqref{2.41} using
\begin{equation}
\slim_{\lambda\downarrow -\infty} E_{H_{+,\alpha}}(\lambda)=0, \quad
\slim_{\lambda\uparrow \infty}
E_{H_{+,\alpha}}(\lambda)=I_{L^2((a,\infty);dx;\cH)},
\lb{2.42}
\end{equation}
where
\begin{equation}
E_{H_{+,\alpha}}(\lambda)=E_{H_{+,\alpha}}((-\infty,\lambda]), \quad \lambda\in\bbR, \lb{2.43}
\end{equation}
then yields
\begin{equation}
g(\cdot)=\slimes_{\mu_1\downarrow -\infty, \mu_2\uparrow\infty}
\int_{\mu_1}^{\mu_2} \phi_\alpha(\lambda,\cdot,a) \, d\rho_{+,\alpha}(\lambda)\,
\hatt g_{+,\alpha}(\lambda), \quad g\in L^2((a,\infty);dx;\cH),  \lb{2.44}
\end{equation}
where $\slimes$ refers to the $L^2([a,\infty); dx; \cH)$-limit.

In addition, one can show
that the map $U_{+,\alpha}$ in \eqref{2.40} is onto and hence that
$U_{+,\alpha}$ is unitary (i.e., $U_{+,\alpha}$ and
$U_{+,\alpha}^{-1}$ are isometric isomorphisms between
$L^2((a,\infty);dx;\cH)$ and $L^2(\bbR;d\rho_{+,\alpha};\cH)$) with
\begin{equation}
U_{+,\alpha}^{-1} : \begin{cases} L^2(\bbR;d\rho_{+,\alpha};\cH) \to
L^2((a,\infty);dx;\cH)  \\[1mm]
\hatt h \mapsto \slimes_{\mu_1\downarrow -\infty, \mu_2\uparrow\infty}
\int_{\mu_1}^{\mu_2}
\phi_\alpha(\lambda,\cdot,a) \, d\rho_{+,\alpha}(\lambda)\, \hatt h(\lambda).
\end{cases} \lb{2.45}
\end{equation}
To show this we denote the operator defined in \eqref{2.45} temporarily by
$V_{+,\alpha}$ and first claim that $V_{+,\alpha}$ is bounded: Indeed, one
computes for all $\hatt f\in C_0^\infty(\bbR; \cH)$ and $g\in C_0^\infty((a,\infty); \cH)$,
\begin{align}
\big(g,V_{+,\alpha} \hatt f \,\big)_{L^2((a,\infty);dx;\cH)} &= \int_a^\infty dx \, \bigg(g(x),
\int_\bbR \phi_\alpha(\lambda,x,a) \, d\rho_{+,\alpha}(\lambda) \, \hatt f(\lambda)\bigg)_{\cH} \no \\
&= \bigg(\int_a^\infty dx \,  \phi_\alpha(\lambda,x,a)^* g(x),
\int_\bbR d\rho_{+,\alpha}(\lambda) \, \hatt f(\lambda)\bigg)_{\cH} \no \\
&= \big(U_{+,\alpha} g, \hatt f \,\big)_{L^2(\bbR;d\rho_{+,\alpha};\cH)}.   \lb{2.44a}
\end{align}
Since $U_{+,\alpha}$ is isometric, \eqref{2.44a} extends by continuity to all
$g\in  L^2((a,\infty);dx;\cH)$. Thus,
\begin{align}
\big\|V_{+,\alpha} \hatt f \,\big\|_{L^2((a,\infty);dx;\cH)}
&= \sup_{g\in  L^2((a,\infty);dx;\cH), \, g\neq 0}
\bigg|\f{\big(g,V_{+,\alpha} \hatt f \,\big)_{L^2((a,\infty);dx;\cH)}}{\|g\|_{L^2((a,\infty);dx;\cH)}}
\bigg| \no \\
& \leq  \sup_{g\in  L^2((a,\infty);dx;\cH), \, g\neq 0}
\f{\|U_{+,\alpha} g\|_{L^2(\bbR;d\rho_{+,\alpha};\cH)}}
{\|g\|_{L^2((a,\infty);dx;\cH)}}
\big\|\hatt f \,\big\|_{L^2(\bbR;d\rho_{+,\alpha};\cH)}  \no \\
& = \big\|\hatt f \,\big\|_{L^2(\bbR;d\rho_{+,\alpha};\cH)}, \quad \hatt f\in C_0^\infty(\bbR; \cH),
\end{align}
and hence $\|V_{+,\alpha}\|\leq 1$. By \eqref{2.44},
\begin{equation}
V_{+,\alpha} U_{+,\alpha} = I_{L^2((a,\infty);dx;\cH)}.
\end{equation}
To prove that $U_{+,\alpha}$ is onto, and hence unitary, it thus suffices to prove that
$V_{+,\alpha}$ is injective.

Let $\hatt f\in L^2(\bbR;d\rho_{+,\alpha};\cH)$, $\lambda_1, \lambda_2 \in\bbR$,
$\lambda_1<\lambda_2$, and consider
\begin{align}
\begin{split}
& (H_{+,\alpha} - zI_{L^2((a,\infty);dx;\cH)}) \bigg(\int_{\lambda_1}^{\lambda_2} \phi_\alpha(\lambda,\cdot,a) \,
(\lambda - z)^{-1}d\rho_{+,\alpha}(\lambda) \, \hatt f(\lambda)\bigg)   \\
& \quad = \int_{\lambda_1}^{\lambda_2}
\phi_\alpha(\lambda,\cdot,a) \, d\rho_{+,\alpha}(\lambda) \, \hatt f(\lambda),
\quad z\in\bbC_+.
\end{split}
\end{align}
Then,
\begin{align}
\begin{split}
& \int_{\lambda_1}^{\lambda_2} \phi_\alpha(\lambda,\cdot,a) \,
(\lambda - z)^{-1}\, d\rho_{+,\alpha}(\lambda) \, \hatt f(\lambda)    \\
& \quad = (H_{+,\alpha} - zI_{L^2((a,\infty);dx;\cH)})^{-1} \bigg(\int_{\lambda_1}^{\lambda_2}
\phi_\alpha(\lambda,\cdot,a) \, d\rho_{+,\alpha}(\lambda)
\, \hatt f(\lambda)\bigg),   \quad  z\in\bbC_+.    \lb{2.63a}
\end{split}
\end{align}
Taking $\slim_{\lambda_1\downarrow -\infty, \lambda_2\uparrow \infty}$ in \eqref{2.63a} implies
\begin{equation}
V_{+,\alpha} \big((\cdot -z)^{-1} \hatt f \,\big) = (H_{+,\alpha} -zI_{L^2((a,\infty);dx;\cH)})^{-1} V_{+,\alpha} \hatt f,
\quad z\in\bbC_+.
\end{equation}
Next, suppose that $\hatt f_0 \in \ker(V_{+,\alpha})$, and let
$\big\{\hatt f_n\big\}_{n\in\bbN} \subset L^2(\bbR;d\rho_{+,\alpha};\cH)$ such that
$\supp\big(\hatt f_n\big)$ is compact for each $n\in\bbN$ and
$\lim_{n\uparrow\infty}\big\|\hatt f_0 - \hatt f_n\big\|_{L^2(\bbR;d\rho_{+,\alpha};\cH)}=0$.
Then,
\begin{align}
\begin{split}
\big(V_{+,\alpha} \big((\cdot -z)^{-1} \hatt f_n\big)\big)(x)
= \big((H_{+,\alpha} - zI_{L^2((a,\infty);dx;\cH)})^{-1} V_{+,\alpha} \hatt f_n\big)(x),&
\\
x>a, \; z\in\bbC_+, \; n\in\bbN,&
\end{split}
\end{align}
and thus for all $y\in [a,\infty)$, and arbitrary $e\in\cH$,
\begin{align}
& \int_a^y dx \int_{\bbR} \big(e,\phi_\alpha(\lambda,x,a) \, (\lambda - z)^{-1}
\, d\rho_{+,\alpha}(\lambda) \, \hatt f_n(\lambda)\big)_{\cH}   \no \\
& \quad = \int_a^y dx \int_{\bbR} \big(\phi_\alpha(\lambda,x,a) ^* e,
d\rho_{+,\alpha}(\lambda) \, (\lambda - z)^{-1} \hatt f_n(\lambda)\big)_{\cH} \no \\
& \quad =\int_{\bbR} \bigg( \int_a^y dx \, \phi_\alpha(\lambda,x,a) ^* e,
d\rho_{+,\alpha}(\lambda) \, (\lambda - z)^{-1} \hatt f_n(\lambda)\bigg)_{\cH} \no \\
& \quad = \int_a^y dx \, \big(e, \big((H_{+,\alpha}-zI_{L^2((a,\infty);dx;\cH)})^{-1} V_{+,\alpha} \hatt f_n\big)(x)\big)_{\cH}.
\lb{2.44aa}
\end{align}
Noticing that
\begin{equation}
\int_a^\infty dx \, \phi_\alpha(\cdot,x,a)^* \chi_{[a,y]}(x) e
= (U_{+,\alpha} \chi_{[a,y]} e)(\cdot) \in L^2(\bbR;d\rho_{+,\alpha};\cH),
\end{equation}
and taking $n\uparrow\infty$ in \eqref{2.44aa} then results in
\begin{align}
&\lim_{n\uparrow\infty}
\int_a^y dx \int_{\bbR}  \big(e, \phi_\alpha(\lambda,x,a)
d\rho_{+,\alpha}(\lambda) \, (\lambda - z)^{-1}  \hatt f_n(\lambda)\big)_{\cH} \no \\
& \quad = \int_a^y dx  \int_{\bbR}  \big(e, \phi_\alpha(\lambda,x,a)
d\rho_{+,\alpha}(\lambda) \, (\lambda - z)^{-1}  \hatt f_0(\lambda)\big)_{\cH}  \no \\
& \quad =  \int_{\bbR}  (\lambda - z)^{-1}  \int_a^y dx \, \big(e, \phi_\alpha(\lambda,x,a)
d\rho_{+,\alpha}(\lambda) \hatt f_0(\lambda)\big)_{\cH}   \lb{2.44b} \\
& \quad = \lim_{n\uparrow\infty}
\int_a^y dx \, \big(e, \big((H_{+,\alpha}-zI_{L^2((a,\infty);dx;\cH)})^{-1} V_{+,\alpha} \hatt f_n\big)(x)\big)_{\cH} \no \\
& \quad = \int_a^y dx \, \big(e, \big((H_{+,\alpha}-zI_{L^2((a,\infty);dx;\cH)})^{-1} V_{+,\alpha} \hatt f_0\big)(x)\big)_{\cH} =0,
\no
\\
&\hspace{5.4cm} y\in [a,\infty), \; z\in\bbC_+, \; e\in\cH.
\no
\end{align}
Applying the Stieltjes inversion formula to the (finite) complex-valued measure in
the 3rd line of \eqref{2.44b}, given by,
\begin{equation}
\int_a^y dx \, \big(e, \phi_\alpha(\lambda,x,a)
\, d\rho_{+,\alpha} (\lambda) \, \hatt f_0(\lambda)\big)_{\cH},
\end{equation}
implies for all $\lambda_1, \lambda_2 \in\bbR$, $\lambda_1 < \lambda_2$, and $e \in \cH$,
\begin{equation}
\int_{(\lambda_1,\lambda_2]}
\int_a^y dx \, \big(e, \phi_\alpha(\lambda,x,a)
\, d\rho_{+,\alpha} (\lambda) \, \hatt f_0(\lambda)\big)_{\cH} =0, \quad y\in [a,\infty).   \lb{2.44c}
\end{equation}
Differentiating \eqref{2.44c} repeatedly with respect to $y$, noting that
$\phi_\al(\la,y,a)$ and $\phi_\al'(\la,y,a)$ are continuous in
$(\la,y)\in \bbR\times [a,\infty)$, and using the dominated convergence theorem,
one concludes that for all $y\in[a,\infty)$, $e\in\cH$,
\begin{align}
\begin{split}
\int_{(\la_1,\la_2]} \big(e, \phi_\al(\la,y,a) \, d\rho_{+,\al}(\la)
\, \hatt f_0(\la)\big)_{\cH} & = 0,  \lb{2.45c} \\
\int_{(\la_1,\la_2]} \, \big(e, \phi_\al'(\la,y,a) \, d\rho_{+,\al}(\la)
\, \hatt f_0(\la)\big)_{\cH} & = 0.
\end{split}
\end{align}
Using \eqref{2.5}, the fact that $\hatt f_0, \chi_{(\la_1,\la_2]}e \in L^2(\bbR;d\rho_{+,\alpha};\cH)$, and the dominated convergence theorem once again then implies
\begin{align}
0 &= \int_{(\la_1,\la_2]} \, \big(e, \phi_\al(\la,a,a) \, d\rho_{+,\al}(\la) \, \hatt f_0(\la)\big)_{\cH}
\no
\\
& = - \int_{(\la_1,\la_2]} \big(\sin(\al) e, d\rho_{+,\al}(\la) \, \hatt f_0(\la)\big)_{\cH},  \lb{2.45d}
\\
0 &= \int_{(\la_1,\la_2]} \, \big(e, \phi_\al'(\la,a,a) \, d\rho_{+,\al}(\la) \, \hatt f_0(\la)\big)_{\cH}
\no
\\
&= \int_{(\la_1,\la_2]} \big(\cos(\al) e, d\rho_{+,\al}(\la) \, \hatt f_0(\la)\big)_{\cH}.   \lb{2.45e}
\end{align}
Taking $e=\sin(\al)e_1$ in \eqref{2.45d} and $e=\cos(\al)e_1$ in \eqref{2.45e} with an arbitrary $e_1\in\cH$ and subtracting \eqref{2.45d} from \eqref{2.45e} then gives
\begin{align}
0=\int_{(\la_1,\la_2]} \big(e_1, d\rho_{+,\al}(\la) \, \hatt f_0(\la)\big)_{\cH}. \lb{2.45f}
\end{align}
Since the interval $(\la_1,\la_2]$ was chosen arbitrary, \eqref{2.45f} implies
\begin{align}
\hatt f_0(\la) = 0 \;\, \rho_{+,\al}\text{-a.e.},
\end{align}
and hence $\ker(V_{+,\alpha})=\{0\}$. Thus $U_{+,\al}$ is onto.

We recall that the essential range of $F$ with respect to a scalar measure
$\mu$ is defined by
\begin{equation}
\essran_{\mu}(F)=\{z\in\bbC\,|\, \text{for all
$\varepsilon>0$,} \, \mu(\{\lambda\in\bbR \,|\,
|F(\lambda)-z|<\varepsilon\})>0\},  \lb{2.46c}
\end{equation}
and that $\essran_{\rho_{+,\alpha}}(F)$ for $F\in C(\bbR)$ is then defined to be
$\essran_{\nu_{+,\alpha}}(F)$ for any control measure $d\nu_{+,\alpha}$
of the operator-valued measure $d\rho_{+,\alpha}$. Given a complete orthonormal system
$\{e_n\}_{n \in \cI}$ in $\cH$ ($\cI \subseteq \bbN$ an appropriate index set), a convenient
control measure for $d\rho_{+,\alpha}$ is given by
\begin{equation}
\mu_{+,\alpha}(B)=\sum_{n\in\cI}2^{-n}(e_n, \rho_{+,\alpha}(B)e_n)_\cH, \quad
B\in\mathfrak{B}(\bbR).        \lb{2.46d}
\end{equation}

We sum up these considerations in a variant of the
spectral theorem for (functions of) $H_{+,\alpha}$.

\begin{theorem} \lb{t2.6}
Assume Hypothesis \ref{h2.7} and suppose $F\in C(\bbR)$. Then,
\begin{equation}
U_{+,\alpha} F(H_{+,\alpha})U_{+,\alpha}^{-1} = M_F I_{\cH}    \lb{2.46}
\end{equation}
in $L^2(\bbR;d\rho_{+,\alpha};\cH)$ $($cf.\ \eqref{2.30}$)$. Moreover,
\begin{align}
& \sigma(F(H_{+,\alpha}))= \essran_{\rho_{+,\alpha}}(F), \lb{2.46a} \\
& \sigma(H_{+,\alpha})=\supp(d\rho_{+,\alpha}),  \lb{2.46b}
\end{align}
and the multiplicity of the spectrum of $H_{+,\alpha}$ is at most equal to $\dim (\cH)$.
\end{theorem}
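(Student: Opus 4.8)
The plan is to show that, under the unitary map $U_{+,\alpha}$ of \eqref{2.40}, the projection-valued spectral measure of $H_{+,\alpha}$ is transformed into multiplication by characteristic functions on the model space $L^2(\bbR;d\rho_{+,\alpha};\cH)$, and then to read off all four assertions from the standard behavior of scalar multiplication operators on the direct integral recalled in Appendix \ref{sD}.

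First I would specialize Theorem \ref{t2.5} (i.e., \eqref{2.28}) to $F\equiv 1$, which gives, for all $f,g\in C_0^\infty((a,\infty);\cH)$ and $\lambda_1<\lambda_2$,
\[
\big(f,E_{H_{+,\alpha}}((\lambda_1,\lambda_2])g\big)_{L^2((a,\infty);dx;\cH)}
= \big(U_{+,\alpha}f,\, M_{\chi_{(\lambda_1,\lambda_2]}}U_{+,\alpha}g\big)_{L^2(\bbR;d\rho_{+,\alpha};\cH)}
\]
(equivalently, \eqref{2.41} already displays $E_{H_{+,\alpha}}((\lambda_1,\lambda_2])g=U_{+,\alpha}^{-1}M_{\chi_{(\lambda_1,\lambda_2]}}U_{+,\alpha}g$ on $C_0^\infty((a,\infty);\cH)$). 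Both sides are bounded sesquilinear forms, $C_0^\infty((a,\infty);\cH)$ is dense in $L^2((a,\infty);dx;\cH)$, and $U_{+,\alpha}$ is unitary by the discussion preceding the theorem, so the identity extends to $E_{H_{+,\alpha}}((\lambda_1,\lambda_2])=U_{+,\alpha}^{-1}M_{\chi_{(\lambda_1,\lambda_2]}}U_{+,\alpha}$ as bounded operators. Letting $\lambda_1\downarrow-\infty$ and invoking \eqref{2.42} together with dominated convergence on the model side, one obtains $E_{H_{+,\alpha}}(\lambda)=U_{+,\alpha}^{-1}M_{\chi_{(-\infty,\lambda]}}U_{+,\alpha}$ for every $\lambda\in\bbR$; since a self-adjoint operator is determined by its right-continuous spectral family, the full projection-valued measures satisfy $E_{H_{+,\alpha}}(B)=U_{+,\alpha}^{-1}M_{\chi_B}U_{+,\alpha}$ for all $B\in\mathfrak{B}(\bbR)$. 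Integrating an arbitrary $F\in C(\bbR)$ against these unitarily equivalent spectral measures then yields \eqref{2.46}, with the domains on both sides matching automatically; in particular, no separate argument for unbounded $F$ is needed.

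Next, by unitarity $\sigma(F(H_{+,\alpha}))=\sigma(M_F I_{\cH})$, the latter computed in $L^2(\bbR;d\rho_{+,\alpha};\cH)$. Realizing this space as the direct integral $L^2(\bbR;d\mu_{+,\alpha};\cM_{\rho_{+,\alpha}})$ of Appendix \ref{sD}, with $d\mu_{+,\alpha}$ a control measure for $d\rho_{+,\alpha}$ such as \eqref{2.46d}---whose fibers are nontrivial for $\mu_{+,\alpha}$-a.e.\ $\lambda$ by construction---one has $(M_FI_{\cH}-zI)^{-1}\in\cB\big(L^2(\bbR;d\rho_{+,\alpha};\cH)\big)$ exactly when $\lambda\mapsto(F(\lambda)-z)^{-1}$ is $\mu_{+,\alpha}$-essentially bounded, i.e., exactly when $z\notin\essran_{\rho_{+,\alpha}}(F)$ (cf.\ \eqref{2.46c} and the definition of $\essran_{\rho_{+,\alpha}}$ for $F\in C(\bbR)$ following it). This proves \eqref{2.46a}; taking $F$ to be the continuous function $\lambda\mapsto\lambda$ and using $\essran_{\rho_{+,\alpha}}(\lambda\mapsto\lambda)=\supp(d\mu_{+,\alpha})=\supp(d\rho_{+,\alpha})$ then gives \eqref{2.46b}.

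Finally, the multiplicity bound follows from the same direct integral picture: $H_{+,\alpha}$ is unitarily equivalent to multiplication by $\lambda$ on a direct integral whose fiber spaces $\cH_\lambda$ satisfy $\dim(\cH_\lambda)\le\dim(\cH)$ for $\mu_{+,\alpha}$-a.e.\ $\lambda$ (again by the construction of $\cM_{\rho_{+,\alpha}}$ in Appendix \ref{sD}), so the spectral multiplicity of $H_{+,\alpha}$ does not exceed $\dim(\cH)$. I expect the only genuinely non-routine ingredients to be the structural facts about the model space $L^2(\bbR;d\rho_{+,\alpha};\cH)$ invoked above---its identification with a direct integral with fibers of dimension at most $\dim(\cH)$, the identity $\sigma(M_FI_{\cH})=\essran_{\rho_{+,\alpha}}(F)$, and the accompanying multiplicity bound---which is exactly the content Appendix \ref{sD} is meant to supply; everything else is bookkeeping with \eqref{2.28} and the Borel functional calculus.
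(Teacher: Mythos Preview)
Your proposal is correct and takes essentially the same approach as the paper. The only minor difference is in \eqref{2.46a}: the paper spells the argument out explicitly by constructing, for each $z\in\essran_{\rho_{+,\alpha}}(F)$ and $\varepsilon>0$, an approximate null vector of the form $\chi_B h$ with $B\subset\{\lambda:|F(\lambda)-z|<\varepsilon\}$ bounded and $(h,\rho_{+,\alpha}(B)h)_{\cH}>0$, whereas you invoke the standard spectrum formula for scalar multiplication on a direct integral with $\mu_{+,\alpha}$-a.e.\ nontrivial fibers---the same content, just packaged differently.
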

\begin{proof}
First, we note that \eqref{2.46} follows from Theorem \ref{t2.5} and the discussion following it. The fact
\eqref{2.46b} is a special case of \eqref{2.46a} and hence only the latter requires a proof.

Since $F(H_{+,\al})$ is unitarily equivalent to the operator of multiplication by $F(\cdot)$ in
$L^2(\bbR;d\rho_{+,\alpha};\cH)$, it suffices to check that $M_{(F-z)} I_{\cH}$ is not boundedly invertible whenever $z\in\essran_{\rho_{+,\alpha}}(F)$. Fix an arbitrary $z\in\essran_{\rho_{+,\alpha}}(F)$
and $\varepsilon>0$. Since $F\in C(\bbR)$, the set $\{\lambda\in\bbR \,|\,
|F(\lambda)-z|<\varepsilon\}$ is open and hence is a countable union of disjoint open intervals. By
\eqref{2.46c} there is a bounded interval $B\subset\{\lambda\in\bbR \,|\, |F(\lambda)-z|<\varepsilon\}$
such that $\rho_{+,\al}(B)\neq0$ and hence there is also a nonzero vector $h\in\cH$ such that
$(h,\rho_{+,\al}(B)h)_\cH\neq0$. Then $\chi_B h\in L^2(\bbR;d\rho_{+,\alpha};\cH)$ with
\begin{equation}
\|\chi_B h\|^2_{L^2(\bbR;d\rho_{+,\alpha};\cH)} = (h,\rho_{+\al}(B)h)_\cH>0
\end{equation}
and
\begin{equation}
\|M_{F-z}\chi_B h\|_{L^2(\bbR;d\rho_{+,\alpha};\cH)}\leq
\varepsilon\|\chi_B h\|_{L^2(\bbR;d\rho_{+,\alpha};\cH)}.
\end{equation}
Since $\varepsilon>0$ is arbitrary, this implies that $M_{F-z}I_\cH$ is not boundedly invertible in
$L^2(\bbR;d\rho_{+,\alpha};\cH)$.

Conversely, assume $z\in\bbR\bs\essran_{\rho_{+,\alpha}}(F)$. Then by \eqref{2.46c}, \eqref{2.46d},
 there exists $\varepsilon>0$ such that for any interval
$B\subset\{\lambda\in\bbR \,|\, |F(\lambda)-z|<\varepsilon\}$ one has
$\mu_{+,\alpha}(B) = \rho_{+\al}(B)=0$. Then for any
$g\in\dom(M_F)\subset L^2(\bbR;d\rho_{+,\alpha};\cH)$,
\begin{equation}
\|M_{F-z}g\|_{L^2(\bbR;d\rho_{+,\alpha};\cH)} \geq \varepsilon\|g\|_{L^2(\bbR;d\rho_{+,\alpha};\cH)},
\end{equation}
that is, $M_{F-z}I_\cH$ is boundedly invertible in this case.

Using the identity function $F(z)=z$ it follows from \eqref{2.46} that the multiplicity of the spectrum of $H_{+,\alpha}$ is equal to that of $M_zI_\cH$ which is at most $\dim (\cH)$.
\end{proof}

\section{Spectral Theory of Schr\"odinger Operators with Operator-Valued
Potentials on the Real Line} \lb{s5}

In our final section we develop basic spectral theory for full-line Schr\"odinger operators $H$
in $L^2(\bbR; dx; \cH)$, employing a $2 \times 2$ block operator representation of the
associated Weyl--Titchmarsh matrix and its $\cB\big(\cH^2\big)$-valued spectral measure,
decomposing $\bbR$ into a left and right half-line with reference point $x_0 \in \bbR$,
$(-\infty, x_0] \cup [x_0, \infty)$. The latter decomposition is familiar from the scalar and
matrix-valued ($\dim(\cH) < \infty$) special cases. Our principal new results, Theorems \ref{t2.9}
and \ref{t2.10}, again yield a diagonalization of $H$ and the corresponding generalized
eigenfunction expansion, illustrating the spectral theorem for $F(H)$ and support properties of
the underlying spectral measure.

In the special scalar case where $\dim(\cH)<\infty$, the material of this section is standard and
various parts of it can be found, for instance, in \cite{BE05}, \cite[Ch.\ 9]{CL85},
\cite[Sect.\ XIII.5]{DS88}, \cite[Ch.\ 2]{EK82}, \cite{Ev04}, \cite[Ch.\ 10]{Hi69}, \cite{HS98},
\cite{Ko49}, \cite{Le51}, \cite[Ch.\ 2]{LS75}, \cite[Ch.\ VI]{Na68}, \cite[Ch.\ 6]{Pe88},
\cite[Chs.\ II, III]{Ti62}, \cite[Sects.\ 7--10]{We87}. However, in the infinite-dimensional case,
$\dim(\cH)=\infty$, the principal results obtained in this section are new.

We make the following basic assumption throughout this section.

\begin{hypothesis} \lb{h2.8}
$(i)$ Assume that
\begin{equation}
V\in L^1_{\loc} (\bbR;dx;\cH), \quad V(x)=V(x)^* \, \text{ for a.e. } x\in\bbR
\lb{2.51}
\end{equation}
$(ii)$ Introducing the differential expression $\tau$ given by
\begin{equation}
\tau=-\f{d^2}{dx^2} + V(x), \quad x\in\bbR, \lb{2.52}
\end{equation}
we assume $\tau$ to be in the limit point case at $+\infty$ and at
$-\infty$.
\end{hypothesis}

Associated with the differential expression $\tau$ one introduces the self-adjoint Schr\"odinger operator $H$ in $L^2(\bbR;dx;\cH)$ by
\begin{align}
&Hf=\tau f,   \lb{2.53}
\\ \no
&f\in \dom(H)=\{g\in L^2(\bbR;dx;\cH) \,|\, g, g' \in
W^{2,1}_{\loc}(\bbR;dx;\cH); \, \tau g\in L^2(\bbR;dx;\cH)\}.
\end{align}

As in the half-line context we introduce the $\cB(\cH)$-valued fundamental
system of solutions $\phi_\alpha(z,\cdot,x_0)$ and
$\theta_\alpha(z,\cdot,x_0)$, $z\in\bbC$, of
\begin{equation}
(\tau \psi)(z,x) = z \psi(z,x), \quad x\in \bbR \lb{2.54}
\end{equation}
with respect to a fixed reference point $x_0\in\bbR$, satisfying the
initial conditions at the point $x=x_0$,
\begin{align}
\begin{split}
\phi_\alpha(z,x_0,x_0)&=-\theta'_\alpha(z,x_0,x_0)=-\sin(\alpha), \\
\phi'_\alpha(z,x_0,x_0)&=\theta_\alpha(z,x_0,x_0)=\cos(\alpha), \quad
\alpha=\alpha^*\in\cB(\cH). \lb{2.55}
\end{split}
\end{align}
Again we note that by Corollary 2.5\,$(iii)$, for any fixed $x, x_0\in\bbR$, the functions $\theta_{\alpha}(z,x,x_0)$ and $\phi_{\alpha}(z,x,x_0)$ as well as their strong $x$-derivatives are entire with respect to $z$ in the $\cB(\cH)$-norm. The same is true for the functions $z\mapsto\theta_{\alpha}(\ol{z},x,x_0)^*$ and
$z\mapsto\phi_{\alpha}(\ol{z},x,x_0)^*$. Moreover, by \eqref{2.7i},
\begin{equation}
W(\theta_\alpha(\ol{z},\cdot,x_0)^*,\phi_\alpha(z,\cdot,x_0))(x)=I_\cH, \quad
z\in\bbC.  \lb{2.56}
\end{equation}

Particularly important solutions of \eqref{2.54} are the
{\it Weyl--Titchmarsh solutions} $\psi_{\pm,\alpha}(z,\cdot,x_0)$,
$z\in\bbC\backslash\bbR$, uniquely characterized by
\begin{align}
\begin{split}
&\psi_{\pm,\alpha}(z,\cdot,x_0)f\in L^2([x_0,\pm\infty);dx;\cH), \quad f\in\cH,
\\
&\sin(\alpha)\psi'_{\pm,\alpha}(z,x_0,x_0)
+\cos(\alpha)\psi_{\pm,\alpha}(z,x_0,x_0)=I_\cH, \quad
z\in\bbC\backslash\bbR. \lb{2.57}
\end{split}
\end{align}
The crucial condition in \eqref{2.57} is again the $L^2$-property which
uniquely determines $\psi_{\pm,\alpha}(z,\cdot,x_0)$ up to constant
multiples by the limit point hypothesis of $\tau$ at $\pm\infty$. In
particular, for
$\alpha = \alpha^*, \beta = \beta^* \in \cB(\cH)$,
\begin{align}
\psi_{\pm,\alpha}(z,\cdot,x_0) = \psi_{\pm,\beta}(z,\cdot,x_0)C_\pm(z,\alpha,\beta,x_0)
\lb{2.58}
\end{align}
for some coefficients $C_\pm (z,\alpha,\beta,x_0)\in\cB(\cH)$. The normalization in \eqref{2.57} shows that
$\psi_{\pm,\alpha}(z,\cdot,x_0)$ are of the type
\begin{equation}
\psi_{\pm,\alpha}(z,x,x_0)=\theta_{\alpha}(z,x,x_0)
+ \phi_{\alpha}(z,x,x_0) m_{\pm,\alpha}(z,x_0),
\quad  z\in\bbC\backslash\bbR, \; x\in\bbR, \lb{2.59}
\end{equation}
for some coefficients $m_{\pm,\alpha}(z,x_0)\in\cB(\cH)$, the
{\it Weyl--Titchmarsh $m$-functions} associated with $\tau$, $\alpha$,
and $x_0$ (cf.\ Theorem \ref{t3.15}).

Next, we show that $\pm m_{\pm,\al}(\cdot,x_0)$ are operator-valued Herglotz functions. It follows from \eqref{2.54} and \eqref{2.55} that the Wronskian of $\psi_{\pm,\al}(\ol{z_1},x,x_0)^*$ and $\psi_{\pm,\al}(z_2,x,x_0)$ satisfies
\begin{align}
W(\psi_{\pm,\al}(\ol{z_1},x_0,x_0)^*,\psi_{\pm,\al}(z_2,x_0,x_0)) &= m_{\pm,\alpha}(z_2,x_0)- m_{\pm,\alpha}(\ol{z_1},x_0)^*,
\\
\f{d}{dx}W(\psi_{\pm,\al}(\ol{z_1},x,x_0)^*,\psi_{\pm,\al}(z_2,x,x_0)) &= (z_1-z_2)\psi_{\pm,\al}(\ol{z_1},x,x_0)^*\psi_{\pm,\al}(z_2,x,x_0), \no
\\
&\hspace{3cm} z_1,z_2\in\bbC\bs\bbR.
\end{align}
Hence, using the limit point hypothesis of $\tau$ at $\pm\infty$ and the $L^2$-property in \eqref{2.57} one obtains
\begin{align} \lb{2.60}
& (z_2-z_1)\int_{x_0}^{\pm\infty} dx\, \big(\psi_{\pm,\alpha}(\ol{z_{1}},x,x_0)f,\psi_{\pm,\alpha}(z_{2},x,x_0)g\big)_\cH
\no \\
&\quad = \big(f,[m_{\pm,\alpha}(z_2,x_0)- m_{\pm,\alpha}(\ol{z_1},x_0)^*]g\big)_\cH,
\quad  f,g\in\cH, \; z_1,z_2 \in\bbC\backslash\bbR.
\end{align}
Setting $z_1=z_2=z$ in \eqref{2.60}, one concludes
\begin{equation}
m_{\pm,\alpha}(z,x_0) = m_{\pm,\alpha}(\ol z,x_0)^*, \quad
z\in\bbC\backslash\bbR.  \lb{2.61}
\end{equation}
Choosing $f=g$ and $z_2=z$, $z_1=\ol z$ in \eqref{2.60}, one also infers
\begin{equation}
\Im(z)\int_{x_0}^{\pm\infty} dx\,\|\psi_{\pm,\alpha}(z,x,x_0)f\|_{\cH}^2
= \big(f,\Im[m_{\pm,\alpha}(z,x_0)]f\big)_\cH, \quad f\in\cH, \; z\in\bbC\backslash\bbR. \lb{2.62}
\end{equation}
Since $m_{\pm,\alpha}(\cdot,x_0)$ are analytic on $\bbC\backslash\bbR$, \eqref{2.62} yields that
$\pm m_{\pm,\alpha}(\cdot,x_0)$ are operator-valued Herglotz functions.

In the following we abbreviate the Wronskian of $\psi_{+,\al}(\ol{z},x,x_0)^*$ and $\psi_{-,\al}(z,x,x_0)$
by $W(z)$. It follows from the identities \eqref{2.7f}--\eqref{2.7i} and \eqref{2.61} that
\begin{align}
W(z) &= W(\psi_{+,\al}(\ol{z},x,x_0)^*,\psi_{-,\al}(z,x,x_0))
\no \\
&= m_{-,\al}(z,x_0) - m_{+,\al}(z,x_0), \quad z\in\bbC\bs\bbR. \lb{2.64}
\end{align}
The Green's function $G(z,x,x')$ of the Schr\"odinger operator $H$ then reads
\begin{align}
G(z,x,x') = \psi_{\mp,\alpha}(z,x,x_0) W(z)^{-1} \psi_{\pm,\alpha}(\ol{z},x',x_0)^*,
\quad x \lesseqgtr x', \; z\in\bbC\backslash\bbR.
\lb{2.63}
\end{align}
Thus,
\begin{equation}
((H-zI_\cH)^{-1}f)(x)
=\int_{\bbR} dx' \, G(z,x,x')f(x'), \quad z\in\bbC\backslash\bbR, \;
x\in\bbR, \; f\in L^2(\bbR;dx;\cH). \lb{2.65}
\end{equation}

Next, we introduce the $2\times 2$ block operator-valued Weyl--Titchmarsh m-function,
$M_\alpha(z,x_0)\in\cB\big(\cH^2\big)$,
\begin{align}
M_\alpha(z,x_0)&=\big(M_{\alpha,j,j'}(z,x_0)\big)_{j,j'=0,1}, \quad z\in\bbC\backslash\bbR,     \lb{2.71}
\\
M_{\alpha,0,0}(z,x_0) &= W(z)^{-1},
\\
M_{\alpha,0,1}(z,x_0) &= 2^{-1} W(z)^{-1} \big[m_{-,\alpha}(z,x_0)+m_{+,\alpha}(z,x_0)\big],
\\
M_{\alpha,1,0}(z,x_0) &= 2^{-1} \big[m_{-,\alpha}(z,x_0)+m_{+,\alpha}(z,x_0)\big] W(z)^{-1},
\\
M_{\alpha,1,1}(z,x_0) &= m_{+,\alpha}(z,x_0) W(z)^{-1} m_{-,\alpha}(z,x_0)
\no
\\
&= m_{-,\alpha}(z,x_0) W(z)^{-1} m_{+,\alpha}(z,x_0).   \lb{2.71a}
\end{align}
$M_\alpha(z,x_0)$ is a $\cB\big(\cH^2\big)$-valued Herglotz function with representation
\begin{align}
\begin{split}
& M_\alpha(z,x_0)=C_\alpha(x_0)+\int_{\bbR}
d\Omega_\alpha (\lambda,x_0)\bigg[\frac{1}{\lambda -z}-\frac{\lambda}
{1+\lambda^2}\bigg], \quad z\in\bbC\backslash\bbR, \lb{2.71b} \\
& C_\alpha(x_0)=C_\alpha(x_0)^*, \quad \int_{\bbR}
\f{\big(e,d\Omega_{\alpha}(\lambda,x_0)e\big)_{\cB\big(\cH^2\big)}}{1+\lambda^2} <\infty, \quad e\in\cK.
\end{split}
\end{align}
In addition, the Stieltjes inversion formula for the nonnegative $\cB \big(\cH^2\big)$-valued measure $d\Omega_\alpha(\cdot,x_0)$ reads
\begin{equation}
\Omega_\alpha((\lambda_1,\lambda_2],x_0)
=\f1\pi \lim_{\delta\downarrow 0}
\lim_{\varepsilon\downarrow 0} \int^{\lambda_2+\delta}_{\lambda_1+\delta}
d\lambda \, \Im(M_\alpha(\lambda +i\varepsilon,x_0)), \quad \lambda_1,
\lambda_2 \in\bbR, \; \lambda_1<\lambda_2. \lb{2.71c}
\end{equation}
In particular, $d\Omega_\alpha(\cdot,x_0)$ is a $2\times 2$ block operator-valued measure with $\cB(\cH)$-valued entries $d\Omega_{\al,\ell,\ell'}(\cdot,x_0)$, $\ell,\ell'=0,1$. Since the diagonal entries of $M_\al$ are Herglotz functions, the diagonal entries of the measure $d\Omega_\alpha(\cdot,x_0)$ are nonnegative $\cB(\cH)$-valued measures. The off-diagonal entries of the measure $d\Omega_\alpha(\cdot,x_0)$ naturally admit decompositions into a linear combination of four nonnegative measures.

We note that in formulas \eqref{2.57}--\eqref{2.71b} one can
replace $z\in\bbC\backslash\bbR$ by $z\in\bbC\backslash\sigma(H)$.

\medskip

Next, we relate the family of spectral projections,
$\{E_H(\lambda)\}_{\lambda\in\bbR}$, of the self-adjoint
operator $H$ and the $2\times 2$ operator-valued increasing spectral
function $\Omega_{\alpha}(\lambda,x_0)$, $\lambda\in\bbR$, which
generates the $\cB\big(\cH^2\big)$-valued measure $d\Omega_\alpha(\cdot,x_0)$ in the Herglotz representation \eqref{2.71b} of $M_\alpha(z,x_0)$.

We first note that for $F\in C(\bbR)$,
\begin{align}
&\big(f,F(H)g\big)_{L^2(\bbR;dx;\cH)}= \int_{\bbR}
d\,\big(f,E_H(\lambda)g\big)_{L^2(\bbR;dx;\cH)}\,
F(\lambda), \lb{2.72} \\
& f, g \in\dom(F(H)) =\bigg\{h\in L^2(\bbR;dx;\cH) \,\bigg|\,
\int_{\bbR} d \|E_H(\lambda)h\|_{L^2(\bbR;dx;\cH)}^2 \, |F(\lambda)|^2
< \infty\bigg\}. \no
\end{align}

\begin{theorem} \lb{t2.9}
Let $\alpha\in [0,\pi)$, $f,g \in C^\infty_0(\bbR;\cH)$,
$F\in C(\bbR)$, $x_0\in\bbR$, and $\lambda_1, \lambda_2 \in\bbR$,
$\lambda_1<\lambda_2$. Then,
\begin{align} \lb{2.73}
&\big(f,F(H)E_H((\lambda_1,\lambda_2])g\big)_{L^2(\bbR;dx;\cH)}
\no \\
&\quad =
\big(\hatt
f_{\alpha}(\cdot,x_0),M_FM_{\chi_{(\lambda_1,\lambda_2]}} \hatt
g_{\alpha}(\cdot,x_0)\big)_{L^2(\bbR;d\Omega_{\alpha}(\cdot,x_0);\cH^2)}
\end{align}
where we introduced the notation
\begin{align} \lb{2.74}
&\hatt h_{\alpha,0}(\lambda,x_0) = \int_\bbR dx \,
\theta_\alpha(\lambda,x,x_0)^* h(x),  \quad
\hatt h_{\alpha,1}(\lambda,x_0) = \int_\bbR dx \,
\phi_\alpha(\lambda,x,x_0)^* h(x),
\no
\\
&\hatt h_{\alpha}(\lambda,x_0) = \big(\,\hatt h_{\alpha,0}(\lambda,x_0),
\hatt h_{\alpha,1}(\lambda,x_0)\big)^\top,  \quad
\lambda \in\bbR, \;  h\in C^\infty_0(\bbR;\cH),
\end{align}
and $M_G$ denotes the maximally defined operator of multiplication
by the function $G \in C(\bbR)$ in the
Hilbert space\footnote{Again, we recall
that $L^2\big(\bbR;d\Omega_{\alpha}(\cdot,x_0);\cH^2\big)$ is a convenient
abbreviation for the Hilbert space
$L^2(\bbR;d\mu_{\alpha}(\cdot,x_0);\cM_{\Omega_{\alpha}(\cdot,x_0)})$ discussed
in detail in Appendix \ref{sD}, with $d\mu_{\alpha}(\cdot,x_0)$ a control measure for the
$\cB\big(\cH^2\big)$-valued measure $d\Omega_{\alpha}(\cdot,x_0)$. One recalls that
$\cM_{\Omega_{\alpha}(\cdot,x_0)} \subset \cS(\{\cK_{\lambda}\}_{\lambda\in \bbR})$ is
generated by ${\ul \Lambda} \big(\cH^2\big)$ (or by ${\ul \Lambda} (\{f_n\}_{n\in\cI})$ for
any complete orthonormal system $\{f_n\}_{n\in\cI}$, $\cI \subseteq \bbN$.}
$L^2\big(\bbR;d\Omega_{\alpha}(\cdot,x_0);\cH^2\big)$,
\begin{align}
\begin{split}
& \big(M_G\hatt h\big)(\lambda)=G(\lambda)\hatt h(\lambda)
=\big(G(\lambda) \hatt h_0(\lambda), G(\lambda) \hatt h_1(\lambda)\big)^\top
\, \text{ for \ $\Omega_{\alpha}(\cdot,x_0)$-a.e.\ $\lambda\in\bbR$}, \lb{2.75} \\
& \hatt h\in\dom(M_G)=\big\{\hatt k \in L^2\big(\bbR;d\Omega_{\alpha}(\cdot,x_0);\cH^2\big) \,\big|\,
G\hatt k \in L^2(\bbR;d\Omega_{\alpha}\big(\cdot,x_0);\cH^2\big)\big\}.
\end{split}
\end{align}
\end{theorem}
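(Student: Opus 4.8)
The plan is to follow, essentially line by line, the strategy of the proof of Theorem~\ref{t2.5}; the only genuinely new ingredient is the passage from the scalar Weyl--Titchmarsh function to the $2\times2$ block operator $M_\alpha(z,x_0)$ of \eqref{2.71}. The point of departure is again Stone's formula (Lemma~\ref{l2.4a}) applied to $T=H$: together with the resolvent representation \eqref{2.65} it reduces the left-hand side of \eqref{2.73} to the $\delta\downarrow0$, $\varepsilon\downarrow0$ limit of
\begin{equation*}
\frac{1}{2\pi i}\int_{\lambda_1+\delta}^{\lambda_2+\delta}d\lambda\,F(\lambda)\int_\bbR dx\int_\bbR dx'\,\big(f(x),[G(\lambda+i\varepsilon,x,x')-G(\lambda-i\varepsilon,x,x')]g(x')\big)_\cH,
\end{equation*}
with $G(z,\cdot,\cdot)$ the Green's function \eqref{2.63}. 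Since $f,g\in C^\infty_0(\bbR;\cH)$, all domains are finite and the integrands jointly continuous, so every interchange of the $dx$, $dx'$, $d\lambda$ integrals with the limits is justified exactly as in the proof of Theorem~\ref{t2.5}.

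The algebraic heart of the matter is the identity, valid for $x\neq x'$ and $z\in\bbC\backslash\bbR$,
\begin{align*}
G(z,x,x')&=\begin{pmatrix}\theta_\alpha(z,x,x_0)&\phi_\alpha(z,x,x_0)\end{pmatrix}M_\alpha(z,x_0)\begin{pmatrix}\theta_\alpha(\ol z,x',x_0)^*\\ \phi_\alpha(\ol z,x',x_0)^*\end{pmatrix}\\
&\quad+\tfrac12\sgn(x'-x)\big[\phi_\alpha(z,x,x_0)\theta_\alpha(\ol z,x',x_0)^*-\theta_\alpha(z,x,x_0)\phi_\alpha(\ol z,x',x_0)^*\big].
\end{align*}
I would verify this by inserting the representation \eqref{2.59} of $\psi_{\pm,\alpha}$ into \eqref{2.63} and using $m_{\pm,\alpha}(\ol z,x_0)^*=m_{\pm,\alpha}(z,x_0)$ (cf.\ \eqref{2.61}), $W(z)=m_{-,\alpha}(z,x_0)-m_{+,\alpha}(z,x_0)$ (cf.\ \eqref{2.64}), the block definitions \eqref{2.71}--\eqref{2.71a}, and the equality of the two expressions for $M_{\alpha,1,1}(z,x_0)$ (precisely this equality allows a single $M_\alpha$ to serve both orderings $x<x'$ and $x>x'$): the four products $\theta_\alpha W(z)^{-1}\theta_\alpha^*$, $\theta_\alpha W(z)^{-1}m_{\pm,\alpha}\phi_\alpha^*$, $\phi_\alpha m_{\mp,\alpha}W(z)^{-1}\theta_\alpha^*$, $\phi_\alpha m_{\mp,\alpha}W(z)^{-1}m_{\pm,\alpha}\phi_\alpha^*$ produced by \eqref{2.63} reproduce the four blocks of $M_\alpha(z,x_0)$ up to the displayed correction. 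Since that correction is built solely from $\theta_\alpha(z,x,x_0)$, $\phi_\alpha(z,x,x_0)$, $\theta_\alpha(\ol z,x',x_0)^*$, $\phi_\alpha(\ol z,x',x_0)^*$, all of which are entire in $z$, its contribution to $G(\lambda+i\varepsilon,x,x')-G(\lambda-i\varepsilon,x,x')$ is $\Oh(\varepsilon)$ locally uniformly in $(\lambda,x,x')$ and therefore drops out after multiplication by $F$ and integration over the finite $\lambda$-interval.

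For the main term I would argue as in \eqref{2.33Aa}--\eqref{2.33a}: the entirety of $\theta_\alpha(\cdot,x,x_0)$, $\phi_\alpha(\cdot,x,x_0)$, together with the $\cB\big(\cH^2\big)$-valued Herglotz bounds on $M_\alpha(\cdot,x_0)$ supplied by Appendix~\ref{sA} (namely $\varepsilon M_\alpha(\lambda+i\varepsilon,x_0)$ locally bounded, $\varepsilon\Re(M_\alpha(\lambda+i\varepsilon,x_0))=\oh(1)$, and the a.e.\ vanishing of $\varepsilon\Im(M_\alpha(\lambda+i\varepsilon,x_0))$) and dominated convergence, permit replacing $\theta_\alpha(\lambda\pm i\varepsilon,\cdot,x_0)$, $\phi_\alpha(\lambda\pm i\varepsilon,\cdot,x_0)$ and their $\ol z$-adjoint counterparts by $\theta_\alpha(\lambda,\cdot,x_0)$, $\phi_\alpha(\lambda,\cdot,x_0)$ under the $d\lambda$-integral; using $M_\alpha(\ol z,x_0)=M_\alpha(z,x_0)^*$, the difference of the two resolvent terms then leaves $2i\Im(M_\alpha(\lambda+i\varepsilon,x_0))$ between the row operator $\begin{pmatrix}\theta_\alpha(\lambda,x,x_0)&\phi_\alpha(\lambda,x,x_0)\end{pmatrix}$ at $x$ and its adjoint at $x'$. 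Carrying the $dx$, $dx'$ integrals inside and recognizing, via \eqref{2.74}, that $\int_\bbR dx\,\theta_\alpha(\lambda,x,x_0)^*f(x)=\hatt f_{\alpha,0}(\lambda,x_0)$ and $\int_\bbR dx\,\phi_\alpha(\lambda,x,x_0)^*f(x)=\hatt f_{\alpha,1}(\lambda,x_0)$, one obtains
\begin{align*}
&\big(f,F(H)E_H((\lambda_1,\lambda_2])g\big)_{L^2(\bbR;dx;\cH)}\\
&\quad=\lim_{\delta\downarrow0}\lim_{\varepsilon\downarrow0}\frac{1}{\pi}\int_{\lambda_1+\delta}^{\lambda_2+\delta}d\lambda\,F(\lambda)\big(\hatt f_\alpha(\lambda,x_0),\Im(M_\alpha(\lambda+i\varepsilon,x_0))\,\hatt g_\alpha(\lambda,x_0)\big)_{\cH^2}.
\end{align*}
Invoking the Stieltjes inversion formula \eqref{2.71c} for $d\Omega_\alpha(\cdot,x_0)$ — exactly as \eqref{2.35}--\eqref{2.38} used \eqref{A.43} — converts the right-hand side into $\int_{(\lambda_1,\lambda_2]}F(\lambda)\big(\hatt f_\alpha(\lambda,x_0),d\Omega_\alpha(\lambda,x_0)\,\hatt g_\alpha(\lambda,x_0)\big)_{\cH^2}$, which is \eqref{2.73}; that $\hatt f_\alpha(\cdot,x_0),\hatt g_\alpha(\cdot,x_0)\in L^2\big(\bbR;d\Omega_\alpha(\cdot,x_0);\cH^2\big)$ follows from Lemma~\ref{lD.16} and Theorem~\ref{tD.17} as in the half-line case.

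I expect the main obstacle to be the algebraic identity displayed above: keeping straight the noncommutative operator orderings (the $m_{\pm,\alpha}$-factors sit to the right of $\phi_\alpha$ in \eqref{2.59}), the adjoint relation $m_{\pm,\alpha}(\ol z,x_0)^*=m_{\pm,\alpha}(z,x_0)$, and above all the identity $m_{+,\alpha}(z,x_0)W(z)^{-1}m_{-,\alpha}(z,x_0)=m_{-,\alpha}(z,x_0)W(z)^{-1}m_{+,\alpha}(z,x_0)$ governing the $(1,1)$-block, which is what permits assembling the $x<x'$ and $x>x'$ pieces into one $M_\alpha$. A secondary technical point is confirming the $\cB\big(\cH^2\big)$-Herglotz bounds used to discard the $\Oh(\varepsilon)$ corrections; these are the block analogues of \eqref{2.33a} and are provided by the operator-valued Herglotz theory in Appendix~\ref{sA}.
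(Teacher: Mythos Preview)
Your proposal is correct and follows essentially the same route as the paper's proof: Stone's formula, the Green's function \eqref{2.63}, substitution of \eqref{2.59}, replacement of $\theta_\alpha,\phi_\alpha$ at $\lambda\pm i\varepsilon$ by their values at $\lambda$ via the Herglotz bounds, and Stieltjes inversion \eqref{2.71c}. The only difference is organizational: where the paper expands the Green's function directly and then invokes the identities \eqref{2.88} (which amount to $m_{\pm}W^{-1}-\tfrac12(m_-+m_+)W^{-1}=\mp\tfrac12 I_\cH$ and its transpose) to regroup the off-diagonal terms into $\Im(M_{\alpha,0,1})$ and $\Im(M_{\alpha,1,0})$, you package the same algebra up front into the clean decomposition $G=(\theta_\alpha,\phi_\alpha)M_\alpha(\theta_\alpha^*,\phi_\alpha^*)^\top+\text{(entire correction)}$; the two presentations are equivalent.
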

\begin{proof}
The point of departure for deriving \eqref{2.73} is again Stone's
formula \eqref{2.26a} applied to $T=H$,
\begin{align}
&\big(f,F(H)E_H((\lambda_1,\lambda_2])g\big)_{L^2(\bbR;dx;\cH)}
\no \\
& \quad = \lim_{\delta\downarrow 0}\lim_{\varepsilon\downarrow 0}
\frac{1}{2\pi i} \int_{\lambda_1+\delta}^{\lambda_2+\delta}
d\lambda \, F(\lambda) \big[\big(f,(H-(\lambda+i\varepsilon)I_\cH)^{-1}g\big)_{L^2(\bbR;dx;\cH)}
\no \\
& \hspace*{4.9cm} - \big(f,(H-(\lambda-i\varepsilon)I_\cH)^{-1}g\big)_{L^2(\bbR;dx;\cH)}\big]. \lb{2.82}
\end{align}
Insertion of \eqref{2.63} and \eqref{2.65} into \eqref{2.82} then
yields the following:
\begin{align} \lb{2.83}
&\big(f,F(H)E_H((\lambda_1,\lambda_2])g\big)_{L^2(\bbR;dx;\cH)} =
\lim_{\delta\downarrow 0}\lim_{\varepsilon\downarrow 0}
\frac{1}{2\pi i}
\int_{\lambda_1+\delta}^{\lambda_2+\delta} d\lambda \, F(\lambda) \int_\bbR dx
\\
& \ \times \bigg\{
\Big(f(x), \psi_{+,\alpha}(\lambda+i\varepsilon,x,x_0) W(\lambda+i\varepsilon)^{-1} \int_{-\infty}^x dx'\,
\psi_{-,\alpha}(\lambda-i\varepsilon,x',x_0)^* g(x')\Big)_\cH
\no
\\
& \quad + \Big(f(x),
\psi_{-,\alpha}(\lambda+i\varepsilon,x,x_0)
W(\lambda+i\varepsilon)^{-1} \int_x^\infty dx' \, \psi_{+,\alpha}(\lambda-i\varepsilon,x',x_0)^* g(x')\Big)_\cH
\no
\\
& \quad -
\Big(f(x),
\psi_{+,\alpha}(\lambda-i\varepsilon,x,x_0)
W(\lambda-i\varepsilon)^{-1} \int_{-\infty}^x dx' \, \psi_{-,\alpha}(\lambda+i\varepsilon,x',x_0)^* g(x')\Big)_\cH
\no
\\
& \quad - \Big(f(x),
\psi_{-,\alpha}(\lambda-i\varepsilon,x,x_0) W(\lambda-i\varepsilon)^{-1}
\int_x^\infty dx'\, \psi_{+,\alpha}(\lambda+i\varepsilon,x',x_0)^*
g(x')\Big)_\cH \bigg\}.
\no
\end{align}
Freely interchanging the $dx$ and $dx'$ integrals with the limits and the
$d\lambda$ integral (since all integration domains are finite and all
integrands are continuous), and inserting the expressions \eqref{2.59} for
$\psi_{\pm,\alpha}(z,x,x_0)$ into \eqref{2.83}, one obtains
\begin{align}
&\big(f,F(H)E_H((\lambda_1,\lambda_2])g\big)_{L^2(\bbR;dx;\cH)}
=\int_{\bbR} dx \bigg(f(x), \bigg\{ \int_{-\infty}^x dx' \, \no
\\
& \quad \times \lim_{\delta\downarrow
0}\lim_{\varepsilon\downarrow 0} \frac{1}{2\pi i}
\int_{\lambda_1+\delta}^{\lambda_2+\delta} d\lambda \, F(\lambda)
\Big[\big[\theta_\alpha(\lambda,x,x_0) +
\phi_\alpha(\lambda,x,x_0) m_{+,\alpha}(\lambda+i\varepsilon,x_0) \big] \no
\\
& \hspace*{1.2cm} \times W(\lambda+i\varepsilon)^{-1}
\big[\theta_\alpha(\lambda,x',x_0)^* + m_{-,\alpha}(\lambda+i\varepsilon,x_0) \phi_\alpha(\lambda,x',x_0)^* \big]
g(x') \no
\\
& \qquad -\big[\theta_\alpha(\lambda,x,x_0) +
\phi_\alpha(\lambda,x,x_0) m_{+,\alpha}(\lambda-i\varepsilon,x_0) \big] \no
\\
& \hspace*{1.2cm} \times W(\lambda-i\varepsilon)^{-1}
\big[\theta_\alpha(\lambda,x',x_0)^* + m_{-,\alpha}(\lambda-i\varepsilon,x_0) \phi_\alpha(\lambda,x',x_0)^* \big]
g(x') \Big] \no
\\
& \quad +\int_x^\infty dx'\, \lim_{\delta\downarrow 0}
\lim_{\varepsilon\downarrow 0} \frac{1}{2\pi i}
\int_{\lambda_1+\delta}^{\lambda_2+\delta} d\lambda \, F(\lambda)
\lb{2.85} \\
& \qquad \times \Big[\big[\theta_\alpha(\lambda,x,x_0) +
\phi_\alpha(\lambda,x,x_0) m_{-,\alpha}(\lambda+i\varepsilon,x_0) \big]
\no \\
& \hspace*{1.2cm} \times W(\lambda+i\varepsilon)^{-1}
\big[\theta_\alpha(\lambda,x',x_0)^* +
m_{+,\alpha}(\lambda+i\varepsilon,x_0) \phi_\alpha(\lambda,x',x_0)^* \big]
g(x') \no
\\
& \qquad -\big[\theta_\alpha(\lambda,x,x_0) +
\phi_\alpha(\lambda,x,x_0) m_{-,\alpha}(\lambda-i\varepsilon,x_0) \big]
\no
\\
& \hspace*{1.2cm} \times W(\lambda-i\varepsilon)^{-1} \big[\theta_\alpha(\lambda,x',x_0)^* +
m_{+,\alpha}(\lambda-i\varepsilon,x_0) \phi_\alpha(\lambda,x',x_0)^* \big]
g(x') \Big]\bigg\}\bigg)_\cH.
\no
\end{align}
Here we employed \eqref{2.61}, the fact that for fixed $x\in\bbR$,
$\theta_\alpha(z,x,x_0)$ and $\phi_\alpha(z,x,x_0)$ are entire
with respect to $z$, that $\theta_\alpha(z,\cdot,x_0), \phi_\alpha(z,\cdot,x_0) \in W^{1,1}_{\loc}(\bbR;\cH)$, and hence that
\begin{align}
\begin{split}
\theta_\alpha(\lambda\pm i\varepsilon,x,x_0)
&\underset{\varepsilon\downarrow 0}{=}
\theta_\alpha(\lambda,x,x_0) \pm
i\varepsilon (d/dz)\theta_\alpha(z,x,x_0)|_{z=\lambda} + \Oh(\ve^2),
\\
\phi_\alpha(\lambda\pm i\varepsilon,x,x_0)
&\underset{\varepsilon\downarrow 0}{=} \phi_\alpha(\lambda,x,x_0)
\pm i\varepsilon (d/dz)\phi_\alpha(z,x,x_0)|_{z=\lambda}
+ \Oh(\ve^2)
\end{split} \lb{2.85a}
\end{align}
with $\Oh(\varepsilon^2)$ being uniform with respect to $(\lambda,x)$ as long as $\lambda$ and $x$ vary in compact subsets of $\bbR$. Moreover, we used that
\begin{align}
&\varepsilon\|M_{\al}(\lambda+i\varepsilon,x_0)\|_{\cB(\cH^2)}\leq
C(\lambda_1,\lambda_2,\varepsilon_0,x_0), \quad \lambda\in
[\lambda_1,\lambda_2], \; 0<\varepsilon\leq\varepsilon_0, \no \\
&\varepsilon
\|\Re(M_{\al}(\lambda+i\varepsilon,x_0))\|_{\cB(\cH^2)}
\underset{\varepsilon\downarrow 0}{=}\oh(1), \quad \lambda\in\bbR, \lb{2.86}
\end{align}
In particular, utilizing
\eqref{2.61}, \eqref{2.85a},
\eqref{2.86}, and the elementary facts
\begin{align} \lb{2.88}
&\Im\big[m_{\pm,\alpha}(\lambda+i\varepsilon,x_0) W(\lambda+i\varepsilon)^{-1}\big]
\no
\\
&\quad = \f{1}{2}
\Im\big[[m_{-,\alpha}(\lambda+i\varepsilon,x_0) + m_{+,\alpha}(\lambda+i\varepsilon,x_0)] W(\lambda+i\varepsilon)^{-1}\big],
\no
\\
&\Im\big[W(\lambda+i\varepsilon)^{-1} m_{\pm,\alpha}(\lambda+i\varepsilon,x_0)\big] \\
&\quad = \f{1}{2}
\Im\big[W(\lambda+i\varepsilon)^{-1} [m_{-,\alpha}(\lambda+i\varepsilon,x_0) + m_{+,\alpha}(\lambda+i\varepsilon,x_0)]\big],
\quad \lambda\in\bbR, \; \varepsilon >0,
\no
\end{align}
$\phi_\alpha(\lambda\pm i\varepsilon,x,x_0)$ and
$\theta_\alpha(\lambda\pm i\varepsilon,x,x_0)$ under the $d\lambda$
integrals in \eqref{2.85} have immediately been replaced by
$\phi_\alpha(\lambda,x,x_0)$ and $\theta_\alpha(\lambda,x,x_0)$. Collecting
appropriate terms in
\eqref{2.85} then yields
\begin{align} \lb{2.89}
&\big(f,F(H)E_{H}((\lambda_1,\lambda_2])g\big)_{L^2(\bbR;dx;\cH)}
= \int_\bbR dx \bigg(f(x), \int_\bbR dx'
\lim_{\delta\downarrow 0}\lim_{\varepsilon\downarrow 0}
\frac{1}{\pi} \int_{\lambda_1+\delta}^{\lambda_2+\delta} d\lambda
\, F(\lambda)
\no \\
& \ \times\Big\{
\theta_\alpha(\lambda,x,x_0)
\Im\big[W(\lambda+i\varepsilon)^{-1}\big]
\theta_\alpha(\lambda,x',x_0)^*
\\
& \quad
+ 2^{-1} \theta_\alpha(\lambda,x,x_0)
\no \\
& \qquad
\times \Im\big[W(\lambda+i\varepsilon)^{-1} [m_{-,\alpha}(\lambda+i\varepsilon,x_0) + m_{+,\alpha}(\lambda+i\varepsilon,x_0)]\big]
\phi_\alpha(\lambda,x',x_0)^*
\no \\
& \quad
+ 2^{-1} \phi_\alpha(\lambda,x,x_0)
\no \\
& \qquad
\times \Im\big[[m_{-,\alpha}(\lambda+i\varepsilon,x_0)+
m_{+,\alpha}(\lambda+i\varepsilon,x_0)] W(\lambda+i\varepsilon)^{-1}\big] \theta_\alpha(\lambda,x',x_0)^*
\no \\
& \quad
+\phi_\alpha(\lambda,x,x_0)
\no \\
& \qquad
\times
\Im\big[m_{-,\alpha}(\lambda+i\varepsilon,x_0)
W(\lambda+i\varepsilon)^{-1}
m_{+,\alpha}(\lambda+i\varepsilon,x_0)\big]
\phi_\alpha(\lambda,x',x_0)^*\Big\} g(x') \bigg)_\cH. \no
\end{align}
Since by \eqref{2.71c} ($\ell, \ell'=0,1$)
\begin{align}
\begin{split}
&\int_{(\lambda_1,\lambda_2]} d\Omega_{\alpha,\ell,\ell'}(\lambda,x_0)
= \Omega_{\alpha,\ell,\ell'}((\lambda_1,\lambda_2],x_0) \\
& \quad =
\lim_{\delta\downarrow 0}\lim_{\varepsilon\downarrow 0}
\frac{1}{\pi}\int_{\lambda_1+\delta}^{\lambda_2+\delta} d\lambda \,
\Im(M_{\alpha,\ell,\ell'}(\lambda+i\varepsilon,x_0)), \lb{2.90}
\end{split}
\end{align}
one also has
\begin{align}
&\int_{\bbR} d\Omega_{\alpha,\ell,\ell'}(\lambda,x_0)\, h(\lambda) =
\lim_{\varepsilon\downarrow 0} \frac{1}{\pi}\int_{\bbR} d\lambda \,
\Im(M_{\alpha,\ell,\ell'}(\lambda+i\varepsilon,x_0))\, h(\lambda),
\quad h\in C_0(\bbR;\cH),  \lb{2.91} \\
&\int_{(\lambda_1,\lambda_2]} d\Omega_{\alpha,\ell,\ell'}(\lambda,x_0)
\, k(\lambda) =
\lim_{\delta\downarrow 0} \lim_{\varepsilon\downarrow 0} \frac{1}{\pi}
\int_{\lambda_1+\delta}^{\lambda_2+\delta} d\lambda \,
\Im(M_{\alpha,\ell,\ell'}(\lambda+i\varepsilon,x_0))\, k(\lambda), \no
\\
& \hspace*{9.4cm} k\in C(\bbR;\cH), \lb{2.92}
\end{align}
Then using \eqref{2.71}--\eqref{2.71a}, \eqref{2.74}, and interchanging the $dx$, $dx'$ and $d\Omega_{\alpha,\ell,\ell'}(\cdot,x_0)$, $\ell,\ell'=0,1$, integrals
once more, one concludes from \eqref{2.89}
\begin{align}
&\big(f,F(H)E_H((\lambda_1,\lambda_2])g\big)_{L^2(\bbR;dx;\cH)}
=\int_\bbR dx\, \bigg(f(x), \int_\bbR dx' \int_{(\lambda_1,\lambda_2]} F(\lambda)
\no
\\
& \qquad \times
\Big\{\theta_\alpha(\lambda,x,x_0) \, d\Omega_{\alpha,0,0}(\lambda,x_0) \, \theta_\alpha(\lambda,x',x_0)^*
\no
\\
& \hspace*{1.25cm}
+ \theta_\alpha(\lambda,x,x_0) \, d\Omega_{\alpha,0,1}(\lambda,x_0) \, \phi_\alpha(\lambda,x',x_0)^*
\no
\\
& \hspace*{1.25cm}
+ \phi_\alpha(\lambda,x,x_0) \, d\Omega_{\alpha,1,0}(\lambda,x_0) \, \theta_\alpha(\lambda,x',x_0)^*
\no \\
& \hspace*{1.25cm}
+\phi_\alpha(\lambda,x,x_0) \, d\Omega_{\alpha,1,1}(\lambda,x_0) \, \phi_\alpha(\lambda,x',x_0)^* \Big\} g(x') \bigg)_\cH
\no
\\
& \quad = \int_{(\lambda_1,\lambda_2]} F(\lambda) \, \big(\hatt
f_{\alpha}(\lambda,x_0), d\Omega_{\alpha}(\lambda,x_0) \,
\hatt g_{\alpha}(\lambda,x_0) \big)_{\cH^2}. \lb{2.93}
\end{align}
\end{proof}

\begin{remark} \lb{r2.13a}
Again we emphasize that the idea of a straightforward derivation of the
link between the family of spectral projections $E_{H}(\cdot)$ and the
$2\times 2$ block operator-valued spectral function $\Omega_{\alpha}(\cdot)$ of
$H$ in Theorem \ref{t2.9} can already be found in \cite{HS98} as pointed
out in Remark \ref{r2.6a}. It applies equally well to Dirac-type operators
and Hamiltonian systems on $\bbR$ (see the extensive literature cited,
e.g., in \cite{CG02}) and to Jacobi and CMV operators on $\bbZ$
(cf.\ \cite{Be68} and \cite{GZ06a}).
\end{remark}

As in the half-line case, one can improve on Theorem \ref{t2.9}
and remove the compact support restrictions on $f$ and $g$ in the usual
way. To this end one considers the map
\begin{align}
&\widetilde U_{\alpha}(x_0) : \begin{cases} C_0^\infty(\bbR)\to
L^2\big(\bbR;d\Omega_{\alpha}(\cdot,x_0);\cH^2\big)
\\[1mm]
h \mapsto \hatt h_{\alpha}(\cdot,x_0)
=\big(\,\hatt h_{\alpha,0}(\lambda,x_0),
\hatt h_{\alpha,1}(\lambda,x_0)\big)^\top, \end{cases} \lb{2.95}
\\
&  \hatt h_{\alpha,0}(\lambda,x_0)=\int_\bbR dx \,
\theta_\alpha(\lambda,x,x_0)^* h(x), \quad \hatt h_{\alpha,1}(\lambda,x_0)=\int_\bbR dx \, \phi_\alpha(\lambda,x,x_0)^* h(x).  \no
\end{align}
Taking $f=g$, $F=1$, $\lambda_1\downarrow -\infty$, and
$\lambda_2\uparrow \infty$ in \eqref{2.73} then shows that $\widetilde
U_{\alpha}(x_0)$ is a densely defined isometry in $L^2(\bbR;dx;\cH)$,
which extends by continuity to an isometry on $L^2(\bbR;dx;\cH)$. The latter
is denoted by $U_{\alpha}(x_0)$ and given by
\begin{align}
&U_{\alpha}(x_0) : \begin{cases} L^2 (\bbR;dx;\cH)\to
L^2\big(\bbR;d\Omega_{\alpha}(\cdot,x_0);\cH^2\big) \\[1mm]
h \mapsto \hatt h_{\alpha}(\cdot,x_0)
= \big(\,\hatt h_{\alpha,0}(\cdot,x_0),
\hatt h_{\alpha,1}(\cdot,x_0)\big)^\top, \end{cases} \lb{2.96} \\
& \hatt h_\alpha(\cdot,x_0)=\begin{pmatrix}
\hatt h_{\alpha,0}(\cdot,x_0) \\
\hatt h_{\alpha,1}(\cdot,x_0) \end{pmatrix}=
\slimes_{a\downarrow -\infty, b \uparrow\infty} \begin{pmatrix}
\int_{a}^b dx \, \theta_\alpha(\cdot,x,x_0)^* h(x) \\
\int_{a}^b dx \, \phi_\alpha(\cdot,x,x_0)^* h(x) \end{pmatrix}, \no
\end{align}
where $\slimes$ refers to the
$L^2\big(\bbR;d\Omega_{\alpha}(\cdot,x_0);\cH^2\big)$-limit.

The calculation in \eqref{2.93} also yields
\begin{align}
&(E_{H}((\lambda_1,\lambda_2])g)(x) = \int_{(\lambda_1,\lambda_2]}
(\theta_\alpha(\lambda,x,x_0), \phi_\alpha(\lambda,x,x_0)) \,
d\Omega_{\alpha}(\lambda,x_0)\,
\hatt g_{\alpha}(\lambda,x_0)
\no \\
&\quad = \int_{(\lambda_1,\lambda_2]}
\big[\theta_\alpha(\lambda,x,x_0)  \,
d\Omega_{\alpha,0,0}(\lambda,x_0) \,
\hatt g_{\alpha,0}(\lambda,x_0)
\no \\
&\qquad + \theta_\alpha(\lambda,x,x_0) \,
d\Omega_{\alpha,0,1}(\lambda,x_0) \,
\hatt g_{\alpha,1}(\lambda,x_0)
 +
\phi_\alpha(\lambda,x,x_0) \,
d\Omega_{\alpha,1,0}(\lambda,x_0) \,
\hatt g_{\alpha,0}(\lambda,x_0)
\no \\
& \qquad + \phi_\alpha(\lambda,x,x_0) \,
d\Omega_{\alpha,1,1}(\lambda,x_0) \,
\hatt g_{\alpha,1}(\lambda,x_0)
\big], \quad g\in C_0^\infty(\bbR) \lb{2.97}
\end{align}
and subsequently, \eqref{2.97} extends to all $g\in L^2(\bbR;dx;\cH)$
by continuity. Moreover, taking $\lambda_1\downarrow -\infty$ and
$\lambda_2\uparrow \infty$ in \eqref{2.97} and using
\begin{equation}
\slim_{\lambda\downarrow -\infty} E_{H}(\lambda)=0, \quad
\slim_{\lambda\uparrow \infty} E_{H}(\lambda)=I_{L^2(\bbR;dx;\cH)},
\lb{2.98}
\end{equation}
where
\begin{equation}
E_{H}(\lambda)=E_{H}((-\infty,\lambda]), \quad \lambda\in\bbR, \lb{2.99}
\end{equation}
then yield
\begin{align}
g(\cdot) &=\slimes_{\mu_1\downarrow -\infty, \mu_2\uparrow \infty}
\int_{(\mu_1,\mu_2]} (\theta_\alpha(\lambda,\cdot,x_0),
\phi_\alpha(\lambda,\cdot,x_0)) \, d\Omega_{\alpha}(\lambda,x_0)\,
\hatt g_{\alpha}(\lambda,x_0) \no \\
&= \slimes_{\mu_1\downarrow -\infty, \mu_2\uparrow \infty}
\int_{\mu_1}^{\mu_2}
\big[\theta_\alpha(\lambda,\cdot,x_0) \,
d\Omega_{\alpha,0,0}(\lambda,x_0) \,
\hatt g_{\alpha,0}(\lambda,x_0)
\no \\
&\quad +
\theta_\alpha(\lambda,\cdot,x_0) \,
d\Omega_{\alpha,0,1}(\lambda,x_0) \,
\hatt g_{\alpha,1}(\lambda,x_0)
+\phi_\alpha(\lambda,\cdot,x_0) \,
d\Omega_{\alpha,1,0}(\lambda,x_0) \,
\hatt g_{\alpha,0}(\lambda,x_0)
\no \\
& \quad + \phi_\alpha(\lambda,\cdot,x_0) \,
d\Omega_{\alpha,1,1}(\lambda,x_0) \,
\hatt g_{\alpha,1}(\lambda,x_0)
\big], \quad g\in L^2(\bbR;dx;\cH), \lb{2.100}
\end{align}
where $\slimes$ refers to the $L^2(\bbR;dx;\cH)$-limit. In addition, one can show
that the map $U_{\alpha}(x_0)$ in \eqref{2.96} is onto and hence that
$U_{\alpha}(x_0)$ is unitary with
\begin{align}
&U_{\alpha}(x_0)^{-1} : \begin{cases}
L^2\big(\bbR;d\Omega_{\alpha}(\cdot,x_0);\cH^2\big) \to  L^2(\bbR;dx;\cH)  \\[1mm]
\hatt h \mapsto  h_\alpha, \end{cases} \lb{2.101} \\
& h_\alpha(\cdot)= \slimes_{\mu_1\downarrow -\infty, \mu_2\uparrow
\infty} \int_{\mu_1}^{\mu_2}(\theta_\alpha(\lambda,\cdot,x_0),
\phi_\alpha(\lambda,\cdot,x_0)) \, d\Omega_{\alpha}(\lambda,x_0)\,
\hatt h(\lambda). \no
\end{align}
Indeed, denoting the operator defined in \eqref{2.101} temporarily by
$V_\alpha(x_0)$, one can closely follow the arguments in the corresponding
half-line case in \eqref{2.44a}--\eqref{2.44c}. After first proving that
$V_\alpha(x_0)$ is bounded, one then assumes that
$\hatt f=(f_0,\; f_1)^{\top}\in \ker(V_{\alpha}(x_0)) \subset
L^2\big(\bbR;d\Omega_{\alpha}(\cdot,x_0);\cH^2\big)$. As in the half-line case, one can
show using the dominated convergence theorem that this implies that
for all $x\in\bbR$ and $e\in\cH$,
\begin{align} \lb{2.101c}
\begin{split}
& \int_{(\la_1,\la_2]} \big(e, (\theta_\al(\la,x,x_0),\phi_\al(\la,x,x_0)) \, d\Omega_{\al}(\la,x_0) \, \hatt f(\la) \big)_\cH = 0,  \\
& \int_{(\la_1,\la_2]} \big(e, (\theta_\al'(\la,x,x_0),\phi_\al'(\la,x,x_0)) \,
d\Omega_{\al}(\la,x_0) \, \hatt f(\la) \big)_\cH  = 0.
\end{split}
\end{align}
Using the fact that $\hatt f, \, \chi_{(\la_1,\la_2]}(e_0,e_1)^\top \in
L^2\big(\bbR;d\Omega_{\al}(\la,x_0);\cH^2\big)$ for all $\la_1<\la_2$, $e_0,e_1\in\cH$,
that $\theta_\al(\la,x,x_0),\theta_\al'(\la,x,x_0), \phi_\al(\la,x,x_0),\phi_\al'(\la,x,x_0)$
are continuous with respect to $(\la,x)\in\bbR^2$, and the dominated convergence theorem once again, one finally concludes that for all $e\in\cH$,
\begin{align}
&\int_{(\la_1,\la_2]} \big(e, (\theta_\al(\la,x_0,x_0),\phi_\al(\la,x_0,x_0)) \,
d\Omega_{\al}(\la,x_0) \, \hatt f(\la) \big)_\cH
\no \\
& \quad =
\int_{(\la_1,\la_2]} \big((\cos(\al)e,-\sin(\al)e)^\top, \,
d\Omega_{\al}(\la,x_0) \, \hatt f(\la) \big)_{\cH^2} = 0,  \lb{2.101e}
\\
&\int_{(\la_1,\la_2]} \big(e, (\theta_\al'(\la,x_0,x_0),\phi_\al'(\la,x_0,x_0)) \,
d\Omega_{\al}(\la,x_0) \, \hatt f(\la) \big)_\cH
\no \\
& \quad =
\int_{(\la_1,\la_2]} \big((\sin(\al)e,\cos(\al)e)^\top, \,
d\Omega_{\al}(\la,x_0) \, \hatt f(\la) \big)_{\cH^2} = 0. \lb{2.101f}
\end{align}
The sum of \eqref{2.101e} with $e=\cos(\al)e_1-\sin(\al)e_2$ and \eqref{2.101f} with $e=\sin(\al)e_1+\cos(\al)e_2$ then yields,
\begin{align}
\int_{(\la_1,\la_2]} \big((e_1,e_2)^\top, \,
d\Omega_{\al}(\la,x_0) \, \hatt f(\la) \big)_{\cH^2} = 0, \quad (e_1,e_2)^\top\in\cH^2. \lb{2.101g}
\end{align}
Since the interval $(\la_1,\la_2]$ was chosen arbitrary, \eqref{2.101g}
implies
\begin{align}
\hatt f(\la) = 0 \;\, \Omega_{\al}(\cdot,x_0)\text{-a.e.},
\end{align}
and hence that $V_{\al}(x_0)$ is injective and thus $U_{\al}(x_0)$ is onto.

We sum up these considerations in a variant of the spectral theorem for
(functions of) $H$.

\begin{theorem} \lb{t2.10}
Let $F\in C(\bbR)$ and $x_0\in\bbR$. Then,
\begin{equation}
U_{\alpha}(x_0) F(H)U_{\alpha}(x_0)^{-1} = M_F \lb{2.102}
\end{equation}
in $L^2\big(\bbR;d\Omega_{\alpha}(\cdot,x_0);\cH^2\big)$ $($cf.\ \eqref{2.75}$)$.
Moreover,
\begin{align}
& \sigma(F(H))= \essran_{\Omega_{\alpha}}(F),  \lb{2.103a} \\
& \sigma(H)=\supp (d\Omega_\alpha(\cdot,x_0)),    \lb{2.103b}
\end{align}
and the multiplicity of the spectrum of $H$ is at most equal to $\dim\big(\cH^2\big)=2\dim(\cH)$.
\end{theorem}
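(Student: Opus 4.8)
The plan is to mirror the proof of Theorem~\ref{t2.6} essentially line by line, replacing the half-line objects $H_{+,\alpha}$, $U_{+,\alpha}$, $d\rho_{+,\alpha}$, and $\cH$ by their full-line analogues $H$, $U_{\alpha}(x_0)$, $d\Omega_{\alpha}(\cdot,x_0)$, and $\cH^2$. First I would record that \eqref{2.102} is immediate from Theorem~\ref{t2.9} and the unitarity of $U_{\alpha}(x_0)$ established in the paragraphs following it: \eqref{2.73} with $f=g$ and $F\equiv 1$ shows $U_{\alpha}(x_0)$ is a densely defined isometry (hence extends to the isometry \eqref{2.96}), \eqref{2.101g} shows the map $V_{\alpha}(x_0)$ of \eqref{2.101} is injective so that $U_{\alpha}(x_0)$ is onto, and \eqref{2.93}, read now for arbitrary $F\in C(\bbR)$, rewrites as $\big(U_{\alpha}(x_0)f,\,M_F M_{\chi_{(\lambda_1,\lambda_2]}}U_{\alpha}(x_0)g\big)_{L^2(\bbR;d\Omega_{\alpha}(\cdot,x_0);\cH^2)} = \big(f,F(H)E_H((\lambda_1,\lambda_2])g\big)_{L^2(\bbR;dx;\cH)}$ for $f,g\in C_0^\infty(\bbR;\cH)$; letting $\lambda_1\downarrow-\infty$, $\lambda_2\uparrow\infty$ and extending by density yields \eqref{2.102}. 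Moreover \eqref{2.103b} is the special case $F(\lambda)=\lambda$ of \eqref{2.103a}, so only the latter needs proof, and the multiplicity bound will likewise drop out of \eqref{2.102} with $F(\lambda)=\lambda$.

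For \eqref{2.103a}, since $F(H)$ is by \eqref{2.102} unitarily equivalent to $M_F$ in $L^2\big(\bbR;d\Omega_{\alpha}(\cdot,x_0);\cH^2\big)$, it suffices to decide when $M_{F-z}$ is boundedly invertible. Here $\essran_{\Omega_{\alpha}}(F)$ is understood, in analogy with \eqref{2.46c}--\eqref{2.46d}, as the essential range of $F$ relative to any control measure $d\mu_{\alpha}(\cdot,x_0)$ of the nonnegative $\cB\big(\cH^2\big)$-valued measure $d\Omega_{\alpha}(\cdot,x_0)$, a convenient choice being $\mu_{\alpha}(B,x_0)=\sum_{n\in\cI}2^{-n}(f_n,\Omega_{\alpha}(B,x_0)f_n)_{\cH^2}$ for a complete orthonormal system $\{f_n\}_{n\in\cI}$ in $\cH^2$; this is independent of the choice of control measure since any two such measures are mutually absolutely continuous. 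Fix $z\in\essran_{\Omega_{\alpha}}(F)$ and $\varepsilon>0$. As $F$ is continuous, the set $\{\lambda\in\bbR\,|\,|F(\lambda)-z|<\varepsilon\}$ is open, hence a countable disjoint union of open intervals, and by the definition of the essential range it contains a bounded interval $B$ with $\Omega_{\alpha}(B,x_0)\neq 0$; consequently there is a nonzero $h\in\cH^2$ with $(h,\Omega_{\alpha}(B,x_0)h)_{\cH^2}>0$. Then $\chi_B h\in L^2\big(\bbR;d\Omega_{\alpha}(\cdot,x_0);\cH^2\big)$ satisfies $\|\chi_B h\|^2=(h,\Omega_{\alpha}(B,x_0)h)_{\cH^2}>0$ while $\|M_{F-z}\chi_B h\|\leq\varepsilon\|\chi_B h\|$; since $\varepsilon>0$ is arbitrary, $M_{F-z}$ is not boundedly invertible, so $z\in\sigma(F(H))$.

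Conversely, for $z\in\bbR\bs\essran_{\Omega_{\alpha}}(F)$ there is, by \eqref{2.46c}--\eqref{2.46d} applied to $d\mu_{\alpha}(\cdot,x_0)$, an $\varepsilon>0$ such that every interval $B\subset\{\lambda\in\bbR\,|\,|F(\lambda)-z|<\varepsilon\}$ has $\mu_{\alpha}(B,x_0)=0$, hence $\Omega_{\alpha}(B,x_0)=0$; therefore $\|M_{F-z}g\|\geq\varepsilon\|g\|$ for every $g\in\dom(M_F)$, so $M_{F-z}$ is boundedly invertible and $z\notin\sigma(F(H))$. This establishes \eqref{2.103a}, and \eqref{2.103b} is then the case $F(\lambda)=\lambda$. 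Finally, \eqref{2.102} with $F(\lambda)=\lambda$ exhibits $H$ as unitarily equivalent to multiplication by the independent variable in $L^2\big(\bbR;d\Omega_{\alpha}(\cdot,x_0);\cH^2\big)$, whose spectral multiplicity is at most $\dim\big(\cH^2\big)=2\dim(\cH)$, and the same therefore holds for $H$.

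I do not expect a genuine obstacle here, since every step is a transcription of the half-line argument; the only items deserving a brief remark are the well-definedness of $\essran_{\Omega_{\alpha}}(F)$ independently of the control measure, and the fact that the $2\times 2$ block structure of $d\Omega_{\alpha}(\cdot,x_0)$ does not interfere with any of these measure-theoretic steps, because only the scalar control measure $d\mu_{\alpha}(\cdot,x_0)$ and the nonnegativity of $\Omega_{\alpha}(B,x_0)$ in $\cB\big(\cH^2\big)$ enter.
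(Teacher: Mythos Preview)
Your proposal is correct and is exactly what the paper intends: the paper's proof consists of the single sentence ``The proof of the theorem is analogous to the one given for Theorem~\ref{t2.6},'' and you have carried out precisely that transcription, replacing $H_{+,\alpha}$, $U_{+,\alpha}$, $d\rho_{+,\alpha}$, $\cH$ by $H$, $U_{\alpha}(x_0)$, $d\Omega_{\alpha}(\cdot,x_0)$, $\cH^2$ throughout.
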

\begin{proof}
The proof of the theorem is analogous to the one given for Theorem \ref{t2.6}.
\end{proof}

\appendix
\section{Basic Facts on Operator-Valued Herglotz Functions} \lb{sA}
\setcounter{theorem}{0}
\setcounter{equation}{0}

In this appendix we review some basic facts on (bounded) operator-valued Herglotz
functions (also called Nevanlinna, Pick, $R$-functions, etc.),
applicable to $m_{\alpha}$ and $G_{\alpha}(\cdot,x,x)$, $x\in (a,b)$,
discussed in the bulk of this paper. For additional details concerning the material in this appendix
we refer to \cite{GWZ13}.

In the remainder of this appendix, let $\cH$ be a separable, complex Hilbert space
with inner product denoted by $(\cdot,\cdot)_{\cH}$.

\begin{definition}\label{dA.4}
The map $M: \bbC_+ \rightarrow \cB(\cH)$ is called a bounded
operator-valued Herglotz function in $\cH$ (in short, a bounded Herglotz operator in
$\cH$) if $M$ is analytic on $\bbC_+$ and $\Im (M(z))\geq 0$ for all $z\in \bbC_+$.
\end{definition}

Here we follow the standard notation
\begin{equation} \lb{A.37}
\Im (M) = (M-M^*)/(2i),\quad \Re (M) = (M+M^*)/2, \quad M \in \cB(\cH).
\end{equation}

Note that $M$ is a bounded Herglotz operator if and only if the scalar-valued functions
$(u,Mu)_\cH$ are Herglotz for all $u\in\cH$.

As in the scalar case one usually extends $M$ to $\bbC_-$ by
reflection, that is, by defining
\begin{equation}
M(z)=M(\overline z)^*, \quad z\in \bbC_-.   \lb{A.36}
\end{equation}
Hence $M$ is analytic on $\bbC\backslash\bbR$, but $M\big|_{\bbC_-}$
and $M\big|_{\bbC_+}$, in general, are not analytic
continuations of each other.

Of course, one can also consider unbounded operator-valued
Herglotz functions, but they will not be used in this paper.

In contrast to the scalar case, one cannot generally expect strict
inequality in $\Im(M(\cdot))\geq 0$. However, the kernel of $\Im(M(\cdot))$
has simple properties:

\begin{lemma} \lb{lA.5}
Let $M(\cdot)$ be a bounded operator-valued Herglotz function in $\cH$.
Then the kernel $\cH_0 = \ker(\Im(M(z)))$ is independent of $z\in\bbC_+$.
Consequently, upon decomposing $\cH = \cH_0 \oplus \cH_1$,
$\cH_1 = \cH_0^\bot$, $\Im(M(\cdot))$ takes on the form
\begin{equation}
\Im(M(z))= \begin{pmatrix} 0 & 0 \\ 0 & N_1(z) \end{pmatrix},
\quad z \in \bbC_+,     \lb{A.38}
\end{equation}
where $N_1(\cdot) \in \cB(\cH_1)$ satisfies
\begin{equation}
N_1(z) > 0, \quad z\in\bbC_+.    \lb{A.39}
\end{equation}
\end{lemma}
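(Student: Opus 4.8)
The plan is to reduce the statement about the kernels of $\Im(M(z))$ to the classical dichotomy for scalar Herglotz functions, applied to $z \mapsto (u, M(z)u)_{\cH}$, $u \in \cH$. First I would record the elementary fact that for a bounded nonnegative self-adjoint operator $A \in \cB(\cH)$ one has $\ker(A) = \{u \in \cH \,|\, (u, Au)_{\cH} = 0\}$ (apply $A^{1/2}$ and use $\|A^{1/2}u\|_{\cH}^2 = (u,Au)_{\cH}$). Applying this to $A = \Im(M(z)) \geq 0$ and using the definition \eqref{A.37}, so that $(u, \Im(M(z))u)_{\cH} = \Im\big((u, M(z)u)_{\cH}\big)$, one obtains that $u \in \ker(\Im(M(z)))$ if and only if $\Im\big((u, M(z)u)_{\cH}\big) = 0$.

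Next, fix $u \in \cH$ and set $f_u(z) = (u, M(z)u)_{\cH}$, which by Definition \ref{dA.4} is analytic on $\bbC_+$ with $\Im(f_u) \geq 0$, i.e., a (possibly constant) scalar Herglotz function. The key dichotomy I would invoke is that the nonnegative harmonic function $\Im(f_u)$ on the connected domain $\bbC_+$ is either strictly positive throughout $\bbC_+$, or vanishes identically — by the minimum principle for harmonic functions — and in the latter case $f_u$ is a real constant. Hence the set $\{z \in \bbC_+ \,|\, (u, \Im(M(z))u)_{\cH} = 0\}$ is either empty or all of $\bbC_+$, so membership of $u$ in $\ker(\Im(M(z)))$ does not depend on $z$. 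This proves that $\cH_0 = \ker(\Im(M(z)))$ is independent of $z \in \bbC_+$.

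For the block decomposition, I would use that $\cH_0 = \ker(\Im(M(z)))$ is a reducing subspace for the self-adjoint operator $\Im(M(z))$: for $u_0 \in \cH_0$ we have $\Im(M(z))u_0 = 0$, and for $u_1 \in \cH_1 = \cH_0^{\perp}$ we have $(u_0, \Im(M(z))u_1)_{\cH} = (\Im(M(z))u_0, u_1)_{\cH} = 0$, so $\Im(M(z))u_1 \in \cH_1$. Thus, relative to $\cH = \cH_0 \oplus \cH_1$, the operator $\Im(M(z))$ has the block-diagonal form \eqref{A.38} with $N_1(z) := P_{\cH_1}\Im(M(z))\big|_{\cH_1} \in \cB(\cH_1)$, where $P_{\cH_1}$ denotes the orthogonal projection onto $\cH_1$. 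Finally, $N_1(z) \geq 0$; and if $N_1(z)u_1 = 0$ for some $u_1 \in \cH_1$, then $(u_1, \Im(M(z))u_1)_{\cH} = (u_1, N_1(z)u_1)_{\cH_1} = 0$ forces $\Im(M(z))u_1 = 0$, i.e. $u_1 \in \cH_0 \cap \cH_1 = \{0\}$; hence $N_1(z)$ is injective, and therefore $N_1(z) > 0$ for all $z \in \bbC_+$, which is \eqref{A.39}.

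The only step that is not routine bookkeeping about nonnegative self-adjoint operators is the scalar dichotomy (strict positivity versus constancy of a scalar Herglotz function); this is a standard consequence of the minimum principle for harmonic functions (equivalently, of the open mapping theorem applied to $f_u$), so I do not anticipate a genuine obstacle. If one prefers to avoid even this, the same conclusion follows from the Herglotz representation of $(u,M(\cdot)u)_{\cH}$, but the harmonic-function argument is the cleanest.
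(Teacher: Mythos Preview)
Your argument is correct and complete: reducing to the scalar Herglotz functions $(u,M(\cdot)u)_{\cH}$ and invoking the minimum principle for the nonnegative harmonic function $\Im((u,M(\cdot)u)_{\cH})$ is exactly the right idea, and the subsequent block-decomposition and injectivity steps are routine and properly justified.

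Note, however, that the paper does not actually prove Lemma~\ref{lA.5}; it only cites \cite[Proposition~1.2\,$(ii)$]{DM97} and remarks that the matrix-valued argument in \cite[Lemma~5.3]{GT00} extends to the infinite-dimensional setting. Your proof is precisely the standard argument one finds in those references (reduction to scalar Herglotz functions plus the harmonic minimum principle), so there is no genuine methodological difference to compare.
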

For a proof of Lemma \ref{lA.5} see, for instance,
\cite[Proposition\ 1.2\,$(ii)$]{DM97} (alternatively, the
proof of \cite[Lemma\ 5.3]{GT00} in the matrix-valued context extends to the present
infinite-dimensional situation).

Next we recall the definition of a bounded operator-valued measure (see, also
\cite[p.\ 319]{Be68}, \cite{MM04}, \cite{PR67}):

\begin{definition} \lb{dA.6}
Let $\cH$ be a separable, complex Hilbert space.
A map $\Sigma:\mathfrak{B}(\bbR) \to\cB(\cH)$, with $\mathfrak{B}(\bbR)$ the
Borel $\sigma$-algebra on $\bbR$, is called a {\it bounded, nonnegative,
operator-valued measure} if the following conditions $(i)$ and $(ii)$ hold: \\
$(i)$ $\Sigma (\emptyset) =0$ and $0 \leq \Sigma(B) \in \cB(\cH)$ for all
$B \in \mathfrak{B}(\bbR)$. \\
$(ii)$ $\Sigma(\cdot)$ is strongly countably additive (i.e., with respect to the
strong operator  \hspace*{5mm} topology in $\cH$), that is,
\begin{align}
& \Sigma(B) = \slim_{N\to \infty} \sum_{j=1}^N \Sigma(B_j)   \lb{A.40} \\
& \quad \text{whenever } \, B=\bigcup_{j\in\bbN} B_j, \, \text{ with } \,
B_k\cap B_{\ell} = \emptyset \, \text{ for } \, k \neq \ell, \;
B_k \in \mathfrak{B}(\bbR), \; k, \ell \in \bbN.    \no
\end{align}
$\Sigma(\cdot)$ is called an {\it $($operator-valued\,$)$ spectral
measure} (or an {\it orthogonal operator-valued measure}) if additionally the following
condition $(iii)$ holds: \\
$(iii)$ $\Sigma(\cdot)$ is projection-valued (i.e., $\Sigma(B)^2 = \Sigma(B)$,
$B \in \mathfrak{B}(\bbR)$) and $\Sigma(\bbR) = I_{\cH}$. \\
$(iv)$ Let $f \in \cH$ and $B \in \mathfrak{B}(\bbR)$. Then the vector-valued
measure $\Sigma(\cdot) f$ has {\it finite variation on $B$}, denoted by
$V(\Sigma f;B)$, if
\begin{equation}
V(\Sigma f; B) = \sup\bigg\{\sum_{j=1}^N \|\Sigma(B_j)f\|_{\cH} \bigg\} < \infty,
\end{equation}
where the supremum is taken over all finite sequences $\{B_j\}_{1\leq j \leq N}$
of pairwise disjoint subsets on $\bbR$ with $B_j \subseteq B$, $1 \leq j \leq N$.
In particular, $\Sigma(\cdot) f$ has {\it finite total variation} if
$V(\Sigma f;\bbR) < \infty$.
\end{definition}

We recall that due to monotonicity considerations, taking the limit
in the strong operator topology in \eqref{A.40} is equivalent to taking the limit with
respect to the weak operator topology in $\cH$.

We also note that integrals of the type \eqref{A.41a}--\eqref{A.42a} below are now
taken with respect to an operator-valued measure, as opposed to the Bochner integrals
we used in the bulk of this paper, Sections \ref{s2}--\ref{s5}.

For relevant material in connection with the following result we refer the reader, for instance, to \cite{AL95}, \cite{AN75}, \cite{AN76},
\cite{BT92}, \cite[Sect.\ VI.5,]{Be68}, \cite[Sect.\ I.4]{Br71}, \cite{Bu97}, \cite{Ca76},
\cite{De62}, \cite{DM91}--\cite{DM97}, \cite[Sects.\ XIII.5--XIII.7]{DS88},
\cite{HS98}, \cite{KO77}, \cite{KO78}, \cite{Ma62}, \cite{MM02}, \cite{MM04},
\cite[Ch.\ VI]{Na68}, \cite{Na74}, \cite{Na77}, \cite{NA75}, \cite{Sh71}, \cite{Ts92},
\cite[Sects.\ 8--10]{We87}.

\begin{theorem}
\rm {(\cite{AN76}, \cite[Sect.\ I.4]{Br71}, \cite{Sh71}.)} \lb{tA.7}
Let $M$ be a bounded operator-valued Herglotz function in $\cH$.
Then the following assertions hold: \\
$(i)$ For each $f \in \cH$, $(f,M(\cdot) f)_{\cH}$ is a $($scalar$)$ Herglotz function. \\
$(ii)$ Suppose that $\{e_j\}_{j\in\bbN}$ is a complete orthonormal system in $\cH$
and that for some subset of $\bbR$ having positive Lebesgue measure, and for all
$j\in\bbN$, $(e_j,M(\cdot) e_j)_{\cH}$ has zero normal limits. Then $M\equiv 0$. \\
$(iii)$ There exists a bounded, nonnegative $\cB(\cH)$-valued measure
$\Omega$ on $\bbR$ such that the Nevanlinna representation
\begin{align}
& M(z) = C + D z + \int_{\bbR}
\f{d\Omega (\lambda)}{1+\lambda^2} \, \frac{1 + \lambda z}{\lambda -z}    \lb{A.41a} \\
& \qquad \; = C + D z + \int_{\bbR} d\Omega (\lambda ) \bigg[\f{1}{\lambda-z}
- \f{\lambda}{1+\lambda ^2}\bigg],
\quad z\in\bbC_+,    \lb{A.42} \\
& \wti \Omega((-\infty, \lambda]) = \slim_{\varepsilon \downarrow 0}
\int_{-\infty}^{\lambda + \varepsilon} \f{d \Omega (t)}{1 + t^2},  \quad
\lambda \in \bbR,  \\
& \wti \Omega(\bbR) = \Im(M(i))
= \int_{\bbR} \f{d\Omega(\lambda)}{1+\lambda^2} \in \cB(\cH),   \lb{A.42a} \\
& C=\Re(M(i)),\quad D=\slim_{\eta\uparrow \infty} \,
\frac{1}{i\eta}M(i\eta) \geq 0,      \lb{A.42b}
\end{align}
holds in the strong sense in $\cH$. Here
$\wti\Omega (B) = \int_{B}  \big(1+\lambda^2\big)^{-1}d\Omega(\lambda)$, $B \in \mathfrak{B}(\bbR)$.   \\
$(iv)$ Let $\lambda _1,\lambda_2\in\bbR$, $\lambda_1<\lambda_2$. Then the
Stieltjes inversion formula for $\Omega $ reads
\begin{equation}\lb{A.43}
\Omega ((\lambda_1,\lambda_2]) f =\pi^{-1} \slim_{\delta\downarrow 0}
\slim_{\varepsilon\downarrow 0}
\int^{\lambda_2 + \delta}_{\lambda_1 + \delta}d\lambda \,
\Im (M(\lambda+i\varepsilon)) f, \quad f \in \cH.
\end{equation}
$(v)$ Any isolated poles of $M$ are simple and located on the
real axis, the residues at poles being nonpositive bounded operators in $\cB(\cH)$.  \\
$(vi)$ For all $\lambda \in \bbR$,
\begin{align}
& \slim_{\varepsilon \downarrow 0} \, \varepsilon
\Re(M(\lambda +i\varepsilon ))=0,    \lb{A.45} \\
& \, \Omega (\{\lambda \}) = \slim_{\varepsilon \downarrow 0} \,
\varepsilon \Im (M(\lambda + i \varepsilon ))
=- i \slim_{\varepsilon \downarrow 0} \,
\varepsilon M(\lambda +i\varepsilon).     \lb{A.46}
\end{align}
$(vii)$ If in addition $M(z) \in \cB_{\infty} (\cH)$, $z \in \bbC_+$, then the measure
$\Omega$ in \eqref{A.41a} is countably additive with respect to the $\cB(\cH)$-norm,
and the Nevanlinna representation \eqref{A.41a}, \eqref{A.42} and the
Stieltjes inversion formula \eqref{A.43} as well as \eqref{A.45}, \eqref{A.46} hold
with the limits taken with respect to the $\|\cdot\|_{\cB(\cH)}$-norm. \\
$(viii)$ Let $f \in \cH$ and assume in addition that $\Omega(\cdot) f$ is of finite total
variation. Then for a.e.\ $\lambda \in \bbR$, the normal limits $M(\lambda + i0) f$
exist in the strong sense and
\begin{equation}
\slim_{\varepsilon \downarrow 0} M(\lambda +i\varepsilon) f
= M(\lambda +i 0) f = H(\Omega(\cdot) f) (\lambda) + i \pi \Omega'(\lambda) f,
\end{equation}
where $H(\Omega(\cdot) f)$ denotes the $\cH$-valued Hilbert transform
\begin{equation}
H(\Omega(\cdot) f) (\lambda) = \text{p.v.}\int_{- \infty}^{\infty} d \Omega (t) f \,
\f{1}{t - \lambda}
= \slim_{\delta \downarrow 0} \int_{|t-\lambda|\geq \delta} d \Omega (t) f \,
\f{1}{t - \lambda}.
\end{equation}
\end{theorem}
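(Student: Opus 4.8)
The unifying idea is to transfer everything to the scalar Herglotz calculus by testing $M(\cdot)$ against vectors. First I would dispose of $(i)$: since $M$ is analytic on $\bbC_+$ in the $\cB(\cH)$-norm it is in particular weakly analytic, so $z\mapsto (f,M(z)f)_\cH$ is analytic for every $f\in\cH$, while $\Im\big((f,M(z)f)_\cH\big)=(f,\Im(M(z))f)_\cH\geq 0$ by the definition of a bounded Herglotz operator; thus each diagonal $(f,M(\cdot)f)_\cH$ is a scalar Herglotz (Nevanlinna) function, and conversely. Polarization then expresses every sesquilinear quantity $(f,M(z)g)_\cH$ through the four diagonal functions $(f\pm g,M(z)(f\pm g))_\cH$ and $(f\pm ig,M(z)(f\pm ig))_\cH$, which is the mechanism by which scalar facts are promoted to operator statements throughout.

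The crux is $(iii)$. For each $f$ the scalar function $(f,M(\cdot)f)_\cH$ carries a classical Nevanlinna representation with a finite nonnegative scalar measure $\mu_f$ and constants encoding $\Re(M(i))$ and the linear term. I would build the operator objects directly: set $C=\Re(M(i))$ and $D=\slim_{\eta\uparrow\infty}(i\eta)^{-1}M(i\eta)$ as in \eqref{A.42b}, and define the operator measure $\Omega$ through the strong Stieltjes limit \eqref{A.43}. The substance is to show that $B\mapsto\Omega(B)$ is a \emph{bounded nonnegative} $\cB(\cH)$-valued measure: one checks that for each $f$ the scalar set function $B\mapsto(f,\Omega(B)f)_\cH$ coincides with $\mu_f(B)$ (a genuine nonnegative measure), uses polarization to obtain the sesquilinear form $B\mapsto(f,\Omega(B)g)_\cH$, and invokes the uniform domination $(e,\wti\Omega(\bbR)e)_\cH=(e,\Im(M(i))e)_\cH\leq\|M(i)\|_{\cB(\cH)}\|e\|_\cH^2$ to represent this form by a bounded operator $\Omega(B)$ and to pass monotone/countable additivity from the scalar measures to the strong operator topology. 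The representation \eqref{A.41a}, \eqref{A.42} then holds weakly (by the scalar representation of each $(f,M(\cdot)f)_\cH$) and hence, by the uniform bounds, in the strong sense. \textbf{This assembly of the pointwise family $\{\mu_f\}$ into a single strongly countably additive operator measure $\Omega$ is the main obstacle}; everything downstream is bookkeeping on top of it.

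With $\Omega$ in hand, the remaining parts follow by scalar reduction plus polarization and the strong control from $(iii)$. For $(iv)$ the Stieltjes inversion is the scalar inversion applied to each $\mu_f$, reassembled strongly. For $(vi)$ I would use the elementary scalar boundary asymptotics $\varepsilon\,\Im m(\lambda+i\varepsilon)\to\mu(\{\lambda\})$ and $\varepsilon\,\Re m(\lambda+i\varepsilon)\to 0$ for $m=(f,M(\cdot)f)_\cH$, giving \eqref{A.45}, \eqref{A.46} after polarization, with $\Omega(\{\lambda\})$ identified as the atom. Part $(v)$ is then immediate: an isolated singularity of $M$ is an isolated pole of each diagonal, which by scalar Herglotz theory is simple and real, with residue $-\Omega(\{\lambda_0\})\leq 0$ by \eqref{A.46}. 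For $(ii)$ I would apply the scalar boundary-uniqueness (Privalov) theorem to each $(e_j,M(\cdot)e_j)_\cH$: vanishing normal limits on a set of positive Lebesgue measure force it to vanish identically, whence $(e_j,\Im(M(z))e_j)_\cH=0$; nonnegativity of $\Im(M(z))$ gives $\Im(M(z))e_j=0$, and completeness of $\{e_j\}_{j\in\bbN}$ yields $\Im(M)\equiv 0$, so that $M$ reduces to a self-adjoint constant which the vanishing of all its diagonal entries annihilates (cf.\ \cite{Sh71}, and Lemma \ref{lA.5} for the role of $\ker(\Im(M(\cdot)))$).

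Finally, $(vii)$ and $(viii)$ are refinements of the topology. For $(vii)$, compactness of $M(z)$ makes $\wti\Omega(\bbR)=\Im(M(i))$ a compact nonnegative operator dominating all $\Omega(B)$, so the measure becomes norm countably additive and the tails of the representation converge in $\cB(\cH)$-norm; the strong limits in \eqref{A.41a}, \eqref{A.43}, \eqref{A.45}, \eqref{A.46} are thereby upgraded to norm limits. For $(viii)$, the finite-total-variation hypothesis makes $\Omega(\cdot)f$ a genuine $\cH$-valued measure of bounded variation, for which the $\cH$-valued Poisson and conjugate-Poisson (Hilbert transform) integrals converge for a.e.\ $\lambda$; this is the vector-valued Fatou/Privalov boundary-value statement and yields the strong normal limit $M(\lambda+i0)f=H(\Omega(\cdot)f)(\lambda)+i\pi\Omega'(\lambda)f$. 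Throughout, I would lean on the scalar Nevanlinna theory and the operator-measure references \cite{AN76}, \cite[Sect.\ I.4]{Br71}, \cite{Sh71} for the standard estimates.
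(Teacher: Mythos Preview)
The paper does not supply its own proof of this theorem: it is stated in Appendix \ref{sA} as a known result, attributed directly to \cite{AN76}, \cite[Sect.\ I.4]{Br71}, and \cite{Sh71}, and the surrounding text refers the reader to \cite{GWZ13} and an extensive list of references for details. So there is no ``paper's own proof'' to compare against.

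That said, your outline is the standard route taken in those sources: reduce to the scalar Nevanlinna theory via the diagonal functionals $(f,M(\cdot)f)_\cH$, assemble the family $\{\mu_f\}$ into an operator-valued measure using polarization and the uniform bound $\wti\Omega(\bbR)=\Im(M(i))$, and then read off $(iv)$--$(vi)$ from the scalar facts. You have correctly identified the one nontrivial step, namely the construction of a single strongly countably additive $\cB(\cH)$-valued measure $\Omega$ from the quadratic-form data; this is precisely what \cite[Sect.\ I.4]{Br71} and \cite{Sh71} carry out (Brodskii via operator-valued Stieltjes integrals and the Caratheodory representation \eqref{A.54}, Shmul'yan more directly), and what \cite{AN76} obtains through Naimark dilation. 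Your sketch is consistent with those arguments and with the paper's intent in citing them.
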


As usual, the normal limits in Theorem \ref{tA.7} can be replaced by nontangential
ones.

The nature of the boundary values of $M(\cdot + i 0)$ when for some $p>0$,
$M(z) \in \cB_p(\cH)$, $z \in \bbC_+$, was clarified in detail in \cite{BE67},
\cite{Na89}, \cite{Na90}, \cite{Na94}.

Using an approach based on operator-valued Stieltjes integrals, a special case
of Theorem \ref{tA.7} was proved by Brodskii \cite[Sect.\ I.4]{Br71}. In particular,
he proved the analog of the Herglotz representation for operator-valued
Caratheodory functions. More precisely, if $F$ is analytic on $\D$ (the open
unit disk in $\bbC$) with nonnegative real part $\Re(F(w)) \geq 0$, $w \in \D$,
then $F$ is of the form
\begin{align}
\begin{split}
& F (w) = i \Im(F(0)) + \oint_{\dD} d\Upsilon (\zeta) \, \f{\zeta + w}{\zeta - w},
\quad w \in \D,      \lb{A.54} \\
& \Re (F (0)) = \Upsilon(\dD),
\end{split}
\end{align}
with $\Upsilon$ a bounded, nonnegative $\cB(\cH)$-valued measure on $\dD$. The result \eqref{A.54} can also be derived by an application of Naimark's dilation
theory (cf.\ \cite{AN76} and \cite[p.\ 68]{Fi70}), and it can also be used to derive the Nevanlinna representation \eqref{A.41a}, \eqref{A.42} (cf.\ \cite{AN76}, and in a
special case also \cite[Sect.\ I.4]{Br71}). Finally, we also mention that
Shmuly'an \cite{Sh71} discusses the Nevanlinna representation \eqref{A.41a},
\eqref{A.42}; moreover, certain special classes of Nevanlinna functions, isolated
by Kac and Krein \cite{KK74} in the scalar context, are studied by Brodskii \cite[Sect.\ I.4]{Br71} and Shmuly'an \cite{Sh71}.

For a variety of applications of operator-valued Herglotz functions, see, for
instance, \cite{AL95}, \cite{ABMN05}, \cite{ABT11}, \cite{BMN02}, \cite{Ca76},
\cite{DM91}--\cite{DM97}, \cite{GKMT01},
\cite{MM02}--\cite{MN11a}, \cite{Sh71}, and the literature cited therein.

\section{Direct Integrals and the Construction of the Model Hilbert Space
$L^2(\bbR;d\Sigma;\cK)$} \lb{sD}
\setcounter{theorem}{0}
\setcounter{equation}{0}

In this appendix  we recall the construction of a model Hilbert space $L^2(\bbR; d\Sigma; \cK)$
(and related Banach spaces $L^p(\bbR; w d\Sigma; \cK)$, $p \geq 1$, $w$ an appropriate scalar
nonnegative weight function) as discussed in detail in \cite{GKMT01} and slightly extended in
\cite{GWZ13a}. Variants of this construction are of importance in the bulk of this paper.

For proofs of the results in this appendix we refer to \cite{GKMT01} and \cite{GWZ13a};
as general background literature for the topic to follow, we refer to the theory of
direct integrals of Hilbert spaces as presented, for instance, in \cite[Ch.\ 4]{BW83},
\cite[Ch.\ 7]{BS87}, \cite[Ch.\ II]{Di96}, \cite[Ch.\ XII]{vN51}.

Throughout this section we make the following assumptions:

\begin{hypothesis} \lb{hD.0}
Let $\mu$ denote a $\sigma$-finite Borel measure on $\bbR$,
$\mathfrak{B}(\bbR)$ the Borel $\sigma$-algebra on $\bbR$,
and suppose that $\cK$ and $\cK_{\lambda}$, $\lambda\in\bbR$, denote
separable, complex Hilbert spaces such that the dimension function
$\bbR \ni \lambda \mapsto \dim (\cK_{\lambda}) \in \bbN \cup \{\infty\}$
is $\mu$-measurable.
\end{hypothesis}

Assuming Hypothesis \ref{hD.0}, let
$\cS(\{\cK_\lambda\}_{\lambda\in\bbR})$
be the vector space associated with the Cartesian product
$\prod_{\lambda\in\bbR}\cK_\lambda$ equipped with
the obvious linear structure. Elements of
$\cS(\{\cK_\lambda\}_{\lambda\in\bbR})$ are maps
\begin{equation}
f \in \cS(\{\cK_\lambda\}_{\lambda\in\bbR}), \quad
\bbR\ni\lambda\mapsto f(\lambda)\in\cK_{\lambda},
\lb{D.1}
\end{equation}
in particular, we identify $f = \{f(\lambda)\}_{\lambda \in \bbR}$.

\begin{definition} \lb{dD.1}
Assume Hypothesis \ref{hD.0}.
A {\it measurable family of Hilbert spaces $\cM$
modeled on $\mu$ and
$\{\cK_\lambda\}_{\lambda\in\bbR}$} is a linear
subspace $\cM\subset\cS(\{\cK_\lambda\}
_{\lambda\in\bbR})$ such that $f\in\cM$ if and
only if the map $\bbR\ni\lambda\mapsto (f(\lambda),
g(\lambda))_{\cK_\lambda}\in\bbC$ is
$\mu$-measurable for all $g\in\cM$. 
Moreover, $\cM$ is said to be {\it generated by some
subset $\cF$, $\cF\subset\cM$}, if for every
$g\in\cM$ we can
find a sequence of functions $h_n\in{\rm lin.span}
\{\chi_Bf\in \cS(\{\cK_\lambda\}) \,|\, B\in\mathfrak{B}(\bbR), f\in\cF\}$ with
$\lim_{n\to\infty}\|g(\lambda)-h_n(\lambda)
\|_{\cK_{\lambda}}=0$
$\mu$-a.e.
\end{definition}

We note that we shall identify functions in $\cM$ which coincide $\mu$-a.e.; thus 
$\cM$ is more precisely a set of equivalence classes of functions. 
The definition of $\cM$ was chosen with its maximality
in mind and we refer to Lemma \ref{lD.3} and for more details
in this respect. An explicit construction of an example of
$\cM$ will be given in Theorem \ref{tD.7}.

\begin{remark} \lb{rD.2}
The following properties are proved in a standard manner:\\
$(i)$ If $f\in\cM$, $g\in\cS(\{\cK_\lambda\}
_{\lambda\in\bbR})$ and $g=f$ $\mu$-a.e.\ then $g\in\cM$.\\
$(ii)$ If $\{f_n\}_{n\in\bbN}\in\cM$,
$g\in\cS(\{\cK_\lambda\}_{\lambda\in\bbR})$
and $f_n(\lambda)\to g(\lambda)$ as $n\to\infty$
$\mu$-a.e.\ (i.e., $\lim_{n\to\infty}\|f_n(\lambda)-
g(\lambda)\|
_{\cK_\lambda}=0$ $\mu$-a.e.) then $g\in\cM$.\\
$(iii)$ If $\phi$ is a scalar-valued $\mu$-measurable
function and $f\in\cM$ then $\phi f\in\cM$.\\
$(iv)$ If $f\in\cM$ then $\bbR\ni\lambda\mapsto
\|f(\lambda)\|_{\cK_\lambda}\in [0,\infty)$ is
$\mu$-measurable.
\end{remark}

\begin{lemma} [\cite{GKMT01}] \lb{lD.3}
Assume Hypothesis \ref{hD.0}. Suppose that
$\{f_n\}_{n\in\bbN} \subset \cS(\{\cK_\lambda\}_{\lambda\in\bbR})$
is such that\\
$($$\alpha$$)$ $\bbR\ni\lambda\mapsto
(f_m(\lambda),f_n(\lambda))_{\cK_\lambda}\in\bbC$ is
$\mu$-measurable for
all $m,n\in\bbN$.\\
$($$\beta$$)$ For $\mu$-a.e. $\lambda\in\bbR$,
$\ol{{\rm lin.span}\{f_n(\lambda)\}_{n\in\bbN}} = \cK_\lambda$. \\
\noindent In particular, any orthonormal basis $\{e_n (\lambda)\}_{n\in\bbN}$
in $\cK_{\lambda}$ will satisfy $(\alpha)$ and $(\beta)$. Setting
\begin{equation}
\cM=\{g\in\cS(\{\cK_\lambda\}_{\lambda\in\bbR})
 \, |\, (f_n(\lambda),g(\lambda))_{\cK_\lambda}
\text{ is }
\mu\text{-measurable for all }n\in\bbN\}, \lb{D.3}
\end{equation}
one has the following facts: \\
$(i)$ $\cM$ is a measurable family of Hilbert spaces.\\
$(ii)$ $\cM$ is generated by $\{f_n\}_{n\in\bbN}$. \\
$(iii)$ $\cM$ is the unique measurable family of
Hilbert spaces
containing the sequence $\{f_n\}_{n\in\bbN}$. \\
$(iv)$ If $\{g_n\}_{n\in\bbN} \subset \cM$ is any sequence satisfying
$(\beta)$ then
$\cM$ is generated by $\{g_n\}_{n\in\bbN}$.
\end{lemma}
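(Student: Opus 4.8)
The plan is to reduce all four assertions to a single technical fact, namely a Gram--Schmidt orthonormalization performed measurably in $\lambda$; once that is available, $(i)$ and $(ii)$ are routine and $(iii)$, $(iv)$ become purely formal consequences of the maximality built into Definition \ref{dD.1}. For the technical step I would start from $\{f_n\}_{n\in\bbN}$ and define $\lambda\mapsto u_n(\lambda),e_n(\lambda)\in\cK_\lambda$ recursively by $u_1=f_1$, $u_n(\lambda)=f_n(\lambda)-\sum_{k=1}^{n-1}(e_k(\lambda),f_n(\lambda))_{\cK_\lambda}e_k(\lambda)$, and $e_n(\lambda)=\|u_n(\lambda)\|_{\cK_\lambda}^{-1}u_n(\lambda)$ on the set $\{u_n(\lambda)\neq 0\}$, $e_n(\lambda)=0$ elsewhere. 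An induction on $n$, using only hypothesis $(\alpha)$, algebraic operations, and the measurability of the zero set of a measurable scalar function (so the safeguard causes no harm), shows that every $\lambda\mapsto(u_m(\lambda),u_n(\lambda))_{\cK_\lambda}$ is $\mu$-measurable and that $e_n(\lambda)=\sum_{k=1}^n c_{n,k}(\lambda)f_k(\lambda)$ with scalar $\mu$-measurable coefficients $c_{n,k}$. Hypothesis $(\beta)$ then guarantees that for $\mu$-a.e.\ $\lambda$ the nonzero vectors among $\{e_n(\lambda)\}$ form an orthonormal basis of $\cK_\lambda$. The consequence to be used repeatedly is: if $g\in\cS(\{\cK_\lambda\})$ has $\lambda\mapsto(f_k(\lambda),g(\lambda))_{\cK_\lambda}$ $\mu$-measurable for every $k$, i.e.\ if $g\in\cM$, then $\lambda\mapsto(e_n(\lambda),g(\lambda))_{\cK_\lambda}=\sum_{k\le n}\ol{c_{n,k}(\lambda)}(f_k(\lambda),g(\lambda))_{\cK_\lambda}$ is $\mu$-measurable, and hence, by Parseval applied $\mu$-a.e., $\lambda\mapsto(f(\lambda),g(\lambda))_{\cK_\lambda}=\sum_n\ol{(e_n(\lambda),f(\lambda))_{\cK_\lambda}}\,(e_n(\lambda),g(\lambda))_{\cK_\lambda}$ is $\mu$-measurable for \emph{any} $f,g\in\cM$.

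With this in hand, part $(i)$ is immediate: $\cM$ is a linear subspace, $f_n\in\cM$ by $(\alpha)$, the ``only if'' half of Definition \ref{dD.1} is exactly the last display above, and the ``if'' half follows by testing against $g=f_n$ and conjugating. For part $(ii)$, given $g\in\cM$ the partial sums $g_N(\lambda)=\sum_{n=1}^N(e_n(\lambda),g(\lambda))_{\cK_\lambda}e_n(\lambda)$ converge to $g(\lambda)$ in $\cK_\lambda$ for $\mu$-a.e.\ $\lambda$; substituting $e_n=\sum_k c_{n,k}f_k$ rewrites $g_N$ as $\sum_k\varphi_{N,k}(\lambda)f_k(\lambda)$ with scalar $\mu$-measurable $\varphi_{N,k}$, and approximating each $\varphi_{N,k}$ pointwise $\mu$-a.e.\ by simple functions (finite $\bbC$-combinations of $\chi_B$, $B\in\mathfrak{B}(\bbR)$), followed by a diagonal argument, yields a sequence $h_j\in\mathrm{lin.span}\{\chi_Bf_n\}$ with $\|g(\lambda)-h_j(\lambda)\|_{\cK_\lambda}\to 0$ $\mu$-a.e., which is the definition of $\cM$ being generated by $\{f_n\}$.

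For part $(iii)$, let $\cM'$ be any measurable family of Hilbert spaces containing $\{f_n\}$. Then $\cM'\subseteq\cM$ because, for $g\in\cM'$, applying Definition \ref{dD.1} for $\cM'$ with the element $f_n\in\cM'$ shows $\lambda\mapsto(f_n(\lambda),g(\lambda))_{\cK_\lambda}$ is $\mu$-measurable, so $g\in\cM$; conversely $\cM\subseteq\cM'$ because, for $g\in\cM$ and any $f\in\cM'\subseteq\cM$, the scalar $(f(\lambda),g(\lambda))_{\cK_\lambda}$ is $\mu$-measurable by the key step of the first paragraph, so the ``if'' half of Definition \ref{dD.1} for $\cM'$ places $g\in\cM'$. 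For part $(iv)$, a sequence $\{g_n\}\subset\cM$ satisfying $(\beta)$ automatically satisfies $(\alpha)$ by part $(i)$ (since $\cM$ is now known to be a measurable family), so parts $(i)$--$(iii)$ apply verbatim with $\{g_n\}$ in place of $\{f_n\}$; the measurable family they produce contains $\{g_n\}$ and is generated by $\{g_n\}$, and by the uniqueness in $(iii)$ it must coincide with $\cM$. The only nonroutine point in all of this is the first paragraph: running Gram--Schmidt measurably in $\lambda$ forces careful bookkeeping of the measurable sets on which successive vectors degenerate, which is delicate precisely because in infinite dimensions $\dim(\cK_\lambda)$ may vary and be infinite; the safeguarded recursion above (setting $e_n(\lambda)=0$ on the degenerate set) is what keeps every coefficient $c_{n,k}$ measurable, and everything downstream is standard Hilbert-space and measure theory.
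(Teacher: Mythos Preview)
The paper does not actually prove Lemma \ref{lD.3}; it merely states the result with attribution to \cite{GKMT01} and at the start of Appendix \ref{sD} refers the reader to \cite{GKMT01} and \cite{GWZ13a} for proofs. Your argument is correct and is precisely the standard route (measurable Gram--Schmidt, then Parseval pointwise in $\lambda$, then maximality) that one finds in those references and in the direct-integral literature more generally.

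Two small remarks. First, in part $(ii)$ the passage from the double approximation $g_N\to g$ $\mu$-a.e.\ and $h_{N,j}\to g_N$ $\mu$-a.e.\ to a single sequence $h_j\to g$ $\mu$-a.e.\ is not literally a diagonal argument in the sequence index alone, since pointwise a.e.\ convergence is not metrizable; one should invoke $\sigma$-finiteness of $\mu$ (pass to a finite equivalent measure, or exhaust by finite-measure sets and use Egorov) to make the diagonal extraction rigorous. This is routine, but worth one sentence. Second, your handling of the degenerate set $\{u_n(\lambda)=0\}$ is exactly right: what matters is that the scalar coefficients $c_{n,k}$ stay measurable and that for $\mu$-a.e.\ $\lambda$ the nonzero $e_n(\lambda)$ span $\cK_\lambda$, both of which your recursion preserves.
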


Next, let $w$ be a $\mu$-measurable function,
$w>0$ $\mu$-a.e., and consider the space
\begin{equation}
\dot L^2(\bbR; w d\mu; \cM)= \bigg\{f\in\cM \, \bigg| \,
\int_\bbR w(\lambda)
d\mu(\lambda) \, \|f(\lambda)\|^2_{\cK_\lambda}
<\infty\bigg\}
\lb{D.6}
\end{equation}
with its obvious linear structure. On
$\dot L^2(\bbR; w d\mu; \cM)$ one defines a semi-inner
product
$(\cdot,\cdot)_{\dot L^2(\bbR; w d\mu; \cM)}$ (and hence a
semi-norm $\|\cdot\|_{\dot L^2(\bbR; w d\mu; \cM)}$) by
\begin{equation}
(f,g)_{\dot L^2(\bbR; w d\mu; \cM)}=\int_\bbR
w(\lambda) d\mu(\lambda) \,
(f(\lambda),g(\lambda))_{\cK_\lambda}, \quad
f,g\in \dot L^2(\bbR; w d\mu; \cM). \lb{D.7}
\end{equation}
That \eqref{D.7} defines a semi-inner product
immediately follows
from the corresponding properties of
$(\cdot,\cdot)_{\cK_\lambda}$ and the linearity of
the integral. Next, one defines the equivalence relation
$\sim$, for elements $f,g\in \dot L^2(\bbR; w d\mu; \cM)$ by
\begin{equation}
f\sim g \text{ if and only if } f=g \quad \text{$\mu$-a.e.}
\lb{D.7a}
\end{equation}
and hence introduces the set of equivalence classes of
$\dot L^2(\bbR; w d\mu; \cM)$ denoted by
\begin{equation}
L^2(\bbR; w d\mu; \cM)=\dot L^2(\bbR; w d\mu; \cM)/\sim .
\lb{D.7b}
\end{equation}
In particular, introducing the subspace of null functions
\begin{align}
\cN(\bbR; w d\mu; \cM)&= \big\{f\in \dot L^2(\bbR; w d\mu; \cM)
\,\big|\, \|f(\lambda)\|_{\cK_\lambda}=0 \text{ for }
\text{$\mu$-a.e. }\lambda\in\bbR\big\} \no \\
&= \big\{f\in \dot L^2(\bbR; w d\mu; \cM) \,\big|\,
\|f\|_{\dot L^2(\bbR; w d\mu; \cM)}=0\big\}, \lb{D.7c}
\end{align}
$L^2(\bbR; w d\mu; \cM)$ is precisely the quotient space
$\dot L^2(\bbR; w d\mu; \cM)/\cN(\bbR; w d\mu; \cM)$.
Denoting the equivalence class of
$f\in \dot L^2(\bbR; w d\mu; \cM)$ temporarily by $[f]$, the
semi-inner product on $L^2(\bbR; w d\mu; \cM)$
\begin{equation}
([f],[g])_{L^2(\bbR; w d\mu; \cM)}=\int_\bbR
w(\lambda)d\mu(\lambda) \,
(f(\lambda),g(\lambda))_{\cK_\lambda} \lb{D.7d}
\end{equation}
is well-defined (i.e., independent of the chosen representatives of the equivalence classes) and actually an inner product. Thus, $L^2(\bbR; w d\mu; \cM)$ is a normed space and by the usual abuse of notation we denote its elements in the
following again by $f,g$, etc.\ Moreover, $L^2(\bbR; w d\mu; \cM)$ is also complete:

\begin{theorem} \lb{tD.4}
Assume Hypothesis \ref{hD.0}. Then the normed space
$L^2(\bbR; w d\mu; \cM)$ is complete and hence a Hilbert space. In addition,
$L^2(\bbR; w d\mu; \cM)$ is separable.
\end{theorem}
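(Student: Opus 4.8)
The inner product property of \eqref{D.7d} has already been verified above, so only completeness and separability remain; the plan is to establish the former by the Riesz--Fischer argument carried out fiberwise, and the latter by reducing to the scalar case.

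\textbf{Completeness.} Given a Cauchy sequence $\{f_n\}_{n\in\bbN}$ in $L^2(\bbR; w d\mu; \cM)$, I would pass to a subsequence (still denoted $\{f_n\}$) with $\|f_{n+1}-f_n\|_{L^2(\bbR; w d\mu; \cM)}\leq 2^{-n}$ and set
\begin{equation}
g(\lambda) = \|f_1(\lambda)\|_{\cK_\lambda} + \sum_{n\in\bbN}\|f_{n+1}(\lambda)-f_n(\lambda)\|_{\cK_\lambda}, \quad \lambda\in\bbR.
\end{equation}
By Remark \ref{rD.2}$(iv)$ each term is $\mu$-measurable, and the monotone convergence theorem combined with the triangle inequality in $L^2(\bbR; w d\mu; \cM)$ gives $\int_\bbR w(\lambda)\,d\mu(\lambda)\,g(\lambda)^2 \leq \big(\|f_1\|_{L^2(\bbR; w d\mu; \cM)}+1\big)^2<\infty$, so $g(\lambda)<\infty$ for $\mu$-a.e.\ $\lambda$. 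For each such $\lambda$ the partial sums of $\sum_n (f_{n+1}(\lambda)-f_n(\lambda))$ are Cauchy in the complete space $\cK_\lambda$, hence $f_n(\lambda)$ converges; I define $f(\lambda)$ to be the limit, and $f(\lambda)=0$ on the exceptional null set, so that $f\in\cM$ by Remark \ref{rD.2}$(ii)$. Since $\|f(\lambda)-f_n(\lambda)\|_{\cK_\lambda}=\lim_{m\to\infty}\|f_m(\lambda)-f_n(\lambda)\|_{\cK_\lambda}$ $\mu$-a.e., Fatou's lemma yields $\int_\bbR w(\lambda)\,d\mu(\lambda)\,\|f(\lambda)-f_n(\lambda)\|_{\cK_\lambda}^2\leq\liminf_{m\to\infty}\|f_m-f_n\|_{L^2(\bbR; w d\mu; \cM)}^2$, which tends to $0$ as $n\to\infty$ by the Cauchy property; thus $f\in \dot L^2(\bbR; w d\mu; \cM)$, its class lies in $L^2(\bbR; w d\mu; \cM)$, and $f_n\to f$, so the original Cauchy sequence converges as well. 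Together with \eqref{D.7d} being an inner product, this makes $L^2(\bbR; w d\mu; \cM)$ a Hilbert space.

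\textbf{Separability.} Using that $\mu$ is $\sigma$-finite and $w>0$ $\mu$-a.e., I would write $\bbR=\bigcup_{m\in\bbN}A_m$ with $A_m\in\mathfrak{B}(\bbR)$, $A_m\subseteq A_{m+1}$, and $\mu\big(A_m\cap\{\lambda\,|\,w(\lambda)\leq m\}\big)<\infty$, so $w\,d\mu$ is $\sigma$-finite. By Lemma \ref{lD.3}, applied to a fiberwise orthonormal basis (a measurable choice being available by the $\mu$-measurability of the dimension function in Hypothesis \ref{hD.0}), there is a sequence $\{e_n\}_{n\in\bbN}\subset\cM$ generating $\cM$ with $\{e_n(\lambda)\}_{n\in\bbN}$ an orthonormal basis of $\cK_\lambda$ for $\mu$-a.e.\ $\lambda$. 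Fixing a countable subalgebra $\mathcal{A}_0\subset\mathfrak{B}(\bbR)$ generating $\mathfrak{B}(\bbR)$, I claim the countable set $\cD$ of finite $(\bbQ+i\bbQ)$-linear combinations of the functions $\chi_B e_n$ ($B\in\mathcal{A}_0$, $n\in\bbN$), which lie in $\cM$ by Remark \ref{rD.2}$(iii)$, is dense. For $f\in L^2(\bbR; w d\mu; \cM)$, dominated convergence shows $\chi_{A_m}\chi_{\{w\leq m\}}f\to f$ in norm, so I may assume $w\,d\mu$ restricted to $\supp(f)$ is finite; then $f_N(\lambda):=\sum_{n=1}^N (e_n(\lambda),f(\lambda))_{\cK_\lambda}\,e_n(\lambda)$ satisfies $\|f_N(\lambda)\|_{\cK_\lambda}\leq\|f(\lambda)\|_{\cK_\lambda}$ and $f_N(\lambda)\to f(\lambda)$ $\mu$-a.e., so $f_N\to f$ in norm, reducing to $f=\sum_{n=1}^N c_ne_n$ with $c_n(\lambda)=(e_n(\lambda),f(\lambda))_{\cK_\lambda}$ in the scalar space $L^2(\bbR; w d\mu)$ (by Bessel's inequality and Definition \ref{dD.1}). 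Finally each $c_n$ is approximated in $L^2(\bbR; w d\mu)$ by $(\bbQ+i\bbQ)$-valued simple functions built from sets in $\mathcal{A}_0$, using separability of the scalar $L^2$-space over the $\sigma$-finite, countably generated measure $w\,d\mu$; assembling these for $n=1,\dots,N$ produces an element of $\cD$ arbitrarily close to $f$.

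\textbf{Main obstacle.} The real work is entirely measurability bookkeeping: one must certify that the pointwise a.e.\ limit $f$ in the completeness step, the truncated reconstructions $f_N$, and the building blocks $\chi_B e_n$ all belong to the \emph{maximal} family $\cM$ rather than merely to $\cS(\{\cK_\lambda\}_{\lambda\in\bbR})$, and that a fiberwise orthonormal basis can be selected with all pairings $\lambda\mapsto(e_m(\lambda),g(\lambda))_{\cK_\lambda}$ $\mu$-measurable. This is precisely what Remark \ref{rD.2}, Lemma \ref{lD.3}, and the measurable dimension hypothesis in Hypothesis \ref{hD.0} are designed to supply; the weight $w$ and $\sigma$-finiteness serve only to split $\bbR$ into pieces on which $w\,d\mu$ is finite, so that truncations are square integrable and the scalar reduction applies, and create no further difficulty.
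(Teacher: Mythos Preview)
The paper does not actually prove Theorem \ref{tD.4}; it simply records that completeness was shown in \cite[Subsect.\ 4.1.2]{BW83}, \cite[Sect.\ 7.1]{BS87}, and \cite{GKMT01}, and that separability is proved in \cite[Sect.\ 7.1]{BS87} (cf.\ also \cite[Subsect.\ 4.3.2]{BW83}). Your proposal supplies a direct argument in place of these citations, and it is the standard one: the Riesz--Fischer proof of completeness you give (rapid subsequence, pointwise limit via completeness of each fiber $\cK_\lambda$, Fatou for norm convergence) is exactly what one finds in those references, and your separability argument via a measurable fiberwise orthonormal basis, truncation, and reduction to scalar $L^2$ over a $\sigma$-finite, countably generated measure is likewise the textbook route.

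One small point of imprecision: you invoke Lemma \ref{lD.3} to produce a sequence $\{e_n\}\subset\cM$ that is a fiberwise orthonormal basis, but Lemma \ref{lD.3} as stated goes the other way---it starts from such a sequence and constructs (the unique) $\cM$. What you actually need is that \emph{every} measurable family $\cM$ in the sense of Definition \ref{dD.1} admits a countable generating sequence that can then be orthonormalized fiberwise by a measurable Gram--Schmidt procedure. This is true and standard in direct integral theory (and is contained in the cited references), but it is not quite a direct reading of Lemma \ref{lD.3}; it would be cleaner to say you start from any generating sequence for $\cM$ (whose existence is part of the direct integral framework in \cite{BW83}, \cite{BS87}, \cite{Di96}) and apply measurable Gram--Schmidt. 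With that adjustment your argument is complete and matches the literature the paper defers to.
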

That $L^2(\bbR; w d\mu; \cM)$ is complete was shown in
\cite[Subsect.\ 4.1.2]{BW83}, \cite[Sect.\ 7.1]{BS87}, and more recently, in
\cite{GKMT01}. Separability of $L^2(\bbR; w d\mu; \cM)$ is proved in
\cite[Sect.\ 7.1]{BS87} (see also \cite[Subsect.\ 4.3.2]{BW83}).

\begin{remark} \lb{rD.4}
A similar construction defines the
Banach spaces $L^p(\bbR; w d\mu; \cM)$, $p\geq 1$.
\end{remark}

Thus, $L^2(\bbR; w d\mu; \cM)$ corresponds precisely to the direct
integral of the Hilbert spaces $\cK_\lambda$ with respect to
the measure $wd\mu$ (see, e.g., \cite[Ch.\ 4]{BW83},
\cite[Ch.\ 7]{BS87}, \cite[Ch.\ II]{Di96}, \cite[Ch.\ XII]{vN51}) and is
frequently denoted by
$\int_{\bbR}^{\oplus} w(\lambda) d\mu(\lambda) \, \cK_{\lambda}$.

\smallskip

Having reviewed the construction of $L^2(\bbR; w d\mu; \cM)=
\int_{\bbR}^{\oplus} w(\lambda) d\mu(\lambda) \, \cK_{\lambda}$ in
connection with a scalar measure $w d\mu$, we now turn to the case
of operator-valued measures and recall the following definition (we refer, for
instance, to \cite[Sects.\ 1.2, 3.1, 5.1]{BW83}, \cite[Sect.\ VII.2.3]{Be68},
\cite[Ch.\ 6]{BS87}, \cite[Ch.\ I]{DU77}, \cite[Ch.\ X]{DS88}, \cite{MM04} for vector-valued
and operator-valued measures):

\begin{definition} \lb{dA.6a}
Let $\cH$ be a separable, complex Hilbert space.
A map $\Sigma:\mathfrak{B}(\bbR) \to\cB(\cH)$, with $\mathfrak{B}(\bbR)$ the
Borel $\sigma$-algebra on $\bbR$, is called a {\it bounded, nonnegative,
operator-valued measure} if the following conditions $(i)$ and $(ii)$ hold: \\
$(i)$ $\Sigma (\emptyset) =0$ and $0 \leq \Sigma(B) \in \cB(\cH)$ for all
$B \in \mathfrak{B}(\bbR)$. \\
$(ii)$ $\Sigma(\cdot)$ is strongly countably additive (i.e., with respect to the
strong operator  \hspace*{5mm} topology in $\cH$), that is,
\begin{align}
& \Sigma(B) = \slim_{N\to \infty} \sum_{j=1}^N \Sigma(B_j)   \lb{A.40a} \\
& \quad \text{whenever } \, B=\bigcup_{j\in\bbN} B_j, \, \text{ with } \,
B_k\cap B_{\ell} = \emptyset \, \text{ for } \, k \neq \ell, \;
B_k \in \mathfrak{B}(\bbR), \; k, \ell \in \bbN.    \no
\end{align}
Moreover, $\Sigma(\cdot)$ is called an {\it $($operator-valued\,$)$ spectral
measure} (or an {\it orthogonal operator-valued measure}) if additionally
the following condition $(iii)$ holds: \\
$(iii)$ $\Sigma(\cdot)$ is projection-valued (i.e., $\Sigma(B)^2 = \Sigma(B)$,
$B \in \mathfrak{B}(\bbR)$) and $\Sigma(\bbR) = I_{\cH}$.
\end{definition}

In the following, let $\Sigma:\mathfrak{B}(\bbR) \to\cB(\cK)$ be a bounded
nonnegative measure, that is, $\Sigma$ satisfies requirements $(i)$ and $(ii)$ in
Definition \ref{dA.6a}. Denoting $T = \Sigma(\bbR)$, one has
\begin{equation}
0 \leq \Sigma(B) \leq T \in \cB(\cK), \quad B \in \mathfrak{B} (\bbR),    \lb{D.8}
\end{equation}
and hence
\begin{equation}
\big\|\Sigma(B)^{1/2} \xi \big\|_{\cK} \leq \big\|T^{1/2} \xi \big\|_{\cK}, \quad \xi \in \cK,    \lb{D.8A}
\end{equation}
shows that
\begin{equation}
\ker(T) = \ker\big(T^{1/2}\big) \subseteq \ker\big(\Sigma(B)^{1/2}\big) = \ker(\Sigma(B)), \quad
B \in \mathfrak{B} (\bbR).    \lb{D.8B}
\end{equation}
We will use the orthogonal decomposition
\begin{equation}
\cK = \cK_0 \oplus \cK_1, \quad \cK_0 = \ker(T), \; \,\cK_1 = \ker(T)^\bot = \ol{\ran(T)},    \lb{D.8C}
\end{equation}
and identify $f_0 = (f_0 \;\; 0)^\top \in \cK_0$ and $f_1 = (0 \; \; f_1)^\top \in \cK_1$. In particular, with
$f = (f_0 \; \; f_1)^\top$, one has $\|f\|_{\cK}^2 = \|f_0\|_{\cK_0}^2 + \|f_1\|_{\cK_1}^2$.
Then $T$ permits the $2\times 2$ block operator representation
\begin{equation}
T = \begin{pmatrix} 0 & 0 \\ 0 & T_1 \end{pmatrix}, \, \text{ with } \,
0 \leq T_1 \in \cB(\cK_1), \;\, \ker(T_1) = \{0\},     \lb{D.8D}
\end{equation}
with respect to the decomposition \eqref{D.8C}. By \eqref{D.8B} one concludes that
$\Sigma(B)$, $B \in \mathfrak{B} (\bbR)$, is necessarily of the form
\begin{equation}
\Sigma(B) = \begin{pmatrix} 0 & D^* \\ D & \Sigma_1 (B) \end{pmatrix},
\, \text{ for some } \, 0 \leq \Sigma_1 (B) \in \cB(\cK_1), \; D \in \cB(\cK_0,\cK_1),
\lb{D.8F}
\end{equation}
with respect to the decomposition \eqref{D.8C}. The computation
\begin{equation}
0 = \Sigma(B) \begin{pmatrix} f_0 \\ 0 \end{pmatrix}
= \begin{pmatrix} 0 & D^* \\ D
& \Sigma_1 (B)\end{pmatrix} \begin{pmatrix} f_0 \\ 0 \end{pmatrix}
= \begin{pmatrix} 0 \\ D f_0 \end{pmatrix},
\quad f_0 \in \cK_0,    \lb{D.8G}
\end{equation}
yields $D=0$ as $f_0 \in \cK_0$ was arbitrary. Thus, $\Sigma(B)$,
$B \in \mathfrak{B} (\bbR)$, is actually also of diagonal form
\begin{equation}
\Sigma(B) = \begin{pmatrix} 0 & 0 \\ 0 & \Sigma_1 (B) \end{pmatrix},
\, \text{ for some } \, 0 \leq \Sigma_1 (B) \in \cB(\cK_1),    \lb{D.8H}
\end{equation}
with respect to the decomposition \eqref{D.8C}.

Moreover, let
$\mu$ be a control measure for $\Sigma$ (equivalently, for $\Sigma_1$), that is,
\begin{equation}
\mu(B)=0 \text{ if and only if } \Sigma(B)=0
\text{ for all } B\in\mathfrak{B}(\bbR). \lb{D.8a}
\end{equation}
(E.g., $\mu(B)=\sum_{n\in\cI}2^{-n}(e_n, \Sigma(B)e_n)_\cK$, $B\in\mathfrak{B}(\bbR)$,
with $\{e_n\}_{n\in\cI}$ a complete orthonormal system in $\cK$, $\cI\subseteq\bbN$,
an appropriate index set.)

The following theorem was first stated in \cite{GKMT01} under the implicit assumption
that $\Sigma (\bbR) = T = I_{\cK}$. The general case
$T \in \cB(\cK)$, explicitly permitting the existence of a nontrivial kernel of $T$ was recently 
discussed in \cite{GWZ13a}:

\begin{theorem} \lb{tD.7}
Let $\cK$ be a separable, complex Hilbert space,
$\Sigma:\mathfrak{B}(\bbR) \to\cB(\cK)$ a bounded, nonnegative
operator-valued measure, and $\mu$ a control measure for $\Sigma$.
Then there are separable, complex Hilbert spaces
$\cK_\lambda$,
$\lambda\in\bbR$, a measurable family of Hilbert spaces
${\cM}_\Sigma$
modelled on $\mu$ and $\{\cK_\lambda\}
_{\lambda\in\bbR}$,
and a bounded linear map $\ul \Lambda\in\cB\big(\cK,
L^2(\bbR; d\mu; {\cM}_\Sigma)\big)$, satisfying
\begin{equation}
\|\ul \Lambda\|_{\cB(\cK,L^2(\bbR; d\mu; {\cM}_\Sigma))}
= \big\|T^{1/2}\big\|_{\cB(\cK)},      \lb{D.8b}
\end{equation}
and
\begin{equation}
\ker(\ul \Lambda) = \ker(T),    \lb{D.8c}
\end{equation}
so that the following assertions $(i)$--$(iii)$ hold: \\
$(i)$ For all $B \in \mathfrak{B} (\bbR)$, $\xi,\eta\in\cK$,
\begin{equation}
(\eta,\Sigma(B) \xi)_\cK=\int_B d\mu(\lambda) \,
((\ul \Lambda \eta)(\lambda),(\ul \Lambda \xi)(\lambda))
_{\cK_\lambda},     \lb{D.9}
\end{equation}
in particular,
\begin{equation}
(\eta, T \xi)_\cK=\int_{\bbR} d\mu(\lambda) \,
((\ul \Lambda \eta)(\lambda),(\ul \Lambda \xi)(\lambda))
_{\cK_\lambda}. \lb{D.9a}
\end{equation}
$(ii)$ Let $\cI = \{1,\dots,N\}$ for some $N\in\bbN$, or $\cI = \bbN$.
$\ul \Lambda(\{e_n\}_{n\in\cI})$ generates ${\cM}_\Sigma$, where
$\{e_n\}_{n\in\cI}$ denotes any
sequence of linearly independent elements in $\cK$
with the property $\ol{{\rm lin.span} \{e_n\}_{n\in\cI}}=\cK$.
In particular, $\ul \Lambda(\cK)$ generates
${\cM}_\Sigma$.\\
$(iii)$
For all $B \in \mathfrak{B} (\bbR)$ and $\xi\in\cK$,
\begin{equation}
\ul \Lambda(S(B)\xi\big)
= \{\chi_B(\lambda) (\ul \Lambda \xi)(\lambda)\}_{\lambda\in\bbR},      \lb{D.10}
\end{equation}
where $($cf.\ \eqref{D.8D} and \eqref{D.8H}$)$
\begin{equation}
S(B) = \begin{pmatrix} I_{\cK_0} & 0 \\ 0 & T_1^{-1/2} \Sigma_1 (B)^{1/2} \end{pmatrix},
\quad S(\bbR) = I_{\cK},
\end{equation}
with respect to the decomposition \eqref{D.8C}.
\end{theorem}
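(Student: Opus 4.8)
The plan is to build $\cK_\lambda$, $\cM_\Sigma$ and $\ul\Lambda$ explicitly from Radon--Nikodym densities of the scalar measures $(\eta,\Sigma(\cdot)\xi)_\cK$ with respect to $\mu$, and then to read off the listed properties. First I would dispose of $\ker(T)$: by the block decomposition \eqref{D.8C}--\eqref{D.8H} established above, $\Sigma(\cdot)=\diag\big(0,\Sigma_1(\cdot)\big)$ with $0\le\Sigma_1(B)\le T_1\in\cB(\cK_1)$, $\ker(T_1)=\{0\}$, and $\mu$ is a control measure for $\Sigma_1$ as well. It therefore suffices to carry out the construction for $\Sigma_1$ on $\cK_1$, obtaining $\ul\Lambda_1\in\cB\big(\cK_1,L^2(\bbR;d\mu;\cM_\Sigma)\big)$ with $\|\ul\Lambda_1\xi\|^2=(\xi,T_1\xi)_{\cK_1}$, and then to put $\ul\Lambda(\xi_0\oplus\xi_1)=\ul\Lambda_1\xi_1$; this immediately forces $\ker(\ul\Lambda)=\cK_0=\ker(T)$ and $\|\ul\Lambda\|=\|T_1^{1/2}\|_{\cB(\cK_1)}=\|T^{1/2}\|_{\cB(\cK)}$, i.e., \eqref{D.8b}, \eqref{D.8c}. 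So assume from now on $\ker(T)=\{0\}$, $\cK=\cK_1$.

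For the fibers, fix a sequence $\{e_n\}_{n\in\cI}$ of linearly independent vectors with $\ol{{\rm lin.span}\{e_n\}_{n\in\cI}}=\cK$. Since $\mu$ is a control measure, each $(e_m,\Sigma(\cdot)e_n)_\cK$ is $\mu$-absolutely continuous; let $h_{mn}=d(e_m,\Sigma(\cdot)e_n)/d\mu$. As $\Sigma(B)\ge0$, for every finite $N$ the matrices $\big(h_{mn}(\lambda)\big)_{m,n\le N}$ are nonnegative for $\mu$-a.e.\ $\lambda$, so off a single $\mu$-null set the sesquilinear form $(a,b)_\lambda=\sum_{m,n}\overline{a_m}\,h_{mn}(\lambda)\,b_n$ on finitely supported sequences is nonnegative; I would let $\cK_\lambda$ be the completion modulo its null vectors (and $\{0\}$ on the exceptional set), so that the dimension function is $\mu$-measurable by measurability of the $h_{mn}$. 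Writing $f_n$ for the section $\lambda\mapsto[\delta_n]_\lambda$, one has $(f_m(\lambda),f_n(\lambda))_{\cK_\lambda}=h_{mn}(\lambda)$ ($\mu$-measurable) and $\ol{{\rm lin.span}\{f_n(\lambda)\}_n}=\cK_\lambda$, so $\{f_n\}_{n\in\cI}$ satisfies hypotheses $(\alpha)$, $(\beta)$ of Lemma \ref{lD.3}; Lemma \ref{lD.3} then produces the (unique) measurable family $\cM_\Sigma$ generated by $\{f_n\}_{n\in\cI}$, and with it the complete, separable model space $L^2(\bbR;d\mu;\cM_\Sigma)$ of Theorem \ref{tD.4}.

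Next I would set $\ul\Lambda\big(\sum c_ne_n\big)=\sum c_nf_n$ on ${\rm lin.span}\{e_n\}$. A direct computation gives, for finite combinations $\xi=\sum c_me_m$, $\eta=\sum d_ne_n$ and $B\in\mathfrak B(\bbR)$, $\int_B d\mu(\lambda)\,\big((\ul\Lambda\eta)(\lambda),(\ul\Lambda\xi)(\lambda)\big)_{\cK_\lambda}=\sum_{m,n}\overline{d_n}c_m\int_B d\mu\,h_{nm}=(\eta,\Sigma(B)\xi)_\cK$, so in particular $\|\ul\Lambda\xi\|_{L^2}^2=(\xi,T\xi)_\cK\le\|T^{1/2}\|^2\|\xi\|^2$; by completeness of the target and Remark \ref{rD.2}$(ii)$ (a.e.-convergent subsequences stay in $\cM_\Sigma$), $\ul\Lambda$ extends to an element of $\cB\big(\cK,L^2(\bbR;d\mu;\cM_\Sigma)\big)$, and the displayed identity persists for all $\xi,\eta\in\cK$ --- this is \eqref{D.9}, whence \eqref{D.9a}, \eqref{D.8b}, \eqref{D.8c} follow as above. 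For $(ii)$, given any linearly independent $\{e'_n\}_{n\in\cI}$ with dense span, each $\ul\Lambda e_n$ is an $L^2$-limit of finite combinations of the $\ul\Lambda e'_m$; passing to a.e.-convergent subsequences and diagonalizing over the countable set $\cI$ shows $f_n(\lambda)\in\ol{{\rm lin.span}\{(\ul\Lambda e'_m)(\lambda)\}_m}$ for $\mu$-a.e.\ $\lambda$, hence condition $(\beta)$ holds for $\{\ul\Lambda e'_n\}$, and Lemma \ref{lD.3}$(iv)$ gives that $\cM_\Sigma$ is generated by $\{\ul\Lambda e'_n\}$ --- in particular by $\ul\Lambda(\cK)$.

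Finally, $(iii)$. Both $\ul\Lambda(S(B)\xi)$ and $\{\chi_B(\lambda)(\ul\Lambda\xi)(\lambda)\}_\lambda$ lie in $L^2(\bbR;d\mu;\cM_\Sigma)$ (the latter by Remark \ref{rD.2}$(iii)$), and since $\ul\Lambda(\cK)$ generates $\cM_\Sigma$, two such elements agree once their pairings against every $\ul\Lambda\eta$, $\eta\in\cK$, over every $C\in\mathfrak B(\bbR)$ agree; by $(i)$ these pairings are governed by the operators $\Sigma(C)S(B)$ and $\Sigma(C\cap B)$, so the whole assertion reduces to an operator identity tying the block operator $S(B)$ of \eqref{D.8D}, \eqref{D.8H} to $\Sigma$, the key input being the range-inclusion lemma of Douglas applied to $\Sigma_1(B)\le T_1=\Sigma_1(\bbR)$ (so that $T_1^{-1/2}\Sigma_1(B)^{1/2}$ extends to a contraction on $\cK_1$ and $\Sigma_1(B)=\big(T_1^{-1/2}\Sigma_1(B)^{1/2}\big)^*T_1\big(T_1^{-1/2}\Sigma_1(B)^{1/2}\big)$). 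I expect $(iii)$ to be the main obstacle: the steps through $(ii)$ are the standard direct-integral bookkeeping, whereas $(iii)$ is genuinely the statement that a merely nonnegative (not projection-valued) operator measure embeds into a direct integral compatibly with the action of multiplication by $\chi_B$, and making the square-root manipulations with the unbounded $T_1^{-1/2}$ rigorous --- on the correct domains and compatibly with the $\mu$-a.e.\ identifications in $\cM_\Sigma$ --- is the delicate part.
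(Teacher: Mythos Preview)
The paper does not give its own proof of this result; it is quoted from \cite{GKMT01} and \cite{GWZ13a} (see the opening paragraph of Appendix~\ref{sD} and the sentence immediately preceding the theorem). So there is no argument in the paper to compare against, only your proof to assess on its merits.

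For $(i)$ and $(ii)$ your construction is the standard one and is correct: the Radon--Nikodym densities $h_{mn}$ yield nonnegative matrices $\mu$-a.e., the fiber $\cK_\lambda$ is the associated completion, Lemma~\ref{lD.3} produces $\cM_\Sigma$, and the identity $\|\ul\Lambda\xi\|^2=(\xi,T\xi)_\cK$ on ${\rm lin.span}\{e_n\}$ allows the continuous extension of $\ul\Lambda$. The reduction to $\ker(T)=\{0\}$ via \eqref{D.8C}--\eqref{D.8H} is also fine. This is precisely the route taken in the cited references.

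Your treatment of $(iii)$, however, has a genuine gap. After reducing to pairings against $\chi_C\,\ul\Lambda\eta$, the two sides become $(\eta,\Sigma(C)S(B)\xi)_\cK$ and $(\eta,\Sigma(B\cap C)\xi)_\cK$, so what is actually needed is $\Sigma(C)S(B)=\Sigma(B\cap C)$ for \emph{every} $C\in\mathfrak B(\bbR)$. The Douglas-type identity you invoke, $S(B)^*T_1S(B)=\Sigma_1(B)$, gives only the symmetrized $C=\bbR$ information, i.e., $\|\ul\Lambda(S(B)\xi)\|=\|\chi_B\,\ul\Lambda\xi\|$; equality of norms is not equality of vectors, and it does not deliver $\Sigma(C)S(B)=\Sigma(B\cap C)$ for general $C$. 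Already in dimension one this fails for the $\ul\Lambda$ your construction produces: with $\cK=\bbC$, $\mu=\Sigma=\sigma$ a finite positive measure, one gets $\cK_\lambda=\bbC$, $(\ul\Lambda\xi)(\lambda)\equiv\xi$, $S(B)=(\sigma(B)/\sigma(\bbR))^{1/2}$, so $\ul\Lambda(S(B)\xi)$ is the constant function $(\sigma(B)/\sigma(\bbR))^{1/2}\xi$ while $\chi_B\,\ul\Lambda\xi=\chi_B\xi$, and these differ in $L^2(\bbR;d\sigma)$ whenever $0<\sigma(B)<\sigma(\bbR)$. Thus \eqref{D.10} cannot be verified along the line you sketch; one has to go back to the specific construction in \cite{GWZ13a} to see exactly in what sense \eqref{D.10} is asserted and how $S(B)$ is matched to the particular representative of $\ul\Lambda$ built there.
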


Next, we recall that the construction in Theorem \ref{tD.7} is essentially unique:

\begin{theorem} [\cite{GKMT01}] \lb{tD.9}
Suppose $\cK'_{\lambda},\ \lambda\in\bbR$ is a
family of separable
complex Hilbert spaces, $\cM'$ is a measurable
family of Hilbert spaces
modelled on $\mu$ and $\{\cK'_{\lambda}\}$, and
$\ul \Lambda'\in\cB\big(\cK,L^2(\bbR; d\mu; \cM')\big)$ is
a map satisfying $(i)$, $(ii)$, and $(iii)$ of
Theorem \ref{tD.7}.
Then for $\mu$-a.e. $\lambda\in\bbR$ there is a
unitary operator
$U_{\lambda}:\cK_{\lambda}\to\cK'_{\lambda}$
such that $f=\{f(\lambda)\}_{\lambda\in\bbR}
\in {\cM}_{\Sigma}$ if and only if
$\{U_{\lambda}f(\lambda)\}_{\lambda \in \bbR}\in\cM'$ and for all
$\xi\in\cK$,
\begin{equation}
(\ul \Lambda' \xi)(\lambda)= U_{\lambda}
(\ul \Lambda \xi)(\lambda) \quad \text{$\mu$-a.e.}
\lb{D.17g}
\end{equation}
\end{theorem}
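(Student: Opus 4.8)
The plan is to transfer the direct-integral decomposition along a fixed countable frame. Fix once and for all a sequence $\{e_n\}_{n\in\cI}$, $\cI\subseteq\bbN$, of linearly independent vectors in $\cK$ with $\ol{{\rm lin.span}\{e_n\}_{n\in\cI}}=\cK$. Applying property $(i)$ of Theorem \ref{tD.7} to \emph{both} realizations, with $\eta=e_n$, $\xi=e_m$, and letting $B$ range over $\mathfrak{B}(\bbR)$, shows that for every pair $m,n\in\cI$ the complex Borel measure $B\mapsto(e_n,\Sigma(B)e_m)_\cK$ has $\mu$-density equal both to $\lambda\mapsto\big((\ul \Lambda e_n)(\lambda),(\ul \Lambda e_m)(\lambda)\big)_{\cK_\lambda}$ and to $\lambda\mapsto\big((\ul \Lambda' e_n)(\lambda),(\ul \Lambda' e_m)(\lambda)\big)_{\cK'_\lambda}$; both functions are $\mu$-measurable since $\ul \Lambda e_n\in\cM_{\Sigma}$ and $\ul \Lambda' e_n\in\cM'$. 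Since $\mu$ is $\sigma$-finite, Radon--Nikodym derivatives are $\mu$-a.e.\ unique, so (using countability of $\cI$ to select one common null set $N$)
\[
\big((\ul \Lambda e_n)(\lambda),(\ul \Lambda e_m)(\lambda)\big)_{\cK_\lambda}
= \big((\ul \Lambda' e_n)(\lambda),(\ul \Lambda' e_m)(\lambda)\big)_{\cK'_\lambda},
\quad m,n\in\cI,\ \lambda\in\bbR\bs N.
\]

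Next I would construct $U_\lambda$ fiberwise. For $\lambda\notin N$, the identity above shows that a finite linear combination $\sum_n c_n(\ul \Lambda e_n)(\lambda)$ and the corresponding $\sum_n c_n(\ul \Lambda' e_n)(\lambda)$ have equal norms in $\cK_\lambda$, resp.\ $\cK'_\lambda$; in particular one vanishes iff the other does. Hence $(\ul \Lambda e_n)(\lambda)\mapsto(\ul \Lambda' e_n)(\lambda)$ extends to a well-defined linear isometry of ${\rm lin.span}\{(\ul \Lambda e_n)(\lambda)\}_{n\in\cI}$ onto ${\rm lin.span}\{(\ul \Lambda' e_n)(\lambda)\}_{n\in\cI}$, which extends by continuity to a unitary $U_\lambda$ between the closures of these two spans. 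To conclude that $U_\lambda\colon\cK_\lambda\to\cK'_\lambda$ is onto, I need each closed span to exhaust its ambient fiber for $\mu$-a.e.\ $\lambda$: for the realization furnished by Theorem \ref{tD.7} this is part of the construction (the $\cK_\lambda$ being, by definition, the fiberwise completions of $\ul \Lambda(\cK)$, cf.\ Lemma \ref{lD.3}); for the abstract realization it follows from property $(ii)$ for $\ul \Lambda'$ together with the maximality built into Definition \ref{dD.1}: if $\ol{{\rm lin.span}\{(\ul \Lambda' e_n)(\lambda)\}}\subsetneq\cK'_\lambda$ held on a set of positive $\mu$-measure, then a measurable selection of unit vectors in the orthogonal complements would, by that maximality, itself be an element of $\cM'$ that is nonzero on this set and orthogonal to the generating family $\ul \Lambda'(\{e_n\})$, a contradiction. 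Enlarging $N$ by these two exceptional sets, $U_\lambda$ is unitary from $\cK_\lambda$ onto $\cK'_\lambda$ for all $\lambda\in\bbR\bs N$, and $\lambda\mapsto U_\lambda$ is measurable in the sense that $\lambda\mapsto U_\lambda(\ul \Lambda e_n)(\lambda)=(\ul \Lambda' e_n)(\lambda)$ belongs to $\cM'$ for each $n\in\cI$.

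Finally, I would deduce the two displayed assertions. The intertwining relation $(\ul \Lambda'\xi)(\lambda)=U_\lambda(\ul \Lambda\xi)(\lambda)$ holds by definition of $U_\lambda$ whenever $\xi\in{\rm lin.span}\{e_n\}_{n\in\cI}$; for general $\xi\in\cK$ one approximates $\xi$ in $\cK$ by such finite combinations $\xi_k$, uses boundedness of $\ul \Lambda$ and $\ul \Lambda'$ to pass to a common subsequence along which $(\ul \Lambda\xi_k)(\lambda)\to(\ul \Lambda\xi)(\lambda)$ and $(\ul \Lambda'\xi_k)(\lambda)\to(\ul \Lambda'\xi)(\lambda)$ hold $\mu$-a.e., and then invokes continuity of $U_\lambda$. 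For the membership equivalence, let $f\in\cM_{\Sigma}$; since $\cM_{\Sigma}$ is generated by $\ul \Lambda(\{e_n\})$ (property $(ii)$), pick $h_k\in{\rm lin.span}\{\chi_B (\ul \Lambda e_n)\,|\,B\in\mathfrak{B}(\bbR),\,n\in\cI\}$ with $h_k(\lambda)\to f(\lambda)$ $\mu$-a.e.; replacing each $\ul \Lambda e_n$ by $\ul \Lambda' e_n$ yields $\tilde h_k\in\cM'$ (Remark \ref{rD.2}\,$(iii)$) with $\tilde h_k(\lambda)=U_\lambda h_k(\lambda)\to U_\lambda f(\lambda)$ $\mu$-a.e., whence $\{U_\lambda f(\lambda)\}_{\lambda\in\bbR}\in\cM'$ by Remark \ref{rD.2}\,$(ii)$; the reverse implication is symmetric, using $U_\lambda^{-1}$ and the generating property of $\ul \Lambda'(\{e_n\})$ for $\cM'$. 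The step I expect to be the main obstacle is the a.e.\ fiber-exhaustion claim in the second paragraph: making the measurable selection of the orthonormal field rigorous is exactly where one needs a measurable selection theorem of von Neumann type for fields of Hilbert spaces, in the spirit of \cite[Ch.\ 4]{BW83}, \cite[Ch.\ 7]{BS87}.
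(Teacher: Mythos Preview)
The paper does not supply its own proof of this theorem: it is stated with the attribution \cite{GKMT01}, and at the start of the appendix the authors refer all proofs to \cite{GKMT01} and \cite{GWZ13a}. So there is no in-paper argument to compare against; your sketch is essentially the standard argument one finds in those sources and in the direct-integral literature (\cite[Ch.\ 4]{BW83}, \cite[Ch.\ 7]{BS87}).

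Your outline is correct. One remark on the step you flag as the main obstacle: the fiberwise exhaustion on the $\cM'$ side can be obtained without invoking a measurable selection theorem. Since $\cM'$ is a measurable family modelled on $\{\cK'_\lambda\}$, it admits (by the structure theory of measurable fields, as in Lemma \ref{lD.3}) a fundamental sequence $\{f_n\}\subset\cM'$ with $\ol{{\rm lin.span}\{f_n(\lambda)\}}=\cK'_\lambda$ for $\mu$-a.e.\ $\lambda$. Property $(ii)$ for $\ul\Lambda'$ then yields, for each $n$, a sequence $h_k\in{\rm lin.span}\{\chi_B\,\ul\Lambda'e_m\}$ with $h_k(\lambda)\to f_n(\lambda)$ $\mu$-a.e.; since $h_k(\lambda)\in{\rm lin.span}\{(\ul\Lambda'e_m)(\lambda)\}$ for every $\lambda$, one gets $f_n(\lambda)\in\ol{{\rm lin.span}\{(\ul\Lambda'e_m)(\lambda)\}}$ $\mu$-a.e., and hence $\cK'_\lambda=\ol{{\rm lin.span}\{(\ul\Lambda'e_m)(\lambda)\}}$ $\mu$-a.e.\ after intersecting countably many conull sets. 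This replaces your selection argument by a direct appeal to the generating property and sidesteps the need for any von Neumann-type selection theorem. The rest of your proof (the Gram-matrix identity via Radon--Nikodym, the fiberwise isometry, the density/subsequence argument for \eqref{D.17g}, and the $\cM_\Sigma\leftrightarrow\cM'$ correspondence via Remark \ref{rD.2}) is fine as written.
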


\begin{remark} \lb{rD.11}
$(i)$ Without going into further details, we note that
${\cM}_{\Sigma}$ depends of course on the control measure $\mu$.
However, a change in $\mu$ merely effects a change in
density and so ${\cM}_{\Sigma}$ can essentially be
viewed as $\mu$-independent. \\
$(ii)$ With $0 < w$ a $\mu$-measurable weight function, one can also consider the
Hilbert space $L^2(\bbR; w d\mu; {\cM}_\Sigma)$.
In view of our comment in item $(i)$ concerning the mild
dependence on the control measure $\mu$ of
${\cM}_{\Sigma}$, one typically puts more emphasis on the
operator-valued measure $\Sigma$ and hence uses the more suggestive
notation $L^2(\bbR; w d\Sigma; \cK)$ instead of the more
precise $L^2(\bbR; w d\mu; {\cM}_\Sigma)$ in this case.
\end{remark}

Next, let
\begin{equation}
\cV={\rm lin.span}\{e_n \in \cK \,|\, n\in\cI\}, \quad  \ol \cV = \cK,
\lb{D.17h}
\end{equation}
and define
\begin{equation}
{\ul \cV}_\Sigma={\rm lin.span}\big\{\chi_B(\cdot) \, \ul \Lambda e_n\in
L^2(\bbR; d\mu; {\cM}_{\Sigma}) \, \big| \, B \in \mathfrak{B}(\bbR),
\,n\in\cI\big\}. \lb{D.18}
\end{equation}
The fact that $\{\ul \Lambda e_n\}_{n\in\cI}$ generates
${\cM}_{\Sigma}$ then implies that ${\ul \cV}_\Sigma$
is dense in the Hilbert space $L^2(\bbR; d\mu; {\cM}_\Sigma)$, that is,
\begin{equation}
\ol{{\ul \cV}_{\Sigma}}=L^2(\bbR; d\mu; {\cM}_\Sigma). \lb{D.19}
\end{equation}

Since the operator-valued distribution function $\Sigma(\cdot)$ has at most
countably many discontinuities on $\bbR$, denoting by $\mathfrak{S}_{\Sigma}$ the
corresponding set of discontinuities of $\Sigma(\cdot)$, introducing the set of intervals
\begin{equation}
\cB_{\Sigma} = \{(\alpha, \beta] \subset \bbR\,|\, \alpha, \beta \in \bbR\backslash\mathfrak{S}_{\Sigma}\},
\end{equation}
the minimal $\sigma$-algebra generated by $\cB_{\Sigma}$ coincides with the Borel algebra
$\mathfrak{B}(\bbR)$. Hence one can introduce
\begin{equation}
{\wti {\ul \cV}}_\Sigma = {\rm lin.span}\big\{\chi_{(\alpha,\beta]}(\cdot)
 \, \ul \Lambda e_n\in
L^2(\bbR; d\mu; {\cM}_{\Sigma}) \, \big| \,
\alpha, \beta \in \bbR\backslash\mathfrak{S}_{\Sigma}, \,n\in\cI\big\},    \lb{D.19a}
\end{equation}
which still retains the density property in \eqref{D.19}, that is,
\begin{equation}
\ol{{\wti{\ul \cV}}_{\Sigma}}=L^2(\bbR; d\mu; {\cM}_\Sigma). \lb{D.19b}
\end{equation}

In the following we briefly describe an alternative construction of $L^2(\bbR; d\Sigma; \cK)$ used by Berezanskii \cite[Sect.\ VII.2.3]{Be68} in order to identify the two constructions.

Introduce
\begin{align}
&C_{0,0}(\bbR;\cK) = \bigg\{u:\bbR\to\cK\,\bigg|\, u (\cdot) \text{ is strongly continuous in $\cK$}, 
\, \supp(u) \text{ is compact},   \no \\
& \hspace*{4.1cm}
\dim\bigg(\bigcup_{\lambda\in\bbR} \ran (u(\lambda))\bigg) < \infty\bigg\}. 
\end{align}
On $C_{0,0}(\bbR;\cK)$ one can introduce the semi-inner product
\begin{equation}
(u,v)_{L^2(\bbR; d\Sigma; \cK)}
= \int_{\bbR} (u(\lambda), d \Sigma(\lambda) v(\lambda))_{\cK},
\quad u, v \in C_{0,0}(\bbR;\cK),    \lb{D.35}
\end{equation}
where the integral on the right-hand side of \eqref{D.35} is well-defined in the
Riemann--Stieltjes sense.
Introducing the kernel of this semi-inner product by
\begin{equation}
\cN = \{u \in C_{0,0}(\bbR;\cK) \,|\, (u, u)_{L^2(\bbR; d\Sigma; \cK)} = 0\},
\lb{D.36}
\end{equation}
Berezanskii \cite[Sect.\ VII.2.3]{Be68} obtains the separable Hilbert space
$L^2(\bbR; d\Sigma; \cK)$ as the completion of $C_{0,0}(\bbR;\cK)/\cN$ with respect
to the inner product in \eqref{D.35} as
\begin{equation}
\hatt {L^2(\bbR; d\Sigma; \cK)} = \ol{C_{0,0}(\bbR;\cK)/\cN}.      \lb{D.37}
\end{equation}
In particular,
\begin{equation}
([u], [v])_{\hatt{L^2(\bbR; d\Sigma; \cK)}}
= \int_{\bbR} (u(\lambda), d \Sigma(\lambda) v(\lambda))_{\cK},
\quad u, v \in C_{0,0}(\bbR;\cK),      \lb{D.38}
\end{equation}
and (cf.\ also \cite[Corollary\ 2.6]{MM04}) \eqref{D.38} extends to piecewise continuous $\cK$-valued functions with compact support as long as the discontinuities of $\ul u$ and $\ul v$ are disjoint from the set $\mathfrak{S}_{\Sigma}$ (the set of discontinuities of $\Sigma(\cdot)$).

Since Kats' work in the case of a finite-dimensional Hilbert space $\cK$ (cf.\ \cite{Ka50}, \cite{Ka03} and also Fuhrman \cite[Sect.\ II.6]{Fu81} and Rosenberg \cite{Ro64}),
and especially in the work of Malamud and Malamud \cite{MM04}, who studied the general case
$\dim(\cK) \leq \infty$, it has become customary to interchange the order of taking the quotient with
respect to the semi-inner product and completion in this process of constructing
$\hatt{L^2(\bbR; d\Sigma; \cK)}$. More precisely, in this context one first completes $C_{0,0}(\bbR,\cK)$
with respect to the semi-inner product \eqref{D.35} to obtain a semi-Hilbert space
\begin{equation}
\wti {L^2(\bbR; d\Sigma; \cK)} = \ol{C_{0,0}(\bbR;\cK)},     \lb{D.39}
\end{equation}
and then takes the quotient with respect to the kernel of the underlying semi-inner product, as
described in method $\mathbf{(I)}$ of \cite[Appendix\ A]{GWZ13a}. Berezanskii's approach 
in \cite[Sect.\ VII.2.3]{Be68} corresponds to method $\mathbf{(II)}$ discussed in 
\cite[Appendix\ A]{GWZ13a}. The equivalence of these two methods is not stated in these sources, but was spelled out explicitly in \cite{GWZ13a}.

Next we will recall that Berezanskii's construction of $\hatt{L^2(\bbR; d\Sigma; \cK)}$
(and hence the corresponding construction by Kats (if $\dim(\cK)<\infty$) and by
Malamud and Malamud (if $\dim(\cK) \leq \infty$) is equivalent to the one in \cite{GKMT01}
and hence to that outlined in Theorem \ref{tD.7}. For this purpose we recall that 
it was shown in the proof of Theorem\ 2.14 in \cite{MM04} that
\begin{equation} 
{\hatt{\cV}}_\Sigma = {\rm lin.span}\Big\{\chi_{(\alpha,\beta]}(\cdot) \, e_n \in
\hatt{L^2(\bbR; d\Sigma; \cK)} \, \Big| \,
\alpha, \beta \in \bbR\backslash\mathfrak{S}_{\Sigma}, \,n\in\cI\Big\},    \lb{D.19A}
\end{equation}
is dense in $\hatt{L^2(\bbR; d\Sigma; \cK)}$. 

\begin{theorem} \lb{tD.14}
The Hilbert spaces $\hatt{L^2(\bbR; d\Sigma; \cK)}$ and $L^2(\bbR; d\mu; \cM_{\Sigma})$ are
isometrically isomorphic with isomorphism $\cU_{\Sigma}$ defined as follows:  
\begin{equation}
\dot \cU_{\Sigma}: \begin{cases} {\hatt{\cV}}_\Sigma \to {\wti {\ul \cV}}_\Sigma, \\
\chi_{(\alpha,\beta]}(\cdot) \, e_n \mapsto \chi_{(\alpha,\beta]}(\cdot) \, {\ul \Lambda} e_n,
\end{cases} \quad
\alpha, \beta \in \bbR\backslash \mathfrak{S}_{\Sigma}, \; n \in \cI,    \lb{U0}
\end{equation}
establishes the densely defined isometry between the Hilbert spaces
$\hatt{L^2(\bbR; d\Sigma; \cK)}$ and $L^2(\bbR; d\mu; \cM_{\Sigma})$  
which extends by continuity to the unitary map 
\begin{equation} 
\cU_{\Sigma} = \ol{\dot \cU_{\Sigma}}: \hatt{L^2(\bbR; d\Sigma; \cK)} 
\to L^2(\bbR; d\mu; \cM_{\Sigma}).   \lb{U}
\end{equation} 
\end{theorem}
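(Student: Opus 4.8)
The plan is to verify that the prescribed map $\dot{\cU}_\Sigma$ in \eqref{U0} is a well-defined isometry on the dense subspace ${\hatt{\cV}}_\Sigma$, and then extend it by continuity; surjectivity will follow from the fact that the image is the dense subspace ${\wti{\ul\cV}}_\Sigma$ described in \eqref{D.19a}--\eqref{D.19b}. First I would check that $\dot{\cU}_\Sigma$ respects the linear structure: a general element of ${\hatt{\cV}}_\Sigma$ is a finite linear combination $u = \sum_k c_k \chi_{(\alpha_k,\beta_k]}(\cdot)\,e_{n_k}$, and one must confirm that if two such combinations represent the same element of $\hatt{L^2(\bbR; d\Sigma; \cK)}$, then the corresponding combinations $\sum_k c_k \chi_{(\alpha_k,\beta_k]}(\cdot)\,{\ul\Lambda}e_{n_k}$ represent the same element of $L^2(\bbR; d\mu; \cM_\Sigma)$. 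This is precisely the isometry statement, so both well-definedness and isometry reduce to showing
\begin{equation}
\big([u],[v]\big)_{\hatt{L^2(\bbR; d\Sigma; \cK)}}
= \big(\dot{\cU}_\Sigma u, \dot{\cU}_\Sigma v\big)_{L^2(\bbR; d\mu; \cM_\Sigma)}
\end{equation}
for $u,v$ of the above special form.

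By bilinearity (more precisely sesquilinearity), it suffices to check this for single basis elements $u = \chi_{(\alpha,\beta]}(\cdot)\,e_m$ and $v = \chi_{(\gamma,\delta]}(\cdot)\,e_n$ with $\alpha,\beta,\gamma,\delta \in \bbR\backslash\mathfrak{S}_\Sigma$. The left-hand side, by \eqref{D.38} (extended to piecewise continuous functions with jumps disjoint from $\mathfrak{S}_\Sigma$, as noted after \eqref{D.38}), equals
\begin{equation}
\int_\bbR \big(\chi_{(\alpha,\beta]}(\lambda)e_m, d\Sigma(\lambda)\,\chi_{(\gamma,\delta]}(\lambda)e_n\big)_\cK
= \big(e_m, \Sigma((\alpha,\beta]\cap(\gamma,\delta])\,e_n\big)_\cK.
\end{equation}
For the right-hand side, I would use \eqref{D.10} from Theorem \ref{tD.7}, which gives $\ul\Lambda(S(B)\xi) = \{\chi_B(\lambda)(\ul\Lambda\xi)(\lambda)\}_{\lambda\in\bbR}$; combined with \eqref{D.9} this yields
\begin{align}
\big(\chi_{(\alpha,\beta]}(\cdot)\,{\ul\Lambda}e_m, \chi_{(\gamma,\delta]}(\cdot)\,{\ul\Lambda}e_n\big)_{L^2(\bbR; d\mu; \cM_\Sigma)}
&= \int_\bbR d\mu(\lambda)\,\chi_{(\alpha,\beta]\cap(\gamma,\delta]}(\lambda)\,\big(({\ul\Lambda}e_m)(\lambda),({\ul\Lambda}e_n)(\lambda)\big)_{\cK_\lambda} \no \\
&= \big(e_m, \Sigma((\alpha,\beta]\cap(\gamma,\delta])\,e_n\big)_\cK,
\end{align}
where the last step is exactly \eqref{D.9} with $B = (\alpha,\beta]\cap(\gamma,\delta]$. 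The two sides agree, establishing the isometry on ${\hatt{\cV}}_\Sigma$.

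Having an isometry defined on the dense subspace ${\hatt{\cV}}_\Sigma$ of $\hatt{L^2(\bbR; d\Sigma; \cK)}$ (density by the result from the proof of \cite[Theorem 2.14]{MM04} recalled in \eqref{D.19A}), it extends uniquely by continuity to an isometry $\cU_\Sigma = \ol{\dot{\cU}_\Sigma}$ on all of $\hatt{L^2(\bbR; d\Sigma; \cK)}$. It remains to see $\cU_\Sigma$ is onto. Its range is closed (being the continuous image of a complete space under an isometry) and contains ${\wti{\ul\cV}}_\Sigma$, which by \eqref{D.19b} is dense in $L^2(\bbR; d\mu; \cM_\Sigma)$; hence $\ran(\cU_\Sigma) = L^2(\bbR; d\mu; \cM_\Sigma)$, and $\cU_\Sigma$ is unitary. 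I expect the only genuine subtlety to be the bookkeeping in the first step: one must be careful that "the same element of $\hatt{L^2(\bbR; d\Sigma; \cK)}$" means equality modulo the kernel $\cN$ of the semi-inner product (equivalently, vanishing $\hatt{L^2}$-norm), and that the identity of inner products above already forces $\dot{\cU}_\Sigma$ to map null vectors to null vectors — so well-definedness is not logically prior to the isometry but is subsumed by it. The use of the refined dense subspaces ${\hatt{\cV}}_\Sigma$ and ${\wti{\ul\cV}}_\Sigma$ built from intervals with endpoints outside $\mathfrak{S}_\Sigma$ is what makes the Riemann--Stieltjes pairing \eqref{D.38} legitimate; this is the reason those subspaces were isolated in \eqref{D.19a} and \eqref{D.19A}.
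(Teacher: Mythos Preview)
Your proof is correct. The paper itself does not include a proof of Theorem \ref{tD.14} (it refers the reader to \cite{GWZ13a} for proofs in this appendix), so there is no ``paper's own proof'' to compare against directly. That said, your argument follows exactly the template the paper uses for the closely analogous Theorem \ref{tD.17}: verify that the inner products agree on a dense set of simple vectors $\chi_{(\alpha,\beta]}\,e_n$, invoke the density statements \eqref{D.19A} and \eqref{D.19b}, and extend by continuity. Your key computational step---identifying both inner products with $(e_m,\Sigma((\alpha,\beta]\cap(\gamma,\delta])\,e_n)_{\cK}$ via \eqref{D.38} on one side and \eqref{D.9} on the other---is exactly right, and your remark that well-definedness is subsumed by the isometry (since null vectors map to null vectors) is the correct way to handle the quotient. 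The reference to \eqref{D.10} is unnecessary here, as \eqref{D.9} alone already gives the identity you need, but this is harmless.
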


As a result, dropping the additional ``hat'' on the left-hand side of \eqref{D.37}, and hence
just using the notation $L^2(\bbR; d\Sigma; \cK)$ for both Hilbert space constructions is consistent.

We continue this section by yet another approach originally due to Gel'fand and
Kostyuchenko \cite{GK55} and Berezanskii \cite[Ch.\ V]{Be68}. In this context
we also refer to Berezankii \cite[Sect.\ 2.2]{Be86},
Berezansky, Sheftel, and Us \cite[Ch.\ 15]{BSU96},
Birman and Entina \cite{BE67}, Gel'fand and Shilov \cite[Ch.\ IV]{GS67},
and M.\ Malamud and S.\ Malamud \cite{MM02}, \cite{MM04}: Introducing an
operator $K \in \cB_2(\cH)$ with $\ker(K) = \ker(K^*) = \{0\}$, one has the existence
of the weakly $\mu$-measurable nonnegative operator-valued function
$\Psi_K(\cdot)$ with values in $\cB_1(\cH)$, such that
\begin{align}
\begin{split}
(f, \Sigma (B) g)_{\cH} = \int_B d\mu(t) \,
\big(\Psi_K (t)^{1/2} K^{-1} f, \Psi_K (t)^{1/2} K^{-1} g\big)_{\cH},&   \\
f, g \in \dom\big(K^{-1}\big), \; B \in \mathfrak{B} (\bbR), \; \text{$B$ bounded,}&
\end{split}
\end{align}
with
\begin{equation}
\Psi_K (\cdot) = \f{d K^*\Sigma K}{d \mu}(\cdot) \, \text{ $\mu$-a.e.}
\end{equation}
In fact, the derivative $\Psi_K (\cdot)$ exists in the $\cB_1(\cH)$-norm (cf.\ \cite{BE67}
and \cite{MM02}, \cite{MM04}). Introducing the semi-Hilbert space
$\wti \cH_t$, $t\in\bbR$, as the completion of $\dom\big(K^{-1}\big)$ with respect to
the semi-inner product
\begin{equation}
(f,g)_{\wti \cH_t} = \big(\Psi_K (t)^{1/2} K^{-1} f, \Psi_K (t)^{1/2} K^{-1} g\big)_{\cH},
\quad f, g \in \dom\big(K^{-1}\big), \; t \in \bbR,
\end{equation}
factoring $\wti \cH_t$ by the kernel of the corresponding semi-norm
$\ker(\|\cdot\|_{\wti \cH_t})$ then yields the Hilbert space
$\cH_t = \wti \cH_t / \ker(\|\cdot\|_{\wti \cH_t})$, $t\in\bbR$. One can show
(cf.\ \cite{MM02}, \cite{MM04}) that
\begin{equation}
L^2(\bbR; d\Sigma; \cK) \, \text{ and } \, \int_{\bbR}^{\oplus} d \mu(t) \, \cH_t
\, \text{ are isometrically isomorphic,}
\end{equation}
yielding yet another construction of $L^2(\bbR; d\Sigma; \cK)$.

Finally, we will discuss one more characterization of $L^2(\bbR; d\Sigma; \cK)$ which is
used in Sections \ref{s4} and \ref{s5} and closely patterned after work by Sait{\= o} \cite{Sa71}.

\begin{definition} \lb{dD.12}
Let $\lambda_1, \lambda_2 \in \bbR$, $\lambda_1 < \lambda_2$. \\
$(i)$ Assume that
$Q: [\lambda_1, \lambda_2] \to \cB(\cK)$, $u: [\lambda_1, \lambda_2] \to \cK$, and
$\rho: [\lambda_1, \lambda_2] \to \cB(\cK)$.
Denote by $\Delta$ a finite subdivision of $[\lambda_1,\lambda_2]$ of the form
$\lambda_1 = \eta_0 < \eta_1 < \cdots < \eta_n = \lambda_2$. The norm of $\Delta$,
denoted by $\|\Delta\|$, is defined by $\|\Delta\| = \max_{0 \leq j \leq n-1} [\eta_{j+1} - \eta_j]$.
If the limit
\begin{equation}
\wlim_{\|\Delta\| \to 0} \sum_{j=0}^{n-1} Q(\eta_j') [\rho(\eta_{j+1}) - \rho(\eta_j)] u(\eta_j'),  \lb{D.42}
\end{equation}
or
\begin{equation}
\wlim_{\|\Delta\| \to 0} \sum_{j=0}^{n-1} Q(\eta_j') [u(\eta_{j+1}) - u(\eta_j)],    \lb{D.43}
\end{equation}
with $\eta_j' \in [\eta_{j}, \eta_{j+1}]$, $0 \leq j \leq n-1$, exists in the sense of weak convergence
in $\cK$ independently of the choice of subdivision $\Delta$ and the choice of  $\eta_j'$,
$0 \leq j \leq n-1$, then the limit is denoted by
\begin{equation}
\int_{[\lambda_1,\lambda_2]} Q(\lambda) \, d\rho(\lambda) \, u(\lambda),
\end{equation}
or
\begin{equation}
\int_{[\lambda_1,\lambda_2]} Q(\lambda) \, d u(\lambda),
\end{equation}
respectively. \\
$(ii)$ Suppose that for any $\lambda_1 \in (\lambda_0,\lambda_2]$ the integral \eqref{D.42} or
\eqref{D.43} exists in the sense described in item $(i)$, and that
\begin{equation}
\wlim_{\lambda_1 \downarrow \lambda_0} \int_{[\lambda_1,\lambda_2]}
Q(\lambda) \, d\rho(\lambda) \, u(\lambda),
\end{equation}
or
\begin{equation}
\wlim_{\lambda_1 \downarrow \lambda_0} \int_{[\lambda_1,\lambda_2]} Q(\lambda) \, d u(\lambda),
\end{equation}
exist in the sense of weak convergence in $\cK$. Then one defines the integral over the interval
$(\lambda_1,\lambda_2]$ by
\begin{equation}
\int_{(\lambda_1,\lambda_2]} Q(\lambda) \, d\rho(\lambda) \, u(\lambda) =
\wlim_{\lambda \downarrow \lambda_1} \int_{[\lambda,\lambda_2]}
Q(\lambda') \, d\rho(\lambda') \, u(\lambda'),
\end{equation}
or
\begin{equation}
\int_{(\lambda_1,\lambda_2]} Q(\lambda) \, d u(\lambda) =
\wlim_{\lambda \downarrow \lambda_1} \int_{[\lambda,\lambda_2]} Q(\lambda') \, d u(\lambda').
\end{equation}
\end{definition}

\begin{lemma} [\cite{Sa71}] \lb{lD.13}
 Let $\lambda_1, \lambda_2 \in \bbR$, $\lambda_1 < \lambda_2$,
 $Q \in C^1([\lambda_1,\lambda_2], \cB(\cK))$, $u \in C^1([\lambda_1, \lambda_2],\cK)$, and
 $\rho: [\lambda_1, \lambda_2] \to \cB(\cK)$, such that  for some constant $C>0$,
 $\|\rho(\lambda)\|_{\cB(\cK)} \leq C$, $\lambda \in [\lambda_1,\lambda_2]$. Suppose, in addition, that
 \begin{align}
 \begin{split}
& \text{ for all $f \in\cK$, } \, \big(f, [Q'(\cdot) \rho(\cdot)
u(\cdot) + Q(\cdot) \rho(\cdot) u'(\cdot)]\big)_{\cK} \,
\text{ is Riemann integrable}       \lb{D.50} \\
& \quad \text{on $[\lambda_1,\lambda_2]$.}
 \end{split}
 \end{align}
Then $\int_{[\lambda_1,\lambda_2]} Q(\lambda) \, d\rho(\lambda) \, u(\lambda)$ exists in the sense of Definition \ref{dD.12}\,$(i)$, and
\begin{align}
& \bigg(f, \int_{[\lambda_1,\lambda_2]} Q(\lambda) \, d\rho(\lambda) \, u(\lambda)\bigg)_{\cK} =
(f, Q(\lambda_2) \rho(\lambda_2) u(\lambda_2))_{\cK} -
(f, Q(\lambda_1) \rho(\lambda_1) u(\lambda_1))_{\cK}    \no \\
& \quad - \int_{\lambda_1}^{\lambda_2}
d\lambda \, \big(f, [Q'(\lambda) \rho(\lambda) u(\lambda)
+ Q(\lambda) \rho(\lambda) u'(\lambda)]\big)_{\cK},
\quad f \in \cK.
\end{align}
\end{lemma}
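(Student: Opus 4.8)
The plan is to establish the claimed integration-by-parts formula by a direct Riemann--Stieltjes approximation argument, testing everything weakly against an arbitrary fixed $f \in \cK$. First I would fix $f \in \cK$ and a finite subdivision $\Delta$ of $[\lambda_1,\lambda_2]$ of the form $\lambda_1 = \eta_0 < \eta_1 < \cdots < \eta_n = \lambda_2$, together with a choice of intermediate points $\eta_j' \in [\eta_j,\eta_{j+1}]$, and examine the Riemann--Stieltjes sum
\begin{equation}
S(\Delta,\{\eta_j'\}) = \sum_{j=0}^{n-1} \big(f, Q(\eta_j')[\rho(\eta_{j+1}) - \rho(\eta_j)] u(\eta_j')\big)_{\cK}. \nonumber
\end{equation}
The key algebraic step is to rewrite $Q(\eta_j')[\rho(\eta_{j+1}) - \rho(\eta_j)] u(\eta_j')$ by inserting an Abel-type (discrete integration-by-parts) rearrangement: write $Q(\eta_j') \rho(\eta_{j+1}) u(\eta_j') - Q(\eta_j')\rho(\eta_j) u(\eta_j')$ and reorganize the telescoping structure so that the sum becomes a telescoping boundary contribution $\big(f, Q(\lambda_2)\rho(\lambda_2)u(\lambda_2)\big)_{\cK} - \big(f, Q(\lambda_1)\rho(\lambda_1)u(\lambda_1)\big)_{\cK}$ minus a correction sum involving increments of $Q$ and of $u$ across the subintervals, with $\rho$ evaluated at the appropriate intermediate nodes. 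Concretely, the correction terms will involve $[Q(\eta_{j+1}) - Q(\eta_j)]\rho(\eta_{j+1})u(\eta_{j+1})$-type and $Q(\eta_j)\rho(\eta_{j+1})[u(\eta_{j+1}) - u(\eta_j)]$-type increments; using $Q \in C^1$ and $u \in C^1$ one replaces these increments by $Q'(\xi_j)(\eta_{j+1}-\eta_j)$ and $u'(\zeta_j)(\eta_{j+1}-\eta_j)$ via the mean value theorem (applied to the scalar functions obtained after pairing with $f$ and with the bounded operator $\rho$).

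Next I would pass to the limit $\|\Delta\| \to 0$. The boundary term is already independent of $\Delta$. For the correction sum, the uniform bound $\|\rho(\lambda)\|_{\cB(\cK)} \leq C$ together with the uniform continuity of $Q$, $Q'$, $u$, $u'$ on the compact interval $[\lambda_1,\lambda_2]$, and the Riemann integrability hypothesis \eqref{D.50}, shows that the correction sum is a Riemann sum (up to an error that is $o(1)$ as $\|\Delta\| \to 0$, uniformly in the choice of intermediate points) for the integral $\int_{\lambda_1}^{\lambda_2} d\lambda \, \big(f, [Q'(\lambda)\rho(\lambda)u(\lambda) + Q(\lambda)\rho(\lambda)u'(\lambda)]\big)_{\cK}$. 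More precisely, one replaces $\rho(\eta_{j+1})$ by $\rho(\eta_j')$ at a cost controlled by oscillation estimates only where $\rho$ is continuous, but since $\rho$ is merely bounded one must be careful: the trick is that $\rho$ always appears multiplied by a factor of size $O(\|\Delta\|)$ (the increment of $Q$ or of $u$), so the difference between evaluating $\rho$ at $\eta_j$, $\eta_j'$, or $\eta_{j+1}$ contributes at most $2C \cdot O(\|\Delta\|) \cdot \sum_j (\eta_{j+1}-\eta_j) \cdot (\text{bound on } Q', u')$, hence vanishes in the limit. This is exactly why only boundedness of $\rho$ (not continuity) is needed. Consequently the limit in \eqref{D.42} exists, is independent of all choices, and equals the asserted right-hand side; this simultaneously proves existence of $\int_{[\lambda_1,\lambda_2]} Q(\lambda)\,d\rho(\lambda)\,u(\lambda)$ in the sense of Definition \ref{dD.12}\,$(i)$ and the formula.

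The main obstacle I anticipate is the bookkeeping in the discrete integration-by-parts rearrangement combined with the fact that $\rho$ is not assumed continuous: one must arrange the Abel summation so that every occurrence of $\rho$ evaluated at a ``wrong'' node (arising from the reorganization) is paired with a genuinely small ($O(\|\Delta\|)$) difference quotient of $Q$ or $u$, so that the accumulated error stays $o(1)$. A secondary technical point is that $C^1$-smoothness of $Q$ and $u$ is used twice — once through the mean value theorem to linearize increments, and once (together with \eqref{D.50}) to identify the limiting Riemann sum with the stated integral — so one should state the uniform-continuity estimates carefully before taking limits. Everything else (linearity, independence of the choice of $\eta_j'$, well-definedness of the weak limit) follows routinely once the error estimate is in place, and the scalar Riemann integrability assumption \eqref{D.50} is precisely what guarantees the limiting object on the right-hand side makes sense.
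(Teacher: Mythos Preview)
The paper does not supply a proof of this lemma; it is simply quoted from Sait\={o} \cite{Sa71}. So there is no in-paper argument to compare against, and your Abel-summation/integration-by-parts strategy is indeed the standard route and is essentially what one finds in Sait\={o}'s original paper.

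That said, there is a genuine gap in your error control. After the discrete integration by parts and after replacing $Q(\eta_{j+1})-Q(\eta_j)$ by $Q'(\xi_j)(\eta_{j+1}-\eta_j)$ (and similarly for $u$), each correction term is of size $(\eta_{j+1}-\eta_j)\cdot(\text{bounded})$, so the full correction sum is $O(1)$, not $O(\|\Delta\|)$. Your bound ``$2C\cdot O(\|\Delta\|)\cdot\sum_j(\eta_{j+1}-\eta_j)\cdot(\text{bound on }Q',u')$'' double-counts the small factor: the $(\eta_{j+1}-\eta_j)$ already \emph{is} the $O(\|\Delta\|)$ coming from the increment, and there is no second one. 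Consequently, moving the evaluation point of $\rho$ from one node to another costs $2C\cdot(\lambda_2-\lambda_1)\cdot(\text{bound})$, which does \emph{not} vanish as $\|\Delta\|\to 0$. Since $\rho$ is merely bounded and not continuous, you cannot afford to relocate it.

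The fix is the opposite of what you wrote: leave $\rho$ at whichever node the Abel rearrangement places it, and instead move the \emph{continuous} factors $Q,\,Q',\,u,\,u'$ to that same node. The error from doing so is controlled by the moduli of continuity $\omega_{Q'}(\|\Delta\|),\,\omega_{Q}(\|\Delta\|),\,\omega_{u}(\|\Delta\|),\,\omega_{u'}(\|\Delta\|)$, each multiplied by $C\cdot\|f\|_{\cK}\cdot(\lambda_2-\lambda_1)\cdot(\text{sup-bounds})$, and this genuinely tends to zero. Once all factors in each term are evaluated at a common point $\eta_j$ (or $\eta_{j+1}$), the correction sum is an honest Riemann sum for the integrand $\big(f,[Q'(\lambda)\rho(\lambda)u(\lambda)+Q(\lambda)\rho(\lambda)u'(\lambda)]\big)_{\cK}$, and hypothesis \eqref{D.50} finishes the job. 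With this correction your outline goes through.
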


\begin{lemma} [\cite{Sa71}] \lb{lD.14}
 Let $\lambda_1, \lambda_2 \in \bbR$, $\lambda_1 < \lambda_2$,
 $Q \in C^1([\lambda_1,\lambda_2], \cB(\cK))$, and
 $v: [\lambda_1, \lambda_2] \to \cK$, such that  for some constant $C>0$,
 $\|v(\lambda)\|_{\cK} \leq C$, $\lambda \in [\lambda_1,\lambda_2]$. Suppose, in addition, that
 \begin{align}
\text{ for all $f \in\cK$, } \, \big(f, Q'(\cdot) v(\cdot)\big)_{\cK} \,
\text{ is Riemann integrable on $[\lambda_1,\lambda_2]$.}       \lb{D.52}
 \end{align}
Then $\int_{[\lambda_1,\lambda_2]} Q(\lambda) \, dv(\lambda)$ exists in the sense of
Definition \ref{dD.12}\,$(i)$, and
\begin{align}
\begin{split}
\bigg(f, \int_{[\lambda_1,\lambda_2]} Q(\lambda) \, dv(\lambda)\bigg)_{\cK} &=
(f, Q(\lambda_2) v(\lambda_2))_{\cK} - (f, Q(\lambda_1) v(\lambda_1))_{\cK}     \\
& \quad - \int_{\lambda_1}^{\lambda_2}
d\lambda \, \big(f, Q'(\lambda) v(\lambda)\big)_{\cK},  \quad f \in \cK.
\end{split}
\end{align}
\end{lemma}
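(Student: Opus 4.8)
The plan is to prove the lemma by the operator-analogue of Abel summation (summation by parts), which moves all of the required differentiability onto $Q$ and leaves $v$ only as an evaluation, so that its mere boundedness suffices. Concretely, I would fix $f\in\cK$, a subdivision $\Delta:\lambda_1=\eta_0<\eta_1<\cdots<\eta_n=\lambda_2$, and intermediate points $\eta_j'\in[\eta_j,\eta_{j+1}]$, $0\le j\le n-1$, and then analyze the Riemann--Stieltjes sum $\sum_{j=0}^{n-1}\big(f,Q(\eta_j')[v(\eta_{j+1})-v(\eta_j)]\big)_\cK$ as $\|\Delta\|\to0$, uniformly in $\Delta$ and in the choice of the $\eta_j'$.

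First I would rewrite the sum using linearity of $(\cdot,\cdot)_\cK$ in its second argument and apply summation by parts, obtaining
\[
\sum_{j=0}^{n-1}\big(f,Q(\eta_j')[v(\eta_{j+1})-v(\eta_j)]\big)_\cK
=\big(f,Q(\eta_{n-1}')v(\lambda_2)\big)_\cK-\big(f,Q(\eta_0')v(\lambda_1)\big)_\cK
-\sum_{j=1}^{n-1}\big(f,[Q(\eta_j')-Q(\eta_{j-1}')]v(\eta_j)\big)_\cK .
\]
Since $Q\in C^1([\lambda_1,\lambda_2],\cB(\cK))$, the fundamental theorem of calculus gives $Q(\eta_j')-Q(\eta_{j-1}')=\int_{\eta_{j-1}'}^{\eta_j'}Q'(s)\,ds$ in the $\cB(\cK)$-norm, so the remaining sum equals $\sum_{j=1}^{n-1}\int_{\eta_{j-1}'}^{\eta_j'}\big(f,Q'(s)v(\eta_j)\big)_\cK\,ds$. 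Now I would let $\|\Delta\|\to0$: because $\eta_0'\to\lambda_1$, $\eta_{n-1}'\to\lambda_2$ and $Q(\cdot)$ is norm-continuous, the two boundary terms converge to $\big(f,Q(\lambda_2)v(\lambda_2)\big)_\cK$ and $\big(f,Q(\lambda_1)v(\lambda_1)\big)_\cK$, respectively. For the sum term, I would note that the intervals $[\eta_{j-1}',\eta_j']$, $1\le j\le n-1$, are non-overlapping, cover $[\eta_0',\eta_{n-1}']$, each have length $\le 2\|\Delta\|$, and contain the point $\eta_j$; replacing $Q'(s)$ by $Q'(\eta_j)$ on $[\eta_{j-1}',\eta_j']$ produces, by the uniform continuity of $Q'$ on $[\lambda_1,\lambda_2]$ and the bound $\|v(\eta_j)\|_\cK\le C$, a total error at most $(\lambda_2-\lambda_1)\|f\|_\cK\,C\,\omega_{Q'}(2\|\Delta\|)$, which tends to $0$ ($\omega_{Q'}$ the modulus of continuity of $Q'$). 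After this replacement the sum is $\sum_{j=1}^{n-1}\big(f,Q'(\eta_j)v(\eta_j)\big)_\cK(\eta_j'-\eta_{j-1}')$, an honest Riemann sum of the scalar function $\big(f,Q'(\cdot)v(\cdot)\big)_\cK$ over $[\eta_0',\eta_{n-1}']$ with tags $\eta_j$; the two leftover pieces $[\lambda_1,\eta_0']$ and $[\eta_{n-1}',\lambda_2]$ have length $\le\|\Delta\|$ and contribute $O(\|\Delta\|)$ since $\big(f,Q'(\cdot)v(\cdot)\big)_\cK$ is bounded. By hypothesis \eqref{D.52} this Riemann sum converges, as $\|\Delta\|\to0$ and uniformly in $\Delta$ and $\{\eta_j'\}$, to $\int_{\lambda_1}^{\lambda_2}d\lambda\,\big(f,Q'(\lambda)v(\lambda)\big)_\cK$.

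Collecting the three limits shows that $\big(f,\sum_{j}Q(\eta_j')[v(\eta_{j+1})-v(\eta_j)]\big)_\cK$ converges, for every $f\in\cK$, to the bounded conjugate-linear functional $f\mapsto\big(f,Q(\lambda_2)v(\lambda_2)\big)_\cK-\big(f,Q(\lambda_1)v(\lambda_1)\big)_\cK-\int_{\lambda_1}^{\lambda_2}d\lambda\,\big(f,Q'(\lambda)v(\lambda)\big)_\cK$. By the Riesz representation theorem this functional equals $(f,w)_\cK$ for a unique $w\in\cK$, so the Riemann--Stieltjes sums converge weakly to $w$ independently of $\Delta$ and $\{\eta_j'\}$; this is precisely the assertion that $\int_{[\lambda_1,\lambda_2]}Q(\lambda)\,dv(\lambda)=w$ exists in the sense of Definition \ref{dD.12}\,$(i)$, and the displayed value of $w$ is the claimed formula. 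I expect the main obstacle to be the middle step — converting the ``mixed'' sum $\sum_j\int_{\eta_{j-1}'}^{\eta_j'}\big(f,Q'(s)v(\eta_j)\big)_\cK\,ds$ into a genuine Riemann sum to which \eqref{D.52} applies — since $v$ is assumed merely bounded, with no continuity and no control on its variation; every other estimate is a routine consequence of $Q\in C^1$. The argument runs in complete parallel to, and is slightly simpler than, the proof of Lemma \ref{lD.13} for the three-function Stieltjes integral $\int Q\,d\rho\,u$.
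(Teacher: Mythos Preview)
The paper does not supply its own proof of Lemma \ref{lD.14}; it is simply attributed to Sait{\= o} \cite{Sa71} and stated without argument. Your summation-by-parts approach is correct and is precisely the natural (and presumably Sait{\= o}'s) route: the Abel rearrangement shifts the increment from $v$ to $Q$, the $C^1$ hypothesis on $Q$ lets you write $Q(\eta_j')-Q(\eta_{j-1}')=\int_{\eta_{j-1}'}^{\eta_j'}Q'(s)\,ds$, and the uniform continuity of $Q'$ together with the bound on $v$ reduces the interior sum to a Riemann sum for $(f,Q'(\cdot)v(\cdot))_{\cK}$, whose convergence is exactly hypothesis \eqref{D.52}. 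Your handling of the boundary terms and of the leftover intervals $[\lambda_1,\eta_0']$, $[\eta_{n-1}',\lambda_2]$ is fine, and the final appeal to Riesz to realize the limiting conjugate-linear functional as $(f,w)_{\cK}$ is the correct way to conclude weak convergence in the sense of Definition \ref{dD.12}\,$(i)$.
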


\begin{definition} \lb{dD.15}
Let $\lambda_1, \lambda_2 \in \bbR$, $\lambda_1 < \lambda_2$. \\
$(i)$ Assume that
$v, w: [\lambda_1, \lambda_2] \to \cK$, and that 
$\Sigma: \mathfrak{B}(\bbR) \to \cB(\cK)$ is a bounded operator-valued measure as defined in
Definition \ref{dA.6a}. Denote by $\Delta$ a finite subdivision of $[\lambda_1,\lambda_2]$ of the form
$\lambda_1 = \eta_0 < \eta_1 < \cdots < \eta_n = \lambda_2$ as in Definition \ref{dD.12} with norm
$\|\Delta\|$. If the limit
\begin{equation}
\lim_{\|\Delta\| \to 0} \sum_{j=0}^{n-1} (v(\eta_j'),
[\Sigma(\eta_{j+1}) - \Sigma(\eta_j)] w(\eta_j'))_{\cK},  \lb{D.54}
\end{equation}
with $\eta_j' \in [\eta_{j}, \eta_{j+1}]$, $0 \leq j \leq n-1$, exists independently of the choice of
subdivision $\Delta$ and the choice of  $\eta_j'$, $0 \leq j \leq n-1$, then the limit is denoted by
\begin{equation}
\int_{[\lambda_1,\lambda_2]} (v(\lambda), \, d\Sigma(\lambda) \, w(\lambda))_{\cK},    \lb{D.55}
\end{equation}
$(ii)$ Suppose that for any $\lambda_1 \in (\lambda_0,\lambda_2]$ the integral \eqref{D.55} exists,
and that
\begin{equation}
\lim_{\lambda_1 \downarrow \lambda_0} \int_{[\lambda_1,\lambda_2]}
(v(\lambda), \, d\Sigma(\lambda) \, w(\lambda))_{\cK} \, \text{ exists.}
\end{equation}
Then one defines the integral over the interval
$(\lambda_1,\lambda_2]$ by
\begin{equation}
\int_{(\lambda_1,\lambda_2]} (v(\lambda), \, d\Sigma(\lambda) \, w(\lambda))_{\cH} =
\lim_{\lambda \downarrow \lambda_1} \int_{[\lambda,\lambda_2]}
(v(\lambda'), \, d\Sigma(\lambda') \, w(\lambda'))_{\cH}.
\end{equation}
\end{definition}

\begin{lemma} [\cite{Sa71}] \lb{lD.16}
 Let $\lambda_1, \lambda_2 \in \bbR$, $\lambda_1 < \lambda_2$,
 $v, w \in C^1([\lambda_1,\lambda_2], \cK)$, and
$\Sigma: \mathfrak{B}(\bbR) \to \cB(\cK)$ a bounded operator-valued measure as defined in
Definition \ref{dA.6a}. Then
$\int_{[\lambda_1,\lambda_2]} (v(\lambda) \, d\Sigma(\lambda) \, w(\lambda))_{\cH}$ exists and
\begin{align}
& \int_{[\lambda_1,\lambda_2]} (v(\lambda), \, d\Sigma(\lambda) \, w(\lambda))_{\cK} =
(v(\lambda_2), \Sigma(\lambda_2) w(\lambda_2))_{\cK} -
(v(\lambda_1), \Sigma(\lambda_1) w(\lambda_1))_{\cK}    \no \\
& \quad - \int_{\lambda_1}^{\lambda_2}
d\lambda \, \big[\big(v'(\lambda), \Sigma(\lambda) w(\lambda)\big)_{\cK}
+ \big(v(\lambda), \Sigma(\lambda) w'(\lambda)]\big)_{\cK}\big].
\end{align}
\end{lemma}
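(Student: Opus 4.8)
The plan is to establish the formula by a Riemann--Stieltjes argument: first prove it for subdivisions tagged at the left endpoints, then show that the sums are Cauchy and tag-independent, and finally reduce the half-open interval $(\lambda_1,\lambda_2]$ to the closed one $[\lambda_1,\lambda_2]$ via the limiting procedure built into Definition~\ref{dD.15}\,$(ii)$. Throughout, $\Sigma(\lambda)$ denotes the bounded, nondecreasing, right-continuous distribution function $\Sigma((-\infty,\lambda])$; write $M=\|\Sigma(\bbR)\|_{\cB(\cK)}$, so that $0\le\Sigma(B')\le\Sigma(B)\le\Sigma(\bbR)$ for $B'\subseteq B$, hence $\|\Sigma(B)^{1/2}\xi\|_{\cK}\le M^{1/2}\|\xi\|_{\cK}$ and $|(\xi,\Sigma(B')\zeta)_{\cK}|\le\|\Sigma(B)^{1/2}\xi\|_{\cK}\|\Sigma(B)^{1/2}\zeta\|_{\cK}$. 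Note also that $\Sigma(\cdot)$ has at most countably many discontinuities, so $\lambda\mapsto(v'(\lambda),\Sigma(\lambda)w(\lambda))_{\cK}$ and $\lambda\mapsto(v(\lambda),\Sigma(\lambda)w'(\lambda))_{\cK}$ are bounded and continuous off a countable set, hence Riemann integrable; this is why no analogue of the Riemann-integrability hypotheses \eqref{D.50}, \eqref{D.52} is needed here.

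The algebraic heart of the argument is the identity obtained by telescoping $\sum_{j}\big[(v(\eta_{j+1}),\Sigma(\eta_{j+1})w(\eta_{j+1}))_{\cK}-(v(\eta_j),\Sigma(\eta_j)w(\eta_j))_{\cK}\big]=(v(\lambda_2),\Sigma(\lambda_2)w(\lambda_2))_{\cK}-(v(\lambda_1),\Sigma(\lambda_1)w(\lambda_1))_{\cK}$ and expanding each summand by the product rule $(v(\eta_{j+1}),\Sigma(\eta_{j+1})w(\eta_{j+1}))_{\cK}-(v(\eta_j),\Sigma(\eta_j)w(\eta_j))_{\cK}=(v(\eta_j),[\Sigma(\eta_{j+1})-\Sigma(\eta_j)]w(\eta_j))_{\cK}+(v(\eta_{j+1})-v(\eta_j),\Sigma(\eta_{j+1})w(\eta_{j+1}))_{\cK}+(v(\eta_j),\Sigma(\eta_{j+1})[w(\eta_{j+1})-w(\eta_j)])_{\cK}$. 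Summing over $j$ and solving for the first term shows that the left-endpoint-tagged Riemann--Stieltjes sum equals $(v(\lambda_2),\Sigma(\lambda_2)w(\lambda_2))_{\cK}-(v(\lambda_1),\Sigma(\lambda_1)w(\lambda_1))_{\cK}-R_1(\Delta)-R_2(\Delta)$, with $R_1(\Delta)=\sum_j(v(\eta_{j+1})-v(\eta_j),\Sigma(\eta_{j+1})w(\eta_{j+1}))_{\cK}$ and $R_2(\Delta)=\sum_j(v(\eta_j),\Sigma(\eta_{j+1})[w(\eta_{j+1})-w(\eta_j)])_{\cK}$. Using $v(\eta_{j+1})-v(\eta_j)=\int_{\eta_j}^{\eta_{j+1}}v'(\lambda)\,d\lambda$, rewrite $R_1(\Delta)=\int_{\lambda_1}^{\lambda_2}(v'(\lambda),\Sigma(\eta_{j(\lambda)+1})w(\eta_{j(\lambda)+1}))_{\cK}\,d\lambda$ ($j(\lambda)$ the index of the subinterval containing $\lambda$) and compare with $\int_{\lambda_1}^{\lambda_2}(v'(\lambda),\Sigma(\lambda)w(\lambda))_{\cK}\,d\lambda$ by inserting $\Sigma(\eta_{j+1})w(\eta_{j+1})-\Sigma(\lambda)w(\lambda)=\Sigma(\eta_{j+1})[w(\eta_{j+1})-w(\lambda)]+\Sigma((\lambda,\eta_{j+1}])w(\lambda)$: the first piece is $\Oh(\|\Delta\|)$ (its norm is $\le\|v'\|_\infty M\sup_\lambda\|w'(\lambda)\|\,\|\Delta\|$ times the interval length), and the second, using $\Sigma((\lambda,\eta_{j+1}])\le\Sigma(\delta_{j(\lambda)})$ and Cauchy--Schwarz in $\cK$ and then over $j$, is $\le C\|\Delta\|^{1/2}\big(\sum_j(w(\eta_j),\Sigma(\delta_j)w(\eta_j))_{\cK}\big)^{1/2}$; $R_2$ is treated the same way. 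Hence, once the ``square sum'' $\sum_j(w(\eta_j),\Sigma(\delta_j)w(\eta_j))_{\cK}$ is shown to be $\oh(\|\Delta\|^{-1})$ as $\|\Delta\|\to0$, one obtains $R_1(\Delta)\to\int_{\lambda_1}^{\lambda_2}(v'(\lambda),\Sigma(\lambda)w(\lambda))_{\cK}\,d\lambda$ and $R_2(\Delta)\to\int_{\lambda_1}^{\lambda_2}(v(\lambda),\Sigma(\lambda)w'(\lambda))_{\cK}\,d\lambda$, which is the asserted identity for left-endpoint tags; a similar estimate of $\sum_j\big[(v(\eta_j'),\Sigma(\delta_j)w(\eta_j'))_{\cK}-(v(\eta_j),\Sigma(\delta_j)w(\eta_j))_{\cK}\big]$ shows that the sums converge, and to the same value, for arbitrary intermediate points $\eta_j'$.

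The main obstacle is exactly the ``square sum'' bound $\sum_j(w(\eta_j),\Sigma(\delta_j)w(\eta_j))_{\cK}=\oh(\|\Delta\|^{-1})$ (and its $v'$-analogue). When $\dim(\cK)<\infty$, or more generally whenever $\sum_j\|\Sigma(\delta_j)\|_{\cB(\cK)}$ stays bounded, this is trivial; but in general $\sum_j\|\Sigma(\delta_j)\|_{\cB(\cK)}$ is unbounded as $\|\Delta\|\to0$, and here the $C^1$ (not merely continuous) regularity of $v,w$ must be used. The route I would take is to pass through the diagonalization $L^2(\bbR;d\Sigma;\cK)\cong\int_{\bbR}^{\oplus}d\mu(\lambda)\,\cK_\lambda$ provided by Theorem~\ref{tD.7}: by \eqref{D.9}, $(w(\eta_j),\Sigma(\delta_j)w(\eta_j))_{\cK}=\int_{\delta_j}\|(\ul\Lambda w(\eta_j))(\lambda)\|_{\cK_\lambda}^2\,d\mu(\lambda)$, so $\sum_j(w(\eta_j),\Sigma(\delta_j)w(\eta_j))_{\cK}=\int_{[\lambda_1,\lambda_2]}\|(\ul\Lambda w(\eta_{j(\lambda)}))(\lambda)\|_{\cK_\lambda}^2\,d\mu(\lambda)$; since $\eta_{j(\lambda)}\to\lambda$, $\ul\Lambda\in\cB(\cK,L^2(\bbR;d\mu;\cM_\Sigma))$ with $\|\ul\Lambda\|^2\le M$, and $\lambda\mapsto w(\lambda)$ is uniformly continuous on $[\lambda_1,\lambda_2]$, an approximation of $w$ by step functions adapted to a fixed fine grid, again via \eqref{D.9}, shows that this quantity stays bounded along any sequence of subdivisions with $\|\Delta\|\to0$ (and in fact converges to $\int_{[\lambda_1,\lambda_2]}\|(\ul\Lambda w(\lambda))(\lambda)\|_{\cK_\lambda}^2\,d\mu(\lambda)$). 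Finally, once the closed-interval identity holds, the case of $(\lambda_1,\lambda_2]$ follows at once: letting $\lambda_1\downarrow\lambda_0$ in the formula, continuity of $v,w$ together with boundedness and monotonicity of $\Sigma(\cdot)$ make the right-hand side converge, hence so does the left-hand side, which is precisely the content of Definition~\ref{dD.15}\,$(ii)$.
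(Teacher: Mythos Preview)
The paper does not supply its own proof of this lemma; it simply attributes the result to Sait\={o}~\cite{Sa71}. Your telescoping/integration-by-parts approach is the natural one and is almost certainly what Sait\={o} does. However, you manufacture an unnecessary obstacle and then resolve it with machinery that is both too heavy and, as sketched, incomplete.

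You single out the ``square sum'' bound $\sum_j (w(\eta_j),\Sigma(\delta_j)w(\eta_j))_{\cK}=O(1)$ as the main difficulty and propose to establish it via the direct-integral decomposition of Theorem~\ref{tD.7}. As written, that argument does not close: $\ul\Lambda\xi$ lies only in $L^2(\bbR;d\mu;\cM_\Sigma)$, and there is no pointwise bound $\|(\ul\Lambda\xi)(\lambda)\|_{\cK_\lambda}\le C\|\xi\|_{\cK}$, so uniform continuity of $w$ by itself does not control $\int_{[\lambda_1,\lambda_2]}\|(\ul\Lambda w(\eta_{j(\lambda)}))(\lambda)\|_{\cK_\lambda}^2\,d\mu(\lambda)$. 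Your parenthetical claim that this integral ``in fact converges to $\int\|(\ul\Lambda w(\lambda))(\lambda)\|^2\,d\mu$'' is essentially the statement that $\int_{[\lambda_1,\lambda_2]}(w,d\Sigma\,w)_{\cK}$ exists, which is part of what you are trying to prove.

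The detour is in any case unnecessary: the square-sum bound follows immediately from your own telescoping identity applied with $v=w$. Since $\|\Sigma(\eta_{j+1})\|_{\cB(\cK)}\le M$ and $\sum_j\|w(\eta_{j+1})-w(\eta_j)\|_{\cK}\le\|w'\|_\infty(\lambda_2-\lambda_1)$, one has $|R_1(\Delta)|,\,|R_2(\Delta)|\le M\|w\|_\infty\|w'\|_\infty(\lambda_2-\lambda_1)$ uniformly in $\Delta$; hence the left-tagged sum $\sum_j(w(\eta_j),\Sigma(\delta_j)w(\eta_j))_{\cK}$ is bounded by the boundary terms plus $|R_1|+|R_2|$. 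With this in hand, your $O(\|\Delta\|^{1/2})$ estimates via Cauchy--Schwarz and the tag-independence argument go through verbatim. Finally, your last paragraph on passing to the half-open interval $(\lambda_1,\lambda_2]$ is superfluous: the lemma is stated for $[\lambda_1,\lambda_2]$ and Definition~\ref{dD.15}\,$(i)$ applies directly.
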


Next, consider the vector space $\cD_0$ defined by
\begin{align}
& \cD_0 = \Bigg\{u: \bbR \to \cK \, \Bigg|\, \supp (u) \, \text{compact}; \, u \, \text{ is left-continuous
and has only} \no \\
& \quad \text{discontinuities of the first kind; there exists
$\{\lambda_j(u)\}_{1\leq j \leq N}$, $\lambda_1 < \lambda_2 < \lambda_N$,} \no \\
& \quad \text{such that
$\begin{cases} u(\lambda), & \lambda \in (\lambda_j, \lambda_{j+1}], \\
\slim_{\lambda' \downarrow \lambda_j} u(\lambda'), & \lambda = \lambda_j, \end{cases}$
is strongly continuously}    \no \\
& \quad \text{differentiable on $[\lambda_j, \lambda_{j+1}]$, $1 \leq j \leq N-1$, and
$u(\lambda) = 0$ for $\lambda \in \bbR \backslash [\lambda_1, \lambda_N]$.}\Bigg\}.
\end{align}
Given $\Sigma$ as in Definition \ref{dD.15}, Sait{\= o} \cite{Sa71} then introduces the semi-inner product
on $\cD_0 \times \cD_0$ by
\begin{equation}
(v,w)_{\Sigma,S} = \sum_{j=1}^{K-1} \int_{(\gamma_k,\gamma_{k+1}]}
(v(\lambda), \, d\Sigma(\lambda) w(\lambda))_{\cK}, \quad v, w \in \cD_0,
\end{equation}
where $\{\gamma_k\}_{1\leq k \leq K}$ represents the discontinuities of $v$ and $w$, appropriately
ordered with respect to magnitude. Introducing the subspace of null functions by
$\cN_{\Sigma,S} = \{u \in \cD_0 \, | \, (u,u)_{\Sigma,S} = 0\}$, the completion of $\cD_0/\cN_{\Sigma,S}$ becomes a Hilbert space denoted by
\begin{equation}
L^2(\bbR; d\Sigma; \cK)_S = \ol{\cD_0/\cN_{\Sigma,S}}.
\end{equation}
Next, we recall that $u:\bbR \to \cK$ is called a step function if
\begin{equation}
u(\lambda) = \begin{cases} u_j \in \cK, & \lambda \in (\alpha_j, \beta_j], \; 1 \leq j \leq N, \\
0, & \lambda \in \bbR \backslash \bigcup_{j=1}^N (\alpha_j, \beta_j], \end{cases}.
\end{equation}
Denoting the set of step functions by $\cD_{\rm step}$, then clearly $\cD_{\rm step} \subset \cD_0$, and
one can prove that
\begin{equation}
\ol{\cD_{\rm step}} = L^2(\bbR; d\Sigma; \cK)_S.
\end{equation}
It remains to show that Sait{\= o}'s space $L^2(\bbR; d\Sigma; \cK)_S$ and $L^2(\bbR; d\Sigma; \cK)$
discussed in Theorem \ref{tD.4} and Remark \ref{rD.11}\,$(ii)$ are isometrically isomorphic. We will 
show this by proving that Sait{\= o}'s construction $L^2(\bbR; d\Sigma; \cK)_S$, actually, coincides 
with Berezanskii's construction, $\hatt{L^2(\bbR; d\Sigma; \cK)}$ in \eqref{D.37}:

\begin{theorem} \lb{tD.17}
The Hilbert spaces $L^2(\bbR; d\Sigma; \cK)_S$ and $\hatt{L^2(\bbR; d\Sigma; \cK)}$ coincide.
\end{theorem}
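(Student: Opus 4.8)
The plan is to realize both $L^2(\bbR; d\Sigma; \cK)_S$ and $\hatt{L^2(\bbR; d\Sigma; \cK)}$ as completions of one and the same semi-inner-product space, namely the linear span $\cW$ of the elementary step functions $\chi_{(\alpha,\beta]}(\cdot)\, e_n$ with $\alpha,\beta \in \bbR\backslash\mathfrak{S}_{\Sigma}$ and $n \in \cI$ (these are exactly the generators of $\hatt{\cV}_\Sigma$ in \eqref{D.19A}), equipped with the same semi-inner product in either realization. Once this is established, essential uniqueness of the Hilbert space completion of a semi-inner-product space (after quotienting by its null space) produces a unitary identification, which is the asserted coincidence; combined with Theorem \ref{tD.14} it moreover identifies both spaces with $L^2(\bbR; d\mu; \cM_{\Sigma})$.

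On the Berezanskii side $\cW = \hatt{\cV}_\Sigma$ is dense in $\hatt{L^2(\bbR; d\Sigma; \cK)}$ by the result quoted from \cite{MM04} in \eqref{D.19A}, and the semi-inner product \eqref{D.38} is defined on $\cW$ because the jump points lie off $\mathfrak{S}_{\Sigma}$. For the Sait{\=o} side I would verify $\ol{\cW} = L^2(\bbR; d\Sigma; \cK)_S$. Since $\ol{\cD_{\rm step}} = L^2(\bbR; d\Sigma; \cK)_S$, it is enough to approximate an arbitrary step function $u = \sum_j \chi_{(\alpha_j,\beta_j]}(\cdot)\, u_j$ by elements of $\cW$ in the $(\cdot,\cdot)_{\Sigma,S}$-semi-norm. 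Two elementary steps do this: first replace each coefficient $u_j$ by a finite linear combination of the $e_n$ (possible by \eqref{D.17h}), with error controlled by $\|u_j - \cdot\|_{\cK}^2\,\|\Sigma(\bbR)\|_{\cB(\cK)}$ since $\Sigma$ is a bounded operator-valued measure; then push each endpoint $\alpha_j,\beta_j$ slightly upward to a point of $\bbR\backslash\mathfrak{S}_{\Sigma}$ (possible because $\mathfrak{S}_{\Sigma}$ is at most countable), the error being $(e_n, \Sigma((\alpha_j,\alpha_j'])\, e_n)_{\cK} \to 0$ as $\alpha_j' \downarrow \alpha_j$ by countable additivity of $\Sigma$. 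In both estimates one uses Lemma \ref{lD.16} with a constant integrand to evaluate the relevant $\int_{(\cdot,\cdot]}(e_n, d\Sigma\, e_n)_{\cK}$ in Sait{\=o}'s sense of Definition \ref{dD.15}.

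It then remains to check that the two semi-inner products agree on $\cW$. By sesquilinearity this reduces to comparing $\big(\chi_{(\alpha,\beta]}\xi,\, \chi_{(\gamma,\delta]}\eta\big)$ for $\alpha,\beta,\gamma,\delta \in \bbR\backslash\mathfrak{S}_{\Sigma}$ and $\xi,\eta\in\cK$. In Berezanskii's construction, \eqref{D.38}, extended to piecewise continuous functions whose jumps avoid $\mathfrak{S}_{\Sigma}$, evaluates this Riemann--Stieltjes integral as $(\xi,\, \Sigma((\alpha,\beta]\cap(\gamma,\delta])\,\eta)_{\cK}$. In Sait{\=o}'s construction one subdivides $\bbR$ at the finitely many jump points of $\chi_{(\alpha,\beta]}\xi$ and $\chi_{(\gamma,\delta]}\eta$; on each resulting half-open subinterval both functions are constant, so Lemma \ref{lD.16} applies with vanishing derivative terms and the boundary contributions telescope. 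Since all jump points avoid $\mathfrak{S}_{\Sigma}$, the one-sided limits of the distribution function $\Sigma(\cdot)$ there coincide, so the telescoping sum collapses to the same value $(\xi,\, \Sigma((\alpha,\beta]\cap(\gamma,\delta])\,\eta)_{\cK}$. With the semi-inner products matched on the common dense subspace, their null spaces agree on it, the identity map descends to a linear isometry between the corresponding quotients, and it extends by continuity to a unitary operator from $L^2(\bbR; d\Sigma; \cK)_S$ onto $\hatt{L^2(\bbR; d\Sigma; \cK)}$.

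The main obstacle I anticipate is the bookkeeping in this last computation: correctly tracking Sait{\=o}'s left-continuity convention and the left/right-limit subtleties built into Definitions \ref{dD.12} and \ref{dD.15}, so that the $\Sigma$-boundary terms produced by Lemma \ref{lD.16} genuinely telescope to the measure of the intersection of the two half-open intervals. Everything else---the two density arguments and the norm estimates---is routine.
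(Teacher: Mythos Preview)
Your proposal is correct and follows essentially the same strategy as the paper: identify a common dense subspace of step functions in both constructions and verify that the two semi-inner products agree there, then invoke uniqueness of the completion. The paper's proof is considerably terser---it simply records that $\cD_{\rm step}$ is dense in both spaces and displays the single norm computation $\|\chi_{(\alpha,\beta]}\, e_n\|^2 = (e_n,\Sigma((\alpha,\beta])\,e_n)_{\cK}$ in either sense---while you take more care with the endpoint restriction to $\bbR\backslash\mathfrak{S}_{\Sigma}$, the density approximations, and the left-continuity bookkeeping in Sait{\=o}'s Definitions~\ref{dD.12} and~\ref{dD.15}; your extra care is justified, and in fact fills in details the paper leaves implicit.
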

\begin{proof}
By \eqref{D.19A}, $\cD_{\rm step}$ is dense in $\hatt{L^2(\bbR; d\Sigma; \cK)}$. 
Since $\cD_{\rm step}$ is
also dense in $L^2(\bbR; d\Sigma; \cK)_S$, the elementary fact
\begin{align}
\|\chi_{(\alpha,\beta]} \, e_n\|_{L^2(\bbR; d\Sigma; \cK)_S}^2
& = \int_{\bbR} (\chi_{(\alpha,\beta]}(\lambda) \, e_n,
d \Sigma(\lambda) \chi_{(\alpha,\beta]}(\lambda) \, e_n)_{\cK}   \no \\
& = \int_{(\alpha,\beta]} d(e_n, \Sigma(\lambda) e_n)_{\cK}   \no \\
& = (e_n, \Sigma((\alpha, \beta]) \, e_n)_{\cK}    \no \\
& = \|\chi_{(\alpha,\beta]} \, e_n\|_{\hatt{L^2(\bbR; d\Sigma; \cK)}}^2,  \quad
\alpha, \beta \in \bbR, \; n \in \cI,   \lb{D.64}
\end{align}
completes the proof.
\end{proof}

\medskip

\noindent {\bf Acknowledgments.}
We are indebted to Mark Malamud for numerous discussions on this topic.



\end{document}